\begin{document}

\title{\bfseries Multiply Warped Products with a Quarter-symmetric Connection}
\author{Quan Qu, Yong Wang \thanks{Corresponding author.
        \newline \mbox{} \quad     E-mail addresses: quq453@nenu.edu.cn (Q. Qu), wangy581@nenu.edu.cn (Y. Wang).
        \newline  \mbox{} \quad   School of Mathematics and Statistics, Northeast Normal University, Changchun Jilin, 130024, China}}\date{}
\maketitle

\begin{abstract}
In this paper, we study the Einstein warped products and multiply warped products with a quarter-symmetric connection. We also study warped products and multiply warped products with a quarter-symmetric connection with constant scalar curvature. Then apply our results to generalized Robertson-Walker space-times with a quarter-symmetric connection and generalized Kasner space-times with a quarter-symmetric connection.
\paragraph{Keywords:}Warped products; multiply warped products; quarter-symmetric connection; Ricci tensor; scalar curvature; Einstein manifolds.
\end{abstract}

\section{Introduction}
The (singly) warped product $B \times _{b}F$ of two pseudo-Riemannian manifolds $(B,g_{B})$ and $(F,g_{F})$ with a smooth function $b : B \to(0,\infty)$ is a product manifold of form $B \times F$ with the metric tensor $g = g_{B}\oplus b^2g_{F}$. Here, $(B,g_{B})$ is called the base manifold and $(F,g_{F})$ is called as the fiber manifold and $b$ is called as the warping function. The concept of warped products was first introduced by Bishop and O'Neill [1] to construct examples of Riemannian manifolds with negative curvature. In [2], F. Dobarro and E. Dozo had studied the problem of showing when a Riemannian metric of constant scalar curvature can be produced on a product manifolds by a warped product construction from the viewpoint of partial differential equations and variational methods. In [3], Ehrlich, Jung and Kim got explicit solutions to warping function to have a constant scalar curvature for generalized Robertson-Walker space-times. In [4], explicit solutions were also obtained for the warping function to make the space-time as Einstein when the fiber is also Einstein.

One can generalize (singly) warped products to multiply warped products. A multiply warped product $(M,g)$ is the product manifold $M=B \times_{b_{1}}F_{1} \times_{b_{2}}F_{2}\cdots \times_{b_{m}}F_{m}$ with the metric $g=g_{B}\oplus b_{1}^2g_{F_{1}}\oplus b_{2}^2g_{F_{2}}\cdots\oplus b_{m}^2g_{F_{m}}$, where for each $i\in\{1,\cdots,m\},b_{i}:B \to(0,\infty)$ is smooth and $(F_{i},g_{F_{i}})$ is a pseudo-Riemannian manifold. In particular, when $B=(c,d),$ the metric $g_{B}=-dt^2$ is negative and $(F_{i},g_{F_{i}})$ is a Riemannian manifold,
we call $M$ as the multiply generalized Robertson-Walker space-time.

Singly warped products have a natural generalization. A twisted product $(M, g)$ is a product manifold of form
$M = B\times_{b}F$, with a smooth function $b:B \times F \to(0,\infty)$, and the metric tensor $g=g_{B}\oplus b^2g_{F}$. In [5], they showed that mixed Ricci-flat twisted products could be expressed as warped products. As a consequence, any Einstein twisted products are warped products. Similar to the definition of multiply warped product, a multiply twisted product $(M, g)$ is a product manifold of form $M=B \times b_{1}F_{1}\times b_{2}F_{2}\cdots \times b_{m}F_{m}$ with the metric $g=g_{B}\oplus b^2_{1}g_{F_{1}}\oplus b^2_{2}g_{F_{2}}\cdots\oplus b^2_{m}g_{F_{m}}$, where for each $i \in \{1,\cdots ,m\}, b_{i}:B\times F_{i} \to (0,\infty)$ is smooth. So in this paper, we define the multiply twisted products as generalizations of twisted products and multiply warped products.

The definition of a semi-symmetric metric connection was given by H. Hayden in [6]. In 1970, K. Yano [7] considered a semi-symmetric metric connection and studied some of its properties. Then in 1975, Golab [8] introduced the idea of a quarter-symmetric linear connection in differentiable manifold which is a generalization of semi-symmetric connection. A linear connection $\nabla$ on an $n$-dimensional Riemannian manifold $(M,g)$ is called a quarter-symmetric connection if its torsion tensor $T$ of the connection $\nabla$ satisfies
$T(X,Y)=\pi(Y)\phi X-\pi(X)\phi Y,$ where $\pi$ is a 1-form and $\phi$ is a (1,1) tensor field. In particular, if $\phi(X)=X,$ then the quarter-symmetric connection reduces to the semi-symmetric connection.

In [9], Dobarro and \"Unal studied Ricci-flat and Einstein-Lorentzian multiply warped products and considered the case of having constant scalar curvature for multiply warped products and applied their results to generalized Kasner space-times. In [10], S. Sular and C. \"Ozg\"ur studied warped product manifolds with a semi-symmetric metric
connection, they computed curvature of semi-symmetric metric connection and considered Einstein warped product manifolds with a semi-symmetric metric connection. In [11], they studied warped product manifolds with a
semi-symmetric non-metric connection. In [12], we considered multiply warped products with a semi-symmetric metric connection, then apply our results to generalized Robertson-Walker spacetimes with a semi-symmetric metric connection
and generalized Kasner spacetimes with a semi-symmetric metric connection. In [13], we studied curvature of multiply warped products with a semi-symmetric non-metric connection. In this paper, we will generalize our result to warped and multiply warped products with a quarter-symmetric connection.

This paper is arranged as follows. In Section 2, we give the definition of a quarter-symmetric connection and its curvature, then give the formula of the Levi-Civita connection and curvature of singly warped and multiply twisted product. In section 3, we first compute curvature of a singly warped product with a quarter-symmetric connection, then study the generalized Robertson-Walker space-times with a quarter-symmetric connection. In section 4, firstly we compute curvature of multiply twisted products with a quarter-symmetric connection, secondly we study the special multiply warped product with a quarter-symmetric connection, finally we consider the generalized Kasner space-times with a quarter-symmetric connection.

\section{Preliminaries}\label{sec2}
Let $ M $ be a Riemannian manifold with Riemannian metric $g$. A linear connection $\overline\nabla$ on a Riemannian manifold $M$ is called a quarter-symmetric connection if the torsion tensor $T$ of the connection $\overline\nabla$
$$T(X, Y ) = \overline\nabla_{X}Y-\overline\nabla_{Y}X-[X,Y] \eqno{(1)}$$
satisfies  $$T(X, Y ) = \pi(Y)\phi X-\pi(X)\phi Y  \eqno{(2)}$$
where $\pi$ is a 1-form associated with the vector field $P$ on $M$ defined by $\pi(X)=g(X,P)$ and $\phi$ is a (1,1) tensor field. $\overline\nabla$ is called a quarter-symmetric metric connection if it satisfies $\overline\nabla g=0$. $\overline\nabla$ is called a quarter-symmetric non-metric connection if it satisfies $\overline\nabla g\neq0$.

If $\nabla$ is the Levi-Civita connection of $M$, in the equation (2.4) in [14], let $\varphi_{1}=\lambda_{1},\;
\varphi_{2}=0,\;U=P,\;f_{1}=0,\;f_{2}=\lambda_{2}-\lambda_{1},\;U_{2}=P,\;\lambda_{1}\neq0,\;\lambda_{2}\neq0$, then we get a linear connection $\overline\nabla$ defined by
$$\overline\nabla_{X}Y=\nabla_{X}Y+\lambda_{1}\pi(Y)X-\lambda_{2}g(X,Y)P. \eqno{(3)}$$
It is easy to see that:\\
¢Ù when $\lambda_{1}=\lambda_{2}=1,\;\overline\nabla$ is a semi-symmetric metric connection;\\
¢Ú when $\lambda_{1}=\lambda_{2}\neq1,\;\overline\nabla$ is a quarter-symmetric metric connection;\\
¢Û when $\lambda_{1}\neq\lambda_{2},\;\overline\nabla$ is a quarter-symmetric non-metric connection.\par
In [12], we considered the case ¢Ù. In this paper, we will consider cases ¢Ú and ¢Û.

Let $R$ and $\overline R$ be the curvature tensors of $\nabla$ and $\overline\nabla$, respectively. By the equation (3.13) in [14], we can get
\begin{eqnarray}
  \overline{R}(X,Y)Z &=& R(X,Y)Z+\lambda_{1}g(Z,\nabla_{X}P)Y-\lambda_{1}g(Z,\nabla_{Y}P)X  \nonumber \\
   &+&\lambda_{2}g(X,Z)\nabla_{Y}P-\lambda_{2}g(Y,Z)\nabla_{X}P   \nonumber    \\
  \setcounter{equation}{4}
   &+&\lambda_{1}\lambda_{2}\pi(P)[g(X,Z)Y-g(Y,Z)X]   \\
   &+&\lambda_{2}^2 [g(Y,Z)\pi(X)-g(X,Z)\pi(Y)]P     \nonumber  \\
   &+&\lambda_{1}^2\pi(Z)[\pi(Y)X-\pi(X)Y]           \nonumber
\end{eqnarray}
for any vector fields $X,Y,Z$ on $M$.
\newtheorem{remark}{Remark}
\begin{remark}
$\mbox{When }\lambda_{1}=\lambda_{2}=1, \mbox{ we can get the equation }(4) \mbox{ in }[12].$
\end{remark}
\subsection{Warped Product}
Let $(B,g_{B})$ and $(F,g_{F})$ be two Riemannian manifolds and $f:B \to(0,\infty)$ be a smooth function. The warped product is the product manifold $B \times F$ with the metric tensor $g = g_{B}\oplus f^2g_{F}$. The function $f$ is called the warping function of the warped product, and the Hessian of $f$ is defined by $H^f(X,Y)=XYf-(\nabla_{X}Y)f$. \par
We need the following two lemmas from [14], for later use:

\newtheorem{Lemma}{Lemma}[section]
\begin{Lemma}
Let $M=B \times_{f}F $ be a warped product, $\nabla,\nabla^B $and $\nabla^F$ denote the Levi-Civita connection on $M,B $ and $F$, respectively. If $X,Y\in\Gamma(TB)$ and $U,W\in\Gamma(TF)$, then:   \\
$(1)\nabla_{X}Y=\nabla^B_{X}Y; $  \\
$(2)\nabla_{X}U=\nabla_{U}X=\frac{Xf}{f}U ; $  \\
$(3)\nabla_{U}W=-\frac{g(U,W)}{f}grad_{B}f+\nabla^F_{U}W.$
\end{Lemma}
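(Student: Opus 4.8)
The plan is to derive all three identities from the Koszul formula for the Levi--Civita connection $\nabla$ of $(M,g)$,
$$2g(\nabla_{X}Y,Z)=Xg(Y,Z)+Yg(X,Z)-Zg(X,Y)+g([X,Y],Z)-g([X,Z],Y)-g([Y,Z],X),$$
which holds for arbitrary vector fields on $M$. It suffices to verify the formulas when all arguments are lifts of vector fields from $B$ and from $F$, so first I would record the structural facts that do the work: (i) the bracket of a $B$-lift with an $F$-lift vanishes, $[X,U]=0$, while $[X,Y]$ remains a $B$-lift and $[U,W]$ an $F$-lift; and (ii) for $g=g_{B}\oplus f^{2}g_{F}$ one has $g(X,U)=0$, the function $g(X,Y)=g_{B}(X,Y)$ is pulled back from $B$ and hence annihilated by every $U\in\Gamma(TF)$, and $g(U,W)=f^{2}g_{F}(U,W)$ with $g_{F}(U,W)$ constant along $B$. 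The only analytic input is $X(f^{2})=2f\,Xf$, so that $X\,g(U,W)=\tfrac{2Xf}{f}\,g(U,W)$.

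With these in hand each part is one application of Koszul followed by term-by-term simplification. For $(1)$, taking $X,Y,Z\in\Gamma(TB)$ every term is computed entirely on $B$, so $\nabla_{X}Y$ equals $\nabla^{B}_{X}Y$ up to an $F$-tangent part; testing against $U\in\Gamma(TF)$ in place of $Z$, all six terms vanish by (i)--(ii), so that $F$-tangent part is zero. For $(2)$, the torsion-free identity together with $[X,U]=0$ gives $\nabla_{X}U=\nabla_{U}X$ at once; then in $2g(\nabla_{X}U,W)$ with $W\in\Gamma(TF)$ only the term $X\,g(U,W)=\tfrac{2Xf}{f}g(U,W)$ survives, while $g(\nabla_{X}U,Y)=0$ for $Y\in\Gamma(TB)$, yielding $\nabla_{X}U=\tfrac{Xf}{f}U$. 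For $(3)$, in $2g(\nabla_{U}W,X)$ with $X\in\Gamma(TB)$ only $-X\,g(U,W)=-\tfrac{2Xf}{f}g(U,W)$ remains, and since $g(grad_{B}f,X)=Xf$ this equals $2g\!\left(-\tfrac{g(U,W)}{f}grad_{B}f,\,X\right)$; in $2g(\nabla_{U}W,V)$ with $V\in\Gamma(TF)$ an overall factor $f^{2}$ pulls out of every term and leaves $2f^{2}g_{F}(\nabla^{F}_{U}W,V)=2g(\nabla^{F}_{U}W,V)$. Summing the $B$-tangent and $F$-tangent components gives the claimed expression for $\nabla_{U}W$.

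None of this is difficult -- it is the classical Bishop--O'Neill computation, recorded here from [14] because the curvature formula $(4)$ will be applied to exactly this connection in Section~3. The step to watch is the bookkeeping behind (i)--(ii): that $U\,g(X,Y)=0$ precisely because $g(X,Y)$ is a pullback from $B$, that $[X,U]=0$ for lifts, and that the warping factor must be differentiated as $X(f^{2}g_{F}(U,W))=\tfrac{2Xf}{f}\,g(U,W)$ rather than naively. Everything else is substitution into the Koszul formula.
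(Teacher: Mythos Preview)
Your proof is correct and is precisely the standard Bishop--O'Neill argument via the Koszul formula. The paper itself does not prove this lemma; it is quoted from reference~[14] as background for the later curvature computations, so there is no ``paper's own proof'' to compare against. Your derivation supplies exactly what the citation is standing in for, and the bookkeeping you flag (that $[X,U]=0$ for lifts, that $g(X,Y)=g_B(X,Y)$ is constant along $F$, and that $X(f^2g_F(U,W))=\tfrac{2Xf}{f}g(U,W)$) is indeed the only content.
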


\begin{Lemma}
Let $M=B \times_{f}F $ be a warped product with curvature $R$, If $X,Y,Z\in\Gamma(TB)$ and $U,V,W\in\Gamma(TF)$, then:   \\
$(1)R(X,Y)Z=R^B(X,Y)Z ;$ \\
$(2)R(V,X)Y = -\frac{H^f_{B}(X,Y)}{f}V;$  \\
$(3)R(X,Y)V =R(V,W)X=0 ;$    \\
$(4)R(X,V)W=-\frac{g(V,W)}{f}\nabla^B_{X}grad_{B}f ;$   \\
$(5)R(V,W)U=R^F(V,W)U-\frac{|grad_{B}f|^2_{B}}{f^2}[g(W,U)V-g(V,U)W] .$
\end{Lemma}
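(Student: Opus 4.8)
The plan is to establish all five identities by direct computation from the definition of the Riemann curvature tensor,
$$R(X,Y)Z=\nabla_X\nabla_YZ-\nabla_Y\nabla_XZ-\nabla_{[X,Y]}Z,$$
inserting horizontal lifts (fields tangent to $B$) and vertical lifts (fields tangent to $F$) in the combinations indicated and applying Lemma~2.1 repeatedly. Beyond Lemma~2.1 the only inputs needed are the standard bracket facts for lifts --- $[X,Y]\in\Gamma(TB)$ is the lift of the $B$-bracket when $X,Y\in\Gamma(TB)$, $[V,W]\in\Gamma(TF)$ is the lift of the $F$-bracket when $V,W\in\Gamma(TF)$, and $[X,V]=0$ when $X\in\Gamma(TB)$ and $V\in\Gamma(TF)$ --- together with the fact that $f$ is constant along $F$, so $Vf=0$ (and more generally $V(h)=0$ for $h$ a lift from $B$, while $X(h)=0$ for $h$ a lift from $F$), and that $\nabla^F$ is the torsion-free, $g_F$-metric Levi-Civita connection of the fiber. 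Each case is then a short bookkeeping computation.

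Cases (1) and (3) are essentially immediate: in (1) every covariant derivative reduces to $\nabla^B$ by Lemma~2.1(1) and $[X,Y]$ is horizontal, so $R(X,Y)Z=R^B(X,Y)Z$; and (3) can be deduced, once (2) and (4) are in hand, from the first Bianchi identity together with the symmetry of the Hessian --- $R(X,Y)V=-R(Y,V)X-R(V,X)Y=0$ because $R(V,X)Y$ and $R(Y,V)X$ are the two opposite-sign multiples of $V$ produced by (2), and likewise $R(V,W)X=-R(W,X)V-R(X,V)W=0$ using (4) (it is just as quick to verify (3) directly). For (2), expand $R(V,X)Y=\nabla_V\nabla_XY-\nabla_X\nabla_VY$ (the bracket term vanishes): Lemma~2.1 gives $\nabla_XY=\nabla^B_XY$, hence $\nabla_V\nabla_XY=\frac{(\nabla^B_XY)f}{f}V$, while $\nabla_X\nabla_VY=\nabla_X\!\big(\frac{Yf}{f}V\big)=X\!\big(\frac{Yf}{f}\big)V+\frac{Yf\,Xf}{f^2}V$; since $X\!\big(\frac{Yf}{f}\big)=\frac{XYf}{f}-\frac{Xf\,Yf}{f^2}$, the two $\frac{Xf\,Yf}{f^2}V$ terms cancel and one is left with $\frac{(\nabla^B_XY)f-XYf}{f}V=-\frac{H^f_B(X,Y)}{f}V$ by the definition of the Hessian. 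Case (4) is the same flavour of calculation, starting from $\nabla_VW=-\frac{g(V,W)}{f}grad_{B}f+\nabla^F_VW$: apply $\nabla_X$, subtract $\nabla_V\nabla_XW=\nabla_V\!\big(\frac{Xf}{f}W\big)=\frac{Xf}{f}\nabla_VW$ (noting $V\!\big(\frac{Xf}{f}\big)=0$), use $\nabla_X\,grad_{B}f=\nabla^B_X\,grad_{B}f$ and $X\big(g(V,W)\big)=2\frac{Xf}{f}\,g(V,W)$; all the $grad_{B}f$-multiples cancel and only $-\frac{g(V,W)}{f}\nabla^B_X\,grad_{B}f$ survives.

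The substantive case is (5). Here one expands $R(V,W)U=\nabla_V\nabla_WU-\nabla_W\nabla_VU-\nabla_{[V,W]}U$, with $[V,W]$ the vertical lift of the $F$-bracket, and substitutes Lemma~2.1(3) for every inner and outer covariant derivative. The result splits into two groups: the terms built solely from $\nabla^F$ and $g_F$ reassemble into the fiber curvature $R^F(V,W)U$, while the terms carrying a factor $grad_{B}f$ must be collected using that $\nabla^F$ is $g_F$-metric, that $grad_{B}f$ is horizontal so that $\nabla_V\,grad_{B}f=\frac{(grad_{B}f)f}{f}V=\frac{|grad_{B}f|^2_B}{f}V$ by Lemma~2.1(2), and that $Vf=0$; these horizontal contributions collapse to $-\frac{|grad_{B}f|^2_B}{f^2}\big[g(W,U)V-g(V,U)W\big]$, which yields the stated formula. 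The main obstacle throughout is purely organizational: one must consistently remember that the metric $g$ on a pair of $F$-lifts is the warped metric $f^2g_F$ and not $g_F$ itself, so that every differentiation of a term $g(V,W)$ by a horizontal field produces the extra $2(Xf)/f$ factor; once this convention is pinned down, all five identities follow by routine expansion.
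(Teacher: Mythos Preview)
Your argument is correct and is the standard direct derivation of the warped-product curvature formulas from Lemma~2.1. Note, however, that the paper itself does not supply a proof of this lemma: it is quoted verbatim from reference~[14] (``We need the following two lemmas from [14], for later use''), so there is no in-paper proof to compare against. Your computation is exactly the expected one and matches the usual textbook treatment (e.g., O'Neill).
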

\subsection{Multiply Twisted Product}
A multiply twisted product $(M,g)$ is a product manifold of form $M=B \times_{b_{1}}F_{1} \times_{b_{2}}F_{2}\cdots \times_{b_{m}}F_{m}$ with the metric $g=g_{B}\oplus b_{1}^2g_{F_{1}}\oplus b_{2}^2g_{F_{2}}\cdots\oplus b_{m}^2g_{F_{m}}$, where for each $i\in\{1,\cdots,m\},b_{i}:B\times F_{i} \to(0,\infty)$ is smooth. Similarly the Hessian of $b_{i}$ is defined by $H^{b_{i}}(X,Y)=XYb_{i}-(\nabla_{X}Y)b_{i}$. \par
We need the following two lemmas from [12], for later use:
\begin{Lemma}
Let $M=B \times_{b_{1}}F_{1} \times_{b_{2}}F_{2}\cdots \times_{b_{m}}F_{m}$ be a multiply twisted product and let $X,Y\in\Gamma(TB)$ and $U\in\Gamma(TF_{i}),W\in\Gamma(TF_{j})$, then:   \\
$(1)\nabla_{X}Y=\nabla^B_{X}Y ;$  \\
$(2)\nabla_{X}U=\nabla_{U}X=\frac{Xb_{i}}{b_{i}}U  ;$  \\
$(3)\nabla_{U}W=0$  if $i\neq j ;$  \\
$(4)\nabla_{U}W=U(lnb_{i})W+W(lnb_{i})U-\frac{g_{F_{i}}(U,W)}{b_{i}}grad_{F_{i}}b_{i}-b_{i}g_{F_{i}}(U,W)grad_{B}b_{i}
 +\nabla^{F_{i}}_{U}W   \mbox{if $i=j$}.$
\end{Lemma}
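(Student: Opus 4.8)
The plan is to verify all four identities by testing each covariant derivative against horizontal fields $Z\in\Gamma(TB)$ and against vertical fields $Z\in\Gamma(TF_{k})$, $1\le k\le m$, via the Koszul formula
\[
2g(\nabla_{X}Y,Z)=Xg(Y,Z)+Yg(X,Z)-Zg(X,Y)+g([X,Y],Z)-g([X,Z],Y)-g([Y,Z],X),
\]
and then reading off $\nabla_{X}Y$ (resp.\ $\nabla_{X}U$, $\nabla_{U}W$) from non-degeneracy of $g$. I work throughout with standard lifts, so that two structural facts are available. First, horizontal lifts from $B$ and vertical lifts from the fibers commute in all relevant combinations: $[X,U]=0$ for $X\in\Gamma(TB),\ U\in\Gamma(TF_{i})$, and $[U,W]=0$ for $U\in\Gamma(TF_{i}),\ W\in\Gamma(TF_{j})$ with $i\ne j$; hence almost every bracket term in the Koszul formula is forced to vanish. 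Second, $g$ is block diagonal, with $g|_{\Gamma(TF_{i})\times\Gamma(TF_{i})}=b_{i}^{2}g_{F_{i}}$ and $g(\Gamma(TF_{i}),\Gamma(TF_{j}))=0$ for $i\ne j$; moreover $b_{i}$ is a function on $B\times F_{i}$ and, for lifted $U,W$, the function $g_{F_{i}}(U,W)$ is constant along $B$ and along every $F_{k}$ with $k\ne i$, so a derivative taken in a ``foreign'' direction annihilates both $b_{i}$ and $g_{F_{i}}(U,W)$.

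Granting this, items $(1)$--$(3)$ are short. For $(1)$, with $X,Y\in\Gamma(TB)$: against a vertical $Z$ every term vanishes (the one surviving bracket $[X,Y]$ is horizontal), so $\nabla_{X}Y$ has no vertical component, while against a horizontal $Z$ the formula is literally the Koszul formula of $(B,g_{B})$, giving $\nabla_{X}Y=\nabla^{B}_{X}Y$. For $(2)$, torsion-freeness together with $[X,U]=0$ gives $\nabla_{X}U=\nabla_{U}X$; testing $\nabla_{X}U$ against horizontal $Z=Y$ leaves only $Ug_{B}(X,Y)=0$, so there is no horizontal component, and testing against $V\in\Gamma(TF_{k})$ leaves only $Xg(U,V)$, which is $0$ unless $k=i$ and then equals $2b_{i}(Xb_{i})g_{F_{i}}(U,V)=2g\!\left(\tfrac{Xb_{i}}{b_{i}}U,V\right)$. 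For $(3)$, with $U\in\Gamma(TF_{i}),\ W\in\Gamma(TF_{j}),\ i\ne j$: every bracket in the Koszul formula vanishes, and each derivative term involves a foreign direction acting on some $b_{k}$, some $g_{F_{k}}$, or on a $g_{B}$-inner product, hence vanishes; so $\nabla_{U}W=0$.

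Item $(4)$ is the substantive case. Take $U,W\in\Gamma(TF_{i})$. Testing $\nabla_{U}W$ against horizontal $Z=X$, the only surviving contribution is $-Xg(U,W)=-2b_{i}(Xb_{i})g_{F_{i}}(U,W)$, which reproduces the term $-b_{i}g_{F_{i}}(U,W)\,grad_{B}b_{i}$; testing against $V\in\Gamma(TF_{k})$ with $k\ne i$ gives $0$ exactly as in $(3)$, consistent with the right-hand side having no $TF_{k}$-component. Testing against $V\in\Gamma(TF_{i})$, every bracket term now lies in $\Gamma(TF_{i})$ and each of the six Koszul terms splits, by the Leibniz rule applied to $g=b_{i}^{2}g_{F_{i}}$, into a piece proportional to $b_{i}^{2}$ and a piece proportional to $b_{i}\,\partial b_{i}$. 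The $b_{i}^{2}$-pieces reassemble into $b_{i}^{2}$ times the Koszul formula of $(F_{i},g_{F_{i}})$, i.e.\ into $2b_{i}^{2}g_{F_{i}}(\nabla^{F_{i}}_{U}W,V)=2g(\nabla^{F_{i}}_{U}W,V)$; the remaining pieces sum to $2b_{i}\!\left[(Ub_{i})g_{F_{i}}(W,V)+(Wb_{i})g_{F_{i}}(U,V)-(Vb_{i})g_{F_{i}}(U,W)\right]$, which is precisely $2g$ of $V$ against $U(\ln b_{i})W+W(\ln b_{i})U-\tfrac{g_{F_{i}}(U,W)}{b_{i}}\,grad_{F_{i}}b_{i}$, using $g_{F_{i}}(grad_{F_{i}}b_{i},V)=Vb_{i}$. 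Combining the three tests and invoking non-degeneracy of $g$ yields the stated formula.

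I expect no genuine obstacle beyond organized bookkeeping. The one point deserving care is the justification of the second structural fact above — that, for lifted $U,W$, the function $g_{F_{i}}(U,W)$ is annihilated by $\Gamma(TB)$ and by $\Gamma(TF_{k})$ for $k\ne i$, and likewise that $b_{i}$ is annihilated by the foreign fiber directions — since it is exactly this decoupling that kills the cross terms and reduces case $(4)$ (including its $k=i$ sub-computation) to the single twisted product $B\times_{b_{i}}F_{i}$, in the same spirit as the O'Neill-type vanishing used for singly warped products above. Once that is recorded, the four cases are routine.
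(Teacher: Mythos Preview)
Your argument is correct: the Koszul-formula verification, tested against horizontal and vertical directions with the structural vanishing you record, goes through exactly as you outline, and the bookkeeping in case~$(4)$ is accurate. Note, however, that the paper does not actually prove this lemma; it is quoted from~[12] as a preliminary, so there is no in-paper proof to compare against --- your direct computation is precisely the standard route by which such connection formulas are established.
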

\begin{Lemma}
Let $M=B \times_{b_{1}}F_{1} \times_{b_{2}}F_{2}\cdots \times_{b_{m}}F_{m}$ be a multiply twisted product and let $X,Y,Z\in\Gamma(TB)$ and $V\in\Gamma(TF_{i}),W\in\Gamma(TF_{j}),U\in\Gamma(TF_{k})$, then:   \\
$(1)R(X,Y)Z=R^B(X,Y)Z; $ \\
$(2)R(V,X)Y = -\frac{H^{b_{i}}_{B}(X,Y)}{b_{i}}V; $  \\
$(3)R(X,V)W =R(V,W)X=R(V,X)W=0 \quad \mbox{if i $\neq$ j};  $    \\
$(4)R(X,Y)V=0;$   \\
$(5)R(V,W)X=VX(lnb_{i})W-WX(lnb_{i})V$ \quad if $i=j;$  \\
$(6)R(V,W)U=0   \quad\mbox{if $i=j\neq k$  or $i\neq j\neq k$};$\\
$(7)R(U,V)W=-g(V,W)\frac{g_{B}(grad_{B}b_{i},grad_{B}b_{k})}{b_{i}b_{k}}U \mbox{\quad if $i\neq j\neq k$};$   \\
$(8)R(X,V)W=-\frac{g(V,W)}{b_{i}}\nabla^B_{X}(grad_{B}b_{i})+[WX(lnb_{i})]V-g_{F_{i}}(W,V)grad_{F_{i}}X(lnb_{i}) $
\mbox{if $i=j$};\\
$(9)R(V,W)U=g(V,U)grad_{B}(W(lnb_{i}))-g(W,U)grad_{B}(V(lnb_{i}))+R^{F_{i}}(V,W)U \\
\mbox{   }\;\; -\frac{|grad_{B}b_{i}|^2_{B}}{b_{i}^2}[g(W,U)V-g(V,U)W] \quad \mbox{ if $i=j= k$}. $
\end{Lemma}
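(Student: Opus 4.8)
The plan is to prove the curvature identities for a multiply twisted product by brute-force bilinearity, using the two lemmas from the excerpt as the computational engine: Lemma 2.3 gives all the Levi-Civita covariant derivatives $\nabla_X Y$, $\nabla_X U$, $\nabla_U W$ in the product, and we simply expand $R(A,B)C = \nabla_A\nabla_B C - \nabla_B\nabla_A C - \nabla_{[A,B]}C$ for every admissible triple of vector fields drawn from the horizontal distribution $\Gamma(TB)$ and the vertical distributions $\Gamma(TF_i)$. For case $(1)$, with all three arguments horizontal, part $(1)$ of Lemma 2.3 says $\nabla$ restricted to $\Gamma(TB)$ is exactly $\nabla^B$, so $R(X,Y)Z = R^B(X,Y)Z$ is immediate. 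Cases $(4)$ and the vanishing assertions in $(3)$ and $(6)$ follow the same way once one notes which mixed covariant derivatives land back in which distribution.

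For the genuinely mixed cases, the recipe is always the same: substitute parts $(2)$ and $(4)$ of Lemma 2.3, being careful to track whether the function being differentiated is $\ln b_i$, $b_i$, or $1/b_i$, and to keep $\mathrm{grad}_B$ and $\mathrm{grad}_{F_i}$ pieces separate. For instance, to get $(2)$, $R(V,X)Y = -\frac{H^{b_i}_B(X,Y)}{b_i}V$, I would write $\nabla_V\nabla_X Y - \nabla_X\nabla_V Y - \nabla_{[V,X]}Y$; the first term is $\frac{(\nabla^B_X Y)b_i}{b_i}V$ by parts $(1)$ and $(2)$, the second is $\nabla_X\!\big(\frac{Xb_i}{b_i}\cdot(\text{wrong index?})\big)$—more precisely $\nabla_X\big(\frac{Yb_i}{b_i}V\big) = X\!\big(\frac{Yb_i}{b_i}\big)V + \frac{Yb_i}{b_i}\frac{Xb_i}{b_i}V$—and after the cross terms cancel one is left with $-\frac{XYb_i - (\nabla^B_X Y)b_i}{b_i}V$, which is exactly $-H^{b_i}_B(X,Y)/b_i$ times $V$. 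The remaining cases $(5),(7),(8),(9)$ are handled by the identical mechanism; in $(9)$, where all three arguments are vertical in the same fiber, one recovers the intrinsic curvature $R^{F_i}(V,W)U$ from the $\nabla^{F_i}$ pieces of part $(4)$, picks up the $-\frac{|\mathrm{grad}_B b_i|^2_B}{b_i^2}$ term from the $\mathrm{grad}_B b_i$ cross-contractions, and the two gradient-of-derivative terms from differentiating the $W(\ln b_i)$ and $V(\ln b_i)$ coefficients horizontally.

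The main obstacle I anticipate is bookkeeping in case $(9)$ (and to a lesser extent $(8)$): part $(4)$ of Lemma 2.3 has five summands, so $\nabla_V(\nabla_W U)$ expands into roughly fifteen terms, many of which mix a vertical derivative acting on a function like $U(\ln b_i)$ against a horizontal gradient term, and the cancellations that reduce this to the clean four-term answer require care about the symmetry $H^{b_i}(X,Y)=H^{b_i}(Y,X)$ and about the fact that $\mathrm{grad}_{F_i}$ of a function pulled back from $B$ vanishes. One must also use that $[V,W]$ for $V,W\in\Gamma(TF_i)$ lies in $\Gamma(TF_i)$ and that $\nabla^{F_i}$ is the Levi-Civita connection of $(F_i,g_{F_i})$, so that its curvature term assembles correctly. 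Once the organizational scheme (expand, sort terms by which distribution they inhabit, cancel symmetric Hessian pieces, recognize the intrinsic curvature) is fixed, each of the nine identities is a finite and essentially mechanical verification.
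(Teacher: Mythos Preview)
Your approach is correct and is precisely the standard way such curvature identities are established: expand $R(A,B)C=\nabla_A\nabla_B C-\nabla_B\nabla_A C-\nabla_{[A,B]}C$ using the covariant-derivative formulas of Lemma~2.3, sort the resulting terms by distribution, and cancel. Your sample computation for case~(2) is right (the $[V,X]$ term vanishes for lifted fields, and the cross terms $\frac{Xb_i}{b_i}\frac{Yb_i}{b_i}V$ cancel, leaving exactly $-H^{b_i}_B(X,Y)/b_i\cdot V$), and your assessment of where the bookkeeping is heaviest (cases~(8) and~(9)) is accurate.

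As for comparison with the paper: the paper does not actually prove this lemma. It is stated as a quoted result from reference~[12] (``We need the following two lemmas from [12], for later use''), so there is no in-paper argument to compare against. Your direct-computation plan is exactly what the original proof in [12] would consist of, so there is no meaningful divergence of method---you are simply supplying the details that this paper omits by citation.
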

\begin{remark}
$\mbox{It is easy to see that Lemmas }2.1\mbox{ and }2.2\mbox{ are Corollaries of Lemma }2.3\mbox{ and }
\\2.4,\mbox{ respectively.}$
\end{remark}
Finally we define the curvature, Ricci curvature and scalar curvature as follows:  
$$R(X,Y)=\nabla_{X}\nabla_{Y}-\nabla_{Y}\nabla_{X}-\nabla_{[X,Y]},$$
$$Ric(X,Y)=\sum_{k}\varepsilon_{k}\langle R(X,E_{k})Y,E_{k}\rangle,$$
$$S=\sum_{k}\varepsilon_{k}Ric(E_{k},E_{k})$$
where $E_{k}$ is an orthonormal base of $M$ with $\langle E_{k},E_{k}\rangle=\varepsilon_{k},\;\varepsilon_{k}=\pm1.$
\section{Warped Product with a Quarter-symmetric Connection}\label{sec3}
In this section, we firstly compute curvature, Ricci curvature and scalar curvature of singly warped product with a quarter-symmetric connection, then study the generalized Robertson-Walker space-times with a quarter-symmetric connection.
\subsection{Connection and Curvature}
By Lemma 2.1 and the equation (3), we have the following two Propositions:
\newtheorem{Proposition}{Proposition}[section]
\begin{Proposition}
Let $M=B \times_{f}F $ be a warped product. If $X,Y\in\Gamma(TB)$, $U , W\in\Gamma(TF)$ and $P\in\Gamma(TB),$ then:\\
$(1)\overline\nabla_{X}Y=\overline\nabla^B_{X}Y; $  \\
$(2)\overline\nabla_{X}U=\frac{Xf}{f}U;$   \\
$(3)\overline\nabla_{U}X=\Big[\frac{Xf}{f}+\lambda_{1}\pi(X)\Big]U; $   \\
$(4)\overline\nabla_{U}W=-fg_{F}(U,W)grad_{B}f+\nabla^F_{U}W-\lambda_{2}g(U,W)P. $
\end{Proposition}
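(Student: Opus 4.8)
The plan is simply to substitute the Levi-Civita formulas of Lemma 2.1 into the defining identity (3), $\overline\nabla_{X}Y=\nabla_{X}Y+\lambda_{1}\pi(Y)X-\lambda_{2}g(X,Y)P$, treating the four cases separately. The one structural fact used throughout is that the warped metric $g=g_{B}\oplus f^{2}g_{F}$ is block-diagonal, so $g(X,U)=0$ for any $X\in\Gamma(TB)$ and $U\in\Gamma(TF)$; since $P\in\Gamma(TB)$ this gives in particular $\pi(U)=g(U,P)=0$, while on $\Gamma(TB)$ the metric and the $1$-form restrict to $g_{B}$ and to the intrinsic $\pi$ of $B$.

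For $X,Y\in\Gamma(TB)$, Lemma 2.1(1) gives $\nabla_{X}Y=\nabla^{B}_{X}Y$, while the two correction terms $\lambda_{1}\pi(Y)X-\lambda_{2}g(X,Y)P$ involve only quantities intrinsic to $B$; comparing with (3) written for the Levi-Civita connection $\nabla^{B}$ of $(B,g_{B})$ with the same $P,\lambda_{1},\lambda_{2}$, the right-hand side is exactly $\overline\nabla^{B}_{X}Y$, which is (1). For $\overline\nabla_{X}U$ with $X\in\Gamma(TB)$, $U\in\Gamma(TF)$, the corrections $\lambda_{1}\pi(U)X$ and $\lambda_{2}g(X,U)P$ both vanish and Lemma 2.1(2) gives $\nabla_{X}U=\frac{Xf}{f}U$, so (2) follows. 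For $\overline\nabla_{U}X$ only $\lambda_{2}g(U,X)P$ vanishes, so adding the surviving term $\lambda_{1}\pi(X)U$ to $\nabla_{U}X=\frac{Xf}{f}U$ (Lemma 2.1(2)) yields $\overline\nabla_{U}X=\big[\frac{Xf}{f}+\lambda_{1}\pi(X)\big]U$, i.e.\ (3).

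For $U,W\in\Gamma(TF)$, the term $\lambda_{1}\pi(W)U$ vanishes because $\pi(W)=g(W,P)=0$; Lemma 2.1(3) gives $\nabla_{U}W=-\frac{g(U,W)}{f}grad_{B}f+\nabla^{F}_{U}W$, and rewriting $g(U,W)=f^{2}g_{F}(U,W)$ turns the first summand into $-fg_{F}(U,W)grad_{B}f$; carrying along the non-metric term $-\lambda_{2}g(U,W)P$ then gives (4). No step is a genuine obstacle: the only things demanding care are keeping track of which metric ($g$ on $M$, $g_{B}$, or $g_{F}$) sits in each term and consistently invoking the orthogonality of base and fiber directions. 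The mildly delicate point is (1), where one must observe that the construction (3) is compatible with restriction to the base, so that the $B$-correction terms reassemble into $\overline\nabla^{B}$ rather than merely into $\nabla^{B}$ plus leftover terms.
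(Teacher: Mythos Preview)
Your proof is correct and follows exactly the approach the paper indicates: the paper simply states that Proposition 3.1 follows ``By Lemma 2.1 and the equation (3)'', and you have carefully written out the four substitutions, using the block-diagonality of $g$ to kill the appropriate correction terms. There is nothing to add.
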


\begin{Proposition}
Let $M=B \times_{f}F $ be a warped product. If $X,Y\in\Gamma(TB)$, $U , W\in\Gamma(TF)$ and $P\in\Gamma(TF),$ then:\\
$(1)\overline\nabla_{X}Y=\nabla^B_{X}Y-\lambda_{2}g(X,Y)P;  $  \\
$(2)\overline\nabla_{X}U=\frac{Xf}{f}U+\lambda_{1}\pi(U)X;$   \\
$(3)\overline\nabla_{U}X=\frac{Xf}{f}U; $   \\
$(4)\overline\nabla_{U}W=-\frac{g(U,W)}{f}grad_{B}f+\overline\nabla^F_{U}W. $
\end{Proposition}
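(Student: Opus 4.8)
The proof will parallel that of Proposition 3.1, with the roles of $B$ and $F$ interchanged: I would substitute the Levi-Civita connection formulas of Lemma 2.1 directly into the defining relation (3) for $\overline\nabla$, and simplify using repeatedly that the metric $g=g_{B}\oplus f^2g_{F}$ is a direct sum, so that $g(X,U)=0$ and $\pi(X)=g(X,P)=0$ whenever $X\in\Gamma(TB)$ and $U,P\in\Gamma(TF)$.

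For part (1), since $X,Y\in\Gamma(TB)$ and $P\in\Gamma(TF)$ we have $\pi(Y)=g(Y,P)=0$, while $\nabla_{X}Y=\nabla^{B}_{X}Y$ by Lemma 2.1(1); hence (3) collapses to $\overline\nabla_{X}Y=\nabla^{B}_{X}Y-\lambda_{2}g(X,Y)P$. For part (3), similarly $\pi(X)=0$ and $g(U,X)=0$, and $\nabla_{U}X=\tfrac{Xf}{f}U$ by Lemma 2.1(2), so both correction terms in (3) drop out and $\overline\nabla_{U}X=\tfrac{Xf}{f}U$. For part (2), the relation $g(X,U)=0$ kills the $\lambda_{2}$-term, but now $\pi(U)=g(U,P)$ need not vanish because $U$ and $P$ are both fiber vectors; combining this with Lemma 2.1(2) gives $\overline\nabla_{X}U=\tfrac{Xf}{f}U+\lambda_{1}\pi(U)X$. (The asymmetry between parts (2) and (3) is exactly the torsion of $\overline\nabla$.)

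The only step needing a little care is part (4). Starting from Lemma 2.1(3) and adding the two correction terms from (3),
$$\overline\nabla_{U}W=-\frac{g(U,W)}{f}grad_{B}f+\Big(\nabla^{F}_{U}W+\lambda_{1}\pi(W)U-\lambda_{2}g(U,W)P\Big),$$
and it remains to recognize the bracketed expression as formula (3) applied to the fiber $(F,g_{F})$ with the restricted $1$-form and the fiber vector field $P$, i.e. as $\overline\nabla^{F}_{U}W$. I expect the main thing to get right here to be the bookkeeping of metric conventions in this last identification — whether $g$ and $\pi$ inside the bracket are read as the ambient objects restricted to $TF$ or as $g_{F}$ and $\pi_{F}$, together with the attendant factor $f^{2}$ — rather than any genuine geometric difficulty; once those conventions are fixed consistently (as in Lemma 2.1), the formula follows, and the remaining parts are immediate from Lemma 2.1 and the orthogonality of $TB$ and $TF$.
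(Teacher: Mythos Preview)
Your proposal is correct and follows exactly the paper's approach: the paper simply says ``By Lemma 2.1 and the equation (3), we have the following two Propositions'' with no further argument, so your term-by-term substitution of Lemma 2.1 into (3) together with the orthogonality $g(X,U)=0$ is precisely what is intended. Your caution about part (4) is well placed --- the identification of $\nabla^{F}_{U}W+\lambda_{1}\pi(W)U-\lambda_{2}g(U,W)P$ with $\overline\nabla^{F}_{U}W$ does hide the $f^{2}$ factor between $g|_{TF}$ and $g_{F}$, and the paper never makes its convention for $\overline\nabla^{F}$ explicit; you have spotted a genuine notational ambiguity that the paper glosses over.
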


By Lemmas 2.1¡¢2.2 and the equation (4), we have the following two Propositions:
\begin{Proposition}
Let $M=B \times_{f}F $ be a warped product. If $X,Y,Z\in\Gamma(TB)$, $U,V,W\in\Gamma(TF)$ and $P\in\Gamma(TB),$ then:
\\
$(1)\overline R(X,Y)Z=\overline R^B(X,Y)Z; $  \\
$(2)\overline R(V,X)Y=-\Big[\frac{H^f_{B}(X,Y)}{f}+\lambda_{2}\frac{Pf}{f}g(X,Y)+\lambda_{1}\lambda_{2}\pi(P)g(X,Y)
+\lambda_{1}g(Y,\nabla_{X}P) \\ \mbox{} \quad -\lambda_{1}^2\pi(X)\pi(Y)\Big]V;$   \\
$(3)\overline R(X,Y)V=0; $   \\
$(4)\overline R(V,W)X=0; $   \\
$(5)\overline R(X,V)W=-g(V,W)\Big[\frac{\nabla_{X}^B(grad_{B}f)}{f}+\lambda_{1}\frac{Pf}{f}X+\lambda_{2}\nabla_{X}P
+\lambda_{1}\lambda_{2}\pi(P)X-\lambda_{2}^2\pi(X)P\Big];$  \\
$(6)\overline R(U,V)W=R^F(U,V)W-\Big[\frac{|grad_{B}f|^2_{B}}{f^2}+(\lambda_{1}+\lambda_{2})\frac{Pf}{f}+
\lambda_{1}\lambda_{2}\pi(P)\Big]\Big[g(V,W)U\\ \mbox{} \quad -g(U,W)V\Big]. $
\end{Proposition}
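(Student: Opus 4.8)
The plan is to substitute the warped product data of Lemmas 2.1 and 2.2 into the curvature formula (4), treating the case $P \in \Gamma(TB)$ throughout. The key observation is that formula (4) expresses $\overline{R}$ in terms of $R$, the metric $g$, the $1$-form $\pi$, and the covariant derivative $\nabla_X P$. Since $P$ is a vector field on $B$, for $X \in \Gamma(TB)$ we have $\nabla_X P = \nabla^B_X P$ by Lemma 2.1(1), while for $U \in \Gamma(TF)$ we have $\nabla_U P = \frac{Pf}{f}U$ by Lemma 2.1(2); also $g(X, P) = \pi(X)$ for $X$ tangent to $B$ and $\pi(V) = 0$ for $V \in \Gamma(TF)$. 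These simplifications, together with the five cases of Lemma 2.2 for $R$, are the only inputs needed.

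I would proceed case by case, matching the six assertions. For (1), both $X,Y,Z \in \Gamma(TB)$, so every auxiliary term in (4) involves only vectors tangent to $B$; by Lemma 2.1(1) and Lemma 2.2(1) the whole expression collapses to the quarter-symmetric curvature of the base, giving $\overline{R}^B(X,Y)Z$. For (2), set the arguments to $(V,X)Y$ with $V \in \Gamma(TF)$, $X,Y \in \Gamma(TB)$: use Lemma 2.2(2) for the $R$-term, note $g(Y, V) = 0$, $g(V, \cdot) $-terms survive only through the fiber slot, and collect the coefficient of $V$; the terms $\lambda_1 g(Y, \nabla_X P)$, $\lambda_2 \frac{Pf}{f} g(X,Y)$ (coming from $\lambda_2 g(X,Y)\nabla_Y P$ hitting the fiber only when the free slot is in $F$ — here one must be careful about which of the two antisymmetrized terms contributes), $\lambda_1\lambda_2 \pi(P) g(X,Y)$, and $-\lambda_1^2 \pi(X)\pi(Y)$ assemble into the stated bracket. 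Cases (3) and (4) should follow because the relevant $R$-terms vanish by Lemma 2.2(3) and every remaining term in (4) pairs a fiber vector against $g(\cdot,\cdot)$ or $\pi(\cdot)$ in a way that forces zero. For (5), arguments $(X,V)W$ with $X \in \Gamma(TB)$, $V,W \in \Gamma(TF)$: Lemma 2.2(4) contributes $-\frac{g(V,W)}{f}\nabla^B_X \mathrm{grad}_B f$, and the extra terms $\lambda_1 g(W, \nabla_X P)$-type, $\lambda_2 g(X,W)\nabla_V P = 0$, $\lambda_2 g(V,W)\nabla_X P$, etc., combine; here one uses $g(W, \nabla_X P) = 0$ since $\nabla_X P \in \Gamma(TB)$, and $\nabla_V P = \frac{Pf}{f} V$, so that the surviving pieces are all proportional to $g(V,W)$ and reproduce the bracket with $\frac{Pf}{f}X$, $\nabla_X P$, $\pi(P)X$, $\pi(X)P$. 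For (6), arguments $(U,V)W$ all in $\Gamma(TF)$: Lemma 2.2(5) gives $R^F(U,V)W - \frac{|\mathrm{grad}_B f|^2}{f^2}[g(V,W)U - g(U,W)V]$, and each of the remaining lines of (4) contributes a multiple of $[g(V,W)U - g(U,W)V]$ with coefficients $\frac{Pf}{f}$ (twice, from the $\lambda_1$ and $\lambda_2$ lines, giving $(\lambda_1+\lambda_2)\frac{Pf}{f}$) and $\lambda_1\lambda_2\pi(P)$, while the $\lambda_2^2$ and $\lambda_1^2$ lines vanish since $\pi$ annihilates fiber vectors.

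The main obstacle will be bookkeeping in case (2) and case (5): one has to carefully track which of the two antisymmetrized summands in each line of (4) actually survives after plugging in arguments of mixed type, and to recognize that a term like $\lambda_1 g(Z, \nabla_X P)Y$ becomes $\lambda_1 g(Y, \nabla_X P)V$ only under the correct cyclic placement of the fiber vector. There is no conceptual difficulty — the Levi-Civita curvature identities of Lemma 2.2 do all the geometric work, and $R^F$ equals $\overline{R}^F$ only trivially here since $P$ has no fiber component — so the proof is a direct, if somewhat lengthy, verification. I would present it by stating "Substituting Lemmas 2.1 and 2.2 into (4) and using $\pi(V)=0$ for $V\in\Gamma(TF)$, $\nabla_X P=\nabla^B_X P$, and $\nabla_U P=\frac{Pf}{f}U$, a direct computation yields (1)--(6)," and then display the computation for the two nontrivial cases (2) and (5) in full, leaving (1), (3), (4), (6) to the reader as immediate.
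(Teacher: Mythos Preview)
Your approach is correct and is exactly what the paper does: it states that the Proposition follows ``By Lemmas 2.1, 2.2 and the equation (4),'' i.e.\ by substituting the warped-product Levi-Civita data into the general curvature relation for the quarter-symmetric connection. In fact your write-up is considerably more detailed than the paper's own justification, which gives no computation at all; the case-by-case bookkeeping you outline (using $\nabla_X P=\nabla^B_X P$, $\nabla_U P=\frac{Pf}{f}U$, and $\pi(V)=0$ for fiber vectors) is precisely the content of that one-line citation.
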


\begin{Proposition}
Let $M=B \times_{f}F $ be a warped product. If $X,Y,Z\in\Gamma(TB)$, $U,V,W\in\Gamma(TF)$ and $P\in\Gamma(TF),$ then: \\
$(1)\overline R(X,Y)Z=R^B(X,Y)Z+\lambda_{2}\Big[g(X,Z)\frac{Yf}{f}-g(Y,Z)\frac{Xf}{f}\Big]P+\lambda_{1}\lambda_{2}
\pi(P)[g(X,Z)Y\\ \mbox{} \quad-g(Y,Z)X]; $  \\
$(2)\overline R(V,X)Y=-\frac{H^f_{B}(X,Y)}{f}V-\lambda_{1}\pi(V)\frac{Yf}{f}X-\lambda_{2}g(X,Y)\nabla_{V}P
-g(X,Y)[\lambda_{1}\lambda_{2}\pi(P)V \\ \mbox{} \quad -\lambda_{2}^2\pi(V)P] ;$   \\
$(3)\overline R(X,Y)V=\lambda_{1}\pi(V)\Big[\frac{Xf}{f}Y-\frac{Yf}{f}X\Big]; $   \\
$(4)\overline R(V,W)X=\lambda_{1}\frac{Xf}{f}[\pi(W)V-\pi(V)W]; $   \\
$(5)\overline R(X,V)W=-g(V,W)\frac{\nabla_{X}^B(grad_{B}f)}{f}+\lambda_{1}\frac{Xf}{f}\pi(W)V-\lambda_{1}g(W,\nabla_{V}P)X-\lambda_{2}
g(V,W)\frac{Xf}{f}P\\ \mbox{}\quad-\lambda_{1}\lambda_{2}g(V,W)\pi(P)X+\lambda_{1}^2\pi(W)\pi(V)X;$  \\
$(6)\overline R(U,V)W=R^F(U,V)W-\frac{|grad_{B}f|^2_{B}}{f^2}[g(V,W)U-g(U,W)V]+\lambda_{1}[g(W,\nabla_{U}P)V
\\ \mbox{}\quad-g(W,\nabla_{V}P)U]+\lambda_{2}[g(U,W)\nabla_{V}P-g(V,W)\nabla_{U}P)]
+\lambda_{1}\lambda_{2}\pi(P)[g(U,W)V\\ \mbox{}\quad-g(V,W)U]+\lambda_{2}^2[g(V,W)\pi(U)-g(U,W)\pi(V)]P
+\lambda_{1}^2\pi(W)[\pi(V)U-\pi(U)V]. $
\end{Proposition}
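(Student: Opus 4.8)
The plan is to specialize the master identity (4) for $\overline R$ to each of the six configurations of arguments, reducing all Levi-Civita data via Lemma 2.1 and Lemma 2.2. The only genuinely new input beyond those two lemmas is the hypothesis $P\in\Gamma(TF)$, which forces $\pi(X)=g(X,P)=0$ for every $X\in\Gamma(TB)$, since the warped metric makes $TB\perp TF$. This annihilates a large fraction of the terms in (4) as soon as one of the three slots carries a base vector field, and is what distinguishes this Proposition from Proposition 3.3.

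First I would record the two auxiliary computations that feed into every case. For $X\in\Gamma(TB)$, Lemma 2.1(2) gives $\nabla_{X}P=\frac{Xf}{f}P$, hence $g(Z,\nabla_{X}P)=\frac{Xf}{f}\pi(Z)$ for any $Z$; this vanishes when $Z\in\Gamma(TB)$ and equals $\frac{Xf}{f}\pi(W)$ when $W\in\Gamma(TF)$. For $V\in\Gamma(TF)$, Lemma 2.1(3) gives $\nabla_{V}P=-\frac{\pi(V)}{f}\,grad_{B}f+\nabla^{F}_{V}P$, so its $TB$-component is $-\frac{\pi(V)}{f}grad_{B}f$ and therefore $g(X,\nabla_{V}P)=-\frac{\pi(V)}{f}Xf$ for $X\in\Gamma(TB)$, while $g(W,\nabla_{V}P)$ and $\nabla_{V}P$ itself for $W,V\in\Gamma(TF)$ are left unexpanded, matching the form of the statement.

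Then I would run through the cases by direct substitution. For (1) take $X,Y,Z\in\Gamma(TB)$ in (4): the $g(Z,\nabla_{\bullet}P)$ terms and every $\pi$-on-base term drop, $R$ becomes $R^{B}$ by Lemma 2.2(1), and only the $\lambda_{2}$ and $\lambda_{1}\lambda_{2}$ lines survive. For (2), (3), (4) exactly one slot is a fiber vector; here Lemma 2.2(2),(3) supplies $R(V,X)Y=-\frac{H^{f}_{B}(X,Y)}{f}V$ and $R(X,Y)V=R(V,W)X=0$, and the surviving corrections are precisely those containing $\pi(V)$ (or $\pi(W)$) or $g(X,Y)$, which assemble into the claimed expressions once the two auxiliary formulas above are inserted. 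For (5) take $X\in\Gamma(TB)$ and $V,W\in\Gamma(TF)$: Lemma 2.2(4) gives $R(X,V)W=-\frac{g(V,W)}{f}\nabla^{B}_{X}grad_{B}f$, the terms with $g(X,W)$ and $\pi(X)$ vanish, and $g(W,\nabla_{X}P)=\frac{Xf}{f}\pi(W)$ together with $\nabla_{X}P=\frac{Xf}{f}P$ produce the rest. For (6) all three arguments lie in $\Gamma(TF)$, nothing vanishes, Lemma 2.2(5) supplies the $R^{F}$ and $\frac{|grad_{B}f|^{2}_{B}}{f^{2}}$ terms, and (4) is simply transcribed.

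I do not expect a real obstacle: the argument is a substitution-and-cancellation bookkeeping exercise. The points demanding care are (i) keeping the signs correct when the arguments of $\overline R$ are permuted relative to the template $\overline R(X,Y)Z$ in (4); (ii) remembering that $g(X,\nabla_{V}P)$ for a base field $X$ is nonzero, because $\nabla_{V}P$ has a base component, whereas $g(Z,\nabla_{X}P)$ for a base field $Z$ does vanish; and (iii) consistently using $TB\perp TF$ to discard mixed inner products. Regrouping the collected terms into the bracketed form displayed in the statement is then immediate.
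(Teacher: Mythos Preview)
Your proposal is correct and follows exactly the approach the paper indicates: the paper simply states ``By Lemmas 2.1, 2.2 and the equation (4), we have the following two Propositions'' without displaying any details, and your plan carries out precisely that substitution-and-cancellation procedure, including the key observation $\pi(X)=0$ for $X\in\Gamma(TB)$ and the auxiliary formulas $\nabla_{X}P=\frac{Xf}{f}P$ and $\nabla_{V}P=-\frac{\pi(V)}{f}\,grad_{B}f+\nabla^{F}_{V}P$.
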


By Propositions 3.3 and 3.4 and the definition of the Ricci curvature tensor, we have the following two Propositions:
\begin{Proposition}
Let $M=B \times_{f}F $ be a warped product, ${\rm dim}B=n_{1}, {\rm dim}F=n_{2} \mbox{ and } \\ {\rm dim}M=\overline n=n_{1}+n_{2}$. If $X,Y\in\Gamma(TB)$, $V,W\in\Gamma(TF)$ and $P\in\Gamma(TB),$ then: \\
$(1)\overline {Ric}(X,Y)=\overline {Ric}^B(X,Y)+n_{2}\Big[\frac{H^f_{B}(X,Y)}{f}+\lambda_{2}\frac{Pf}{f}g(X,Y)
+\lambda_{1}\lambda_{2}\pi(P)g(X,Y) \\ \mbox{}\quad+\lambda_{1}g(Y,\nabla_{X}P)-\lambda_{1}^2\pi(X)\pi(Y)\Big]; $\\
$(2)\overline {Ric}(X,V)=\overline {Ric}(V,X)=0; $   \\
$(3)\overline {Ric}(V,W)=\overline {Ric}^F(V,W)+\Big\{\frac{\Delta_{B}f}{f}+(n_{2}-1)\frac{|grad_{B}f|^2_{B}}{f^2}
+[(\overline n-1)\lambda_{1}\lambda_{2}-\lambda_{2}^2]\pi(P)\\ \mbox{}\quad +\lambda_{2}div_{B}P+[(\overline n-1)\lambda_{1}+(n_{2}-1)\lambda_{2}]\frac{Pf}{f}\Big\}g(V,W). $\\
where  $div_{B}P=\sum\limits_{k=1}^{n_{1}}\varepsilon_{k}\langle\nabla_{E_{k}}P,E_{k}\rangle,$ and $E_{k},
1\leq k\leq n_{1} $ is an orthonormal base of $B $ with $\varepsilon_{k}=g(E_{k},E_{k}).$
\end{Proposition}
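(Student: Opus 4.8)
\emph{Proof proposal.} The statement follows by contracting the curvature identities of Proposition 3.3, so the plan is purely computational. First I would fix an orthonormal frame of $M$ compatible with the product: take a local orthonormal frame $\{e_{1},\dots,e_{n_{1}}\}$ of $(B,g_{B})$, lifted to $M$, with $\langle e_{a},e_{a}\rangle=\varepsilon_{a}$, and a local orthonormal frame $\{\eta_{1},\dots,\eta_{n_{2}}\}$ of $(F,g_{F})$ with $\langle\eta_{\alpha},\eta_{\alpha}\rangle_{g_{F}}=\varepsilon_{\alpha}$; since $g=g_{B}\oplus f^{2}g_{F}$, the $e_{a}$ together with $\tilde\eta_{\alpha}:=f^{-1}\eta_{\alpha}$ form an orthonormal frame of $M$ with the same signs. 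I would use throughout that $\overline R(\cdot,\cdot)\cdot$ is antisymmetric in its first two slots (immediate from $\overline R(X,Y)=\overline\nabla_{X}\overline\nabla_{Y}-\overline\nabla_{Y}\overline\nabla_{X}-\overline\nabla_{[X,Y]}$), so that every summand of $\sum_{k}\varepsilon_{k}\langle\overline R(A,E_{k})C,E_{k}\rangle$ can be put into one of the forms listed in Proposition 3.3, and that $TB\perp TF$.

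For (1) I would split $\overline{Ric}(X,Y)$ into the base sum $\sum_{a}\varepsilon_{a}\langle\overline R(X,e_{a})Y,e_{a}\rangle$ and the fiber sum $\sum_{\alpha}\varepsilon_{\alpha}\langle\overline R(X,\tilde\eta_{\alpha})Y,\tilde\eta_{\alpha}\rangle$. By Proposition 3.3(1) the base sum equals $\overline{Ric}^{B}(X,Y)$ (the restriction of $\overline\nabla$ to $B$ being $\overline\nabla^{B}$, by Proposition 3.1(1)); by antisymmetry and Proposition 3.3(2), each fiber summand equals the scalar bracket of 3.3(2) (since $\varepsilon_{\alpha}^{2}=1$), which does not depend on $\alpha$, so the fiber sum produces the factor $n_{2}$ and the claimed expression. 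For (2), the base part of $\overline{Ric}(X,V)$ vanishes by 3.3(3), and its fiber part vanishes because by 3.3(5) the term $\overline R(X,\tilde\eta_{\alpha})V$ is a scalar multiple of a vector tangent to $B$, hence $g$-orthogonal to $\tilde\eta_{\alpha}$; symmetrically $\overline{Ric}(V,X)=0$ using 3.3(2) (a fiber-valued curvature paired with $e_{a}$) and 3.3(4).

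For (3) I would again split $\overline{Ric}(V,W)$ into base and fiber sums. The base sum, via antisymmetry and Proposition 3.3(5), reduces to the four elementary traces $\sum_{a}\varepsilon_{a}\langle\nabla^{B}_{e_{a}}(grad_{B}f),e_{a}\rangle=\Delta_{B}f$, $\sum_{a}\varepsilon_{a}\langle\nabla_{e_{a}}P,e_{a}\rangle=div_{B}P$, $\sum_{a}\varepsilon_{a}=n_{1}$, and $\sum_{a}\varepsilon_{a}\pi(e_{a})g(P,e_{a})=g(P,P)=\pi(P)$, giving $g(V,W)\big[\tfrac{\Delta_{B}f}{f}+\lambda_{1}n_{1}\tfrac{Pf}{f}+\lambda_{2}div_{B}P+(\lambda_{1}\lambda_{2}n_{1}-\lambda_{2}^{2})\pi(P)\big]$. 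The fiber sum uses Proposition 3.3(6); the only step requiring care is the $R^{F}$ term, where because $\tilde\eta_{\alpha}=f^{-1}\eta_{\alpha}$ and $g=f^{2}g_{F}$ on fiber vectors the factors $f^{-2}$ and $f^{2}$ cancel, so $\sum_{\alpha}\varepsilon_{\alpha}\langle R^{F}(V,\tilde\eta_{\alpha})W,\tilde\eta_{\alpha}\rangle=\overline{Ric}^{F}(V,W)$ (and, since $P\in\Gamma(TB)$, the connection $\overline\nabla$ is Levi-Civita along fiber directions, so $\overline{Ric}^{F}=Ric^{F}$); the remaining contractions $\sum_{\alpha}\varepsilon_{\alpha}g(\tilde\eta_{\alpha},W)g(V,\tilde\eta_{\alpha})=g(V,W)$ and $\sum_{\alpha}\varepsilon_{\alpha}g(\tilde\eta_{\alpha},\tilde\eta_{\alpha})=n_{2}$ produce the $(n_{2}-1)$-weighted terms. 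Adding the two sums and collecting the coefficients of $g(V,W)$, using $\lambda_{1}\lambda_{2}n_{1}+(n_{2}-1)\lambda_{1}\lambda_{2}-\lambda_{2}^{2}=(\overline n-1)\lambda_{1}\lambda_{2}-\lambda_{2}^{2}$ and $\lambda_{1}n_{1}+(n_{2}-1)(\lambda_{1}+\lambda_{2})=(\overline n-1)\lambda_{1}+(n_{2}-1)\lambda_{2}$, yields the stated identity. The argument is entirely mechanical; the main (mild) obstacle is the bookkeeping in the fiber contraction of the $R^{F}$ term — checking that the warping factors cancel and that no mixed $TB$–$TF$ terms survive.
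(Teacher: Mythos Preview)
Your proposal is correct and follows exactly the approach the paper indicates: the paper simply says ``By Propositions 3.3 and 3.4 and the definition of the Ricci curvature tensor'' without writing out the contraction, and your argument is precisely that contraction carried out with an adapted orthonormal frame. Your bookkeeping (the cancellation of the $f$-factors in the $R^{F}$ term, the identification $\overline{Ric}^{F}=Ric^{F}$ since $\pi|_{TF}=0$, and the coefficient collection at the end) is accurate.
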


\begin{Proposition}
Let $M=B \times_{f}F $ be a warped product, ${\rm dim}B=n_{1}, {\rm dim}F=n_{2} \mbox{ and } \\ {\rm dim}M=\overline n=n_{1}+n_{2}$. If $X,Y\in\Gamma(TB)$, $V,W\in\Gamma(TF)$ and $P\in\Gamma(TF),$ then: \\
$(1)\overline {Ric}(X,Y)=\overline {Ric}^B(X,Y)+n_{2}\frac{H^f_{B}(X,Y)}{f}+[(\overline n-1)\lambda_{1}\lambda_{2}
-\lambda_{2}^2]\pi(P)g(X,Y)\\ \mbox{}\quad +\lambda_{2}g(X,Y)div_{F}P; $ \\
$(2)\overline {Ric}(X,V)=[(\overline n-1)\lambda_{1}-\lambda_{2}]\pi(V)\frac{Xf}{f}; $   \\
$(3)\overline {Ric}(V,X)=[\lambda_{2}-(\overline n-1)\lambda_{1}]\pi(V)\frac{Xf}{f}; $   \\
$(4)\overline {Ric}(V,W)=Ric^F(V,W)+g(V,W)\Big\{\frac{\Delta_{B}f}{f}+(n_{2}-1)\frac{|grad_{B}f|^2_{B}}{f^2}
+[(\overline n-1)\lambda_{1}\lambda_{2}-\lambda_{2}^2]\pi(P)\Big\}\\ \mbox{}\quad+[(\overline n-1)\lambda_{1}
-\lambda_{2}]g(W,\nabla_{V}P)+[\lambda_{2}^2+(1-\overline n)\lambda_{1}^2]\pi(V)\pi(W)+\lambda_{2}g(V,W)div_{F}P . $
\end{Proposition}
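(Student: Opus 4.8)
The plan is to obtain each of the four formulas by tracing the curvature operator of Proposition 3.4 against an orthonormal frame of $M$ adapted to the product structure. Fix an orthonormal frame $\{E_{1},\dots,E_{n_{1}}\}$ of $(B,g_{B})$, with $\varepsilon_{a}=g(E_{a},E_{a})$, and vector fields $\{F_{1},\dots,F_{n_{2}}\}$ that are $g$-orthonormal along the fiber, with $\widetilde\varepsilon_{\alpha}=g(F_{\alpha},F_{\alpha})$, so that together they form an orthonormal frame of $(M,g)$. Since $P\in\Gamma(TF)$ one has $g(P,E_{a})=0$ and $g(P,F_{\alpha})=\pi(F_{\alpha})$. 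For arguments $Z,W\in\Gamma(TM)$, $\overline{Ric}(Z,W)=\sum_{a}\varepsilon_{a}\langle\overline R(Z,E_{a})W,E_{a}\rangle+\sum_{\alpha}\widetilde\varepsilon_{\alpha}\langle\overline R(Z,F_{\alpha})W,F_{\alpha}\rangle$, and into each summand I would feed the appropriate item of Proposition 3.4, using the first-slot antisymmetry $\overline R(U_{1},U_{2})=-\overline R(U_{2},U_{1})$ whenever needed to align the arguments with the tabulated cases. Explicitly: for $\overline{Ric}(X,Y)$ the base sum uses item $(1)$ and the fiber sum uses $-\overline R(F_{\alpha},X)Y$ through item $(2)$; for $\overline{Ric}(X,V)$ the base sum uses $(3)$ and the fiber sum uses $(5)$; for $\overline{Ric}(V,X)$ the base sum uses $(2)$ and the fiber sum uses $(4)$; for $\overline{Ric}(V,W)$ the base sum uses $-\overline R(E_{a},V)W$ through $(5)$ and the fiber sum uses $(6)$.

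After substitution the sums collapse by a short list of standard identities: the contraction formulas $\sum_{a}\varepsilon_{a}g(Y,E_{a})E_{a}=Y$ for $Y\in\Gamma(TB)$ and $\sum_{\alpha}\widetilde\varepsilon_{\alpha}g(V,F_{\alpha})F_{\alpha}=V$ for $V\in\Gamma(TF)$ (hence $\sum_{a}\varepsilon_{a}g(Y,E_{a})\frac{E_{a}f}{f}=\frac{Yf}{f}$, $\sum_{\alpha}\widetilde\varepsilon_{\alpha}\pi(F_{\alpha})g(V,F_{\alpha})=\pi(V)$ and $\sum_{\alpha}\widetilde\varepsilon_{\alpha}\pi(F_{\alpha})^{2}=g(P,P)=\pi(P)$); the dimension counts $\sum_{a}\varepsilon_{a}g(E_{a},E_{a})=n_{1}$ and $\sum_{\alpha}\widetilde\varepsilon_{\alpha}g(F_{\alpha},F_{\alpha})=n_{2}$; the trace $\sum_{a}\varepsilon_{a}H^{f}_{B}(E_{a},E_{a})=\Delta_{B}f$; and the identifications $\sum_{a}\varepsilon_{a}\langle R^{B}(X,E_{a})Y,E_{a}\rangle=\overline{Ric}^{B}(X,Y)$ (here the Levi-Civita Ricci tensor of $B$, since $P$ has no $TB$-component) and $\sum_{\alpha}\widetilde\varepsilon_{\alpha}\langle R^{F}(V,F_{\alpha})W,F_{\alpha}\rangle=Ric^{F}(V,W)$. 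The terms $\nabla_{X}P$ and $\nabla_{V}P$ occurring in the curvature formulas I would resolve via Lemma 2.1: $\nabla_{X}P=\frac{Xf}{f}P$ for $X\in\Gamma(TB)$, while $\nabla_{V}P=-\frac{\pi(V)}{f}grad_{B}f+\nabla^{F}_{V}P$ for $V\in\Gamma(TF)$; the $grad_{B}f$-part drops against fiber vectors, so $g(W,\nabla_{V}P)$ is tangential to the fiber and $\sum_{\alpha}\widetilde\varepsilon_{\alpha}g(\nabla_{F_{\alpha}}P,F_{\alpha})=div_{F}P$. Collecting the surviving scalar coefficients against $g(X,Y)$, $\pi(V)\frac{Xf}{f}$, $g(V,W)$, $g(W,\nabla_{V}P)$ and $\pi(V)\pi(W)$, and writing $n_{1}+n_{2}=\overline n$, reproduces $(1)$--$(4)$ verbatim.

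The main obstacle is bookkeeping rather than conceptual difficulty. Because $\overline\nabla$ is a quarter-symmetric connection --- and, for $\lambda_{1}\neq\lambda_{2}$, non-metric --- the tensor $\overline R$ enjoys none of the usual pair symmetries, so $\overline{Ric}$ is in general not symmetric; the mixed blocks $\overline{Ric}(X,V)$ and $\overline{Ric}(V,X)$ must therefore be evaluated independently, and one has to track carefully which argument sits in which slot of $\overline R$ and the sign picked up each time the first-slot antisymmetry is invoked to match an item of Proposition 3.4. A second point requiring vigilance is the interaction with $P\in\Gamma(TF)$: several terms that contribute under the fiber trace vanish under the base trace and vice versa, so one must consistently use $g(P,E_{a})=0$, $g(P,F_{\alpha})=\pi(F_{\alpha})$, and the base/fiber splitting of $\nabla_{V}P$ before summing. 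Proposition 3.5 (the case $P\in\Gamma(TB)$) is proved in exactly the same manner, starting from Proposition 3.3 in place of Proposition 3.4.
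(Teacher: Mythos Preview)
Your proposal is correct and follows exactly the route the paper takes: the paper simply states that Propositions 3.5 and 3.6 follow ``by Propositions 3.3 and 3.4 and the definition of the Ricci curvature tensor,'' and what you have written is precisely the unpacking of that trace computation, with the correct bookkeeping of which item of Proposition 3.4 feeds each block of $\overline{Ric}$ and the correct use of Lemma 2.1 to resolve $\nabla_{X}P$ and $\nabla_{V}P$. Your observation that $\overline{Ric}^{B}=Ric^{B}$ here because $P$ has no $TB$-component is the right way to reconcile the $R^{B}$ appearing in item $(1)$ of Proposition 3.4 with the $\overline{Ric}^{B}$ in the statement.
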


By Proposition 3.5 and the definition of the scalar curvature, we have the following:
\begin{Proposition}
Let $M=B \times_{f}F $ be a warped product, ${\rm dim}B=n_{1}, {\rm dim}F=n_{2} \mbox{ and } \\ {\rm dim}M=\overline n=n_{1}+n_{2}$. If $P\in\Gamma(TB),$ then the scalar curvature $\overline S$ has the following expression: \par
$\overline S=\overline S^B+2n_{2}\frac{\Delta_{B}f}{f}+\frac{S^F}{f^2}+n_{2}(n_{2}-1)\frac{|grad_{B}f|^2_{B}}{f^2}
+n_{2}(\overline n-1)(\lambda_{1}+\lambda_{2})\frac{Pf}{f}\\ \mbox{}\qquad\quad +[n_{2}(\overline n+n_{1}-1)\lambda_{1}\lambda_{2}-n_{2}(\lambda_{1}^2+\lambda_{2}^2)]\pi(P)+n_{2}(\lambda_{1}+\lambda_{2})div_{B}P. $
\end{Proposition}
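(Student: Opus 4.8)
The plan is to obtain $\overline S$ by tracing the Ricci tensor $\overline{Ric}$ computed in Proposition 3.5, using an adapted orthonormal frame for the warped product $M = B\times_f F$. Concretely, I would pick an orthonormal frame $\{E_1,\dots,E_{n_1}\}$ of $B$ with $\varepsilon_k = g(E_k,E_k)$ and an orthonormal frame $\{E_{n_1+1},\dots,E_{\overline n}\}$ of $(F,f^2 g_F)$ with the corresponding signs, so that $\{E_1,\dots,E_{\overline n}\}$ is an orthonormal frame of $M$. Then by definition $\overline S = \sum_{k=1}^{\overline n}\varepsilon_k\,\overline{Ric}(E_k,E_k) = \sum_{k=1}^{n_1}\varepsilon_k\,\overline{Ric}(E_k,E_k) + \sum_{k=n_1+1}^{\overline n}\varepsilon_k\,\overline{Ric}(E_k,E_k)$; the cross terms vanish because $\overline{Ric}(X,V)=0$ by Proposition 3.5(2).

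Next I would handle the two sums separately. For the base sum I substitute part (1) of Proposition 3.5 with $X=Y=E_k$ and sum over $k\le n_1$: the term $\overline{Ric}^B(E_k,E_k)$ contributes $\overline S^B$; the Hessian term contributes $n_2\,\frac{\Delta_B f}{f}$ since $\sum_k \varepsilon_k H^f_B(E_k,E_k) = \Delta_B f$ (I would fix the sign convention for $\Delta_B$ to match the one making this identity hold); the term $\lambda_1 g(E_k,\nabla_{E_k}P)$ sums to $\lambda_1\,\mathrm{div}_B P$; the terms proportional to $g(E_k,E_k)$ pick up a factor $\sum_k \varepsilon_k\cdot\varepsilon_k = n_1$ (recall $\varepsilon_k^2=1$), giving $n_1$ times the bracket $\big[\lambda_2\frac{Pf}{f} + \lambda_1\lambda_2\pi(P)\big]$ times $n_2$; and the term $-\lambda_1^2\pi(E_k)\pi(E_k)$ sums to $-\lambda_1^2|P|^2$. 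Here one must be careful: $\sum_k\varepsilon_k\pi(E_k)^2 = g(P,P) = \pi(P)$ only if $P\in\Gamma(TB)$, which is exactly our hypothesis, so $\sum_k \varepsilon_k\pi(E_k)^2 = \pi(P)$. For the fiber sum I substitute part (3) of Proposition 3.5 with $V=W=E_k$, $k>n_1$: the $\overline{Ric}^F$ terms sum to $S^F/f^2$ (the $f^2$ appearing because the $E_k$ are unit for $f^2g_F$, not for $g_F$, so $\overline{Ric}^F(E_k,E_k)$ in the $g$-normalization equals $f^{-2}$ times the $g_F$-normalized value); and the coefficient of $g(V,W)$, which does not depend on $k$, is multiplied by $\sum_{k>n_1}\varepsilon_k g(E_k,E_k) = n_2$.

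Finally I collect terms. Adding the two partial sums and grouping the coefficients of $\frac{Pf}{f}$, of $\pi(P)$, and of $\mathrm{div}_B P$ should reproduce the stated formula; in particular the $\frac{Pf}{f}$-coefficient becomes $n_1 n_2\lambda_2 + n_2\big[(\overline n-1)\lambda_1 + (n_2-1)\lambda_2\big] = n_2(\overline n -1)(\lambda_1+\lambda_2)$ after using $n_1+n_2=\overline n$, the $\pi(P)$-coefficient becomes $n_1 n_2\lambda_1\lambda_2 - n_1\lambda_1^2 + n_2\big[(\overline n-1)\lambda_1\lambda_2 - \lambda_2^2\big]$, which I would rearrange into $n_2(\overline n + n_1 -1)\lambda_1\lambda_2 - n_2(\lambda_1^2+\lambda_2^2)$ (noting $n_1\lambda_1^2 + n_2\lambda_2^2$ is not symmetric, so this step needs a careful check — see below), and the $\mathrm{div}_B P$-coefficient becomes $n_1\lambda_2 + n_2\lambda_2 = \overline n\lambda_2$; comparing with the claimed $n_2(\lambda_1+\lambda_2)$ coefficient on $\mathrm{div}_BP$ flags that the bookkeeping of which divergence terms are present must be tracked scrupulously. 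The main obstacle I anticipate is precisely this careful accounting of the several scalar terms that are linear in $g(X,Y)$ or $g(V,W)$: each is multiplied by a different dimensional factor ($n_1$, $n_2$, or $\overline n$), and matching the $\pi(P)$- and $\mathrm{div}P$-coefficients to the asserted symmetric-looking expressions requires repeated use of $n_1 = \overline n - n_2$ together with the identity $\sum_k\varepsilon_k\pi(E_k)^2 = \pi(P)$ that holds only because $P$ is tangent to $B$. Everything else is a routine trace computation given Proposition 3.5.
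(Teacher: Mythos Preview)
Your approach is exactly the paper's: it states only ``By Proposition 3.5 and the definition of the scalar curvature'' and leaves the trace computation implicit, so your plan to split the sum over an adapted orthonormal frame and use the vanishing of the mixed Ricci terms is precisely what is intended.

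The mismatch you flag in the $\mathrm{div}_B P$ and $\pi(P)$ coefficients is not a defect of the formula but a small bookkeeping slip in your sketch: in Proposition 3.5(1) the entire bracket is multiplied by $n_2$, so the base-trace contributions of $\lambda_1 g(Y,\nabla_X P)$ and $-\lambda_1^2\pi(X)\pi(Y)$ are $n_2\lambda_1\,\mathrm{div}_B P$ and $-n_2\lambda_1^2\,\pi(P)$, not $\lambda_1\,\mathrm{div}_B P$ and $-n_1\lambda_1^2\,\pi(P)$. With that correction the $\mathrm{div}_B P$ coefficient is $n_2\lambda_1 + n_2\lambda_2 = n_2(\lambda_1+\lambda_2)$ and the $\pi(P)$ coefficient is $n_1 n_2\lambda_1\lambda_2 - n_2\lambda_1^2 + n_2[(\overline n-1)\lambda_1\lambda_2 - \lambda_2^2] = n_2[(\overline n + n_1 -1)\lambda_1\lambda_2 - (\lambda_1^2+\lambda_2^2)]$, exactly as stated.
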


By Proposition 3.6 and the definition of the scalar curvature, we have the following:
\begin{Proposition}
Let $M=B \times_{f}F $ be a warped product, ${\rm dim}B=n_{1}, {\rm dim}F=n_{2} \mbox{ and } \\ {\rm dim}M=\overline n=n_{1}+n_{2}$. If $P\in\Gamma(TF),$ then the scalar curvature $\overline S$ has the following expression: \par
$\overline S=\overline S^B+2n_{2}\frac{\Delta_{B}f}{f}+\frac{S^F}{f^2}+n_{2}(n_{2}-1)\frac{|grad_{B}f|^2_{B}}{f^2}
+[\overline n(\overline n-1)\lambda_{1}\lambda_{2}+(1-\overline n)(\lambda_{1}^2+\lambda_{2}^2)]\pi(P)
\\ \mbox{}\qquad\quad +(\overline n-1)(\lambda_{1}+\lambda_{2})div_{F}P.  $
\end{Proposition}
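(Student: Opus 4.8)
The idea is to read off $\overline{S}$ from Proposition 3.6 by tracing the Ricci tensor against an orthonormal frame of $M$ adapted to the product structure. I would fix a local orthonormal frame $\{E_{1},\dots,E_{n_{1}}\}$ of $(B,g_{B})$, $g(E_{i},E_{i})=\varepsilon_{i}$, lift it to $M$, and complete it by $\widetilde E_{j}=f^{-1}E_{j}$ for $j=1,\dots,n_{2}$, where $\{E_{j}\}$ is $g_{F}$-orthonormal with $g_{F}(E_{j},E_{j})=\varepsilon_{j}$; then $g(\widetilde E_{j},\widetilde E_{j})=\varepsilon_{j}$, so $\{E_{i}\}\cup\{\widetilde E_{j}\}$ is an orthonormal frame of $M$. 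Since every frame vector lies purely in $TB$ or purely in $TF$, the scalar curvature splits as $\overline{S}=\sum_{i}\varepsilon_{i}\,\overline{Ric}(E_{i},E_{i})+\sum_{j}\varepsilon_{j}\,\overline{Ric}(\widetilde E_{j},\widetilde E_{j})$, and the mixed components $\overline{Ric}(X,V)$ of Proposition 3.6(2)--(3) play no role.

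For the base sum I would put $X=Y=E_{i}$ in Proposition 3.6(1) and add over $i$, using $\sum_{i}\varepsilon_{i}H^{f}_{B}(E_{i},E_{i})=\Delta_{B}f$, $\sum_{i}\varepsilon_{i}g(E_{i},E_{i})=n_{1}$, and $\sum_{i}\varepsilon_{i}\overline{Ric}^{B}(E_{i},E_{i})=\overline{S}^{B}$; this gives $\overline{S}^{B}+n_{2}\frac{\Delta_{B}f}{f}+n_{1}\big[(\overline n-1)\lambda_{1}\lambda_{2}-\lambda_{2}^{2}\big]\pi(P)+n_{1}\lambda_{2}\,div_{F}P$. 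For the fibre sum I would put $V=W=\widetilde E_{j}$ in Proposition 3.6(4) and add over $j$, which needs four elementary identities: $\sum_{j}\varepsilon_{j}Ric^{F}(\widetilde E_{j},\widetilde E_{j})=f^{-2}S^{F}$ and $\sum_{j}\varepsilon_{j}g(\widetilde E_{j},\widetilde E_{j})=n_{2}$ (both from bilinearity and the $f^{-1}$-rescaling); the completeness relation $\sum_{j}\varepsilon_{j}\pi(\widetilde E_{j})^{2}=\sum_{j}\varepsilon_{j}g(\widetilde E_{j},P)^{2}=g(P,P)=\pi(P)$, which holds because $P\in\Gamma(TF)$ is spanned by the $\widetilde E_{j}$; and $\sum_{j}\varepsilon_{j}g(\widetilde E_{j},\nabla_{\widetilde E_{j}}P)=div_{F}P$, which follows from Lemma 2.1(3), whose $grad_{B}f$-term is $g$-orthogonal to $TF$ and drops, together with the rescaling converting $\nabla^{F}$ into the $g_{F}$-divergence. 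This produces the fibre contribution $f^{-2}S^{F}+n_{2}\frac{\Delta_{B}f}{f}+n_{2}(n_{2}-1)\frac{|grad_{B}f|^{2}_{B}}{f^{2}}+n_{2}\big[(\overline n-1)\lambda_{1}\lambda_{2}-\lambda_{2}^{2}\big]\pi(P)+\big[(\overline n-1)\lambda_{1}-\lambda_{2}\big]div_{F}P+\big[\lambda_{2}^{2}+(1-\overline n)\lambda_{1}^{2}\big]\pi(P)+n_{2}\lambda_{2}\,div_{F}P$.

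Adding the two and collecting like terms finishes the proof: the $\frac{\Delta_{B}f}{f}$ terms combine to $2n_{2}\frac{\Delta_{B}f}{f}$; the $\pi(P)$-coefficient is $(n_{1}+n_{2})\big[(\overline n-1)\lambda_{1}\lambda_{2}-\lambda_{2}^{2}\big]+\lambda_{2}^{2}+(1-\overline n)\lambda_{1}^{2}$, which via $n_{1}+n_{2}=\overline n$ collapses to $\overline n(\overline n-1)\lambda_{1}\lambda_{2}+(1-\overline n)(\lambda_{1}^{2}+\lambda_{2}^{2})$; and the $div_{F}P$-coefficient is $n_{1}\lambda_{2}+(\overline n-1)\lambda_{1}-\lambda_{2}+n_{2}\lambda_{2}=(\overline n-1)(\lambda_{1}+\lambda_{2})$. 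The step requiring the most care is the fibre trace: one must keep track of how the $f^{-1}$-rescaling of $\widetilde E_{j}$ acts on each term — so that $Ric^{F}$ acquires the factor $f^{-2}$ while $\sum_{j}\varepsilon_{j}\pi(\widetilde E_{j})^{2}$ comes out to $\pi(P)$ with no $f$ — and correctly identify $\sum_{j}\varepsilon_{j}g(\widetilde E_{j},\nabla_{\widetilde E_{j}}P)$ with $div_{F}P$ using Lemma 2.1; the remaining algebra is routine.
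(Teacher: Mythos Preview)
Your proposal is correct and follows exactly the route indicated in the paper, which simply states ``By Proposition 3.6 and the definition of the scalar curvature'' without spelling out details. You have carried out precisely this trace computation against an adapted orthonormal frame, and your verification of the fibre identities $\sum_{j}\varepsilon_{j}\pi(\widetilde E_{j})^{2}=\pi(P)$ and $\sum_{j}\varepsilon_{j}g(\widetilde E_{j},\nabla_{\widetilde E_{j}}P)=div_{F}P$, together with the final algebraic collection, is accurate.
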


\subsection{Generalized Robertson-Walker Space-times with a Quarter-sym-metric Connection}
\newtheorem{Theorem}[Proposition]{Theorem}
\begin{Theorem}
Let $M=I \times_{f}F $ be a warped product, where $I$ is an open interval in $\mathbb{R},$ ${\rm dim}I=1$ and ${\rm dim}F=\overline n-1(\overline n\geq3).$ Then $(M,\overline\nabla)$ is Einstein if and only if $(F,\nabla^F)$ is Einstein for $P=\frac{\partial}{\partial t}$ or $f$ is a constant on $I$ for  $P\in\Gamma(TF),\;\lambda_{2}\neq(\overline n-1)\lambda_{1}.$
\end{Theorem}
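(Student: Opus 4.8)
The plan is to read off the components of $\overline{Ric}$ from Propositions 3.5 and 3.6, specialised to $\dim B=1$, impose the Einstein equation $\overline{Ric}=\mu g$, and analyse the resulting identities block by block according to whether the arguments lie in $\Gamma(TI)$ or $\Gamma(TF)$. Throughout I would write $t$ for the coordinate on $I$, set $\varepsilon=g(\partial_t,\partial_t)=\pm1$ and $n_2=\overline n-1=\dim F$, and use the simplifications valid on a one-dimensional base: $\overline{Ric}^B=0$, $H^f_{B}(\partial_t,\partial_t)=f''$, $\Delta_{B}f=\varepsilon f''$, $|grad_{B}f|^2_{B}=\varepsilon(f')^2$ and $div_{B}P=0$, where $'$ denotes $d/dt$.

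First I would examine the mixed block. Because $\Gamma(TI)\perp\Gamma(TF)$, the equation $\overline{Ric}=\mu g$ forces $\overline{Ric}(X,V)=\overline{Ric}(V,X)=0$ for $X\in\Gamma(TI)$ and $V\in\Gamma(TF)$. For $P=\partial/\partial t\in\Gamma(TB)$ this holds identically by Proposition 3.5(2), so no restriction on $f$ arises. For $P\in\Gamma(TF)$, Proposition 3.6(2)--(3) give $\overline{Ric}(X,V)=[(\overline n-1)\lambda_1-\lambda_2]\,\pi(V)\,\frac{Xf}{f}$; since $\lambda_2\neq(\overline n-1)\lambda_1$ the scalar factor is nonzero, hence $\pi(V)\frac{Xf}{f}=0$ for all such $X,V$. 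Choosing $V$ with $\pi(V)\neq0$ — possible at any point where $P\neq0$, by nondegeneracy of $g$ on $TF$ — forces $Xf=f'\equiv0$, that is, $f$ is constant on $I$; this is the ``only if'' half of the second alternative.

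Next, for $P=\partial/\partial t$ I would combine the base and fibre blocks. Proposition 3.5(1) gives $\overline{Ric}(\partial_t,\partial_t)=n_2\big(\frac{f''}{f}+\varepsilon\lambda_2\frac{f'}{f}+\lambda_1\lambda_2-\lambda_1^2\big)$, and $\overline{Ric}(\partial_t,\partial_t)=\mu\varepsilon$ expresses $\mu$ explicitly through $f,f',f''$, a function of $t$ only. Proposition 3.5(3) reads $\overline{Ric}(V,W)=Ric^{F}(V,W)+\phi(t)\,g(V,W)$ with $\phi$ an explicit function of $t$ alone — the connection induced on each fibre is the Levi-Civita one, since $P\perp TF$, so $\overline{Ric}^{F}=Ric^{F}$ — and imposing $\overline{Ric}(V,W)=\mu\,g(V,W)$ together with $g(V,W)=f^2g_{F}(V,W)$ yields $Ric^{F}(V,W)=(\mu-\phi(t))\,f^2\,g_{F}(V,W)$. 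Since the left-hand side does not depend on $t$, the coefficient on the right is a constant, i.e. $(F,\nabla^{F})$ is Einstein; here the hypothesis $\overline n\geq3$, i.e. $n_2\geq2$, is what makes being Einstein a genuine condition on $F$. For the converse I would reverse these substitutions: for $P=\partial/\partial t$ with $(F,\nabla^{F})$ Einstein, compatibility of the base and fibre equations reduces to a single second-order ODE for $f$ with constant right-hand side, which is solvable; for $P\in\Gamma(TF)$ with $f$ constant, substituting the constant $f$ into Propositions 3.6(1) and 3.6(4) collapses the Einstein condition on $M$ to the analogous one on the fibre with its induced quarter-symmetric connection.

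The main obstacle I anticipate is organisational rather than conceptual: faithfully tracking the numerous $\lambda_1,\lambda_2,\pi(P)$ and $\nabla_{\cdot}P$ contributions in Propositions 3.5 and 3.6 without sign slips, and isolating the $t$-dependence cleanly enough that the ``$Ric^{F}$ is independent of $t$'' step can be invoked. A secondary subtlety is that $\overline{Ric}$ need not be symmetric when $\overline\nabla$ is non-metric, so the mixed block must be tested against both $\overline{Ric}(X,V)$ and $\overline{Ric}(V,X)$; by Proposition 3.6(2)--(3) these give the same condition, so nothing is lost. Finally, the statement tacitly restricts to $P$ purely tangent to the base or purely tangent to the fibre.
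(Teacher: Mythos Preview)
Your treatment of the necessity direction is essentially the paper's own argument: the same block-by-block use of Propositions~3.5 and~3.6, the same vanishing of the mixed block, and the same ``left-hand side is independent of $t$'' step to force $(F,\nabla^{F})$ Einstein. The only cosmetic difference is that the paper makes the substitution $f=e^{q/2}$ before reading off the $(\partial_t,\partial_t)$ and $(V,W)$ components, whereas you work directly with $f$; nothing is gained or lost either way.

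The problem lies in your converse. The paper, despite the ``if and only if'' wording in the statement, proves only the forward implication, and for good reason: the converse as you interpret it is not true. Having $(F,\nabla^{F})$ Einstein is \emph{necessary} but not \emph{sufficient} for $(M,\overline\nabla)$ to be Einstein when $P=\partial/\partial t$; Theorem~3.11 later in the paper gives the full characterisation, and conditions~(2) and~(3) there are genuine constraints on the given warping function $f$, not an ODE one is free to solve for some $f$. Your sentence ``compatibility of the base and fibre equations reduces to a single second-order ODE for $f$ with constant right-hand side, which is solvable'' treats $f$ as a free variable to be chosen, but the theorem concerns a fixed warped product $I\times_f F$ with $f$ already prescribed. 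The same objection applies to the second case: $f$ constant by itself does not force $(M,\overline\nabla)$ Einstein when $P\in\Gamma(TF)$; additional structure on $F$ and $P$ is needed (compare Theorem~3.10(3)). So your plan for the converse should simply be dropped --- the paper's proof establishes only necessity, and that is all the argument can support.
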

\begin{proof}
¢Ù Assume that $P=\frac{\partial}{\partial t},$ Let $f=e^{\frac{q}{2}},$ by Proposition 3.5, we can write
\begin{eqnarray}
\setcounter{equation}{5}
\lefteqn{\overline {Ric}\Big(\frac{\partial}{\partial t},\frac{\partial}{\partial t}\Big)=(1-\overline n)\Big[\frac{1}{4}(q^\prime)^2+\frac{1}{2}q^{\prime\prime}-\frac{1}{2}\lambda_{2}q^\prime+
\lambda_{1}\lambda_{2}-\lambda_{1}^2\Big]g_{I}\Big(\frac{\partial}{\partial t},\frac{\partial}{\partial t}\Big);}\hspace{12cm} \\
\lefteqn{\overline {Ric}\Big(\frac{\partial}{\partial t},V\Big)=0;}\hspace{12cm} \nonumber \\
\setcounter{equation}{6}
\lefteqn{\overline {Ric}(V,W)=Ric^F(V,W)+e^q\Big\{\frac{\overline n-1}{4}(q^\prime)^2
+\frac{1}{2}\Big[(\overline n-1)\lambda_{1}+(\overline n-2)\lambda_{2}\Big]q^\prime }\hspace{12cm}  \\ \lefteqn{+\lambda_{2}^2+(1-\overline n)\lambda_{1}\lambda_{2}+\frac{1}{2}q^{\prime\prime}\Big\}g_{F}(V,W) }
\hspace{9.8cm}\nonumber
\end{eqnarray}
for any $V,W\in\Gamma(TF).$

Since $M$ is Einstein manifold, we have
$$\overline {Ric}\Big(\frac{\partial}{\partial t},\frac{\partial}{\partial t}\Big)=
\alpha g_{I}\Big(\frac{\partial}{\partial t},\frac{\partial}{\partial t}\Big);   \eqno{(7)} $$
$$\overline {Ric}(V,W)=\alpha e^qg_{F}(V,W). \eqno{(8)}$$
From equations $(5)\mbox{ and }(7),$ we get
$$\alpha=(1-\overline n)\Big[\frac{1}{4}(q^\prime)^2+\frac{1}{2}q^{\prime\prime}-\frac{1}{2}\lambda_{2}q^\prime+
\lambda_{1}\lambda_{2}-\lambda_{1}^2\Big].   \eqno{(9)} $$
Similarly, from equations $(6)\mbox{ and }(8),\mbox{ and by the use of } (9)$, we obtain
$$Ric^F(V,W)=\Big[\frac{1-\overline n}{2}(q^\prime)^2-\frac{\overline n}{2}q^{\prime\prime}
+\Big(\frac{1-\overline n}{2}\lambda_{1}+\frac{1}{2}\lambda_{2}\Big)q^\prime
+(\overline n-1)\lambda_{1}^2-\lambda_{2}^2\Big]e^qg_{F}(V,W)    $$
which implies that $(F,\nabla^F)$ is Einstein manifold.
\par ¢Ú Assume that $P\in \Gamma(TF),$ by Proposition 3.6, we have
$$\overline {Ric}\Big(\frac{\partial}{\partial t},V\Big)=\frac{1}{2}q^\prime[(\overline n-1)\lambda_{1}-\lambda_{2}]\pi(V); \eqno{(10)} $$
$$\overline {Ric}\Big(V,\frac{\partial}{\partial t}\Big)=\frac{1}{2}q^\prime
[\lambda_{2}-(\overline n-1)\lambda_{1}]\pi(V)  \eqno{(11)} $$
for any $V\in\Gamma(TF).$
\par Since $M$ is an Einstein manifold, we can write
$$\overline{Ric}\Big(\frac{\partial}{\partial t},V\Big)=\alpha g\Big(\frac{\partial}{\partial t},V\Big)=0
=\alpha g\Big(V,\frac{\partial}{\partial t}\Big)=\overline {Ric}\Big(V,\frac{\partial}{\partial t}\Big) \eqno{(12)}$$where $\frac{\partial}{\partial t}\in\Gamma(TB)$ and $V\in\Gamma(TF).$   \par Since $\lambda_{2}\neq(\overline n-1)\lambda_{1}, \pi(V)\neq0,$ , using equations $(10),(11),(12),$ then we can get $q^\prime=0$, which means $q$ is a constant on $I$, then $f$ is a constant on $I$.
\end{proof}
\begin{Theorem}
Let $M=B\times_{f}I $ be a warped product, where $I$ is an open interval in $\mathbb{R},\;{\rm dim}I=1$ and
${\rm dim}B=\overline n-1\;(\overline n\geq3).$ Then \\
$(1)$If $(M,\overline\nabla)$ is an Einstein manifold, $P\in\Gamma(TB),\;\nabla^BP=0,$ and $f$ is a constant on $B$, then: $$\overline S^B=[(\overline n-1)(\overline n-2)\lambda_{1}\lambda_{2}+\lambda_{1}^2
+(1-\overline n)\lambda_{2}^2]\pi(P).$$
Furthermore, if $(\overline n-1)(\overline n-2)\lambda_{1}\lambda_{2}+\lambda_{1}^2+(1-\overline n)\lambda_{2}^2=0$, then $\overline S^B=0.$  \\
$(2)$If $(M,\overline\nabla)$ is Einstein manifold, $P\in\Gamma(TI),$ and $\lambda_{2}\neq(\overline n-1)\lambda_{1}$
then $f$ is a constant on $B.$  \\
$(3)$If $f$ is a constant on $B,$ $(B,\nabla^B)$ is Einstein, $P\in\Gamma(TI),$ then
$(M,\overline\nabla)$ is Einstein manifold; furthermore, if $\lambda_{2}=(\overline n-1)\lambda_{1},$ then
$\alpha=\alpha_{B},$ where $\alpha$ and $\alpha_{B}$ denote the Einstein constant on $M$ and $B$, respectively.
\end{Theorem}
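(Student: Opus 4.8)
The plan is to derive all three parts from Propositions 3.5 and 3.6, specialized to $n_2 = \dim I = 1$ and $n_1 = \dim B = \overline n - 1$, together with the Einstein condition $\overline{Ric} = \alpha g$, the $g$-orthogonality of $\Gamma(TB)$ and $\Gamma(TI)$, and a single trace over $B$.

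For part (1) the vector field $P$ lies in $\Gamma(TB)$, so I use Proposition 3.5. Constancy of $f$ on $B$ kills $\mathrm{grad}_B f$, $H^f_B$, $\Delta_B f$ and $Pf$, while $\nabla^B P = 0$ kills $g(Y,\nabla_X P)$ and $\mathrm{div}_B P$; also $Ric^I = 0$ since $\dim I = 1$. Proposition 3.5(3) then collapses to $\overline{Ric}(V,W) = [(\overline n - 1)\lambda_1\lambda_2 - \lambda_2^2]\pi(P)\,g(V,W)$, which identifies the Einstein constant as $\alpha = [(\overline n - 1)\lambda_1\lambda_2 - \lambda_2^2]\pi(P)$, and Proposition 3.5(1) collapses to $\overline{Ric}^B(X,Y) = [\alpha - \lambda_1\lambda_2\pi(P)]g(X,Y) + \lambda_1^2\pi(X)\pi(Y)$. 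I then take the trace over an orthonormal frame $E_1,\dots,E_{\overline n - 1}$ of $B$. The one nonobvious input is $\sum_k \varepsilon_k \pi(E_k)^2 = \sum_k \varepsilon_k g(E_k,P)^2 = g(P,P) = \pi(P)$, the middle equality being the expansion of $P \in \Gamma(TB)$ in the frame; this gives $\overline S^B = (\overline n - 1)[\alpha - \lambda_1\lambda_2\pi(P)] + \lambda_1^2\pi(P)$, and substituting $\alpha$ yields exactly the stated formula. The ``furthermore'' clause is then immediate.

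For part (2) the vector field $P$ lies in $\Gamma(TI)$, so I use Proposition 3.6(2): $\overline{Ric}(X,V) = [(\overline n - 1)\lambda_1 - \lambda_2]\pi(V)\frac{Xf}{f}$. The Einstein condition forces $\overline{Ric}(X,V) = \alpha g(X,V) = 0$ by orthogonality, and since $(\overline n - 1)\lambda_1 \neq \lambda_2$ while $\pi$ is not identically zero on $\Gamma(TI)$ (take $V = P$, so $\pi(P) = g(P,P) \neq 0$), I conclude $Xf = 0$ for every $X \in \Gamma(TB)$, i.e.\ $f$ is constant on $B$. Part (3) runs this in reverse: with $f$ constant the mixed Ricci components vanish by Proposition 3.6(2)--(3) and the Hessian and gradient terms drop out of Proposition 3.6(1) and (4). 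The $(TB,TB)$-block then equals $\overline{Ric}^B(X,Y)$ plus $\{[(\overline n - 1)\lambda_1\lambda_2 - \lambda_2^2]\pi(P) + \lambda_2\,\mathrm{div}_I P\}g(X,Y)$, hence a scalar multiple of $g$ once $(B,\nabla^B)$ is Einstein; and for the $(TI,TI)$-block I use $\dim I = 1$, so that $Ric^I = 0$ and, along the line bundle $TI$, $\pi(V)\pi(W) = \pi(P)g(V,W)$ and $g(W,\nabla_V P) = (\mathrm{div}_I P)g(V,W)$, which collapses Proposition 3.6(4) to a scalar multiple of $g$ as well. Matching the two scalars gives the Einstein property of $(M,\overline\nabla)$, and when $\lambda_2 = (\overline n - 1)\lambda_1$ the coefficient $(\overline n - 1)\lambda_1\lambda_2 - \lambda_2^2$ of $\pi(P)$ vanishes, so the $(TB,TB)$-block reduces to $\overline{Ric}^B$ and therefore $\alpha = \alpha_B$.

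The step needing most care is the trace in part (1): the term $\lambda_1^2\pi(X)\pi(Y)$ in $\overline{Ric}^B$ must be seen to contribute $\pi(P)$, not $0$, under the trace, and it is exactly this that injects the $\lambda_1^2$ and the $(1-\overline n)\lambda_2^2$ into the final expression for $\overline S^B$. In part (3) the analogous bookkeeping is to confirm that the $\mathrm{div}_I P$ and $\pi(P)$ contributions to the two diagonal blocks assemble into a single constant $\alpha$; the one-dimensionality of $I$ is what makes that reduction possible.
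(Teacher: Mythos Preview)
Your argument follows the paper's throughout. The only substantive variation is in part (1): you extract $\alpha$ directly from the one-dimensional fiber block via Proposition 3.5(3), whereas the paper instead computes $\overline S$ from Proposition 3.7, writes $\alpha=\overline S/\overline n$, and then equates two expressions for the partial trace $\widehat S=\sum_{k=1}^{\overline n-1}\overline{Ric}(E_k,E_k)$ (one from the Einstein condition, one from Proposition 3.5(1)). Your route is slightly more direct, but both land on the same identity, and the key step $\sum_k \varepsilon_k\pi(E_k)^2=\pi(P)$ that you single out is exactly what produces the paper's equation (15).

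Part (2) matches the paper verbatim. For part (3) your outline is actually more thorough than the paper's, which writes only the $(TB,TB)$ block (its equation (19)) and silently drops the $\lambda_2\,\mathrm{div}_I P$ term from Proposition 3.6(1) before concluding. You retain that term and also reduce the $(TI,TI)$ block using the one-dimensionality of $I$. One small internal slip: in your ``furthermore'' clause you assert the $(TB,TB)$ block reduces to $\overline{Ric}^B$ once $\lambda_2=(\overline n-1)\lambda_1$, but the $\lambda_2\,\mathrm{div}_I P$ contribution you correctly recorded a few lines earlier survives that specialization. The paper's proof has the same omission (since it never wrote that term in (19) to begin with), so this is a deficiency inherited from the source rather than one you introduced.
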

\begin{proof}
¢Ù Assume that $(M,\overline\nabla)$ is an Einstein manifold, $P\in \Gamma(TB)$, then
$$\overline{Ric}(X,Y)=\frac{\;\overline s\;}{\;\overline n\;}g(X,Y)   \eqno{(13)} $$  for any $X,Y\in \Gamma(TB).$
Consider that $f$ is a constant on $B$ and by Proposition 3.7, we can get
$$\overline{Ric}(X,Y)=\frac{1}{\;\overline n\;}g(X,Y)\{\overline S^B+[(2\overline n-2)\lambda_{1}\lambda_{2}
-(\lambda_{1}^2+\lambda_{2}^2)]\pi(P)\}. $$
Define $\widehat{S}=\sum\limits_{k=1}^{\overline n-1}\frac{1}{\;\overline n\;}\overline{Ric}(E_{k},E_{k}),$
where $E_{k}$ is an orthonormal base of $B,$ then we have
$$\widehat{S}=\frac{\;\overline n-1\;}{\;\overline n\;}\Big\{\overline S^B+[(2\overline n-2)\lambda_{1}\lambda_{2}
-(\lambda_{1}^2+\lambda_{2}^2)]\pi(P)\Big\}. \eqno{(14)}$$
On the other hand, using Proposition 3.5, we can get
$$\overline {Ric}(X,Y)=\overline {Ric}^B(X,Y)+[\lambda_{1}\lambda_{2}\pi(P)g(X,Y)-\lambda_{1}^2\pi(X)\pi(Y)],$$
then we have
$$\widehat{S}=\overline S^B+[(\overline n-1)\lambda_{1}\lambda_{2}-\lambda_{1}^2]\pi(P).  \eqno{(15)}$$
From equations (14) and (15), we can get
$$\overline S^B=[(\overline n-1)(\overline n-2)\lambda_{1}\lambda_{2}+\lambda_{1}^2
+(1-\overline n)\lambda_{2}^2]\pi(P).$$    \par
¢Ú Assume that $(M,\overline\nabla)$ is Einstein manifold, $P\in\Gamma(TI),$ by Proposition 3.6, we obtain
$$\overline {Ric}(X,P)=[(\overline n-1)\lambda_{1}-\lambda_{2}]\pi(P)\frac{Xf}{f};  \eqno{(16)}  $$
$$\overline {Ric}(P,X)=[\lambda_{2}-(\overline n-1)\lambda_{1}]\pi(P)\frac{Xf}{f}.  \eqno{(17)}   $$
Using the similar proof of Theorem 3.9.¢Ú, we can get $Xf=0,$ which means $f$ is a constant on $B.$   \par
¢Û If $f$ is a constant on $B,$ and $(B,\nabla^B)$ is Einstein, and $P\in\Gamma(TI),$
then $$Ric^B(X,Y)=\alpha_{B}g(X,Y).    \eqno{(18)}$$
By Theorem 3.6, we have
$$\overline {Ric}(X,Y)=\overline {Ric}^B(X,Y)+[(\overline n-1)\lambda_{1}\lambda_{2}-\lambda_{2}^2]\pi(P)g(X,Y). \eqno{(19)}  $$
So by equations (18) and (19), we can easily get
$$\overline{Ric}(X,Y)=\{\alpha_{B}+[(\overline n-1)\lambda_{1}\lambda_{2}-\lambda_{2}^2]\pi(P)\}g(X,Y)$$
which means $(M,\overline\nabla)$ is an Einstein manifold.
\par Furthermore, if $\lambda_{2}=(\overline n-1)\lambda_{1},$ then $\overline{Ric}(X,Y)=\alpha_{B}g(X,Y),$
and $\alpha=\alpha_{B}.$
\end{proof}
\par Now we specially study $M=I \times F $ with the metric tensor $-dt^2+f(t)^2g_{F},\;I$ is an open interval in $\mathbb{R}.$
Let begin with the following theorem:
\begin{Theorem}
Let $M=I \times F $ with the metric tensor $-dt^2+f(t)^2g_{F}, P=\frac{\partial}{\partial t}, {\rm dim}F=l.$ Then
$(M,\overline\nabla)$ is Einstein with the Einstein constant $\alpha$ if and only if the following
conditions are satisfied \\
$(1)$ $(F,\nabla^F)$ is Einstein with the Einstein constant $\alpha_{F};$  \\
$(2)$ $l\Big(\lambda_{2}\frac{f^\prime}{f}-\frac{f^{\prime\prime}}{f}+\lambda_{1}^2-\lambda_{1}\lambda_{2}\Big)
=\alpha;$ \\
$(3)$ $\alpha_{F}-ff^{\prime\prime}+(1-l)(f^\prime)^2+[\lambda_{2}^2-l\lambda_{1}\lambda_{2}-\alpha]f^2
+[l\lambda_{1}+(l-1)\lambda_{2}]ff^\prime=0.$
\end{Theorem}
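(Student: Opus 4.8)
The plan is to specialise Proposition 3.5 to $B=I$ with the Lorentzian metric $g_{I}=-dt^2$, so that $n_{1}=1$, $n_{2}=l$, $\overline n=l+1$ and $P=\partial_{t}\in\Gamma(TI)$ (writing $\partial_{t}$ for $\frac{\partial}{\partial t}$). First I would record the data attached to this base. Since $I$ is one-dimensional it is flat, with $\nabla^{I}_{\partial_{t}}\partial_{t}=0$ and $\overline {Ric}^{B}=0$; hence $H^{f}_{B}(\partial_{t},\partial_{t})=f^{\prime\prime}$, the term $g(Y,\nabla_{X}P)$ of Proposition 3.5(1) is $0$ for $X=Y=\partial_{t}$, $div_{B}P=0$ and $Pf=f^{\prime}$, while $g_{I}=-dt^2$ yields $\Delta_{B}f=-f^{\prime\prime}$, $|grad_{B}f|^{2}_{B}=-(f^{\prime})^2$, $g(\partial_{t},\partial_{t})=-1$, $\pi(P)=g(P,P)=-1$ and $\pi(\partial_{t})^2=1$. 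Moreover, for $P\in\Gamma(TI)$ the vertical part of $\overline\nabla_{U}W$ along a fibre is $\nabla^{F}_{U}W$ (by $(3)$ and Lemma 2.1, since $\pi(W)=0$ and $P$ is horizontal), so $\overline {Ric}^{F}=Ric^{F}$ in Proposition 3.5, exactly as in the proof of Theorem 3.9. The only mildly delicate point here is keeping track of the signs forced by the Lorentzian base.

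Substituting these data into Proposition 3.5 gives
$$\overline {Ric}(\partial_{t},\partial_{t})=l\Big(\tfrac{f^{\prime\prime}}{f}-\lambda_{2}\tfrac{f^{\prime}}{f}+\lambda_{1}\lambda_{2}-\lambda_{1}^2\Big),\qquad\overline {Ric}(\partial_{t},V)=\overline {Ric}(V,\partial_{t})=0,$$
$$\overline {Ric}(V,W)=Ric^{F}(V,W)+\Big\{\lambda_{2}^2-l\lambda_{1}\lambda_{2}-\tfrac{f^{\prime\prime}}{f}-(l-1)\tfrac{(f^{\prime})^2}{f^2}+[l\lambda_{1}+(l-1)\lambda_{2}]\tfrac{f^{\prime}}{f}\Big\}f^2g_{F}(V,W)$$
for all $V,W\in\Gamma(TF)$, where I used $g(V,W)=f^2g_{F}(V,W)$. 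Then $(M,\overline\nabla)$ is Einstein with constant $\alpha$ if and only if $\overline {Ric}(\partial_{t},\partial_{t})=\alpha g(\partial_{t},\partial_{t})=-\alpha$, $\overline {Ric}(\partial_{t},V)=0$ and $\overline {Ric}(V,W)=\alpha f^2g_{F}(V,W)$. The second is automatic, the first rearranges at once to condition $(2)$, and the third rearranges to $\psi(t)\,g_{F}(V,W)=Ric^{F}(V,W)$ with
$$\psi(t)=\alpha f^2+ff^{\prime\prime}+(l-1)(f^{\prime})^2+l\lambda_{1}\lambda_{2}f^2-\lambda_{2}^2f^2-[l\lambda_{1}+(l-1)\lambda_{2}]ff^{\prime}.$$

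For the ``only if'' direction, the identity $\psi(t)g_{F}=Ric^{F}$ forces $(F,\nabla^{F})$ to be Einstein: fixing $V,W\in\Gamma(TF)$ with $g_{F}(V,W)\neq0$, the right-hand side is independent of $t$, so $\psi$ is a constant $\alpha_{F}$, which is $(1)$, and then the equation $\psi=\alpha_{F}$ is precisely condition $(3)$. For the ``if'' direction I would run the computation in reverse: assuming $(1)$--$(3)$, substitute $Ric^{F}=\alpha_{F}g_{F}$ into the displayed formula for $\overline {Ric}(V,W)$ and use $(3)$ to see that the whole bracketed coefficient of $g_{F}(V,W)$ collapses to $\alpha f^2$, while $(2)$ produces the $(\partial_{t},\partial_{t})$-component and the $(\partial_{t},V)$-component vanishes identically; hence all three Einstein equations hold with the single constant $\alpha$. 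No individual step is a genuine obstacle, the whole argument being bookkeeping on top of Proposition 3.5; the points that demand care are the sign and normalisation accounting for the Lorentzian base and the standard separation-of-variables observation that $\psi$ is constant.
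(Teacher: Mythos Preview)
Your proposal is correct and follows the same route as the paper: specialise Proposition~3.5 to $B=I$ with $g_{I}=-dt^{2}$ and $P=\partial_{t}$, compute $\overline{Ric}(\partial_{t},\partial_{t})$, $\overline{Ric}(\partial_{t},V)$ and $\overline{Ric}(V,W)$, and then impose the Einstein condition. The paper's own proof is in fact even terser than yours (it just writes down the three Ricci expressions and says ``by the Einstein condition, we get Theorem~3.11''); your explicit verification of the sign bookkeeping for the Lorentzian base, the identification $\overline{Ric}^{F}=Ric^{F}$ when $P$ is horizontal, and the separation-of-variables argument showing $\psi$ is constant are all exactly what is needed and match the paper's implicit reasoning.
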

\begin{proof}
By Proposition $3.5,$ we have
\begin{eqnarray*}
\lefteqn{\overline {Ric}\Big(\frac{\partial}{\partial t},\frac{\partial}{\partial t}\Big)=
-l\Big(\lambda_{2}\frac{f^\prime}{f}-\frac{f^{\prime\prime}}{f}+\lambda_{1}^2-\lambda_{1}\lambda_{2}\Big);}
\hspace{12cm}\\
\lefteqn{\overline{Ric}\Big(\frac{\partial}{\partial t},V\Big)=\overline {Ric}\Big(V,\frac{\partial}{\partial t}\Big)=0; }\hspace{12cm}\\
\lefteqn{\overline{Ric}(V,W)=Ric^F(V,W)+g_{F}(V,W)\{-ff^{\prime\prime}-(l-1){f^\prime}^2
+(\lambda_{2}^2-l\lambda_{1}\lambda_{2})f^2  }\hspace{12cm}\\
\lefteqn{+[l\lambda_{1}+(l-1)\lambda_{2}]ff^\prime\}.  }\hspace{10cm}
\end{eqnarray*}
Then by the Einstein condition, we get Theorem $3.11.$
\end{proof}
\begin{remark}
When $\lambda_{1}=\lambda_{2}=1, $ we can get Corollary $21$ in $[12]$.
\end{remark}
\par Considering the dimension of $F,$ we get Corollaries 3.12 and 3.13 of Theorem 3.11:
\newtheorem{Corollary}[Proposition]{Corollary}
\begin{Corollary}
Let $M=I \times F $ with the metric tensor $-dt^2+f(t)^2g_{F}, P=\frac{\partial}{\partial t}, {\rm dim}F=1.$ Then
$(M,\overline\nabla)$ is Einstein with the Einstein constant $\alpha$ if and only if \\
$f^{\prime\prime}=\lambda_{2}f^\prime+(\lambda_{1}^2-\lambda_{1}\lambda_{2})f-\alpha f$
\end{Corollary}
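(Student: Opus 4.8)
The natural plan is to read off Corollary 3.12 as the special case $l:=\dim F=1$ of Theorem 3.11, so that essentially all the work is substituting $l=1$ into the three conditions stated there and recording what survives.

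The first step is to dispose of condition (1). A one--dimensional Riemannian manifold is flat: for any $X,Y\in\Gamma(TF)$ the bivector $X\wedge Y$ vanishes, hence $R^F\equiv 0$ and $Ric^F\equiv 0$. So $(F,\nabla^F)$ is automatically Einstein and its Einstein constant is forced to be $\alpha_F=0$; condition (1) of Theorem 3.11 therefore imposes nothing when $\dim F=1$.

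The second step is condition (2). With $l=1$ it reads $\lambda_2\frac{f'}{f}-\frac{f''}{f}+\lambda_1^2-\lambda_1\lambda_2=\alpha$. Since $f>0$ we may multiply through by $f$ and isolate $f''$, obtaining exactly $f''=\lambda_2 f'+(\lambda_1^2-\lambda_1\lambda_2)f-\alpha f$, which is the displayed equation. Geometrically this is just the $\overline{Ric}\big(\tfrac{\partial}{\partial t},\tfrac{\partial}{\partial t}\big)=\alpha\,g\big(\tfrac{\partial}{\partial t},\tfrac{\partial}{\partial t}\big)$ component of the Einstein equation (recall $g_I(\tfrac{\partial}{\partial t},\tfrac{\partial}{\partial t})=-1$), so this half of the argument already gives the ``only if'' direction.

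The third step, and the place where care is needed, is condition (3): with $l=1$ and $\alpha_F=0$ it collapses, after dividing out the positive factor $f$, to the fiber component $\overline{Ric}(V,W)=\alpha\,g(V,W)$ of the Einstein equation. The crux is to check that this is not an extra constraint but is consistent with, and collapses onto, the ODE produced in the second step. When $\lambda_1=\lambda_2$ (the quarter-symmetric metric case) the two scalar equations coincide termwise, so the reduction is transparent; for $\lambda_1\neq\lambda_2$ one must compare the two equations and the associated jet data of $f$ and $\alpha$ carefully — I expect this compatibility check to be the main obstacle. Once condition (3) has been reconciled with condition (2), the three conditions of Theorem 3.11 are jointly equivalent to the single displayed ODE, and invoking the Einstein condition yields Corollary 3.12, exactly as Theorem 3.11 was deduced from Proposition 3.5.
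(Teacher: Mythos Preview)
Your approach---specialize Theorem~3.11 to $l=1$---is exactly the paper's: the paper offers no argument beyond ``Considering the dimension of $F$, we get Corollaries 3.12 and 3.13 of Theorem 3.11.'' Your handling of conditions (1) and (2) is correct and complete, and matches what the paper (implicitly) does.

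Your instinct about condition (3) is right, but the ``compatibility check'' you defer cannot succeed in general. With $l=1$ and $\alpha_F=0$, condition (3) of Theorem~3.11 reduces (after dividing by $f>0$) to
\[
f'' = \lambda_1 f' + (\lambda_2^2-\lambda_1\lambda_2-\alpha)f,
\]
whereas condition (2) gives the displayed ODE
\[
f'' = \lambda_2 f' + (\lambda_1^2-\lambda_1\lambda_2-\alpha)f.
\]
Subtracting yields $(\lambda_2-\lambda_1)\bigl[f'-(\lambda_1+\lambda_2)f\bigr]=0$. So when $\lambda_1=\lambda_2$ the two conditions coincide termwise, as you observed, and the corollary holds; but when $\lambda_1\neq\lambda_2$ the Einstein condition imposes the genuinely \emph{additional} constraint $f'=(\lambda_1+\lambda_2)f$, and the single ODE in the corollary is only necessary, not sufficient. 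The paper glosses over this entirely; your proposal correctly flags the issue but assumes it can be ``reconciled,'' which it cannot in favor of the stated if-and-only-if. The gap is in the statement itself, not in your strategy.
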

\begin{remark}
$(1)$From Theorem $3.11,$ we can also get: if ${\rm dim}F=1,$ then $\alpha_{F}=0;$  \par
\mbox{}\qquad\quad  $\;$ $(2)$When $\lambda_{1}=\lambda_{2}=1, $ we can get Corollary $23$ in $[12]$.
\end{remark}
\begin{Corollary}
Let $M=I \times F $ with the metric tensor $-dt^2+f(t)^2g_{F}, P=\frac{\partial}{\partial t}, {\rm dim}F=l>1.$ Then
$(M,\overline\nabla)$ is Einstein with the Einstein constant $\alpha$ if and only if the following conditions are satisfied: \\
$(1)$ $(F,\nabla^F)$ is Einstein with the Einstein constant $\alpha^F;$  \\
$(2)$$f^{\prime\prime}=\lambda_{2}f^\prime+(\lambda_{1}^2-\lambda_{1}\lambda_{2}-\frac{\alpha}{l})f;$\\
$(3)$ $\frac{\alpha_{F}}{1-l}+(f^\prime)^2+\Big[\frac{\alpha}{l}+\lambda_{1}\lambda_{2}+\frac{\lambda_{2}^2
-\lambda_{1}^2}{1-l}\Big]f^2+\Big[\frac{l}{1-l}\lambda_{1}+\frac{(l-2)}{1-l}\lambda_{2}\Big]ff^\prime=0.$
\end{Corollary}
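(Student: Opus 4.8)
The plan is to obtain Corollary 3.13 as a purely algebraic consequence of Theorem 3.11, the only new ingredient being the hypothesis $l>1$, which allows division by $1-l$. Since condition $(1)$ of Theorem 3.11 (that $(F,\nabla^F)$ is Einstein with constant $\alpha_F$) is identical to condition $(1)$ here, that part needs no argument, and I would simply quote it.

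First I would rewrite condition $(2)$ of Theorem 3.11. Because $f:I\to(0,\infty)$ is nowhere zero, the relation $l\big(\lambda_{2}\tfrac{f'}{f}-\tfrac{f''}{f}+\lambda_{1}^2-\lambda_{1}\lambda_{2}\big)=\alpha$ is equivalent, after dividing by $l$ and multiplying by $f$, to
$$f''=\lambda_{2}f'+\Big(\lambda_{1}^2-\lambda_{1}\lambda_{2}-\frac{\alpha}{l}\Big)f,$$
which is condition $(2)$ of the Corollary; every step here is reversible. Next I would use this to eliminate $f''$ from condition $(3)$ of Theorem 3.11: multiplying the displayed identity by $f$ gives $ff''=\lambda_{2}ff'+(\lambda_{1}^2-\lambda_{1}\lambda_{2}-\tfrac{\alpha}{l})f^2$, and substituting this into $\alpha_{F}-ff''+(1-l)(f')^2+[\lambda_{2}^2-l\lambda_{1}\lambda_{2}-\alpha]f^2+[l\lambda_{1}+(l-1)\lambda_{2}]ff'=0$, then collecting the coefficients of $(f')^2$, of $f^2$, and of $ff'$, yields
$$\alpha_{F}+(1-l)(f')^2+\Big[\frac{\alpha(1-l)}{l}+(1-l)\lambda_{1}\lambda_{2}+\lambda_{2}^2-\lambda_{1}^2\Big]f^2+[l\lambda_{1}+(l-2)\lambda_{2}]ff'=0.$$
Dividing through by $1-l$ — legitimate exactly because $l>1$, which is precisely where this hypothesis (as opposed to the $l=1$ case treated in Corollary 3.12) is used — gives condition $(3)$ of the Corollary. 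Reversing the substitution of $ff''$ recovers condition $(3)$ of Theorem 3.11, so the chain of equivalences closes up and the ``if and only if'' is preserved.

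There is no genuine obstacle in this proof; the only point requiring care is the bookkeeping of signs and coefficients when gathering the $f^2$- and $ff'$-terms after eliminating $f''$, and the observation that each manipulation (division by $l$, multiplication by $f$, substitution, division by $1-l$) is invertible, so that the biconditional statement of Theorem 3.11 transfers faithfully to the reformulated system of Corollary 3.13.
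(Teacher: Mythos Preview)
Your proposal is correct and follows exactly the approach the paper intends: the paper simply says ``Considering the dimension of $F$, we get Corollaries 3.12 and 3.13 of Theorem 3.11'' without writing out any details, and your argument supplies precisely the algebraic rewriting of conditions $(2)$ and $(3)$ of Theorem 3.11 (eliminating $f''$ and dividing by $1-l$) that this sentence leaves implicit.
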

\begin{remark}
When $\lambda_{1}=\lambda_{2}=1, $ we can get Corollary $24$ in $[12]$.
\end{remark}  \par
By Corollary 3.12 and elementary methods for ordinary differential equations, we get:
\begin{Theorem}
Let $M=I \times F $ with the metric tensor $-dt^2+f(t)^2g_{F}, P=\frac{\partial}{\partial t}, {\rm dim}F=1.$ Then
$(M,\overline\nabla)$ is Einstein with the Einstein constant $\alpha$ if and only if \\
$(1)\alpha<(\lambda_{1}-\frac{1}{2}\lambda_{2})^2,
f(t)=c_{1}e^{((\lambda_{2}+\sqrt{(2\lambda_{1}-\lambda_{2})^2-4\alpha})/2)t}
+c_{2}e^{((\lambda_{2}-\sqrt{(2\lambda_{1}-\lambda_{2})^2-4\alpha})/2)t},$  \\
$(2)\alpha=(\lambda_{1}-\frac{1}{2}\lambda_{2})^2, f(t)=c_{1}e^{(\lambda_{2}/2)t}+c_{2}te^{(\lambda_{2}/2)t},$  \\
$(3)\alpha>(\lambda_{1}-\frac{1}{2}\lambda_{2})^2, f(t)=c_{1}e^{(\lambda_{2}/2)t}cos((\sqrt{4\alpha-(2\lambda_{1}-\lambda_{2})^2}/2)t)
\\  \mbox{}\qquad \qquad  \qquad   \qquad  \qquad \;
+c_{2}e^{(\lambda_{2}/2)t}sin((\sqrt{4\alpha-(2\lambda_{1}-\lambda_{2})^2}/2)t).$
\end{Theorem}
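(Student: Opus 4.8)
The plan is to reduce the Einstein condition for $(M,\overline\nabla)$ to a single homogeneous constant-coefficient linear ODE for $f$ and then obtain the three listed families by an elementary case analysis on the discriminant of its characteristic polynomial.

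First I would apply Corollary 3.12: since ${\rm dim}F=1$ and $P=\partial/\partial t$, the manifold $(M,\overline\nabla)$ is Einstein with Einstein constant $\alpha$ if and only if
$$f^{\prime\prime}-\lambda_{2}f^{\prime}+(\alpha+\lambda_{1}\lambda_{2}-\lambda_{1}^{2})f=0 .$$
This is a second-order linear ODE with constant coefficients, so from this point the argument is purely a matter of solving it and translating each case back through Corollary 3.12.

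Next I would write down the characteristic equation $r^{2}-\lambda_{2}r+(\alpha+\lambda_{1}\lambda_{2}-\lambda_{1}^{2})=0$ and compute its discriminant
$$\Delta=\lambda_{2}^{2}-4(\alpha+\lambda_{1}\lambda_{2}-\lambda_{1}^{2})=(2\lambda_{1}-\lambda_{2})^{2}-4\alpha .$$
Since $(2\lambda_{1}-\lambda_{2})^{2}=4(\lambda_{1}-\tfrac{1}{2}\lambda_{2})^{2}$, the signs $\Delta>0$, $\Delta=0$, $\Delta<0$ correspond exactly to $\alpha<(\lambda_{1}-\tfrac{1}{2}\lambda_{2})^{2}$, $\alpha=(\lambda_{1}-\tfrac{1}{2}\lambda_{2})^{2}$, $\alpha>(\lambda_{1}-\tfrac{1}{2}\lambda_{2})^{2}$. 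In the first case the roots are the distinct reals $r_{\pm}=\tfrac{1}{2}\bigl(\lambda_{2}\pm\sqrt{(2\lambda_{1}-\lambda_{2})^{2}-4\alpha}\,\bigr)$, so the general solution is the combination of exponentials in (1); in the second case there is the double root $r=\lambda_{2}/2$, so the general solution is $c_{1}e^{(\lambda_{2}/2)t}+c_{2}te^{(\lambda_{2}/2)t}$ as in (2); in the third case the roots are the complex conjugates $\tfrac{\lambda_{2}}{2}\pm\tfrac{i}{2}\sqrt{4\alpha-(2\lambda_{1}-\lambda_{2})^{2}}$, and taking real and imaginary parts of the exponential solutions gives the trigonometric family in (3). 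Conversely, each displayed family manifestly satisfies the ODE, hence by Corollary 3.12 yields an Einstein metric, which closes the equivalence.

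There is essentially no genuine obstacle here; the one point deserving a remark is the standing assumption that $f$ be a positive warping function on $I$. One should note that the constants $c_{1},c_{2}$ (and, if necessary, the interval $I$) are to be chosen so that $f>0$ throughout, and that this restriction affects neither the ODE nor the form of its solutions. I expect this minor bookkeeping, rather than any real difficulty, to be the only thing worth flagging.
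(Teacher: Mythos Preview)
Your proposal is correct and follows exactly the paper's approach: the paper simply states that the result follows from Corollary 3.12 together with elementary methods for ordinary differential equations, and your write-up supplies precisely those details (characteristic equation, discriminant $(2\lambda_{1}-\lambda_{2})^{2}-4\alpha$, and the three cases). Your closing remark about the positivity of $f$ is a reasonable extra observation not made explicit in the paper.
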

\begin{remark}
When $\lambda_{1}=\lambda_{2}=1, $ we can get Corollary $25$ in $[12]$.
\end{remark}
As a Corollary of Theorem $3.14,$ we have
\begin{Corollary}
Let $M=I \times F $ with the metric tensor $-dt^2+f(t)^2g_{F}, P=\frac{\partial}{\partial t}, {\rm dim}F=1, $ and $\lambda_{2}=2\lambda_{1},$ then
$(M,\overline\nabla)$ is Einstein with the Einstein constant $\alpha$ if and only if \\
$(1)\alpha<0,f(t)=c_{1}e^{(\lambda_{1}+\sqrt{-\alpha})t}+c_{2}e^{(\lambda_{1}-\sqrt{-\alpha})t},$  \\
$(2)\alpha=0, f(t)=c_{1}e^{\lambda_{1}t}+c_{2}te^{\lambda_{1}t},$  \\
$(3)\alpha>0, f(t)=c_{1}e^{\lambda_{1}t}cos(\sqrt{\alpha}t)+c_{2}e^{\lambda_{1}t}sin(\sqrt{\alpha}t).$
\end{Corollary}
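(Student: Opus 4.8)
The plan is to obtain this statement as a direct specialization of Theorem 3.14, so almost no new work is needed. First I would record the arithmetic consequences of the hypothesis $\lambda_{2}=2\lambda_{1}$: one has $\lambda_{1}-\tfrac{1}{2}\lambda_{2}=0$, hence $(\lambda_{1}-\tfrac{1}{2}\lambda_{2})^{2}=0$; moreover $2\lambda_{1}-\lambda_{2}=0$ and $\lambda_{2}/2=\lambda_{1}$. These are the only identities the proof uses.

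Next I would run the trichotomy of Theorem 3.14 with these substitutions. In case (1), the threshold $(\lambda_{1}-\tfrac12\lambda_{2})^{2}$ becomes $0$, so the condition $\alpha<(\lambda_{1}-\tfrac12\lambda_{2})^{2}$ reads $\alpha<0$; inside the exponentials, $\sqrt{(2\lambda_{1}-\lambda_{2})^{2}-4\alpha}=\sqrt{-4\alpha}=2\sqrt{-\alpha}$ and $\lambda_{2}/2=\lambda_{1}$, so the exponents $(\lambda_{2}\pm\sqrt{(2\lambda_{1}-\lambda_{2})^{2}-4\alpha})/2$ collapse to $\lambda_{1}\pm\sqrt{-\alpha}$, giving exactly the stated form $f(t)=c_{1}e^{(\lambda_{1}+\sqrt{-\alpha})t}+c_{2}e^{(\lambda_{1}-\sqrt{-\alpha})t}$. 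In case (2), $\alpha=(\lambda_{1}-\tfrac12\lambda_{2})^{2}$ becomes $\alpha=0$ and $e^{(\lambda_{2}/2)t}=e^{\lambda_{1}t}$, so $f(t)=c_{1}e^{\lambda_{1}t}+c_{2}te^{\lambda_{1}t}$. In case (3), $\alpha>(\lambda_{1}-\tfrac12\lambda_{2})^{2}$ becomes $\alpha>0$, while $\sqrt{4\alpha-(2\lambda_{1}-\lambda_{2})^{2}}/2=\sqrt{\alpha}$ and again $e^{(\lambda_{2}/2)t}=e^{\lambda_{1}t}$, producing $f(t)=c_{1}e^{\lambda_{1}t}\cos(\sqrt{\alpha}\,t)+c_{2}e^{\lambda_{1}t}\sin(\sqrt{\alpha}\,t)$. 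Since the cases in Theorem 3.14 are exhaustive and mutually exclusive for the real parameter $\alpha$, the equivalence in Corollary 3.15 follows.

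There is essentially no obstacle here: the whole content is the substitution $\lambda_{2}=2\lambda_{1}$ into the formulas of Theorem 3.14 and the observation that this makes the discriminant-type quantity $(2\lambda_{1}-\lambda_{2})^{2}$ vanish. The only point requiring minimal care is bookkeeping the square roots and the exponential prefactors so that the three regimes in $\alpha$ line up with the three stated solution families; once that is checked, the proof is complete.
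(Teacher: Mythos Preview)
Your proposal is correct and matches the paper's approach: the paper presents Corollary 3.15 explicitly as an immediate consequence of Theorem 3.14 with no separate proof, so the specialization $\lambda_{2}=2\lambda_{1}$ that you carry out is exactly the intended argument.
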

\begin{Theorem}
Let $M=I \times F $ with the metric tensor $-dt^2+f(t)^2g_{F}, P=\frac{\partial}{\partial t}, {\rm dim}F=l>1.$ Then
$(M,\overline\nabla)$ is Einstein with the Einstein constant $\alpha$ if and only if one of the following conditions is satisfied:\\
$(1)\alpha=(\lambda_{1}^2-\lambda_{1}\lambda_{2})l,\;\alpha_{F}=c_{2}^2(l\lambda_{1}^2-\lambda_{2}^2),\;f(t)=c_{2};$\\
$(2)\lambda_{1}=\lambda_{2},\;\alpha=0,\;\alpha_{F}=(l-1)c_{2}^2\lambda_{1}^2,\;f(t)=c_{1}e^{\lambda_{1}t}+c_{2};$\\
$(3)\lambda_{2}^2-2l\lambda_{1}^2+l\lambda_{1}\lambda_{2}\neq0,\;\lambda_{2}\neq l\lambda_{1},\;
\alpha=\frac{(3l^2+l)\lambda_{1}^2\lambda_{2}^2-(l^2+l)\lambda_{1}\lambda_{1}^3-2l^2\lambda_{1}^3\lambda_{2}}
{(l\lambda_{1}-\lambda_{2})^2},\;\alpha_{F}=0,
\\ \mbox{}\quad  f(t)=c_{0}e^{(l\lambda_{1}^2-\lambda_{2}^2)t/(l\lambda_{1}-\lambda_{2})};$\\
$(4)\lambda_{2}^2-2l\lambda_{1}^2+l\lambda_{1}\lambda_{2}=0,\;\alpha=l(\lambda_{1}-\frac{1}{2}\lambda_{2})^2,\;
\alpha_{F}=0,\;f(t)=c_{1}e^{\frac{\lambda_{2}}{2}t}.$
\end{Theorem}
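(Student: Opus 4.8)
The plan is to build directly on Corollary 3.13, which already reduces ``$(M,\overline\nabla)$ Einstein with constant $\alpha$'' to three conditions: $(F,\nabla^F)$ is Einstein with some constant $\alpha^F$ (this is exactly conclusion (1), so it costs nothing), the linear ODE
$$f''=\lambda_{2}f'+\Big(\lambda_{1}^{2}-\lambda_{1}\lambda_{2}-\tfrac{\alpha}{l}\Big)f \eqno{(\ast)}$$
and the first-order quadratic constraint
$$(f')^{2}+Bf^{2}+Cff'+D=0,\qquad B=\tfrac{\alpha}{l}+\lambda_{1}\lambda_{2}+\tfrac{\lambda_{2}^{2}-\lambda_{1}^{2}}{1-l},\ \ C=\tfrac{l\lambda_{1}+(l-2)\lambda_{2}}{1-l},\ \ D=\tfrac{\alpha^{F}}{1-l}. \eqno{(\ast\ast)}$$
So the whole question becomes: for which $\alpha,\alpha^{F}$ does $(\ast)$ admit a positive solution that also obeys $(\ast\ast)$, and what are those solutions? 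I would first dispose of $f\equiv\mathrm{const}$: then $(\ast)$ forces $\alpha=l(\lambda_{1}^{2}-\lambda_{1}\lambda_{2})$ and $(\ast\ast)$ reduces to $Bf^{2}+D=0$, i.e. $\alpha^{F}=(l\lambda_{1}^{2}-\lambda_{2}^{2})f^{2}$ after substituting $B$ — this is case (1).

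For non-constant $f$ I would write the general solution of $(\ast)$ via its characteristic roots $r_{1,2}=\tfrac12\big(\lambda_{2}\pm\sqrt{(2\lambda_{1}-\lambda_{2})^{2}-4\alpha/l}\big)$, setting $\mu=\lambda_{1}^{2}-\lambda_{1}\lambda_{2}-\alpha/l$ (so $r_{1}+r_{2}=\lambda_{2}$, $r_{1}r_{2}=-\mu$): $f=c_{1}e^{r_{1}t}+c_{2}e^{r_{2}t}$ when $r_{1}\neq r_{2}$, and $f=(c_{1}+c_{2}t)e^{\lambda_{2}t/2}$ when $r_{1}=r_{2}$ (forcing $\alpha=l(\lambda_{1}-\tfrac12\lambda_{2})^{2}$, $\mu=-\lambda_{2}^{2}/4$). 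The heart of the argument is to substitute these into $(\ast\ast)$ and exploit linear independence: in the generic distinct-root case the functions $e^{2r_{1}t},e^{(r_{1}+r_{2})t},e^{2r_{2}t},1$ are independent, so each coefficient of $(\ast\ast)$ must vanish; this converts $(\ast\ast)$ into a small algebraic system in $c_{i},r_{i},\alpha,\alpha^{F},\lambda_{1},\lambda_{2}$. If both $c_{i}\neq0$, both $r_{i}\neq0$, and $\lambda_{2}\neq0$, the $e^{2r_{i}t}$-equations say $r_{1},r_{2}$ are the roots of $x^{2}+Cx+B=0$, hence $C=-\lambda_{2}$ and $B=-\mu$; the $e^{(r_{1}+r_{2})t}$-equation then collapses to $\lambda_{2}^{2}+4\mu=0$, contradicting $r_{1}\neq r_{2}$. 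So a nontrivial Einstein solution must degenerate: one mode absent, or a characteristic root equal to $0$, or $r_{1}=r_{2}$.

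Going through the surviving branches: if one root is $0$ (so $\mu=0$, $\alpha=l(\lambda_{1}^{2}-\lambda_{1}\lambda_{2})$) with both $c_{i}\neq0$, the three coefficient equations of $(\ast\ast)$ force $B=\lambda_{2}^{2}$ and $C=-2\lambda_{2}$, which after substituting $C$ gives $\lambda_{1}=\lambda_{2}$, then $\alpha=0$, and the constant term gives $\alpha^{F}=(l-1)c_{2}^{2}\lambda_{1}^{2}$ with $f=c_{1}e^{\lambda_{1}t}+c_{2}$ — case (2). If $f=c_{0}e^{rt}$ is a single exponential with $r\neq0$, then $(\ast\ast)$ forces $\alpha^{F}=0$ and $r^{2}+Cr+B=0$; combined with $r^{2}-\lambda_{2}r-\mu=0$ and the two identities $B+\mu=\tfrac{\lambda_{2}^{2}-l\lambda_{1}^{2}}{1-l}$ and $\lambda_{2}+C=\tfrac{l\lambda_{1}-\lambda_{2}}{1-l}$ (both independent of $\alpha$), this pins $r=\tfrac{l\lambda_{1}^{2}-\lambda_{2}^{2}}{l\lambda_{1}-\lambda_{2}}$, provided $\lambda_{2}\neq l\lambda_{1}$ — exactly the excluded value — whereupon $\alpha$ is determined by $r^{2}-\lambda_{2}r-\mu=0$, giving case (3). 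Finally, in the repeated-root case, expanding $(\ast\ast)$ for $f=(c_{1}+c_{2}t)e^{\lambda_{2}t/2}$ the $t^{2}e^{\lambda_{2}t}$-coefficient equals $\tfrac{\lambda_{2}^{2}-2l\lambda_{1}^{2}+l\lambda_{1}\lambda_{2}}{2(1-l)}$ times $c_{2}^{2}$: if this is nonzero then $c_{2}=0$ but then the single exponential $e^{\lambda_{2}t/2}$ cannot satisfy $(\ast\ast)$, so there is no solution; if it is zero (case (4)'s constraint), the $te^{\lambda_{2}t}$-coefficient, using $\lambda_{2}+C\neq0$ — a short check under that constraint — still forces $c_{2}=0$, leaving $f=c_{1}e^{\lambda_{2}t/2}$, $\alpha=l(\lambda_{1}-\tfrac12\lambda_{2})^{2}$, $\alpha^{F}=0$, which is case (4). (Note $\lambda_{2}^{2}-2l\lambda_{1}^{2}+l\lambda_{1}\lambda_{2}=0$ is precisely the condition that the exponent of case (3) equals $\lambda_{2}/2$, so (3) and (4) are complementary alternatives.) The converse direction is just plugging each of the four data sets back into $(\ast)$ and $(\ast\ast)$ and invoking Corollary 3.13.

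The main obstacle I anticipate is the bookkeeping in the degenerate sub-cases — when two of the exponents $2r_{1},\,r_{1}+r_{2},\,2r_{2},\,0$ collide ($\lambda_{2}=0$, a vanishing characteristic root, or $r_{1}=r_{2}$), one gets fewer and different coefficient equations — together with the two delicate exclusions: showing $\lambda_{2}=l\lambda_{1}$ is genuinely incompatible with a non-constant Einstein solution (hence must be barred in case (3)), and showing $c_{2}=0$ is forced throughout the repeated-root case so that no ``$(c_{1}+c_{2}t)e^{\lambda_{2}t/2}$'' family survives.
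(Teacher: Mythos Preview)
Your proposal is correct and follows essentially the same route as the paper: reduce via Corollary~3.13 to the ODE $(\ast)$ and the quadratic constraint $(\ast\ast)$, write the general solution of $(\ast)$ through its characteristic roots, substitute into $(\ast\ast)$, and separate by linear independence of exponentials --- the paper organizes this as a three-way split on the sign of the discriminant (real distinct, repeated, complex-conjugate roots) rather than your generic-versus-degenerate dichotomy, but the mechanics are identical. Two small remarks: $\lambda_{2}\neq0$ is a standing hypothesis of the paper, so that degenerate sub-case never arises; and your Vieta contradiction for the two-mode case already covers the complex-conjugate-root situation (the paper's case~III) once you allow complex $c_{i}$, while your repeated-root argument via the $t^{2}$- and $t$-coefficients is in fact cleaner than the paper's more laborious algebra in its case~II.$3'$.
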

\begin{proof}
Let $\frac{\alpha}{l}=d_{0},\;\frac{\alpha_{F}}{1-l}=\overline {d_{0}},\;
a_{0}=\frac{\lambda_{2}+\sqrt{(2\lambda_{1}-\lambda_{2})^2-4d_{0}}}{2},\;
b_{0}=\frac{\lambda_{2}-\sqrt{(2\lambda_{1}-\lambda_{2})^2-4d_{0}}}{2},$ then
$a_{0}+b_{0}=\lambda_{2},\;a_{0}b_{0}=d_{0}+\lambda_{1}\lambda_{2}-\lambda_{1}^2.$\\
¢Ù $d_{0}<(\lambda_{1}-\frac{1}{2}\lambda_{2})^2,$ then $f(t)=c_{1}e^{a_{0}t}+c_{2}e^{b_{0}t}.$
By Corollary $3.13(3),$ then
\begin{eqnarray}
\lefteqn{\overline {d_{0}}+c_{1}^2\Big(a_{0}^2+a_{0}b_{0}+\lambda_{1}^2+\frac{\lambda_{2}^2-\lambda_{1}^2}{1-l}
+\frac{l}{1-l}\lambda_{1}a_{0}+\frac{l-2}{1-l}\lambda_{2}a_{0}\Big)e^{2a_{0}t} }\hspace{13cm}  \nonumber \\
\setcounter{equation}{20}
\lefteqn{ +c_{2}^2\Big(b_{0}^2+a_{0}b_{0}+\lambda_{1}^2+\frac{\lambda_{2}^2-\lambda_{1}^2}{1-l}
+\frac{l}{1-l}\lambda_{1}b_{0}+\frac{l-2}{1-l}\lambda_{2}b_{0}\Big)e^{2b_{0}t}  }\hspace{12.4cm}  \\
\lefteqn{ +c_{1}c_{2}\Big[4a_{0}b_{0}+\frac{2\lambda_{2}^2-2l\lambda_{1}^2}{1-l}
+\Big(\frac{l}{1-l}\lambda_{1}+\frac{l-2}{1-l}\lambda_{2}\Big)(a_{0}+b_{0})\Big]e^{(a_{0}+b_{0})t}=0. }\hspace{12.4cm} \nonumber
\end{eqnarray} \par
$1)b_{0}=0,$ we have $d_{0}=\lambda_{1}^2-\lambda_{1}\lambda_{2}<(\lambda_{1}-\frac{1}{2}\lambda_{2})^2,\;
a_{0}=\lambda_{2},\;\alpha=(\lambda_{1}^2-\lambda_{1}\lambda_{2})l.$
By the equation $(20),$ we get
$$\overline {d_{0}}+c_{1}^2\frac{l\lambda_{1}(\lambda_{2}-\lambda_{1})}{1-l}e^{2\lambda_{2}t}
+c_{2}^2\frac{\lambda_{2}^2-l\lambda_{1}^2}{1-l}+c_{1}c_{2}\frac{l(\lambda_{2}+2\lambda_{1})(\lambda_{2}-\lambda_{1})}
{1-l}e^{\lambda_{2}t}=0.$$ Since $e^{2\lambda_{2}t}$ and $e^{\lambda_{2}t}$ are linearly independent, we have
\begin{equation*}
\begin{cases}
c_{1}^2\frac{l\lambda_{1}(\lambda_{2}-\lambda_{1})}{1-l}=0\\
\overline {d_{0}}+c_{2}^2\frac{\lambda_{2}^2-l\lambda_{1}^2}{1-l}=0\\
c_{1}c_{2}\frac{l(\lambda_{2}+2\lambda_{1})(\lambda_{2}-\lambda_{1})}{1-l}=0
\end{cases}
\end{equation*}
$1^\prime. c_{1}=0,\;c_{2}\neq0.$ We have
$\underline{\alpha=(\lambda_{1}^2-\lambda_{1}\lambda_{2})l,\;\alpha_{F}=c_{2}^2(l\lambda_{1}^2-\lambda_{2}^2),\;
f(t)=c_{2}}.$\\
$2^\prime. c_{1}\neq0,\;c_{2}=0.$ We have
$\underline{\lambda_{1}=\lambda_{2},\;\alpha=\alpha_{F}=0,\;f(t)=c_{1}e^{\lambda_{1}t}}.$\\
$3^\prime. c_{1}\neq0,\;c_{2}\neq0.$ We have
$\underline{\lambda_{1}=\lambda_{2},\;\alpha=0,\;\alpha_{F}=(l-1)c_{2}^2\lambda_{1}^2,\;f(t)=c_{1}e^{\lambda_{1}t}+c_{2}}.$
\\It is easy to see that the conclusion of $2^\prime$ is a special case of $3^\prime.$  \par
$2)b_{0}\neq0,$ $1^\prime.\; c_{1}=0,\;c_{2}\neq0.$ Since $e^{2a_{0}t},e^{2b_{0}t}$ and $e^{(a_{0}+b_{0})t}$ are linearly independent, we have $\overline {d_{0}}=0,\;b_{0}^2+a_{0}b_{0}+\lambda_{1}^2
+\frac{\lambda_{2}^2-\lambda_{1}^2}{1-l}+\frac{l}{1-l}\lambda_{1}b_{0}+\frac{l-2}{1-l}\lambda_{2}b_{0}=0,$
then $\alpha_{F}=0,\;\frac{l\lambda_{1}-\lambda_{2}}{1-l}b_{0}=\frac{l\lambda_{1}^2-\lambda_{2}^2}{1-l}.$
If $\lambda_{2}=l\lambda_{1},$ we have $0=l\lambda_{1}^2,$ this is a contradiction with $\lambda_{1}\neq0.$
So $\lambda_{2}\neq l\lambda_{1}$ and $b_{0}=\frac{l\lambda_{1}^2-\lambda_{2}^2}{l\lambda_{1}-\lambda_{2}}.$
Considering that $b_{0}<\frac{\lambda_{2}}{2},$ we get
 $\frac{\lambda_{2}^2+l\lambda_{1}\lambda_{2}-2l\lambda_{1}^2}{l\lambda_{1}-\lambda_{2}}>0.$
$a_{0}=\frac{l\lambda_{1}(\lambda_{2}-\lambda_{1})}{l\lambda_{1}-\lambda_{2}},
d_{0}=\frac{(3l+1)\lambda_{1}^2\lambda_{2}^2-(l+1)\lambda_{1}\lambda_{2}^3-2l\lambda_{1}^3\lambda_{2}}
{(l\lambda_{1}-\lambda_{2})^2},$  $\alpha=\frac{(3l^2+l)\lambda_{1}^2\lambda_{2}^2-(l^2+l)\lambda_{1}\lambda_{2}^3-2l^2\lambda_{1}^3\lambda_{2}}
{(l\lambda_{1}-\lambda_{2})^2}$ and $d_{0}$ satisfies $d_{0}<(\lambda_{1}-\frac{1}{2}\lambda_{2})^2.$
So in this case, we obtain \\ $\underline{\frac{\lambda_{2}^2+l\lambda_{1}\lambda_{2}-2l\lambda_{1}^2}{l\lambda_{1}-\lambda_{2}}>0,
\;\alpha=\frac{(3l^2+l)\lambda_{1}^2\lambda_{2}^2-(l^2+l)\lambda_{1}\lambda_{2}^3-2l^2\lambda_{1}^3\lambda_{2}}
{(l\lambda_{1}-\lambda_{2})^2},\;\alpha_{F}=0,\;
f(t)=c_{2}e^{\frac{l\lambda_{1}^2-\lambda_{2}^2}{l\lambda_{1}-\lambda_{2}}t}.}$\\
$2^\prime.\; c_{1}\neq0,\;c_{2}=0.$ Using the same method we can get
$a_{0}=\frac{l\lambda_{1}^2-\lambda_{2}^2}{l\lambda_{1}-\lambda_{2}},\;
b_{0}=\frac{l\lambda_{1}(\lambda_{2}-\lambda_{1})}{l\lambda_{1}-\lambda_{2}}$ and
$\underline{\frac{\lambda_{2}^2+l\lambda_{1}\lambda_{2}-2l\lambda_{1}^2}{l\lambda_{1}-\lambda_{2}}<0,
\;\alpha=\frac{(3l^2+l)\lambda_{1}^2\lambda_{2}^2-(l^2+l)\lambda_{1}\lambda_{2}^3-2l^2\lambda_{1}^3\lambda_{2}}
{(l\lambda_{1}-\lambda_{2})^2},\;\alpha_{F}=0,\;
f(t)=c_{1}e^{\frac{l\lambda_{1}^2-\lambda_{2}^2}{l\lambda_{1}-\lambda_{2}}t}.}$\\
So by $1^\prime$ and $2^\prime,$ we get Theorem $3.16(3).$ \\
$3^\prime.\; c_{1}\neq0,\;c_{2}\neq0.$  Since $e^{2a_{0}t},e^{2b_{0}t}$ and $e^{(a_{0}+b_{0})t}$ are linearly independent, we have
$$a_{0}^2+a_{0}b_{0}+\lambda_{1}^2+\frac{\lambda_{2}^2-\lambda_{1}^2}{1-l}
+\frac{l}{1-l}\lambda_{1}a_{0}+\frac{l-2}{1-l}\lambda_{2}a_{0}=0; \eqno{(21a)}  $$
$$b_{0}^2+a_{0}b_{0}+\lambda_{1}^2+\frac{\lambda_{2}^2-\lambda_{1}^2}{1-l}
+\frac{l}{1-l}\lambda_{1}b_{0}+\frac{l-2}{1-l}\lambda_{2}b_{0}=0.   \eqno{(21b)}$$
$(21a)-(21b)$ we get $(a_{0}-b_{0})\lambda_{2}+\frac{l\lambda_{1}}{1-l}(a_{0}-b_{0})+\frac{l-2}{1-l}\lambda_{2}(a_{0}-b_{0})=0, $
since $ a_{0}\neq b_{0}$ we have $\lambda_{2}+\frac{l\lambda_{1}}{1-l}+\frac{l-2}{1-l}\lambda_{2}=0,$ then $\lambda_{2}=l\lambda_{1},$ using $(21a)$ again we get $0=l\lambda_{1}^2,$ this is a contradiction with $\lambda_{1}\neq0.$ So in case $3^\prime$ we have no solution.\\
¢Ú$d_{0}=(\lambda_{1}-\frac{1}{2}\lambda_{2})^2,$ then $f(t)=c_{1}e^{\frac{\lambda_{2}}{2}t}+c_{2}te^{\frac{\lambda_{2}}{2}t}.$ By Corollary $3.13(3),$ we get
\begin{eqnarray*}
\lefteqn{\overline {d_{0}}+\Big(\frac{\lambda_{2}}{2}c_{1}+c_{2}+\frac{\lambda_{2}}{2}c_{2}t\Big)^2e^{\lambda_{2}t} +\frac{(5-l)\lambda_{2}^2-4l\lambda_{1}^2}{4(1-l)}(c_{1}+c_{2}t)^2e^{\lambda_{2}t}  }\hspace{11cm}  \\
\lefteqn{+\Big(\frac{l}{1-l}\lambda_{1}+\frac{l-2}{1-l}\lambda_{2}\Big)(c_{1}+c_{2}t)
\Big(\frac{\lambda_{2}}{2}c_{1}+c_{2}+\frac{\lambda_{2}}{2}c_{2}t\Big)e^{\lambda_{2}t}=0  }\hspace{10.5cm}
\end{eqnarray*}
$1^\prime.\; c_{1}=0,\;c_{2}\neq0.$ The coefficient of $e^{\lambda_{2}t}$ is $c_{2}^2=0,$ this is a contradiction with $c_{2}\neq0.$\\
$2^\prime.\; c_{1}\neq0,\;c_{2}=0.$ Then $\overline {d_{0}}=0$ and from the coefficient of $e^{\lambda_{2}t}$ we can get $\underline{\lambda_{2}^2+l\lambda_{1}\lambda_{2}}\\ \underline{-2l\lambda_{1}^2=0,}$ so $\underline{\alpha=d_{0}l=l(\lambda_{1}-\frac{1}{2}\lambda_{2})^2,\;\alpha_{F}=0,\;f(t)
=c_{1}e^{\frac{\lambda_{2}}{2}t}.}$
\\
$3^\prime.\; c_{1}\neq0,\;c_{2}\neq0.$ From the coefficient of $t^2e^{\lambda_{2}t}$ we get
$$\lambda_{2}^2+l\lambda_{1}\lambda_{2}-2l\lambda_{1}^2=0,     \eqno{(22)}$$
by the equation $(22),$ the coefficient of $te^{\lambda_{2}t}$ becomes
$$\frac{(l-3)\lambda_{2}+2l\lambda_{1}}{2(1-l)}c_{2}-\frac{\lambda_{2}^2}{4}c_{1}=0,   \eqno{(23)}$$
the coefficient of $e^{\lambda_{2}t}$ is $\frac{l\lambda_{1}-\lambda_{2}}{1-l}c_{1}c_{2}+c_{2}^2=0,$
then $c_{2}=\frac{\lambda_{2}-l\lambda_{1}}{1-l}c_{1},$ so by equation $(23)$ we get
$(2l-7-l^2)\lambda_{2}=(4l^2-10l)\lambda_{1},$ considering that $2l-7-l^2\neq0, 4l^2-10l\neq0,$ we obtain
$\lambda_{2}=\frac{(4l^2-10l)}{(2l-7-l^2)}\lambda_{1},$ using the equation $ (22)$ again, we have
$(l-1)^2(3l^2-15l+49)=0,$ but $l-1\neq0,3l^2-15l+49\neq0,$ so we have no solution in this case.\\
¢Û$d_{0}>(\lambda_{1}-\frac{1}{2}\lambda_{2})^2,$ let $h_{0}=\frac{1}{2}\sqrt{4d_{0}-(2\lambda_{1}-\lambda_{2})^2},$
then $f(t)=e^{\frac{\lambda_{2}}{2}t}(c_{1}cos(h_{0}t)+c_{2}sin(h_{0}t)).$  By Corollary $3.13(3),$ we get
\begin{eqnarray*}
\lefteqn{\overline {d_{0}}+\Big[\Big(\frac{\lambda_{2}}{2}c_{1}+c_{2}h_{0}\Big)cos(h_{0}t)
+\Big(\frac{\lambda_{2}}{2}c_{2}-c_{1}h_{0}\Big)sin(h_{0}t)\Big]^2e^{\lambda_{2}t} }\hspace{15cm}\\ \lefteqn{+\Big(d_{0}+\lambda_{1}\lambda_{2}+\frac{\lambda_{2}^2-\lambda_{1}^2}{1-l}\Big)
\Big(c_{1}cos(h_{0}t)+c_{2}sin(h_{0}t)\Big)^2e^{\lambda_{2}t}+\Big(\frac{l}{1-l}\lambda_{1}
+\frac{l-2}{1-l}\lambda_{2}\Big)  }\hspace{15cm}\\
\lefteqn{\Big(c_{1}cos(h_{0}t)+c_{2}sin(h_{0}t)\Big)\Big[\Big(\frac{\lambda_{2}}{2}c_{1}+c_{2}h_{0}\Big)cos(h_{0}t)
+\Big(\frac{\lambda_{2}}{2}c_{2}-c_{1}h_{0}\Big)sin(h_{0}t)\Big]e^{\lambda_{2}t}=0.  }\hspace{15cm}
\end{eqnarray*}
The coefficient of $cos^2(h_{0}t)e^{\lambda_{2}t}$ is
$$\Big[d_{0}+\frac{(l+1)\lambda_{2}^2+(4-2l)\lambda_{1}\lambda_{2}-4\lambda_{1}^2}{4(1-l)}\Big]c_{1}^2
+c_{2}^2h_{0}^2-\frac{l\lambda_{1}-\lambda_{2}}{1-l}c_{1}c_{2}h_{0}=0.   \eqno{(24)}$$
Similarly the coefficient of $sin^2(h_{0}t)e^{\lambda_{2}t}$ is
$$\Big[d_{0}+\frac{(l+1)\lambda_{2}^2+(4-2l)\lambda_{1}\lambda_{2}-4\lambda_{1}^2}{4(1-l)}\Big]c_{2}^2
+c_{1}^2h_{0}^2-\frac{l\lambda_{1}-\lambda_{2}}{1-l}c_{1}c_{2}h_{0}=0.   \eqno{(25)}$$
If $c_{1}=0,\;c_{2}\neq0,$ by the equation $(24)$ we get $c_{2}^2h_{0}^2=0,$ then $h_{0}=0,$
this is a contradiction with $h_{0}\neq0;$ \\
If $c_{1}\neq0,\;c_{2}=0,$ by the equation $(25)$ we get $c_{1}^2h_{0}^2=0,$ then $h_{0}=0,$
this is a contradiction with $h_{0}\neq0.$
So $c_{1}\neq0,\;c_{2}\neq0,$ $(24)+(25)$ we get
$$d_{0}=\frac{[(2l-4)\lambda_{1}-l\lambda_{2}](\lambda_{2}-\lambda_{1})}{4(1-l)}.   \eqno{(26)}$$
Considering $d_{0}>(\lambda_{1}-\frac{1}{2}\lambda_{2})^2,$ we have
$$\lambda_{2}^2-2l\lambda_{1}^2+l\lambda_{1}\lambda_{2}>0.  \eqno{(27)}$$
The coefficient of $sin(h_{0}t)cos(h_{0}t)e^{\lambda_{2}t}$ is
$$\frac{\lambda_{2}^2-2l\lambda_{1}^2+l\lambda_{1}\lambda_{2}}{1-l}c_{1}c_{2}
+\frac{l\lambda_{1}-\lambda_{2}}{1-l}h_{0}(c_{2}^2-c_{1}^2)=0.    \eqno{(28)}$$
Using $(24)-(25),$ we get
$$\frac{\lambda_{2}^2-2l\lambda_{1}^2+l\lambda_{1}\lambda_{2}}{2(1-l)}(c_{1}^2-c_{2}^2)
+\frac{2(l\lambda_{1}-\lambda_{2})}{1-l}h_{0}c_{1}c_{2}.       \eqno{(29)}$$
Let $a=\frac{\lambda_{2}^2-2l\lambda_{1}^2+l\lambda_{1}\lambda_{2}}{2(1-l)}<0,\;
b=\frac{l\lambda_{1}-\lambda_{2}}{1-l}h_{0},\;x=c_{1}^2-c_{2}^2,\;y=c_{1}c_{2}\neq0,$ from equations $(28)$ and $(29)$ we obtain
$$ax+2by=0,         \eqno{(30a)} $$
$$2ay-bx=0.         \eqno{(30b)} $$
Through $(30a)\times b+(30b)\times a,$ then  $a^2+b^2=0,$ which means $\Big[\frac{\lambda_{2}^2-2l\lambda_{1}^2+l\lambda_{1}\lambda_{2}}{2(1-l)}\Big]^2+
\Big[\frac{l\lambda_{1}-\lambda_{2}}{1-l}h_{0}\Big]^2=0,$ then we get
$$d_{0}=\frac{(3l+1)\lambda_{1}^2\lambda_{2}^2-(l+1)\lambda_{1}\lambda_{2}^3-2l\lambda_{1}^3\lambda_{2}}
{(l\lambda_{1}-\lambda_{2})^2}.   \eqno{(31)} $$
Using equations $(26)$ and $(31)$, we have
$(\lambda_{1}-\lambda_{2})[(2-l)\lambda_{1}-\lambda_{2}](\lambda_{2}^2+l\lambda_{1}\lambda_{2}-2l\lambda_{2}^2)=0.$
By inequality $(27),$ we get $\lambda_{1}=\lambda_{2}$ or $\lambda_{2}=(2-l)\lambda_{1}.$   \par
$1)\lambda_{1}=\lambda_{2},$ by the equation $(26)$ we get $d_{0}=0,$ this is a contradiction with
$d_{0}>(\lambda_{1}-\frac{1}{2}\lambda_{2})^2=\frac{1}{4}\lambda_{1}^2>0.$  \par
$2)\lambda_{2}=(2-l)\lambda_{1},$  by the equation $(26)$ we get
$d_{0}=\frac{l^2-4}{4}\lambda_{1}^2<\frac{l^2}{4}\lambda_{1}^2.$
On the other hand, $d_{0}>(\lambda_{1}-\frac{1}{2}\lambda_{2})^2=\frac{l^2}{4}\lambda_{1}^2,$ this is a contradiction. \par In a word, we have no solution in the case of $d_{0}>(\lambda_{1}-\frac{1}{2}\lambda_{2})^2.$
\end{proof}
\begin{remark}
When $\lambda_{1}=\lambda_{2}=1, $ we get Theorem $26$ in $[12]$.
\end{remark}
\begin{Proposition}
Let $M=I \times F $ with the metric tensor $-dt^2+f(t)^2g_{F}, P=\frac{\partial}{\partial t}.$ Then
$(M,\overline\nabla)$ has constant scalar curvature $\overline S$ if and only if $(F,\nabla^F)$ has constant scalar curvature $S^F$ and
$$\overline S=\frac{S^F}{f^2}-2l\frac{f^{\prime\prime}}{f}-l(l-1)\frac{(f^\prime)^2}{f^2}+l^2(\lambda_{1}
+\lambda_{2})\frac{f^\prime}{f}+l[\lambda_{1}^2+\lambda_{2}^2-(l+1)\lambda_{1}\lambda_{2}].\eqno{(32)}$$
\end{Proposition}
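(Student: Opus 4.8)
The plan is to specialize the general scalar-curvature formula of Proposition 3.7 (the case $P\in\Gamma(TB)$) to the present situation: $B=I$ with $g_{B}=-dt^{2}$, $n_{1}={\rm dim}I=1$, $F$ with $n_{2}={\rm dim}F=l$, $\overline n=l+1$, warping function $f=f(t)$, and $P=\frac{\partial}{\partial t}\in\Gamma(TI)=\Gamma(TB)$; the asserted equivalence then falls out of the resulting identity by a separation-of-variables observation.

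First I would evaluate each geometric quantity entering the formula of Proposition 3.7 on the one-dimensional Lorentzian base. Since ${\rm dim}B=1$ and $\overline R^{B}(X,Y)Z$ is by definition antisymmetric in $X,Y$, the curvature of $\overline\nabla^{B}$ vanishes identically, so $\overline S^{B}=0$; likewise $div_{B}P=0$ because $\nabla^{B}_{\partial/\partial t}(\partial/\partial t)=0$. From $g_{I}(\frac{\partial}{\partial t},\frac{\partial}{\partial t})=-1$ one obtains $\pi(P)=g(P,P)=-1$, $grad_{B}f=-f^{\prime}\frac{\partial}{\partial t}$, hence $|grad_{B}f|^{2}_{B}=-(f^{\prime})^{2}$ and $\Delta_{B}f=div_{B}(grad_{B}f)=-f^{\prime\prime}$, while $Pf=f^{\prime}$. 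Substituting these values, together with $n_{1}=1,\ n_{2}=l,\ \overline n=l+1$, into the seven-term expression of Proposition 3.7 collapses it to
$$\overline S=\frac{S^{F}}{f^{2}}-2l\frac{f^{\prime\prime}}{f}-l(l-1)\frac{(f^{\prime})^{2}}{f^{2}}+l^{2}(\lambda_{1}+\lambda_{2})\frac{f^{\prime}}{f}+l\big[\lambda_{1}^{2}+\lambda_{2}^{2}-(l+1)\lambda_{1}\lambda_{2}\big],$$
which is exactly (32); in particular the term $[n_{2}(\overline n+n_{1}-1)\lambda_{1}\lambda_{2}-n_{2}(\lambda_{1}^{2}+\lambda_{2}^{2})]\pi(P)$ becomes $l[\lambda_{1}^{2}+\lambda_{2}^{2}-(l+1)\lambda_{1}\lambda_{2}]$ once $\pi(P)=-1$ is inserted.

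It remains to extract the equivalence. In the identity just established, every summand on the right except $S^{F}/f^{2}$ depends only on the coordinate $t\in I$, whereas $S^{F}$ is a function on $F$ alone. Hence if $(M,\overline\nabla)$ has constant scalar curvature $\overline S$, then $S^{F}=f(t)^{2}\big(\overline S-\psi(t)\big)$, where $\psi$ denotes the $t$-dependent part of the right-hand side; being simultaneously a function of $t$ only and a function on $F$ only, $S^{F}$ must be a constant, and (32) holds automatically. Conversely, if $(F,\nabla^{F})$ has constant scalar curvature and $f$ is such that the right-hand side of (32) equals a constant $\overline S$, then $\overline S$ is constant on all of $M=I\times F$. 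This proves both implications.

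I do not expect a genuine obstacle: the proof is a direct substitution into Proposition 3.7 followed by a one-line separation-of-variables argument. The only point demanding care is the bookkeeping of the Lorentzian signature — the three sign reversals produced by $g_{I}(\frac{\partial}{\partial t},\frac{\partial}{\partial t})=-1$ (in $\pi(P)$, in $|grad_{B}f|^{2}_{B}$, and in $\Delta_{B}f$), together with the two vanishings $\overline S^{B}=0$ and $div_{B}P=0$ valid in dimension one; it is precisely these that pin down the coefficients displayed in (32).
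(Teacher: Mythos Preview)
Your proposal is correct and follows exactly the approach of the paper: specialize Proposition~3.7 to the one-dimensional Lorentzian base (making explicit the sign bookkeeping that yields $\overline S^{B}=0$, $div_{B}P=0$, $\Delta_{B}f=-f''$, $|grad_{B}f|^{2}_{B}=-(f')^{2}$, $\pi(P)=-1$) and then invoke separation of variables. The paper's own proof is the same, only tersely stated.
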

\begin{proof}
Considering that $M=I \times F $ with the metric tensor $-dt^2+f(t)^2g_{F}\mbox{ and } P=\frac{\partial}{\partial t},$ then by Proposition $3.7$ we get the equation $(32).$ With the fact that $S^F$ is function defined on $F,$ and $f$ is function defined on $I,$ then using variables separation we complete the proof of this Proposition.
\end{proof}
\begin{Proposition}
Let $M=I \times F $ with the metric tensor $-dt^2+f(t)^2g_{F},\;P\in\Gamma(TF)$. If $(M,\overline\nabla)$ has constant scalar curvature $\overline S,$ then\\
$(1)\mbox{If } \lambda_{1}+\lambda_{2}\neq0 \mbox{ and }\lambda_{1}^2+\lambda_{2}^2-\overline n\lambda_{1}\lambda_{2}=0,\mbox{ and }div_{F}P $ is a constant, then $S^F$ is a \\ \mbox{}\quad constant;\\
$(2)\mbox{If } \lambda_{1}=-\lambda_{2}\neq0\mbox{ and }  g_{F}(P,P)$  is a constant, then $S^F$ is a constant;\\
$(3)\mbox{If } \lambda_{1}+\lambda_{2}\neq0 \mbox{ and }\lambda_{1}^2+\lambda_{2}^2-\overline n\lambda_{1}\lambda_{2}\neq0,\mbox{ and }div_{F}P,\;g_{F}(P,P) $ are constants, then\\ \mbox{}\quad $S^F$ is a constant.
\end{Proposition}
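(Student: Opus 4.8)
The plan is to reduce everything to the scalar-curvature identity of Proposition 3.8 and then separate variables. With $B=I$, $g_B=-dt^2$ and $\dim I=1$ one has $\overline S^B=0$, $\Delta_B f=-f''$, $|grad_B f|^2_B=-(f')^2$, and since $P\in\Gamma(TF)$ has no component along $I$, $\pi(P)=g(P,P)=f^2 g_F(P,P)$. Putting $l=\overline n-1$ and inserting these into Proposition 3.8 gives
\begin{equation*}
\overline S=-2l\frac{f''}{f}-l(l-1)\frac{(f')^2}{f^2}+\frac{S^F}{f^2}-(\overline n-1)\bigl(\lambda_1^2+\lambda_2^2-\overline n\lambda_1\lambda_2\bigr)f^2 g_F(P,P)+(\overline n-1)(\lambda_1+\lambda_2)\,div_F P ,
\end{equation*}
an equality of functions on $M=I\times F$ whose left-hand side is a constant. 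Write $A(t)=-2lf''/f-l(l-1)(f')^2/f^2$, $c_1=(\overline n-1)(\lambda_1^2+\lambda_2^2-\overline n\lambda_1\lambda_2)$ and $c_2=(\overline n-1)(\lambda_1+\lambda_2)$, so that $c_1=0$ precisely when $\lambda_1^2+\lambda_2^2-\overline n\lambda_1\lambda_2=0$ and $c_2=0$ precisely when $\lambda_1=-\lambda_2$. The identity then rearranges to
\begin{equation*}
\frac{S^F}{f^2}=\bigl(\overline S-A(t)\bigr)+c_1 f^2 g_F(P,P)-c_2\, div_F P .
\end{equation*}

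The next step is to observe that under the hypotheses of each of the three cases the right-hand side is a function of $t$ alone. In case $(1)$ the assumption $\lambda_1^2+\lambda_2^2-\overline n\lambda_1\lambda_2=0$ forces $c_1=0$ while $div_F P$ is constant by hypothesis, so only $\overline S-A(t)$ together with a constant survives; in case $(2)$ the assumption $\lambda_1=-\lambda_2$ forces $c_2=0$ while $g_F(P,P)$ is constant, so $c_1 f^2 g_F(P,P)$ depends only on $t$; in case $(3)$ both $g_F(P,P)$ and $div_F P$ are constant, so the two correction terms are, respectively, a function of $t$ and a constant (the values of $c_1,c_2$ being irrelevant). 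Hence in every case there is a function $\Phi$ on $I$, independent of the point of $F$, with $S^F/f^2=\Phi$ identically, i.e. $S^F(p)=\Phi(t)f(t)^2$ on all of $I\times F$.

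Finally, comparing this last identity at two parameters $t_1,t_2\in I$ yields $\Phi(t_1)f(t_1)^2=\Phi(t_2)f(t_2)^2$, so $t\mapsto\Phi(t)f(t)^2$ is a constant $K$, and therefore $S^F\equiv K$ on $F$; that is, $(F,\nabla^F)$ has constant scalar curvature, which is the assertion in each case. The argument is a routine separation of variables; the only thing that needs genuine care is the algebraic bookkeeping identifying the vanishing of $c_1$ and $c_2$ with exactly the conditions in $(1)$, $(2)$ and $(3)$, and the verification that in each case no $F$-dependent quantity other than $S^F$ itself remains on the right-hand side. I do not foresee a real obstacle.
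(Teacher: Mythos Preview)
Your proof is correct and follows essentially the same route as the paper: you specialize Proposition~3.8 to $B=I$ with $g_B=-dt^2$ to obtain the paper's equation~(33), and then argue by separation of variables in each case. The only difference is cosmetic---you write the coefficients as $(\overline n-1)(\cdots)$ while the paper writes $l(\cdots)$ with $l=\overline n-1$, and you spell out the separation-of-variables step more carefully than the paper does.
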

\begin{proof}
Considering that $M=I \times F $ with the metric tensor $-dt^2+f(t)^2g_{F}, P\in\Gamma(TF),$ then by Proposition $3.8,$
we can get
$$\overline S=\frac{S^F}{f^2}-2l\frac{f^{\prime\prime}}{f}-l(l-1)\frac{(f^\prime)^2}{f^2}
+[l\overline n\lambda_{1}\lambda_{2}-l(\lambda_{1}^2+\lambda_{2}^2)]f^2g_{F}(P,P)
+l(\lambda_{1}+\lambda_{2})div_{F}P.    \eqno{(33)} $$ Then by variables separation, we have:\\
$(1)\mbox{If } \lambda_{1}+\lambda_{2}\neq0 \mbox{ and }\lambda_{1}^2+\lambda_{2}^2-\overline n\lambda_{1}\lambda_{2}=0,\mbox{ and }div_{F}P $ is a constant, we can obtain
$\overline S=\frac{S^F}{f^2}-2l\frac{f^{\prime\prime}}{f}-l(l-1)\frac{(f^\prime)^2}{f^2}
+l(\lambda_{1}+\lambda_{2})div_{F}P. $ Then $S^F$ is a constant;\\
$(2)\mbox{If } \lambda_{1}=-\lambda_{2}\neq0\mbox{ and }  g_{F}(P,P)$  is a constant, which means
$\lambda_{1}+\lambda_{2}=0 \mbox{ and }\lambda_{1}^2+\lambda_{2}^2-\overline n\lambda_{1}\lambda_{2}\neq0,\mbox{ and }g_{F}(P,P) $ is a constant, we can obtain
$\overline S=\frac{S^F}{f^2}-2l\frac{f^{\prime\prime}}{f}-l(l-1)\frac{(f^\prime)^2}{f^2}
+[l\overline n\lambda_{1}\lambda_{2}-l(\lambda_{1}^2+\lambda_{2}^2)]f^2g_{F}(P,P).  $ then $S^F$ is a constant;\\
$(3)$It is obvious.\\
$(4)$If $\lambda_{1}+\lambda_{2}=0 $ and $\lambda_{1}^2+\lambda_{2}^2-\overline n\lambda_{1}\lambda_{2}=0,$
 then we can get $\lambda_{1}=\lambda_{2}=0,$ which is a contradiction.
\end{proof}
In $(32),$ we make the change of variable $ f(t)=\sqrt{v(t)}$ and have the following equation:\\
$$v^{\prime\prime}(t)+\frac{l-3}{4}\frac{v^\prime(t)^2}{v(t)}-\frac{l}{2}(\lambda_{1}+\lambda_{2})v^\prime(t)
+\Big[(l+1)\lambda_{1}\lambda_{2}-\lambda_{1}^2-\lambda_{2}^2+\frac{\overline S}{l}\Big]v(t)-\frac{S^F}{l}=0. \eqno{(34)}  $$
\begin{remark}
When $\lambda_{1}=\lambda_{2}=1, $ equations $(32),(33),(34)$ respectively become $(20),(21),$ $(22)$ in $[12]$,
then by Proposition $3.17,$ we can get Corollary $27$ in $[12],$ and by Proposition $3.18.(3),$ we can get Corollary $28$ in $[12].$
\end{remark}
\begin{Theorem}
Let $M=I \times F $ with the metric tensor $-dt^2+f(t)^2g_{F}, P=\frac{\partial}{\partial t},$ and
${\rm dim}F=l=3.$ Then
$(M,\overline\nabla)$ has constant scalar curvature $\overline S$ if and only if $(F,\nabla^F)$ has constant scalar curvature $S^F$ and\\
$(1)\overline S<\frac{27}{16}(\lambda_{1}+\lambda_{2})^2+3\lambda_{1}^2+3\lambda_{2}^2-12\lambda_{1}\lambda_{2}
\mbox{ and } \overline S\neq3\lambda_{1}^2+3\lambda_{2}^2-12\lambda_{1}\lambda_{2},\\ \mbox{ } \quad
v(t)=c_{1}e^{(\big(\frac{3}{2}(\lambda_{1}+\lambda_{2})+\sqrt{\frac{9}{4}(\lambda_{1}+\lambda_{2})^2
-\frac{4}{3}\overline S+4\lambda_{1}^2+4\lambda_{2}^2-16\lambda_{1}\lambda_{2}}\big)/2)t}\\ \mbox{ } \qquad \quad \!
+c_{2}e^{(\big(\frac{3}{2}(\lambda_{1}+\lambda_{2})-\sqrt{\frac{9}{4}(\lambda_{1}+\lambda_{2})^2
-\frac{4}{3}\overline S+4\lambda_{1}^2+4\lambda_{2}^2-16\lambda_{1}\lambda_{2}}\big)/2)t}
+\frac{S^F}{12\lambda_{1}\lambda_{2}-3\lambda_{1}^2-3\lambda_{2}^2+\overline S}; $\\
$(2)\overline S=\frac{27}{16}(\lambda_{1}+\lambda_{2})^2+3\lambda_{1}^2+3\lambda_{2}^2-12\lambda_{1}\lambda_{2},\\
\mbox{ } \quad v(t)=c_{1}e^{\frac{3}{4}(\lambda_{1}+\lambda_{2})t}+c_{2}te^{\frac{3}{4}(\lambda_{1}+\lambda_{2})t}
+\frac{S^F}{12\lambda_{1}\lambda_{2}-3\lambda_{1}^2-3\lambda_{2}^2+\overline S}; $\\
$(3)\overline S>\frac{27}{16}(\lambda_{1}+\lambda_{2})^2+3\lambda_{1}^2+3\lambda_{2}^2-12\lambda_{1}\lambda_{2},\\
\mbox{ } \quad v(t)=c_{1}e^{\frac{3}{4}(\lambda_{1}+\lambda_{2})t}cos((\Big(\sqrt{\frac{4}{3}\overline S-4\lambda_{1}^2-4\lambda_{2}^2+16\lambda_{1}\lambda_{2}-\frac{9}{4}(\lambda_{1}+\lambda_{2})^2}\Big)/2)t)\\
\mbox{ } \qquad \;\; \; +c_{2}e^{\frac{3}{4}(\lambda_{1}+\lambda_{2})t}sin((\Big(\sqrt{\frac{4}{3}\overline S-4\lambda_{1}^2-4\lambda_{2}^2+16\lambda_{1}\lambda_{2}-\frac{9}{4}(\lambda_{1}+\lambda_{2})^2}\Big)/2)t)\\
\mbox{ } \qquad \;\; \; +\frac{S^F}{12\lambda_{1}\lambda_{2}-3\lambda_{1}^2-3\lambda_{2}^2+\overline S};$\\
$(4)\overline S=3\lambda_{1}^2+3\lambda_{2}^2-12\lambda_{1}\lambda_{2},\mbox{ and } \lambda_{1}+\lambda_{2}\neq0,
\;v(t)=c_{1}-\frac{2S^F}{9(\lambda_{1}+\lambda_{2})}+c_{2}e^{\frac{3}{2}(\lambda_{1}+\lambda_{2})t};$
$(5)\overline S=3\lambda_{1}^2+3\lambda_{2}^2-12\lambda_{1}\lambda_{2},\mbox{ and } \lambda_{1}+\lambda_{2}=0,
\;v(t)=\frac{S^F}{6}t^2+c_{1}t+c_{2}.$
\end{Theorem}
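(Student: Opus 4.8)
The plan is to reduce the constant-scalar-curvature condition to an explicitly solvable ODE. By Proposition 3.17, $(M,\overline\nabla)$ has constant scalar curvature $\overline S$ if and only if $(F,\nabla^F)$ has constant scalar curvature $S^F$ and the warping function $f$ satisfies equation $(32)$; hence the entire theorem is the analysis of $(32)$ in the case $l=3$. First I would invoke the substitution $f(t)=\sqrt{v(t)}$ already recorded before the statement, which turns $(32)$ into equation $(34)$. The decisive point is that for $l=3$ the nonlinear term $\frac{l-3}{4}\frac{v'^2}{v}$ in $(34)$ vanishes identically, so $(34)$ collapses to the linear second-order constant-coefficient inhomogeneous ODE
$$v''-\frac{3}{2}(\lambda_{1}+\lambda_{2})v'+\Big[4\lambda_{1}\lambda_{2}-\lambda_{1}^2-\lambda_{2}^2+\frac{\overline S}{3}\Big]v-\frac{S^F}{3}=0.$$

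The remaining work is to solve this ODE by the standard method and to organize the answer by the shape of the solution. The associated characteristic equation is $r^2-\frac{3}{2}(\lambda_{1}+\lambda_{2})r+\big[4\lambda_{1}\lambda_{2}-\lambda_{1}^2-\lambda_{2}^2+\frac{\overline S}{3}\big]=0$, with discriminant $\frac{9}{4}(\lambda_{1}+\lambda_{2})^2-16\lambda_{1}\lambda_{2}+4\lambda_{1}^2+4\lambda_{2}^2-\frac{4\overline S}{3}$; this is positive, zero, or negative precisely according as $\overline S$ is less than, equal to, or greater than $\frac{27}{16}(\lambda_{1}+\lambda_{2})^2+3\lambda_{1}^2+3\lambda_{2}^2-12\lambda_{1}\lambda_{2}$, which yields the trichotomy in parts $(1)$–$(3)$ for the homogeneous solution (two real exponentials; an exponential times $t$; or an exponential times trigonometric functions). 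For the particular solution one needs the zeroth-order coefficient $4\lambda_{1}\lambda_{2}-\lambda_{1}^2-\lambda_{2}^2+\frac{\overline S}{3}$ to be nonzero, i.e. $\overline S\neq 3\lambda_{1}^2+3\lambda_{2}^2-12\lambda_{1}\lambda_{2}$, and then the particular solution is the constant $\frac{S^F}{12\lambda_{1}\lambda_{2}-3\lambda_{1}^2-3\lambda_{2}^2+\overline S}$, producing the inhomogeneous terms in $(1)$–$(3)$. When that coefficient vanishes the equation degenerates: if $\lambda_{1}+\lambda_{2}\neq 0$ the characteristic roots are $0$ and $\frac{3}{2}(\lambda_{1}+\lambda_{2})$, so resonance at $r=0$ forces a particular solution proportional to $t$, giving $(4)$; if in addition $\lambda_{1}+\lambda_{2}=0$ the ODE reduces to $v''=\frac{S^F}{3}$, whose general solution is the quadratic $\frac{S^F}{6}t^2+c_{1}t+c_{2}$ of $(5)$. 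Conversely, each $v(t)$ listed visibly solves the relevant ODE, so the equivalence runs in both directions.

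I do not expect a serious obstacle: once Proposition 3.17 and the substitution $f=\sqrt{v}$ are in hand, everything is elementary ODE theory. The only places demanding care are verifying that the $l=3$ specialization genuinely removes the nonlinear term of $(34)$, and correctly separating the degenerate branches — when the discriminant vanishes and, independently, when the zeroth-order coefficient vanishes — since in those branches both the homogeneous and the particular solution change form; this bookkeeping is exactly what produces the split of $(1)$–$(3)$ against $(4)$–$(5)$ and the further split of $(4)$ against $(5)$.
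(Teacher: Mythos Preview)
Your proposal is correct and follows essentially the same route as the paper: specialize $(34)$ at $l=3$ so the nonlinear term drops out, obtain the linear constant-coefficient ODE $(35)$, and solve it case by case according to whether the zeroth-order coefficient vanishes and, within the nondegenerate case, according to the sign of the discriminant. The only cosmetic difference is that the paper homogenizes via the affine change $h(t)=(4\lambda_{1}\lambda_{2}-\lambda_{1}^2-\lambda_{2}^2+\tfrac{\overline S}{3})v(t)-\tfrac{S^F}{3}$ before invoking the characteristic equation, whereas you separate the general homogeneous solution from a constant particular solution directly; the two procedures are equivalent.
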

\begin{proof}
If $l=3,$ then we have a simple differential equation as follows:
$$v^{\prime\prime}(t)-\frac{3}{2}(\lambda_{1}+\lambda_{2})v^\prime(t)
+(4\lambda_{1}\lambda_{2}-\lambda_{1}^2-\lambda_{2}^2+\frac{\overline S}{3})v(t)-\frac{S^F}{3}=0. \eqno{(35)}  $$
¢Ù If $\overline S\neq3\lambda_{1}^2+3\lambda_{2}^2-12\lambda_{1}\lambda_{2},$ putting
$h(t)=(4\lambda_{1}\lambda_{2}-\lambda_{1}^2-\lambda_{2}^2+\frac{\overline S}{3})v(t)-\frac{S^F}{3},$ we get
$h^{\prime\prime}(t)-\frac{3}{2}(\lambda_{1}+\lambda_{2})h^\prime(t)
+(4\lambda_{1}\lambda_{2}-\lambda_{1}^2-\lambda_{2}^2+\frac{\overline S}{3})h(t)=0.$ The above solutions $(1)-(3)$
follow directly from elementary methods for ordinary differential equations.\\
¢Ú If $\overline S=3\lambda_{1}^2+3\lambda_{2}^2-12\lambda_{1}\lambda_{2},$ and $\lambda_{1}+\lambda_{2}\neq0,$
then $v^{\prime\prime}(t)-\frac{3}{2}(\lambda_{1}+\lambda_{2})v^\prime(t)-\frac{S^F}{3}=0,$ and we get solution$(4)$.
\\¢Û If $\overline S=3\lambda_{1}^2+3\lambda_{2}^2-12\lambda_{1}\lambda_{2},$ and $\lambda_{1}+\lambda_{2}=0,$
then $v^{\prime\prime}(t)-\frac{S^F}{3}=0,$ and we get solution$(5)$.
\end{proof}
\begin{remark}
When $\lambda_{1}=\lambda_{2}=1, $ we get Theorem $29$ in $[12]$.
\end{remark}
\begin{Theorem}
Let $M=I \times F $ with the metric tensor $-dt^2+f(t)^2g_{F}, P=\frac{\partial}{\partial t},$ and
${\rm dim}F=l\neq3$ and $S^F=0.$ Then $(M,\overline\nabla)$ has constant scalar curvature $\overline S$ if and only if\\
$(1)\overline S<\frac{l^3}{4(l+1)}(\lambda_{1}+\lambda_{2})^2-l[(l+1)\lambda_{1}\lambda_{2}-\lambda_{1}^2-\lambda_{2}^2],\\
\mbox{ } \quad v(t)=\Big[c_{1}e^{(\Big(\frac{1}{2}(\lambda_{1}+\lambda_{2})+\sqrt{\frac{l^2}{4}(\lambda_{1}+\lambda_{2})^2
-(l+1)\big[(l+1)\lambda_{1}\lambda_{2}-\lambda_{1}^2-\lambda_{2}^2+\frac{\overline S}{l}\big]}\Big)/2)t}
\\\mbox{ } \qquad \quad\; +c_{2}e^{(\Big(\frac{1}{2}(\lambda_{1}+\lambda_{2})-\sqrt{\frac{l^2}{4}(\lambda_{1}
+\lambda_{2})^2-(l+1)\big[(l+1)\lambda_{1}\lambda_{2}-\lambda_{1}^2-\lambda_{2}^2+\frac{\overline S}{l}\big]}\Big)/2)t}\Big]^{\frac{4}{l+1}};$\\
$(2)\overline S=\frac{l^3}{4(l+1)}(\lambda_{1}+\lambda_{2})^2-l[(l+1)\lambda_{1}\lambda_{2}-\lambda_{1}^2-\lambda_{2}^2],
v(t)=\Big[c_{1}e^{\frac{l}{4}(\lambda_{1}+\lambda_{2})t}+c_{2}te^{\frac{l}{4}(\lambda_{1}
+\lambda_{2})t}\Big]^{\frac{4}{l+1}};$\\
$(3)\overline S>\frac{l^3}{4(l+1)}(\lambda_{1}+\lambda_{2})^2-l[(l+1)\lambda_{1}\lambda_{2}-\lambda_{1}^2-\lambda_{2}^2],\\
\mbox{ } \quad v(t)=\Big[c_{1}e^{\frac{l}{4}(\lambda_{1}+\lambda_{2})t}cos{(\Big(\sqrt{(l+1)\big[(l+1)\lambda_{1}\lambda_{2}
-\lambda_{1}^2-\lambda_{2}^2+\frac{\overline S}{l}\big]-\frac{l^2}{4}(\lambda_{1}+\lambda_{2})^2}\Big)/2)t}
\\  \mbox{ }\qquad \quad\;\;
+c_{2}e^{\frac{l}{4}(\lambda_{1}+\lambda_{2})t}sin{(\Big(\sqrt{(l+1)\big[(l+1)\lambda_{1}\lambda_{2}-\lambda_{1}^2
-\lambda_{2}^2+\frac{\overline S}{l}\big]-\frac{l^2}{4}(\lambda_{1}+\lambda_{2})^2}\Big)/2)t}\Big]^{\frac{4}{l+1}}$
\end{Theorem}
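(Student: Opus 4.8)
The plan is to reduce the claim to the ordinary differential equation already isolated in the paper and then integrate it explicitly. By Proposition $3.17$, $(M,\overline\nabla)$ has constant scalar curvature $\overline S$ if and only if $S^F$ is constant and the warping function satisfies $(32)$; since the hypothesis $S^F=0$ is in particular a constant, this is equivalent to equation $(34)$ with $S^F=0$, that is, writing $f=\sqrt v$,
$$v''+\frac{l-3}{4}\,\frac{(v')^2}{v}-\frac{l}{2}(\lambda_1+\lambda_2)v'+\Big[(l+1)\lambda_1\lambda_2-\lambda_1^2-\lambda_2^2+\frac{\overline S}{l}\Big]v=0 .$$
The assumption $S^F=0$ is exactly what removes the inhomogeneous term $S^F/l$, and because $l\neq 3$ the remaining equation is genuinely nonlinear through the $(v')^2/v$ term; this is the point at which the present situation differs from the $l=3$ case of Theorem $3.19$, where that term is absent and $(34)$ is already linear in $v$.

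The crucial step is the change of unknown $v=w^{4/(l+1)}$. Setting $p=4/(l+1)$ one computes $v'=p\,w^{p-1}w'$ and $v''=p(p-1)w^{p-2}(w')^2+p\,w^{p-1}w''$, so in $v''+\frac{l-3}{4}(v')^2/v$ the coefficient of $w^{p-2}(w')^2$ equals $p(p-1)+\frac{l-3}{4}p^2=p\big(\tfrac{l+1}{4}\,p-1\big)$, which vanishes precisely when $p=4/(l+1)$; this exponent makes sense for any $l\geq 1$ and is nontrivial since $l\neq 3$. Dividing the resulting identity by $p\,w^{p-1}$, on a subinterval of $I$ where $w>0$, turns $(34)$ into the homogeneous linear constant-coefficient equation
$$w''-\frac{l}{2}(\lambda_1+\lambda_2)w'+\frac{l+1}{4}\Big[(l+1)\lambda_1\lambda_2-\lambda_1^2-\lambda_2^2+\frac{\overline S}{l}\Big]w=0 .$$
(If one allowed $S^F\neq 0$ with $l\neq 3$, the same substitution would leave a term proportional to $w^{(l-3)/(l+1)}$ on the right-hand side, which is why that case is not treated here.)

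It remains to integrate this linear equation and translate the resulting trichotomy. Its characteristic polynomial is $r^2-\frac{l}{2}(\lambda_1+\lambda_2)r+\frac{l+1}{4}\big[(l+1)\lambda_1\lambda_2-\lambda_1^2-\lambda_2^2+\overline S/l\big]$, whose discriminant is $\Delta=\frac{l^2}{4}(\lambda_1+\lambda_2)^2-(l+1)\big[(l+1)\lambda_1\lambda_2-\lambda_1^2-\lambda_2^2+\overline S/l\big]$; multiplying the sign condition on $\Delta$ by the positive number $l/(l+1)$ shows that $\Delta>0$, $\Delta=0$, $\Delta<0$ correspond respectively to $\overline S$ being less than, equal to, or greater than $\frac{l^3}{4(l+1)}(\lambda_1+\lambda_2)^2-l\big[(l+1)\lambda_1\lambda_2-\lambda_1^2-\lambda_2^2\big]$, which are exactly the three cases of the statement. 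In these cases the general solution $w$ is a linear combination of two distinct real exponentials, of $e^{rt}$ and $te^{rt}$ with the double root $r=\tfrac{l}{4}(\lambda_1+\lambda_2)$, or of $e^{\mu t}\cos(\omega t)$ and $e^{\mu t}\sin(\omega t)$ with $\mu=\tfrac{l}{4}(\lambda_1+\lambda_2)$ and $\omega=\tfrac12\sqrt{-\Delta}$; substituting back $v=w^{4/(l+1)}$ and $f=\sqrt v$ yields exactly the three displayed formulas. Conversely, each $v$ listed there solves $(34)$ with $S^F=0$ by construction, so Proposition $3.17$ returns constant scalar curvature $\overline S$, which closes the equivalence.

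The only step that is not bookkeeping is recognizing the exponent $4/(l+1)$ as the one that removes the nonlinearity; everything after it is the routine classification of solutions of a second-order constant-coefficient linear ODE. Two things to monitor are the domain — one should pass to a subinterval of $I$ on which the chosen $w$ is positive so that $v=w^{4/(l+1)}$ and $f=\sqrt v$ are well defined and positive there — and the signs of the positive multiplicative constants in the passage between $\Delta$ and $\overline S$, so that the strict inequalities come out in the direction stated.
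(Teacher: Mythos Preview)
Your proof is correct and follows essentially the same approach as the paper: reduce via Proposition~3.17 to equation~(34) with $S^F=0$, substitute $v=w^{4/(l+1)}$ to linearize, and then classify solutions of the resulting constant-coefficient linear ODE by the sign of its discriminant. You supply more detail than the paper does --- in particular the verification that the exponent $4/(l+1)$ kills the $(v')^2/v$ term and the translation between the discriminant sign and the threshold for $\overline S$ --- but the strategy is identical.
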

\begin{proof}
In this case, $(34)$ is changed into the simpler form
$$v^{\prime\prime}(t)+\frac{l-3}{4}\frac{v^\prime(t)^2}{v(t)}-\frac{l}{2}(\lambda_{1}+\lambda_{2})v^\prime(t)
+\Big[(l+1)\lambda_{1}\lambda_{2}-\lambda_{1}^2-\lambda_{2}^2+\frac{\overline S}{l}\Big]v(t)=0. \eqno{(36)}  $$
Putting $v(t)=w(t)^{\frac{4}{l+1}},$ then $w(t)$ satisfies the equation
$w^{\prime\prime}(t)-\frac{l}{2}(\lambda_{1}+\lambda_{2})w^\prime(t)
+\frac{l+1}{4}\big[(l+1)\lambda_{1}\lambda_{2}-\lambda_{1}^2-\lambda_{2}^2+\frac{\overline S}{l}\big]w(t)=0,$
by the elementary methods for ordinary differential equations, we prove the above theorem.
\end{proof}
\begin{remark}
When $\lambda_{1}=\lambda_{2}=1, $ we get Theorem $30$ in $[12]$.
\end{remark}

When ${\rm dim}F=l\neq3,$ and $S^F\neq0,$ putting $v(t)=w(t)^{\frac{4}{l+1}},$ then $w(t)$ satisfies the following equation
:$$w^{\prime\prime}(t)-\frac{l}{2}(\lambda_{1}+\lambda_{2})w^\prime(t)+\frac{l+1}{4}\Big[(l+1)\lambda_{1}\lambda_{2}
-\lambda_{1}^2-\lambda_{2}^2+\frac{\overline S}{l}\Big]w(t)-\frac{l+1}{4l}S^Fw^{1-\frac{4}{l+1}}=0. \eqno{(37)}$$

\section{Multiply Warped Product with a Quarter-symmetric Connection}\label{sec4}
In this section, firstly we compute curvature of multiply twisted product with a quarter-symmetric connection, secondly we study the special multiply warped product with a quarter-symmetric connection, finally we consider the generalized Kasner space-times with a quarter-symmetric connection.

\subsection{Connection and Curvature}
By Lemma 2.3 and the equation (3), we have the following two Propositions:
\begin{Proposition}
Let $M=B \times_{b_{1}}F_{1} \times_{b_{2}}F_{2}\cdots \times_{b_{m}}F_{m}$ be a multiply twisted product,
If $X,Y\in\Gamma(TB)$, $U\in\Gamma(TF_{i}), W\in\Gamma(TF_{j})$ and $P\in\Gamma(TB).$ Then: \\
$(1)\overline\nabla_{X}Y=\overline\nabla^B_{X}Y; $  \\
$(2)\overline\nabla_{X}U=\frac{Xb_{i}}{b_{i}}U;$   \\
$(3)\overline\nabla_{U}X=\Big[\frac{Xb_{i}}{b_{i}}+\lambda_{1}\pi(X)\Big]U; $   \\
$(4)\overline\nabla_{U}W=0$ if $i\neq j;$\\
$(5)\overline\nabla_{U}W=U(lnb_{i})W+W(lnb_{i})U-\frac{g_{F_{i}}(U,W)}{b_{i}}grad_{F_{i}}b_{i}
-b_{i}g_{F_{i}}(U,W)grad_{B}b_{i}+\nabla^{F_{i}}_{U}W-\\ \mbox{} \; \quad \lambda_{2}g(U,W)P $ if $i=j.$
\end{Proposition}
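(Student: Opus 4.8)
The plan is to obtain each of the five identities by a direct substitution of the Levi-Civita expressions from Lemma 2.3 into the defining formula (3), namely $\overline\nabla_{X}Y=\nabla_{X}Y+\lambda_{1}\pi(Y)X-\lambda_{2}g(X,Y)P$, followed by simplification using the orthogonal decomposition $TM=TB\oplus TF_{1}\oplus\cdots\oplus TF_{m}$. The only extra input beyond Lemma 2.3 is that, since $P\in\Gamma(TB)$, the one-form $\pi=g(\cdot,P)$ vanishes on every fiber vector, i.e. $\pi(U)=0$ for $U\in\Gamma(TF_{i})$, and that the metric couplings $g(X,U)$ (for $X\in\Gamma(TB)$, $U\in\Gamma(TF_{i})$) and $g(U,W)$ (for $U\in\Gamma(TF_{i})$, $W\in\Gamma(TF_{j})$ with $i\neq j$) all vanish.

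First I would handle $(1)$: Lemma 2.3(1) gives $\nabla_{X}Y=\nabla^{B}_{X}Y$, and since $Y,P\in\Gamma(TB)$ the quantities $\pi(Y)=g_{B}(Y,P)$ and $g(X,Y)=g_{B}(X,Y)$ involve only data on $B$, so (3) becomes $\nabla^{B}_{X}Y+\lambda_{1}\pi(Y)X-\lambda_{2}g_{B}(X,Y)P$; recognizing this as the quarter-symmetric connection (3) built over $(B,g_{B})$ with the restricted $\pi$ and $P$ yields $\overline\nabla_{X}Y=\overline\nabla^{B}_{X}Y$. For $(2)$ and $(3)$ I substitute Lemma 2.3(2), $\nabla_{X}U=\nabla_{U}X=\frac{Xb_{i}}{b_{i}}U$; in $(2)$ both correction terms drop out because $\pi(U)=0$ and $g(X,U)=0$, while in $(3)$ only the $\lambda_{2}g(U,X)P$ term vanishes and the summand $\lambda_{1}\pi(X)U$ remains. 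For $(4)$, Lemma 2.3(3) gives $\nabla_{U}W=0$, and both corrections vanish since $\pi(W)=0$ and $g(U,W)=0$ when $i\neq j$. For $(5)$, I insert the Levi-Civita expression of Lemma 2.3(4); again $\pi(W)=0$ kills the $\lambda_{1}$-term, so only $-\lambda_{2}g(U,W)P$ is appended to that expression, giving the claimed formula.

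I expect no genuine obstacle here, as the argument is purely computational. The two places that call for mild care are: in $(1)$, verifying that the $\lambda_{1}$- and $\lambda_{2}$-corrections are precisely the data defining $\overline\nabla^{B}$ rather than some genuinely new term; and in $(5)$, distinguishing the ambient metric $g$ from the fiber metric $g_{F_{i}}$ (so that $g(U,W)=b_{i}^{2}g_{F_{i}}(U,W)$), which affects only how one reads the surviving term $-\lambda_{2}g(U,W)P$. Both are bookkeeping matters rather than actual difficulties.
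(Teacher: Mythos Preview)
Your proposal is correct and follows exactly the route the paper indicates: the paper simply states ``By Lemma 2.3 and the equation (3)'' before the proposition, and your argument is precisely the direct substitution-and-simplification that this citation encodes. The orthogonality observations you single out ($\pi(U)=0$, $g(X,U)=0$, $g(U,W)=0$ for $i\neq j$) and the identification of the residual terms in $(1)$ with $\overline\nabla^{B}$ are exactly the points needed, and your care in $(5)$ about $g$ versus $g_{F_{i}}$ is apt but, as you note, only a bookkeeping matter.
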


\begin{Proposition}
Let $M=B \times_{b_{1}}F_{1} \times_{b_{2}}F_{2}\cdots \times_{b_{m}}F_{m}$ be a multiply twisted product.
If $X,Y\in\Gamma(TB)$, $U\in\Gamma(TF_{i}), W\in\Gamma(TF_{j})$ and $P\in\Gamma(TF_{r})$ for a fixed r, then: \\
$(1)\overline\nabla_{X}Y=\nabla^B_{X}Y-\lambda_{2}g(X,Y)P;  $  \\
$(2)\overline\nabla_{X}U=\frac{Xb_{i}}{b_{i}}U+\lambda_{1}\pi(U)X;$   \\
$(3)\overline\nabla_{U}X=\frac{Xb_{i}}{b_{i}}U; $   \\
$(4)\overline\nabla_{U}W=\lambda_{1}g(W,P)U$  if $i\neq j;$ \\
$(5)\overline\nabla_{U}W=U(lnb_{i})W+W(lnb_{i})U-\frac{g_{F_{i}}(U,W)}{b_{i}}grad_{F_{i}}b_{i}
-b_{i}g_{F_{i}}(U,W)grad_{B}b_{i}+\overline\nabla^{F_{i}}_{U}W$ \\ \mbox{}  \quad  if $i=j.$
\end{Proposition}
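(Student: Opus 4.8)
The plan is to derive every formula directly from equation~(3), which writes $\overline\nabla_X Y=\nabla_X Y+\lambda_1\pi(Y)X-\lambda_2 g(X,Y)P$, by substituting for the Levi-Civita term $\nabla_X Y$ the appropriate clause of Lemma~2.3 and then simplifying the two correction terms using the product structure $g=g_B\oplus b_1^2 g_{F_1}\oplus\cdots\oplus b_m^2 g_{F_m}$. The one observation that does all the work is that $\pi(Z)=g(Z,P)$ vanishes whenever $Z$ and $P$ lie in different factors --- in particular $\pi(Z)=0$ for every $Z\in\Gamma(TB)$, since here $P\in\Gamma(TF_r)$ --- and similarly $g(Z,Z')=0$ when $Z$ and $Z'$ sit in different factors. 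So for each of the five cases I would just write down (3), plug in the matching Levi-Civita formula, and cross off the terms that these orthogonality relations annihilate.

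Concretely, for (1) both $X$ and $Y$ lie in $\Gamma(TB)$, so $\pi(Y)=0$ and Lemma~2.3(1) gives $\overline\nabla_X Y=\nabla^B_X Y-\lambda_2 g(X,Y)P$. For (2) the term $g(X,U)$ vanishes (different factors), leaving $\overline\nabla_X U=\frac{Xb_i}{b_i}U+\lambda_1\pi(U)X$ via Lemma~2.3(2). For (3) both $\pi(X)$ and $g(U,X)$ vanish, so only $\nabla_U X=\frac{Xb_i}{b_i}U$ survives. For (4), with $i\neq j$, Lemma~2.3(3) gives $\nabla_U W=0$ and $g(U,W)=0$, so $\overline\nabla_U W=\lambda_1\pi(W)U=\lambda_1 g(W,P)U$; note that this one formula covers both $j=r$ and $j\neq r$ uniformly, the latter giving simply $\overline\nabla_U W=0$.

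Item (5) carries the actual content. Here $U,W\in\Gamma(TF_i)$ and Lemma~2.3(4) expands $\nabla_U W$ into five pieces, one of which is $\nabla^{F_i}_U W$. The point is that the two correction terms $\lambda_1\pi(W)U-\lambda_2 g(U,W)P$ are tangent to $F_i$ exactly when $P\in\Gamma(TF_i)$, i.e. when $r=i$, and in that case they merge with $\nabla^{F_i}_U W$ to rebuild the quarter-symmetric connection $\overline\nabla^{F_i}$ of the fiber --- which is precisely (3) applied internally to $(F_i,g_{F_i})$ --- while the remaining four pieces of Lemma~2.3(4) are carried along unchanged. This reassembly is the step I expect to be the main obstacle: one has to check carefully which of the five Levi-Civita pieces absorb the $\lambda$-terms and that the others are genuinely untouched, keep straight the factor $b_i^2$ relating $g|_{F_i}$ to $g_{F_i}$, and dispatch the degenerate sub-case $r\neq i$, where $\pi$ restricted to $F_i$ is zero. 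Once this bookkeeping is in place the stated formula for (5) drops out and the proposition is proved; this mirrors, and specializes to, Proposition~3.2(4) for the singly warped product $B\times_f F$ with $P\in\Gamma(TF)$.
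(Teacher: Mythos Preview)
Your approach is exactly the one the paper indicates: it simply says ``By Lemma 2.3 and the equation (3), we have the following two Propositions'' and gives no further details, so your case-by-case application of equation~(3) to each Levi-Civita clause of Lemma~2.3, together with the orthogonality observations $\pi(Z)=0$ and $g(Z,Z')=0$ for vectors in distinct factors, is precisely the intended derivation. Your explicit flagging of the $b_i^2$ scaling and of the sub-case $r\neq i$ in item~(5) goes beyond what the paper spells out, but is appropriate bookkeeping for making the reassembly into $\overline\nabla^{F_i}_UW$ honest.
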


By Lemmas 2.3, 2.4 and the equation (4), we have the following two Propositions:
\begin{Proposition}
Let $M=B \times_{b_{1}}F_{1} \times_{b_{2}}F_{2}\cdots \times_{b_{m}}F_{m}$ be a multiply twisted product.
If $X,Y,Z\in\Gamma(TB)$, $V\in\Gamma(TF_{i}), W\in\Gamma(TF_{j}),U\in\Gamma(TF_{k})$ and $P\in\Gamma(TB),$ then: \\
$(1)\overline R(X,Y)Z=\overline R^B(X,Y)Z; $  \\
$(2)\overline R(V,X)Y=-\Big[\frac{H^{b_{i}}_{B}(X,Y)}{f}+\lambda_{2}\frac{Pb_{i}}{b_{i}}g(X,Y)
+\lambda_{1}\lambda_{2}\pi(P)g(X,Y)+\lambda_{1}g(Y,\nabla_{X}P) \\ \mbox{}\quad -\lambda_{1}^2\pi(X)\pi(Y)\Big]V;$ \\
$(3)\overline R(X,Y)V=0; $   \\
$(4)\overline R(V,W)X=VX(lnb_{i})W-WX(lnb_{i})V $ if $i=j;$   \\
$(5)\overline R(V,W)U=0 $  if $i=j\neq k$ or $i\neq j\neq k;$ \\
$(6)\overline R(X,V)W=\overline R(V,W)X=\overline R(V,X)W=0$  if $i\neq j;$ \\
$(7)\overline R(X,V)W=WX(lnb_{i})V-g(V,W)\Big[\frac{\nabla_{X}^B(grad_{B}b_{i})}{b_{i}}
+grad_{F_{i}}\frac{X(lnb_{i})}{b_{i}^2}+\lambda_{1}\frac{Pb_{i}}{b_{i}}X+\lambda_{2}\nabla_{X}P\\ \mbox{}\quad
+\lambda_{1}\lambda_{2}\pi(P)X-\lambda_{2}^2\pi(X)P\Big]$  if $i=j;$ \\
$(8)\overline R(U,V)W=-g(V,W)U\Big[\frac{g_{B}(grad_{B}b_{i},grad_{B}b_{k})}{b_{i}b_{k}}
+\lambda_{1}\frac{Pb_{i}}{b_{i}}+\lambda_{2}\frac{Pb_{k}}{b_{k}}+\lambda_{1}\lambda_{2}\pi(P)\Big]$if $i=j\neq k;$
\\$(9)\overline R(U,V)W=g(U,W)grad_{B}(V(lnb_{i}))-g(V,W)grad_{B}(U(lnb_{i}))+R^{F_{i}}(U,V)W-
\\ \mbox{}\quad \Big[\frac{|grad_{B}b_{i}|^2_{B}}{b_{i}^2}+(\lambda_{1}+\lambda_{2})\frac{Pb_{i}}{b_{i}}+
\lambda_{1}\lambda_{2}\pi(P)\Big][g(V,W)U-g(U,W)V]$   if $i=j=k.$
\end{Proposition}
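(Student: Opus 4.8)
The plan is to obtain each of the nine identities by inserting the appropriate triple of vector fields into the general curvature formula (4) and then evaluating every summand with Lemmas 2.3 and 2.4. The structural fact doing most of the work is that $P\in\Gamma(TB)$: this forces $\pi=g(\,\cdot\,,P)$ to vanish on every fiber vector, while Lemma 2.3(1)--(2) give $\nabla_{X}P=\nabla^{B}_{X}P\in\Gamma(TB)$ for $X\in\Gamma(TB)$ and $\nabla_{U}P=\frac{Pb_{i}}{b_{i}}U$ for $U\in\Gamma(TF_{i})$. Combined with the block form $g=g_{B}\oplus b_{1}^{2}g_{F_{1}}\oplus\cdots\oplus b_{m}^{2}g_{F_{m}}$ --- so that $g(V,X)=0$ when $V$ is a fiber vector and $X\in\Gamma(TB)$, and $g(V,W)=0$ when $V,W$ lie in different fibers --- this annihilates most of the $\lambda$-correction terms in (4) case by case.

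I would first dispatch (1): for $X,Y,Z\in\Gamma(TB)$ every contraction in (4) is against $g_{B}$, $R(X,Y)Z=R^{B}(X,Y)Z$ by Lemma 2.4(1), and $\nabla_{X}P=\nabla^{B}_{X}P$, so the right-hand side of (4) is precisely formula (4) computed intrinsically on $(B,g_{B})$, i.e.\ $\overline{R}^{B}(X,Y)Z$. For (3), (5) and (6) I would plug the stated mixed triples into (4), observe that the Levi-Civita term vanishes by Lemma 2.4(3),(4),(6), and note that every surviving correction carries a vanishing factor of the form $g(\text{fiber},\text{base})$, $g(\text{fiber}_{i},\text{fiber}_{j})$ with $i\neq j$, $\pi(\text{fiber})$, or $g(W,\nabla_{X}P)=0$. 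The substantive cases are (2), (4), (7), (8), (9): here the Levi-Civita part comes from Lemma 2.4(2), 2.4(5), 2.4(8), 2.4(7), 2.4(9) respectively, and after using $\nabla_{U}P=\frac{Pb_{i}}{b_{i}}U$ and $\pi(\text{fiber})=0$ the leftover corrections are scalar multiples --- by $\lambda_{1}g(Y,\nabla_{X}P)$, $\lambda_{2}\frac{Pb_{i}}{b_{i}}$, $\lambda_{1}\lambda_{2}\pi(P)$, $\lambda_{1}^{2}\pi(X)\pi(Y)$ in (2), and by $\lambda_{1}\frac{Pb_{i}}{b_{i}}$, $\lambda_{2}\frac{Pb_{k}}{b_{k}}$, $\lambda_{1}\lambda_{2}\pi(P)$, $\lambda_{2}^{2}\pi(X)$ contracted against $g(V,W)$ or $g(U,W)$ in (7)--(9) --- of the repeated vector; collecting coefficients reproduces the bracketed expressions in the statement.

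One bookkeeping point deserves care. In (9), all three of $U,V,W$ lie in a single fiber $F_{i}$, but Lemma 2.4(9) is written for $R(V,W)U$ whereas we need $R(U,V)W$; since the lemma is universally quantified over the fiber vector fields, I would simply relabel $V\mapsto U,\ W\mapsto V,\ U\mapsto W$ in it to get $R(U,V)W=g(U,W)\,grad_{B}(V(\ln b_{i}))-g(V,W)\,grad_{B}(U(\ln b_{i}))+R^{F_{i}}(U,V)W-\frac{|grad_{B}b_{i}|^{2}_{B}}{b_{i}^{2}}[g(V,W)U-g(U,W)V]$, and then add the corrections from (4). Similarly, in (7) the identity $g_{F_{i}}(V,W)=b_{i}^{-2}g(V,W)$ is needed to merge the intrinsic-fiber gradient term of Lemma 2.4(8) with the factor $g(V,W)$, and $g(W,\nabla_{U}P)=\frac{Pb_{i}}{b_{i}}g(U,W)$ is used repeatedly in (8) and (9).

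I do not expect a genuine obstacle: once (4) and Lemmas 2.3--2.4 are available the argument is entirely mechanical. The only thing demanding attention is matching the correct case of Lemma 2.4 to each substitution and keeping the fiber indices straight --- in particular distinguishing $i=j\neq k$ from all-distinct $i,j,k$, and remembering that $g(V,W)\neq 0$ exactly when $V,W$ sit in the same fiber. As a consistency check, specializing to $\lambda_{1}=\lambda_{2}=1$ should recover the semi-symmetric-connection formulas of [12], which pins down the bracketed coefficients.
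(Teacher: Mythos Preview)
Your proposal is correct and follows exactly the approach the paper takes: the paper's proof consists of the single sentence ``By Lemmas 2.3, 2.4 and the equation (4)'', and your write-up is a careful unpacking of precisely that computation, with the same structural observations (that $P\in\Gamma(TB)$ kills $\pi$ on fiber vectors, that $\nabla_{U}P=\frac{Pb_{i}}{b_{i}}U$ for fiber $U$, and that mixed metric pairings vanish). Your bookkeeping remarks on relabeling Lemma 2.4(9) and converting $g_{F_{i}}(V,W)=b_{i}^{-2}g(V,W)$ in case (7) are the right points to flag.
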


\begin{Proposition}
Let $M=B \times_{b_{1}}F_{1} \times_{b_{2}}F_{2}\cdots \times_{b_{m}}F_{m}$ be a multiply twisted product.
If $X,Y,Z\in\Gamma(TB)$, $V\in\Gamma(TF_{i}), W\in\Gamma(TF_{j}),U\in\Gamma(TF_{k})$ and $P\in\Gamma(TF_{r})$ for a fixed r, then: \\
$(1)\overline R(X,Y)Z=R^B(X,Y)Z+\lambda_{2}\Big[g(X,Z)\frac{Yb_{r}}{b_{r}}-g(Y,Z)\frac{Xb_{r}}{b_{r}}\Big]P
+\lambda_{1}\lambda_{2}\pi(P)[g(X,Z)Y\\ \mbox{}\quad-g(Y,Z)X]; $  \\
$(2)\overline R(V,X)Y=-\frac{H^{b_{i}}_{B}(X,Y)}{b_{i}}V-\lambda_{1}\lambda_{2}\pi(P)g(X,Y)V$ if $i\neq r;$  \\
$(3)\overline R(V,X)Y=-\frac{H^{b_{i}}_{B}(X,Y)}{b_{i}}-\lambda_{1}\pi(V)\frac{Yb_{i}}{b_{i}}X
-\lambda_{2}g(X,Y)\nabla_{V}P-g(X,Y)[\lambda_{1}\lambda_{2}\pi(P)V \\ \mbox{} \quad -\lambda_{2}^2\pi(V)P]$
if $i=r;$\\
$(4)\overline R(X,Y)V=\lambda_{1}\pi(V)[\frac{Xb_{r}}{b_{r}}Y-\frac{Yb_{r}}{b_{r}}X]; $   \\
$(5)\overline R(V,W)X=-\lambda_{1}\delta_{i}^r\frac{Xb_{i}}{b_{i}}\pi(V)W
+\lambda_{1}\delta_{j}^r\frac{Xb_{j}}{b_{j}}\pi(W)V $   if $i\neq j;$ \\
$(6)\overline R(V,W)X=VX(lnb_{i})W-WX(lnb_{i})V-\lambda_{1}\delta_{i}^r\frac{Xb_{i}}{b_{i}}[\pi(V)W-\pi(W)V]$
if $i=j;$  \\
$(7)\overline R(V,W)U=0 $ if $i=j\neq k$ or $i\neq j\neq k;$   \\
$(8)\overline R(X,V)W=\lambda_{1}\frac{Xb_{r}}{b_{r}}\pi(W)V$  if $i\neq j;$\\
$(9)\overline R(X,V)W=WX(lnb_{i})V-g(V,W)\frac{\nabla_{X}^B(grad_{B}b_{i})}{b_{i}}
-grad_{F_{i}}(Xlnb_{i})g_{F_{i}}(V,W)\\ \mbox{}\quad
+\lambda_{1}\frac{Xb_{r}}{b_{r}}\pi(W)V-\lambda_{1}g(W,\nabla_{V}P)X-\lambda_{2}
g(V,W)\frac{Xb_{r}}{b_{r}}P-\lambda_{1}\lambda_{2}g(V,W)\pi(P)X\\ \mbox{}\quad
+\lambda_{1}^2\pi(W)\pi(V)X$ if $i\neq j;$  \\
$(10)\overline R(U,V)W=-g(V,W)\frac{g_{B}(grad_{B}b_{i},grad_{B}b_{k})}{b_{i}b_{k}}U
-\lambda_{1}g(W,\nabla_{V}P)U-\lambda_{2}g(V,W)\nabla_{U}P-\\ \mbox{}\quad \;\;\; \lambda_{1}\lambda_{2}\pi(P)g(V,W)U
+\lambda_{2}^2g(V,W)\pi(U)P+\lambda_{1}^2\pi(W)[\pi(V)U-\pi(U)V]$  if $i=j\neq k;$ \\
$(11)\overline R(U,V)W=g(U,W)grad_{B}(Vlnb_{i})-g(V,W)grad_{B}(Ulnb_{i})+R^{F_{i}}(U,V)W \\ \mbox{} \quad\;\;\;
-\frac{|grad_{B}b_{i}|^2_{B}}{b_{i}^2}[g(V,W)U-g(U,W)V]+\lambda_{1}\lambda_{2}\pi(P)[g(U,W)V-g(V,W)U]$
\\ \mbox{} \quad\;\;\; if $i=j=k\neq r;$\\
$(12)\overline R(U,V)W=g(U,W)grad_{B}(Vlnb_{i})-g(V,W)grad_{B}(Ulnb_{i})+R^{F_{i}}(U,V)W\\ \mbox{} \quad\;\;\;
-\frac{|grad_{B}f|^2_{B}}{f^2}[g(V,W)U-g(U,W)V]+\lambda_{1}[g(W,\nabla_{U}P)V-g(W,\nabla_{V}P)U]
\\ \mbox{}\quad\;\;\;+\lambda_{2}[g(U,W)\nabla_{V}P-g(V,W)\nabla_{U}P)]+\lambda_{1}\lambda_{2}\pi(P)[g(U,W)V-g(V,W)U]
\\ \mbox{}\quad\;\;\;+\lambda_{2}^2[g(V,W)\pi(U)-g(U,W)\pi(V)]P+\lambda_{1}^2\pi(W)[\pi(V)U-\pi(U)V]$
if $i=j=k=r.$ \\where $\delta_{i}^r$ denotes the Kronecker symbol.
\end{Proposition}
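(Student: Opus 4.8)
The plan is to substitute the general curvature identity $(4)$ into each of the twelve configurations, reading off the Levi-Civita curvature $R(\cdot,\cdot)\cdot$ of the multiply twisted product from Lemma $2.4$ and each term $\nabla_{\bullet}P$ from Lemma $2.3$, and then simplifying by exploiting that $P\in\Gamma(TF_{r})$. No new geometric input is needed beyond $(4)$ and Lemmas $2.3$--$2.4$; the proof is a bookkeeping computation entirely analogous to the passage from Lemma $2.2$ to Proposition $3.4$.

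First I would collect the auxiliary data that feed $(4)$. Because $P\in\Gamma(TF_{r})$: for $X\in\Gamma(TB)$ Lemma $2.3(2)$ gives $\nabla_{X}P=\frac{Xb_{r}}{b_{r}}P$; for $V\in\Gamma(TF_{i})$ with $i\neq r$ Lemma $2.3(3)$ gives $\nabla_{V}P=0$; and for $V\in\Gamma(TF_{r})$ Lemma $2.3(4)$ supplies the full expression involving $\nabla^{F_{r}}_{V}P$, together with the $grad_{F_{r}}b_{r}$ and $grad_{B}b_{r}$ pieces. Likewise $\pi(X)=g(X,P)=0$ for $X\in\Gamma(TB)$, $\pi(V)=g(V,P)=0$ for $V\in\Gamma(TF_{i})$ with $i\neq r$, while $\pi(P)=g(P,P)$. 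These vanishings kill most of the six correction terms in $(4)$ in any given case, and they are exactly what produces the Kronecker symbols $\delta_{i}^{r},\delta_{j}^{r}$ in the statement: a term such as $\lambda_{1}g(W,\nabla_{V}P)$ survives only when $V$ and $W$ both lie in $\Gamma(TF_{r})$, i.e.\ only after the insertion of a factor $\delta_{i}^{r}$ or $\delta_{j}^{r}$.

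The remaining work is the case split, organized by how the three vector-field slots distribute among $TB$ and the fibers and, within that, by the pattern of coincidences among the indices $i,j,k$ and their comparison with $r$. I would dispatch the ``pure base'' case $(1)$ and the ``single fiber'' cases $(11)$--$(12)$ first, since these run parallel to Propositions $3.3$ and $3.4$; here one must recognize when a cluster of terms reassembles into $\frac{\nabla^{B}_{X}(grad_{B}b_{i})}{b_{i}}$, $\frac{|grad_{B}b_{i}|^{2}_{B}}{b_{i}^{2}}$, or $R^{F_{i}}(U,V)W$. The genuinely mixed cases $(2)$--$(10)$ then follow by the same mechanical substitution, the only care being to select the right branch of Lemma $2.3$ ($(3)$ versus $(4)$) for $\nabla_{V}P$ and to record precisely which $\pi$'s vanish; as sanity checks I would confirm that setting $m=1$ recovers Proposition $3.4$ and that dropping all $\lambda$-terms in $(11)$--$(12)$ returns Lemma $2.4(9)$. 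The main obstacle is precisely this organizational load: there is no conceptual difficulty, but twelve sub-cases, each requiring the correct subset of the six terms in $(4)$ and the correct branch of Lemma $2.3$, make it easy to drop a term or mis-sign one of the $\lambda_{1}^{2}$, $\lambda_{2}^{2}$, $\lambda_{1}\lambda_{2}$ contributions.
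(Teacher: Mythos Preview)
Your approach is exactly the paper's: the proposition is stated immediately after the sentence ``By Lemmas 2.3, 2.4 and the equation (4), we have the following two Propositions,'' with no further argument given. Your proposal simply unpacks what that citation entails---substituting the Levi-Civita curvature from Lemma~2.4 and the derivatives $\nabla_{\bullet}P$ from Lemma~2.3 into the general identity~(4), then tracking which $\pi$-terms vanish according to whether the relevant index equals $r$---which is precisely the intended computation.
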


By Propositions 4.3 and 4.4 and the definition of the Ricci curvature tensor, we have the following two Propositions:
\begin{Proposition}
Let $M=B \times_{b_{1}}F_{1} \times_{b_{2}}F_{2}\cdots \times_{b_{m}}F_{m}$ be a multiply twisted product,
${\rm dim}M=\overline n,\;{\rm dim}B=n,\;{\rm dim}F_{i}=l_{i}.$
If $X,Y,Z\in\Gamma(TB)$, $V\in\Gamma(TF_{i}), W\in\Gamma(TF_{j})$ and $P\in\Gamma(TB),$ then: \\
$(1)\overline {Ric}(X,Y)=\overline {Ric}^B(X,Y)+\sum\limits_{i=1}^ml_{i}\Big[\frac{H^{b_{i}}_{B}(X,Y)}{b_{i}}
+\lambda_{2}\frac{Pb_{i}}{b_{i}}g(X,Y)+\lambda_{1}\lambda_{2}\pi(P)g(X,Y)
\\ \mbox{}\quad+\lambda_{1}g(Y,\nabla_{X}P)-\lambda_{1}^2\pi(X)\pi(Y)\Big]; $ \\
$(2)\overline {Ric}(X,V)=\overline {Ric}(V,X)=(l_{i}-1)[VX(lnb_{i})]; $   \\
$(3)\overline {Ric}(V,W)=0$  if $i\neq j;$ \\
$(3)\overline {Ric}(V,W)=\overline {Ric}^{F_{i}}(V,W)+\Big\{\frac{\Delta_{B}b_{i}}{b_{i}}
+(l_{i}-1)\frac{|grad_{B}b_{i}|^2_{B}}{b_{i}^2}
+\sum\limits_{s\neq i}l_{s}\frac{g_{B}(grad_{B}b_{i},grad_{B}b_{s})}{b_{i}b_{s}}
\\ \mbox{}\quad +[(\overline n-1)\lambda_{1}\lambda_{2}-\lambda_{2}^2]\pi(P)+\lambda_{2}div_{B}P
+\lambda_{2}\sum\limits_{s\neq i}l_{s}\frac{Pb_{s}}{b_{s}}+[(\overline n-1)\lambda_{1}
+(l_{i}-1)\lambda_{2}]\frac{Pb_{i}}{b_{i}}\Big\}g(V,W)$ \\ \mbox{}\quad if $i=j.$ \\
where  $div_{B}P=\sum\limits_{k=1}^n\varepsilon_{k}\langle\nabla_{E_{k}}P,E_{k}\rangle,$ and $E_{k},
1\leq k\leq n $ is an orthonormal base of $B $ with $\varepsilon_{k}=g(E_{k},E_{k}).$
\end{Proposition}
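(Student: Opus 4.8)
\textit{Proof sketch (plan).}

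The plan is to unwind the definition $\overline{Ric}(Z_1,Z_2)=\sum_k\varepsilon_k\langle\overline R(Z_1,E_k)Z_2,E_k\rangle$ over an orthonormal frame of $M$ adapted to the product decomposition, and to substitute the curvature identities of Proposition 4.3. First I would fix a $g$-orthonormal frame $\{E_a\}_{a=1}^{n}$ of $TB$ and, for each $s$, a $g$-orthonormal frame $\{E^{(s)}_\beta\}_{\beta=1}^{l_s}$ of $TF_s$; since $g|_{TF_s}=b_s^{2}g_{F_s}$, the rescaled vectors $\widetilde{E}^{(s)}_{\beta}:=b_sE^{(s)}_\beta$ form a $g_{F_s}$-orthonormal frame of $F_s$, and $\{E_a\}\cup\bigcup_s\{E^{(s)}_\beta\}$ is a frame of $M$ with $\overline n=n+\sum_sl_s$. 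Each trace then splits into a $TB$-block and one $TF_s$-block per $s$, and inside the fiber blocks one further separates $s=i$ from $s\neq i$.

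For $\overline{Ric}(X,Y)$ with $X,Y\in\Gamma(TB)$ the $TB$-block is $\overline{Ric}^B(X,Y)$ by Proposition 4.3(1), while for a fiber vector $E^{(s)}_\beta$ the antisymmetry $\overline R(X,E^{(s)}_\beta)=-\overline R(E^{(s)}_\beta,X)$ together with Proposition 4.3(2) shows $\overline R(X,E^{(s)}_\beta)Y$ is a scalar multiple of $E^{(s)}_\beta$ whose coefficient is exactly the bracket in (1) with $b_i$ replaced by $b_s$; pairing and summing over the $l_s$ indices and then over $s$ gives the $\sum_i l_i[\cdots]$ term. For $\overline{Ric}(X,V)$ (and the mirror identity for $\overline{Ric}(V,X)$ via Proposition 4.3(4)) Proposition 4.3(3) kills the $TB$-block and Proposition 4.3(6) kills the blocks $s\neq i$; in the $s=i$ block Proposition 4.3(7) gives $\overline R(X,E^{(i)}_\beta)V=[V(X\ln b_i)]E^{(i)}_\beta-g(E^{(i)}_\beta,V)[\cdots]$, the first term tracing to $l_i\,VX(\ln b_i)$ and the $TF_i$-component $\mathrm{grad}_{F_i}\tfrac{X(\ln b_i)}{b_i^{2}}$ of the bracket tracing, via $\sum_\beta\varepsilon_\beta\langle V,E^{(i)}_\beta\rangle E^{(i)}_\beta=V$, to $-VX(\ln b_i)$, so the total is $(l_i-1)VX(\ln b_i)$. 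The case $\overline{Ric}(V,W)$ with $i\neq j$ is disposed of by checking block by block that every term is a curvature vector lying in a fiber orthogonal to the frame vector against which it is paired (Proposition 4.3(5),(6),(8)), hence vanishes.

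The substantive computation is $\overline{Ric}(V,W)$ with $V,W\in\Gamma(TF_i)$. The $TB$-block, via Proposition 4.3(7) and the scalar identities $\sum_a\varepsilon_a\langle\nabla^B_{E_a}\mathrm{grad}_Bb_i,E_a\rangle=\Delta_Bb_i$, $\sum_a\varepsilon_a=n$, $\sum_a\varepsilon_a\langle\nabla_{E_a}P,E_a\rangle=\mathrm{div}_BP$ and $\sum_a\varepsilon_a\pi(E_a)^{2}=g(P,P)=\pi(P)$, produces $g(V,W)\bigl[\tfrac{\Delta_Bb_i}{b_i}+\lambda_1n\tfrac{Pb_i}{b_i}+\lambda_2\,\mathrm{div}_BP+(\lambda_1\lambda_2n-\lambda_2^{2})\pi(P)\bigr]$; the blocks $s\neq i$, via Proposition 4.3(8), produce $g(V,W)\sum_{s\neq i}l_s\bigl[\tfrac{g_B(\mathrm{grad}_Bb_i,\mathrm{grad}_Bb_s)}{b_ib_s}+\lambda_1\tfrac{Pb_i}{b_i}+\lambda_2\tfrac{Pb_s}{b_s}+\lambda_1\lambda_2\pi(P)\bigr]$; and the $s=i$ block, via Proposition 4.3(9), produces $\overline{Ric}^{F_i}(V,W)$ (the two $\mathrm{grad}_B$ terms are $g$-orthogonal to $E^{(i)}_\beta$; the $R^{F_i}$-trace becomes $\overline{Ric}^{F_i}$ after the $b_i^{2}$-rescaling and the tensoriality of $R^{F_i}$) together with $(l_i-1)g(V,W)\bigl[\tfrac{|\mathrm{grad}_Bb_i|_B^{2}}{b_i^{2}}+(\lambda_1+\lambda_2)\tfrac{Pb_i}{b_i}+\lambda_1\lambda_2\pi(P)\bigr]$. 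Adding the three blocks and using $\overline n=n+\sum_sl_s$ — so that the $\pi(P)$ coefficients combine to $\lambda_1\lambda_2(n+\sum_{s\neq i}l_s+l_i-1)-\lambda_2^{2}=(\overline n-1)\lambda_1\lambda_2-\lambda_2^{2}$ and the $\tfrac{Pb_i}{b_i}$ coefficients to $\lambda_1(n+\sum_{s\neq i}l_s)+(l_i-1)(\lambda_1+\lambda_2)=(\overline n-1)\lambda_1+(l_i-1)\lambda_2$ — yields the asserted formula.

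The main obstacle is exactly this final bookkeeping: matching each block of each trace to the correct case of Proposition 4.3 after relabeling the three curvature slots, keeping track of the $b_s^{2}$ rescaling between the $g$- and $g_{F_i}$-orthonormal frames inside the $R^{F_i}$ and $\mathrm{grad}_{F_i}$ terms, and merging the $\pi(P)$, $\tfrac{Pb_i}{b_i}$, $\Delta_Bb_i$ and $\mathrm{div}_BP$ contributions from all blocks into the compact form with $\overline n=n+\sum_sl_s$. Once the adapted frame is in place, the $(X,Y)$, $(X,V)$ and $i\neq j$ cases are essentially routine traces.
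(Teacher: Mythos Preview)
Your proposal is correct and is exactly the approach the paper takes: the paper simply states that Propositions 4.5 and 4.7 follow ``by Propositions 4.3 and 4.4 and the definition of the Ricci curvature tensor,'' giving no further detail. Your block-by-block trace over an adapted $g$-orthonormal frame, with the $b_i^{2}$ rescaling for the $R^{F_i}$ and $\mathrm{grad}_{F_i}$ terms and the final recombination of the $\pi(P)$ and $\tfrac{Pb_i}{b_i}$ coefficients via $\overline n=n+\sum_s l_s$, is precisely the computation the paper leaves implicit.
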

As a Corollary of Proposition $4.5,$ we have:
\begin{Corollary}
Let $M=B \times_{b_{1}}F_{1} \times_{b_{2}}F_{2}\cdots \times_{b_{m}}F_{m}$ be a multiply twisted product, and
${\rm dim}F_{i}=l_{i}>1,\;P\in\Gamma(TB),$ then $(M,\overline\nabla)$ is mixed Ricci-flat if and only if $M$ can be expressed as a multiply warped product. In particular, if $(M,\overline\nabla)$ is Einstein, then $M$ can be expressed as a multiply warped product.
\end{Corollary}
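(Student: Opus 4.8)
The plan is to read off the mixed components of the Ricci tensor $\overline{Ric}$ from Proposition 4.5 and then convert the resulting differential condition on the twisting functions $b_i$ into a product decomposition of $M$. First I would recall that $(M,\overline\nabla)$ being mixed Ricci-flat means $\overline{Ric}(X,V)=0$ for all $X\in\Gamma(TB)$ and $V\in\Gamma(TF_i)$, $1\le i\le m$. By Proposition 4.5(2) this is exactly $(l_i-1)\,VX(\ln b_i)=0$, and since each $l_i>1$ by hypothesis it is equivalent to
$$VX(\ln b_i)=0\qquad\text{for all }X\in\Gamma(TB),\ V\in\Gamma(TF_i),\ 1\le i\le m.\eqno{(\ast)}$$
One direction is then immediate: if $M=B\times_{b_1}F_1\times\cdots\times_{b_m}F_m$ is already a multiply warped product, each $b_i$ is a function on $B$ alone, so $X(\ln b_i)$ is again a function on $B$ and is annihilated by every $V\in\Gamma(TF_i)$; hence $(\ast)$ holds and $(M,\overline\nabla)$ is mixed Ricci-flat.

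Conversely, assuming $(\ast)$, I would carry out a separation-of-variables argument factor by factor. Fix $i$; since a vector field $X\in\Gamma(TB)$ differentiates only in the $B$-directions on $B\times F_i$, condition $(\ast)$ says that $X(\ln b_i)$ is constant along each slice $\{p\}\times F_i$. Choosing a base point $q_0\in F_i$ and using connectedness of $B$, this forces $\ln b_i(p,q)-\ln b_i(p,q_0)$ to be independent of $p$; writing $\mu_i(p):=\ln b_i(p,q_0)$ and $\nu_i(q):=\ln b_i(p,q)-\ln b_i(p,q_0)$ yields the additive splitting $\ln b_i=\mu_i+\nu_i$, i.e. $b_i(p,q)=\widetilde b_i(p)\,c_i(q)$ with $\widetilde b_i:=e^{\mu_i}>0$ on $B$ and $c_i:=e^{\nu_i}>0$ on $F_i$. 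Setting $\widetilde g_{F_i}:=c_i^2 g_{F_i}$, the $F_i$-block of the metric becomes $b_i^2 g_{F_i}=\widetilde{b}_i^{\,2}\,\widetilde g_{F_i}$ with warping function $\widetilde b_i$ depending only on $B$, so $M=B\times_{\widetilde b_1}(F_1,\widetilde g_{F_1})\times\cdots\times_{\widetilde b_m}(F_m,\widetilde g_{F_m})$ is a multiply warped product. This establishes the stated equivalence.

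For the final clause, if $(M,\overline\nabla)$ is Einstein then $\overline{Ric}(\cdot,\cdot)$ is proportional to $g(\cdot,\cdot)$, and since $g=g_B\oplus b_1^2 g_{F_1}\oplus\cdots\oplus b_m^2 g_{F_m}$ is block diagonal we have $g(X,V)=0$ for $X\in\Gamma(TB)$, $V\in\Gamma(TF_i)$; hence $\overline{Ric}(X,V)=0$, so $M$ is mixed Ricci-flat and the previous step applies. The only delicate point is the separation step: one must verify that $(\ast)$ genuinely produces the additive splitting of $\ln b_i$, which rests on connectedness of $B$ and on the fact that $\Gamma(TB)$ and $\Gamma(TF_i)$ span complementary coordinate directions on the product. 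I expect this to be the main (and rather mild) obstacle, the remainder being bookkeeping.
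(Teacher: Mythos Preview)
Your proposal is correct and follows essentially the same route as the paper: both arguments reduce mixed Ricci-flatness to the condition $(l_i-1)\,VX(\ln b_i)=0$ via Proposition~4.5(2), use $l_i>1$ to drop the factor, and then separate variables in $\ln b_i$ to rewrite $M$ as a multiply warped product; the paper simply cites Theorem~1 of [5] for this last step, while you spell it out explicitly. Two small remarks: the paper also invokes Proposition~4.5(3) to note that the cross-fiber components $\overline{Ric}(V,W)$ with $V\in\Gamma(TF_i)$, $W\in\Gamma(TF_j)$, $i\neq j$, vanish automatically (so the full mixed Ricci-flatness condition, not only the $(X,V)$-part, is accounted for), and your separation argument tacitly uses connectedness of $F_i$ as well as of $B$.
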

\begin{proof}
By Proposition $4.5.(2)$ and $(3)$, similar to the proof of Theorem $1$ in $[5],$ we get this Corollary.
\end{proof}
\begin{Proposition}
Let $M=B \times_{b_{1}}F_{1} \times_{b_{2}}F_{2}\cdots \times_{b_{m}}F_{m}$ be a multiply twisted product,
${\rm dim}M=\overline n,\;{\rm dim}B=n,\;{\rm dim}F_{i}=l_{i}.$
If $X,Y,Z\in\Gamma(TB)$, $V\in\Gamma(TF_{i}), W\in\Gamma(TF_{j})$ and $P\in\Gamma(TF_{r})$ for a fixed r, then: \\
$(1)\overline {Ric}(X,Y)=\overline {Ric}^B(X,Y)+\sum\limits_{i=1}^ml_{i}\frac{H^{b_{i}}_{B}(X,Y)}{b_{i}}
+[(\overline n-1)\lambda_{1}\lambda_{2}-\lambda_{2}^2]\pi(P)g(X,Y)\\ \mbox{}\quad +\lambda_{2}g(X,Y)div_{F_{r}}P;$ \\
$(2)\overline {Ric}(X,V)=(l_{i}-1)[VX(lnb_{i})]
+[(\overline n-1)\lambda_{1}-\lambda_{2}]\pi(V)\frac{Xb_{r}}{b_{r}};$\\
$(3)\overline {Ric}(V,X)=(l_{i}-1)[VX(lnb_{i})]
+[\lambda_{2}-(\overline n-1)\lambda_{1}]\pi(V)\frac{Xb_{r}}{b_{r}}; $   \\
$(4)\overline {Ric}(V,W)=0$ if $i\neq j;$\\
$(4)\overline {Ric}(V,W)=Ric^{F_{i}}(V,W)+g(V,W)\Big\{\frac{\Delta_{B}b_{i}}{b_{i}}
+(l_{i}-1)\frac{|grad_{B}b_{i}|^2_{B}}{b_{i}^2}
+\sum\limits_{s\neq i}l_{s}\frac{g_{B}(grad_{B}b_{i},grad_{B}b_{s})}{b_{i}b_{s}}\\ \mbox{}\quad
+[(\overline n-1)\lambda_{1}\lambda_{2} -\lambda_{2}^2]\pi(P)\Big\}
+[(\overline n-1)\lambda_{1}-\lambda_{2}]g(W,\nabla_{V}P)+[\lambda_{2}^2+(1-\overline n)\lambda_{1}^2]\pi(V)\pi(W)
\\ \mbox{}\quad +\lambda_{2}g(V,W)div_{F_{r}}P$  if $i=j.$
\end{Proposition}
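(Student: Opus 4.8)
\noindent\textit{Proof proposal.} All three formulas are obtained by tracing the curvature identities of Proposition 4.4. Fix a local orthonormal frame $\{E_k\}$ of $M$ adapted to the product: a $g_B$-orthonormal frame $\{\mathcal E_1,\dots,\mathcal E_n\}$ of $TB$ together with, for each $i$, a frame $\{\mathcal F^i_1,\dots,\mathcal F^i_{l_i}\}$ of $TF_i$ orthonormal for $b_i^2 g_{F_i}$. For any pair of arguments the defining sum $\overline{Ric}(X,Y)=\sum_k\varepsilon_k\langle\overline R(X,E_k)Y,E_k\rangle$ splits into a base block (indices in $TB$) and $m$ fiber blocks (indices in $TF_i$), and each block is evaluated from the item of Proposition 4.4 matching the types of the three vector fields, using the trace identities $\sum_a\varepsilon_a g(X,\mathcal E_a)g(\mathcal E_a,Y)=g(X,Y)$, $\sum_a\varepsilon_a g(\mathcal E_a,\mathcal E_a)=n$, the completeness relation $\sum_\alpha\varepsilon_\alpha g(V,\mathcal F^i_\alpha)\mathcal F^i_\alpha=V$ on each fiber, the identity $g(P,\mathcal E_a)=0$ (since $P\in\Gamma(TF_r)$), and the rescaling $b_i\mathcal F^i_\alpha$ that turns the $F_i$-block trace of $R^{F_i}$ into $Ric^{F_i}$.

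For (1), with $X,Y\in\Gamma(TB)$: the base block comes from Proposition 4.4(1) (its $\lambda_2$-term dies against $\mathcal E_a$) and yields $\overline{Ric}^B(X,Y)+(n-1)\lambda_1\lambda_2\pi(P)g(X,Y)$; each fiber block with $i\ne r$ comes from Proposition 4.4(2) plus the antisymmetry $\overline R(X,V)=-\overline R(V,X)$ and yields $l_i\bigl[H^{b_i}_B(X,Y)/b_i+\lambda_1\lambda_2\pi(P)g(X,Y)\bigr]$; the block for $F_r$ comes from Proposition 4.4(3) and additionally produces $-\lambda_2^2\pi(P)g(X,Y)$ via $\sum_\alpha\varepsilon_\alpha\pi(\mathcal F^r_\alpha)^2=g(P,P)=\pi(P)$ and $\lambda_2 g(X,Y)\,\mathrm{div}_{F_r}P$ from the term $-\lambda_2 g(X,Y)\nabla_\cdot P$. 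Adding the blocks and collecting $\lambda_1\lambda_2\pi(P)g(X,Y)$ with coefficient $(n-1)+\sum_i l_i=\overline n-1$ gives (1). For (2) and (3) one traces $\overline R(X,\cdot)V$ instead: the base block uses Proposition 4.4(4), the fiber blocks with $j\ne i$ use Proposition 4.4(8), the $F_i$-block uses Proposition 4.4(9); the ``twisted'' term $(l_i-1)\,VX(\ln b_i)$ appears as $l_i\cdot VX(\ln b_i)$ off the diagonal of the $F_i$-block minus one copy coming from the $\mathrm{grad}_{F_i}$-term through fiber completeness, while the quarter-symmetric correction $[(\overline n-1)\lambda_1-\lambda_2]\pi(V)\tfrac{Xb_r}{b_r}$ has its $\lambda_1$-coefficient accumulating again as $(n-1)+\sum_{j\ne i}l_j+(l_i-1)=\overline n-1$; the sign of the $\lambda_2$-term flips between $\overline{Ric}(X,V)$ and $\overline{Ric}(V,X)$ because the transposed curvature $\overline R(V,E_k)X$ is needed for the latter (the non-metric connection has no pair symmetry, so $\overline{Ric}$ is not symmetric).

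Part (4) is handled the same way from $\overline R(V,\cdot)W$. For $i\ne j$ every term in every block carries a factor $\pi(\cdot)$ or $g(\cdot,\cdot)$ joining two distinct fibers, hence pairs to zero and $\overline{Ric}(V,W)=0$. For $i=j$ the base block and the blocks of the remaining fibers reproduce exactly the Levi-Civita part $Ric^{F_i}(V,W)+g(V,W)\bigl\{\Delta_B b_i/b_i+(l_i-1)|\mathrm{grad}_B b_i|^2_B/b_i^2+\sum_{s\ne i}l_s\, g_B(\mathrm{grad}_B b_i,\mathrm{grad}_B b_s)/(b_i b_s)\bigr\}$, the $R^{F_i}$-piece of Proposition 4.4(11)/(12) contributing $Ric^{F_i}(V,W)$ after rescaling; the extra terms assemble, once more via completeness on $TF_r$, into $[(\overline n-1)\lambda_1\lambda_2-\lambda_2^2]\pi(P)g(V,W)+[(\overline n-1)\lambda_1-\lambda_2]g(W,\nabla_V P)+[\lambda_2^2+(1-\overline n)\lambda_1^2]\pi(V)\pi(W)+\lambda_2 g(V,W)\,\mathrm{div}_{F_r}P$, with the split between Proposition 4.4(11) and (12) governed by whether $i=r$.

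The main obstacle is not any single computation but the bookkeeping: for each of the three Ricci components there are four blocks, and inside Proposition 4.4 each curvature formula must be applied with its arguments relabelled using both the antisymmetry of $\overline R$ in its first two slots and the $i\leftrightarrow j$ symmetry of the fiber labels, while tracking which $\pi$- and $g$-factors vanish. The only steps that are not pure linear algebra over the frame are (a) identifying $\sum_\alpha\varepsilon_\alpha g(\nabla_{\mathcal F^r_\alpha}P,\mathcal F^r_\alpha)$ with $\mathrm{div}_{F_r}P$, which uses Lemma 2.3(4) to see that the non-$\nabla^{F_r}$ parts of $\nabla_{\mathcal F^r_\alpha}P$ either trace to zero or fall into the warping terms already accounted for, and (b) the repeated use of $P=\sum_\alpha\varepsilon_\alpha\pi(\mathcal F^r_\alpha)\mathcal F^r_\alpha$ to collapse the quadratic-in-$\pi$ sums to $\pi(P)=g(P,P)$.
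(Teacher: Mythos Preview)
Your approach is exactly the paper's: the authors simply state that Proposition~4.7 follows ``by Propositions 4.3 and 4.4 and the definition of the Ricci curvature tensor,'' i.e., by contracting the curvature identities of Proposition~4.4 against an adapted orthonormal frame. Your write-up supplies the block-by-block bookkeeping that the paper omits, and the two subtleties you flag (identifying the fiber trace of $\nabla_{\cdot}P$ with $\mathrm{div}_{F_r}P$ via Lemma~2.3(4), and collapsing $\sum_\alpha\varepsilon_\alpha\pi(\mathcal F^r_\alpha)^2$ to $\pi(P)$) are precisely the places where the computation is not purely formal.
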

As a Corollary of Proposition $4.7,$ we have:
\begin{Corollary}
Let $M=B \times_{b_{1}}F_{1} \times_{b_{2}}F_{2}\cdots \times_{b_{m}}F_{m}$ be a multiply twisted product, and
${\rm dim}F_{i}=l_{i}>1,\;P\in\Gamma(TF_{r}),$ then $(M,\overline\nabla)$ is mixed Ricci-flat if and only if one of the following two conditions is satisfied:\\
$(1)$\;$\lambda_{2}=(\overline n-1)\lambda_{1},$ and $M$ can be expressed as a multiply warped product; \\
$(2)$\;$\lambda_{2}\neq(\overline n-1)\lambda_{1},\;M$ can be expressed as a multiply warped product and $b_{r}$ is only \\ \mbox{} \quad dependent on $F_{r};$  \\
In particular, if $(M,\overline\nabla)$ is Einstein, then $M$ can be expressed as a multiply warped product.
\end{Corollary}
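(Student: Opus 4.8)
The plan is to unwind the definition of ``mixed Ricci-flat,'' which here means $\overline{Ric}(X,V)=0$ and $\overline{Ric}(V,X)=0$ for every $X\in\Gamma(TB)$ and $V\in\Gamma(TF_i)$, for all $i$, and then read off the consequences from Proposition 4.7. First I would add the expressions in Proposition 4.7(2) and 4.7(3): the terms $\pm[(\overline n-1)\lambda_{1}-\lambda_{2}]\pi(V)\frac{Xb_{r}}{b_{r}}$ cancel and we are left with $2(l_{i}-1)\,VX(\ln b_{i})=0$. Since each $l_{i}>1$, this forces $VX(\ln b_{i})=0$ for all $X\in\Gamma(TB)$, $V\in\Gamma(TF_{i})$, and all $i$. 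By the separation-of-variables argument in the proof of Theorem 1 of [5] (the step that upgrades a twisted product to a warped product, exactly as used in Corollary 4.6), this is equivalent to saying each $b_{i}$ factors as a product of a positive function on $B$ and a positive function on $F_{i}$; absorbing the fiber factor into $g_{F_{i}}$ exhibits $M$ as a multiply warped product.

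Next I would substitute $VX(\ln b_{i})=0$ back into Proposition 4.7(2) (or subtract (3) from (2)). What survives is $[(\overline n-1)\lambda_{1}-\lambda_{2}]\,\pi(V)\frac{Xb_{r}}{b_{r}}=0$ for all $X\in\Gamma(TB)$ and all $V\in\Gamma(TF_{i})$. For $i\neq r$ this is automatic, since $\pi(V)=g(V,P)=0$ when $V\in\Gamma(TF_{i})$ and $P\in\Gamma(TF_{r})$ lie in orthogonal summands; so the only genuine constraint comes from $i=r$, where $\pi(V)$ does not vanish identically. Hence the condition reduces to $[(\overline n-1)\lambda_{1}-\lambda_{2}]\,\frac{Xb_{r}}{b_{r}}=0$ for all $X\in\Gamma(TB)$. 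Now I split into the two stated cases: if $\lambda_{2}=(\overline n-1)\lambda_{1}$ there is no further restriction, giving case (1); if $\lambda_{2}\neq(\overline n-1)\lambda_{1}$ then $Xb_{r}=0$ for every $X\in\Gamma(TB)$, i.e.\ $b_{r}$ depends only on $F_{r}$, giving case (2).

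The converse is a direct check: assuming $M$ is a multiply warped product (so every $b_{i}$ depends only on $B$, whence $VX(\ln b_{i})=0$) together with the appropriate hypothesis on $\lambda_{1},\lambda_{2},b_{r}$, substitution into Proposition 4.7(2)--(3) yields $\overline{Ric}(X,V)=\overline{Ric}(V,X)=0$. For the ``in particular'' clause, if $(M,\overline\nabla)$ is Einstein then $\overline{Ric}=\alpha g$ for some $\alpha$, and since $g(X,V)=0$ for $X\in\Gamma(TB)$, $V\in\Gamma(TF_{i})$, we get $\overline{Ric}(X,V)=\overline{Ric}(V,X)=0$; thus $M$ is mixed Ricci-flat, and by the above it can be expressed as a multiply warped product in either case.

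The only non-routine ingredient is invoking the result of [5] in the reduction step, and there the one point requiring care is that re-scaling $g_{F_{i}}$ by the fiber-dependent factor of $b_{i}$ should not disturb the separate conclusion about $b_{r}$ in case (2). But after the re-scaling the $r$-th warping function is precisely the $B$-part of $b_{r}$, so the statement ``$b_{r}$ depends only on $F_{r}$'' is preserved in the form ``the $B$-part of $b_{r}$ is constant,'' and nothing is lost. Everything else is bookkeeping with the Ricci formulas already established in Proposition 4.7.
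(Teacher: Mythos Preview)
Your proof is correct and follows essentially the same route as the paper: use Proposition~4.7(2)--(3) to reduce mixed Ricci-flatness to the two conditions $VX(\ln b_i)=0$ and $[(\overline n-1)\lambda_1-\lambda_2]\pi(V)\frac{Xb_r}{b_r}=0$, invoke the argument of [5] (via Corollary~4.6) for the first, and split into cases on the second. Your presentation is in fact more complete than the paper's---you make explicit the add/subtract trick that isolates $VX(\ln b_i)$, spell out the converse and the Einstein clause, and flag the rescaling issue for $b_r$---but the underlying argument is the same.
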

\begin{proof}
By Proposition $4.7.(2)$ and $(3),$ we have that $(M,\overline\nabla)$ is mixed Ricci-flat if and only if
$VX(lnb_{i})=0,\;[(\overline n-1)\lambda_{1}-\lambda_{2}]\pi(V)\frac{Xb_{r}}{b_{r}}=0.$
Similar to the proof of Corollary $4.6,$ we get that\\
¢Ù $\lambda_{2}=(\overline n-1)\lambda_{1},$ and $M$ can be expressed as a multiply warped product.\\
¢Ú $\lambda_{2}\neq(\overline n-1)\lambda_{1},\;M$ can be expressed as a multiply warped product.
When $i\neq r,\;\pi(V)=0.$ When $i=r,$ by $\pi(V)\frac{Xb_{r}}{b_{r}}=0,$ then $b_{r}$ depends only on $F_{r}.$
\end{proof}
By Proposition 4.5 and the definition of the scalar curvature, we have the following:
\begin{Proposition}
Let $M=B \times_{b_{1}}F_{1} \times_{b_{2}}F_{2}\cdots \times_{b_{m}}F_{m}$ be a multiply twisted product,
${\rm dim}M=\overline n,\;{\rm dim}B=n,\;{\rm dim}F_{i}=l_{i}. $ If $P\in\Gamma(TB),$
then the scalar curvature $\overline S$ has the following expression: \\
$\overline S=\overline S^B+2\sum\limits_{i=1}^ml_{i}\frac{\Delta_{B}b_{i}}{b_{i}}
+\sum\limits_{i=1}^m\frac{S^{F_{i}}}{b_{i}^2}+\sum\limits_{i=1}^ml_{i}(l_{i}-1)\frac{|grad_{B}b_{i}|^2_{B}}{b_{i}^2}
+\sum\limits_{i=1}^m\sum\limits_{s\neq i}l_{i}l_{s}\frac{g_{B}(grad_{B}b_{i},grad_{B}b_{s})}{b_{i}b_{s}}
\\\mbox{}\quad+\sum\limits_{i=1}^ml_{i}[(\overline n-1)\lambda_{1}+(n+l_{i}-1)\lambda_{2})]\frac{Pb_{i}}{b_{i}}
+\lambda_{2}\sum\limits_{i=1}^m\sum\limits_{s\neq i}l_{i}l_{s}\frac{Pb_{s}}{b_{s}}
+\sum\limits_{i=1}^ml_{i}[(\overline n+n-1)\lambda_{1}\lambda_{2}\\\mbox{}\quad
-(\lambda_{1}^2+\lambda_{2}^2)]\pi(P)+(\lambda_{1}+\lambda_{2})\sum\limits_{i=1}^ml_{i}div_{B}P.$
\end{Proposition}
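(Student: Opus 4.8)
The plan is to trace the Ricci tensor of Proposition~4.5 over an orthonormal frame adapted to the product. First I would fix an orthonormal basis $\{E_1,\dots,E_n\}$ of $(B,g_B)$ with $g(E_a,E_a)=\varepsilon_a$, and for each $i$ an orthonormal basis $\{\widehat E^{(i)}_1,\dots,\widehat E^{(i)}_{l_i}\}$ of $(F_i,g_{F_i})$; then the $E_a$ together with $E^{(i)}_\alpha:=b_i^{-1}\widehat E^{(i)}_\alpha$ form an orthonormal basis of $(M,g)$ with $g(E^{(i)}_\alpha,E^{(i)}_\alpha)=\varepsilon^{(i)}_\alpha$. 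Since every frame vector lies purely in $TB$ or in a single $TF_i$, the mixed components $\overline{Ric}(X,V)$ of Proposition~4.5.(2) never enter, and $\overline S$ splits as the base trace $\sum_a\varepsilon_a\overline{Ric}(E_a,E_a)$ plus the fiber traces $\sum_{i=1}^m\sum_\alpha\varepsilon^{(i)}_\alpha\overline{Ric}(E^{(i)}_\alpha,E^{(i)}_\alpha)$.

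For the base trace I would substitute Proposition~4.5.(1) and use the standard identities $\sum_a\varepsilon_a\frac{H^{b_i}_B(E_a,E_a)}{b_i}=\frac{\Delta_B b_i}{b_i}$, $\sum_a\varepsilon_a g(E_a,E_a)=n$, $\sum_a\varepsilon_a g(E_a,\nabla_{E_a}P)=div_B P$, and $\sum_a\varepsilon_a\pi(E_a)^2=g(P,P)=\pi(P)$ (the last valid since $P\in\Gamma(TB)$); this yields $\overline S^B+\sum_i l_i\big[\frac{\Delta_B b_i}{b_i}+n\lambda_2\frac{Pb_i}{b_i}+n\lambda_1\lambda_2\pi(P)+\lambda_1 div_B P-\lambda_1^2\pi(P)\big]$. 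For the fiber traces I would use the $i=j$ case of Proposition~4.5.(3). The one point that needs care is the rescaling of the fiber frame: since $\overline{Ric}^{F_i}$ is a $(0,2)$-tensor on $F_i$ we have $\overline{Ric}^{F_i}(E^{(i)}_\alpha,E^{(i)}_\alpha)=b_i^{-2}\overline{Ric}^{F_i}(\widehat E^{(i)}_\alpha,\widehat E^{(i)}_\alpha)$, so this term contributes $b_i^{-2}S^{F_i}$ to the trace, whereas the scalar bracket multiplying $g(V,W)$ is summed $l_i$ times because $\sum_\alpha\varepsilon^{(i)}_\alpha g(E^{(i)}_\alpha,E^{(i)}_\alpha)=l_i$; summing over $i$ reproduces all of the $S^{F_i}/b_i^2$, the $l_i(l_i-1)|grad_B b_i|^2_B/b_i^2$, the cross sums $\sum_{s\neq i}l_i l_s g_B(grad_B b_i,grad_B b_s)/(b_i b_s)$ and $\lambda_2\sum_{s\neq i}l_i l_s\frac{Pb_s}{b_s}$ of the claim, together with a further $\sum_i l_i\big[\frac{\Delta_B b_i}{b_i}+((\overline n-1)\lambda_1\lambda_2-\lambda_2^2)\pi(P)+\lambda_2 div_B P+((\overline n-1)\lambda_1+(l_i-1)\lambda_2)\frac{Pb_i}{b_i}\big]$.

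Finally I would add the base and fiber traces and collect coefficients: the two copies of $\sum_i l_i\frac{\Delta_B b_i}{b_i}$ combine into $2\sum_i l_i\frac{\Delta_B b_i}{b_i}$; the coefficient of $l_i\frac{Pb_i}{b_i}$ becomes $n\lambda_2+(\overline n-1)\lambda_1+(l_i-1)\lambda_2=(\overline n-1)\lambda_1+(n+l_i-1)\lambda_2$; the coefficient of $l_i\pi(P)$ becomes $n\lambda_1\lambda_2-\lambda_1^2+(\overline n-1)\lambda_1\lambda_2-\lambda_2^2=(\overline n+n-1)\lambda_1\lambda_2-(\lambda_1^2+\lambda_2^2)$; and $div_B P$ acquires the factor $\lambda_1+\lambda_2$. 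This is exactly the stated formula; since all of Proposition~4.5 is already available, the remaining work is purely the trace bookkeeping. I expect the main pitfall to be the fiber rescaling --- keeping straight that the purely intrinsic Ricci term scales like $b_i^{-2}$ while the ambient scalar bracket does not --- and confirming that no mixed Ricci term survives the trace.
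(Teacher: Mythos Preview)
Your proposal is correct and follows exactly the approach the paper indicates: the paper simply states ``By Proposition~4.5 and the definition of the scalar curvature'' without further detail, and your argument is precisely the frame-by-frame trace of Proposition~4.5 that this phrase is shorthand for. Your handling of the fiber rescaling and the collection of coefficients is accurate, and the mixed terms from 4.5(2) indeed drop out of the trace as you note.
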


By Proposition 4.7 and the definition of the scalar curvature, we have the following:
\begin{Proposition}
Let $M=B \times_{b_{1}}F_{1} \times_{b_{2}}F_{2}\cdots \times_{b_{m}}F_{m}$ be a multiply twisted product,
${\rm dim}M=\overline n,\;{\rm dim}B=n,\;{\rm dim}F_{i}=l_{i}. $ If $P\in\Gamma(TF_{r}),$
then the scalar curvature $\overline S$ has the following expression: \\
$\overline S=\overline S^B+2\sum\limits_{i=1}^ml_{i}\frac{\Delta_{B}b_{i}}{b_{i}}
+\sum\limits_{i=1}^m\frac{S^{F_{i}}}{b_{i}^2}+\sum\limits_{i=1}^ml_{i}(l_{i}-1)\frac{|grad_{B}b_{i}|^2_{B}}{b_{i}^2}
+\sum\limits_{i=1}^m\sum\limits_{s\neq i}l_{i}l_{s}\frac{g_{B}(grad_{B}b_{i},grad_{B}b_{s})}{b_{i}b_{s}}
\\ \mbox{}\quad\;\;+[\overline n(\overline n-1)\lambda_{1}\lambda_{2}
+(1-\overline n)(\lambda_{1}^2+\lambda_{2}^2)]\pi(P)+(\overline n-1)(\lambda_{1}+\lambda_{2})div_{F_{r}}P.  $
\end{Proposition}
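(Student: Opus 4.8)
The plan is to obtain the formula by tracing Proposition 4.7 against an adapted orthonormal frame, exactly as Proposition 3.8 is derived from Proposition 3.6 in the singly warped case (and as Proposition 4.9 is derived from Proposition 4.5). First I would fix, near a point, an orthonormal frame $\{X_1,\dots,X_n\}$ of $(B,g_B)$ with $g(X_a,X_a)=\varepsilon_a$, and for each $i$ an orthonormal frame $\{Y^{(i)}_1,\dots,Y^{(i)}_{l_i}\}$ of $(F_i,g_{F_i})$ with $g_{F_i}(Y^{(i)}_\alpha,Y^{(i)}_\alpha)=\varepsilon^{(i)}_\alpha$; then $\{X_a\}\cup\{b_i^{-1}Y^{(i)}_\alpha\}$ is an orthonormal frame of $(M,g)$, and
$$\overline S=\sum_a\varepsilon_a\,\overline{Ric}(X_a,X_a)+\sum_i\sum_\alpha\varepsilon^{(i)}_\alpha\,\overline{Ric}\big(b_i^{-1}Y^{(i)}_\alpha,b_i^{-1}Y^{(i)}_\alpha\big).$$
Since the mixed terms $\overline{Ric}(X,V)$ and the off-block terms $\overline{Ric}(V,W)$ with $V\in\Gamma(TF_i)$, $W\in\Gamma(TF_j)$, $i\neq j$, are never hit by this diagonal sum, only part $(1)$ and the $i=j$ case of part $(4)$ of Proposition 4.7 enter the computation.

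Next I would carry out the two traces. For the base block, substituting Proposition 4.7$(1)$ and using $\sum_a\varepsilon_a\overline{Ric}^B(X_a,X_a)=\overline S^B$, $\sum_a\varepsilon_aH^{b_i}_B(X_a,X_a)=\Delta_B b_i$, $\sum_a\varepsilon_a g(X_a,X_a)=n$, together with $\pi(X_a)=g(X_a,P)=0$ (valid since $P\in\Gamma(TF_r)$), I get
$$\overline S^B+\sum_{i}l_i\frac{\Delta_B b_i}{b_i}+n\big[(\overline n-1)\lambda_1\lambda_2-\lambda_2^2\big]\pi(P)+n\lambda_2\,div_{F_r}P.$$
For the $i$-th fiber block, substituting the $i=j$ case of Proposition 4.7$(4)$ and using that $Ric^{F_i}$ is a $(0,2)$-tensor so that $\sum_\alpha\varepsilon^{(i)}_\alpha Ric^{F_i}(b_i^{-1}Y^{(i)}_\alpha,b_i^{-1}Y^{(i)}_\alpha)=b_i^{-2}S^{F_i}$, together with $\sum_\alpha\varepsilon^{(i)}_\alpha g(b_i^{-1}Y^{(i)}_\alpha,b_i^{-1}Y^{(i)}_\alpha)=l_i$, the polarization identity giving $\sum_\alpha\varepsilon^{(i)}_\alpha\pi(b_i^{-1}Y^{(i)}_\alpha)^2=g(P,P)=\pi(P)$ when $i=r$ and $0$ otherwise, and Lemma 2.3$(3)$ which forces $\sum_\alpha\varepsilon^{(i)}_\alpha g(b_i^{-1}Y^{(i)}_\alpha,\nabla_{b_i^{-1}Y^{(i)}_\alpha}P)$ to vanish unless $i=r$ and (by Lemma 2.3$(4)$, or directly by the definition of $div_{F_r}P$ used in the paper) to equal $div_{F_r}P$ when $i=r$, I get that the $i$-th fiber block contributes
$$\frac{S^{F_i}}{b_i^2}+l_i\frac{\Delta_B b_i}{b_i}+l_i(l_i-1)\frac{|grad_B b_i|^2_B}{b_i^2}+l_i\sum_{s\neq i}l_s\frac{g_B(grad_B b_i,grad_B b_s)}{b_ib_s}+l_i\big[(\overline n-1)\lambda_1\lambda_2-\lambda_2^2\big]\pi(P)+l_i\lambda_2\,div_{F_r}P,$$
and, only when $i=r$, the additional terms $\big[(\overline n-1)\lambda_1-\lambda_2\big]div_{F_r}P+\big[\lambda_2^2+(1-\overline n)\lambda_1^2\big]\pi(P)$.

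Finally I would sum the base block and all fiber blocks and collect. The $b_i$-terms assemble immediately: $\sum_i l_i\frac{\Delta_B b_i}{b_i}$ occurs once from the base and once from each matching fiber, producing the factor $2$, while $\sum_i\frac{S^{F_i}}{b_i^2}$, $\sum_i l_i(l_i-1)\frac{|grad_B b_i|^2_B}{b_i^2}$ and $\sum_i\sum_{s\neq i}l_il_s\frac{g_B(grad_B b_i,grad_B b_s)}{b_ib_s}$ come straight from the fiber blocks. For the connection terms I collect the coefficient of $\pi(P)$, namely $n\big[(\overline n-1)\lambda_1\lambda_2-\lambda_2^2\big]+\big(\sum_i l_i\big)\big[(\overline n-1)\lambda_1\lambda_2-\lambda_2^2\big]+\lambda_2^2+(1-\overline n)\lambda_1^2$, and simplify using $n+\sum_i l_i=\overline n$ to obtain $\overline n(\overline n-1)\lambda_1\lambda_2+(1-\overline n)(\lambda_1^2+\lambda_2^2)$; likewise the coefficient of $div_{F_r}P$ is $n\lambda_2+\big(\sum_i l_i\big)\lambda_2+(\overline n-1)\lambda_1-\lambda_2=(\overline n-1)(\lambda_1+\lambda_2)$. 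This reproduces the claimed expression. The main obstacle is not any single estimate but precisely this bookkeeping step: one must correctly attribute the contributions to $\pi(P)$ and to $div_{F_r}P$ among the three sources — the base trace, the $g(V,W)$-coefficients of each fiber trace, and the $\pi(V)\pi(W)$ and $g(W,\nabla_V P)$ terms of the $F_r$-fiber trace — and verify that the $\pi(X)\pi(Y)$ term of Proposition 4.7$(1)$ drops out because $P$ is fiber-tangent. Everything else is the several-fiber specialization of the computation behind Proposition 3.8.
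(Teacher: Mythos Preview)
Your proposal is correct and follows exactly the approach the paper indicates: trace Proposition 4.7 against an adapted orthonormal frame, using part $(1)$ for the base block and the $i=j$ case of part $(4)$ for each fiber block, then collect the $\pi(P)$ and $div_{F_r}P$ contributions using $n+\sum_i l_i=\overline n$. One very small remark: Proposition 4.7$(1)$ as stated already has no $\pi(X)\pi(Y)$ term (that simplification was made in passing from the general formula to 4.7$(1)$), so your final comment about this term ``dropping out'' is about a step that has already happened; otherwise the bookkeeping is exactly right.
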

\begin{remark}
$(1)$ It is easy to see that Propositions $3.1-3.8$ are Corollaries of Propo-\\ \mbox{} \qquad \qquad \qquad \;\;\; sitions $4.1, 4.2, 4.3, 4.4, 4.5, 4.7, 4.9, 4.10,$ respectively.\\ \mbox{} \qquad \qquad \quad \;
$(2)$ When $\lambda_{1}=\lambda_{2}=1,$ we get Propositions $1,2,4,5,7,9,12,13$ in $[12],$ by \\ \mbox{} \qquad \qquad \qquad \quad \, Propositions $4.1, 4.2, 4.3, 4.4, 4.5, 4.7, 4.9, 4.10,$ respectively.
\end{remark}
\subsection{Special Multiply Warped Product with a Quarter-symmetric Connection}
Let $M=I \times_{b_{1}}F_{1} \times_{b_{2}}F_{2}\cdots \times_{b_{m}}F_{m}$ be a multiply warped product with the metric tensor $g=-dt^2\oplus b_{1}^2g_{F_{1}}\oplus b_{2}^2g_{F_{2}}\cdots\oplus b_{m}^2g_{F_{m}}$ and $I$ is an open interval in $\mathbb{R},$ and $b_{i}\in C^\infty(I),\;{\rm dim}M=\overline n,\;{\rm dim}I=1,\;{\rm dim}F_{i}=l_{i}.$  \par Similar to the proof method of Theorem $3.11,$ we have:
\begin{Theorem}
Let $M=I \times_{b_{1}}F_{1} \times_{b_{2}}F_{2}\cdots \times_{b_{m}}F_{m}$ be a multiply warped product with the metric tensor $g=-dt^2\oplus b_{1}^2g_{F_{1}}\oplus b_{2}^2g_{F_{2}}\cdots\oplus b_{m}^2g_{F_{m}},\;  P=\frac{\partial}{\partial t}.$ Then
$(M,\overline\nabla)$ is Einstein with the Einstein constant $\alpha$ if and only if the following
conditions are satisfied \\
$(1)$ $(F_{i},\nabla^{F_{i}})$ is Einstein with the Einstein constant $\alpha_{i},\;i\in\{1,\dots,m\};$  \\
$(2)$ $\sum\limits_{i=1}^ml_{i}\Big(\lambda_{2}\frac{b_{i}^\prime}{b_{i}}-\frac{b_{i}^{\prime\prime}}{b_{i}}
+\lambda_{1}^2-\lambda_{1}\lambda_{2}\Big)=\alpha;$ \\
$(3)$ $\alpha_{i}-b_{i}b_{i}^{\prime\prime}+(1-l_{i})(b_{i}^\prime)^2+(\lambda_{2}b_{i}^2-b_{i}b_{i}^\prime)
\sum\limits_{s\neq i}l_{s}\frac{b_{s}^\prime}{b_{s}}+[\lambda_{2}^2+(1-\overline n)\lambda_{1}\lambda_{2}]b_{i}^2
+[(\overline n-1)\lambda_{1}+\\ \mbox{} \quad\;\;(l_{i}-1)\lambda_{2}]b_{i}b_{i}^\prime=\alpha b_{i}^2.$
\end{Theorem}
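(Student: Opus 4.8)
The plan is to specialize Proposition 4.5 to $B=I$, $g_B=-dt^2$, $P=\partial/\partial t$, impose the Einstein condition $\overline{Ric}=\alpha g$ on the frame adapted to the splitting $TM=TI\oplus TF_1\oplus\cdots\oplus TF_m$, and read off the three equations; the converse then follows by reversing the same computation. First I would record the geometry of the one-dimensional Lorentzian base: since $\nabla^I$ is flat, $\{\partial/\partial t\}$ is an orthonormal frame with $\varepsilon=-1$, and each $b_i\in C^\infty(I)$, one has $\pi(P)=g(P,P)=-1$, $\nabla_{\partial/\partial t}P=0$, $div_I P=0$, $Pb_i=b_i'$, $H^{b_i}_I(\partial/\partial t,\partial/\partial t)=b_i''$, $\Delta_I b_i=-b_i''$, $|grad_I b_i|^2_I=-(b_i')^2$, $g_I(grad_I b_i,grad_I b_s)=-b_i'b_s'$, and $\overline{Ric}^I\equiv0$ (the curvature of any connection on a $1$-manifold vanishes, being alternating in its first two slots).

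Plugging these into Proposition 4.5(1) gives $\overline{Ric}(\partial/\partial t,\partial/\partial t)=\sum_{i=1}^m l_i\big(\frac{b_i''}{b_i}-\lambda_2\frac{b_i'}{b_i}+\lambda_1\lambda_2-\lambda_1^2\big)$, and equating with $\alpha\,g(\partial/\partial t,\partial/\partial t)=-\alpha$ yields condition $(2)$. The mixed terms cost nothing: Proposition 4.5(2) gives $\overline{Ric}(\partial/\partial t,V)=(l_i-1)\,V(b_i'/b_i)=0$ because $b_i'/b_i$ depends on $t$ only, and $\overline{Ric}(V,W)=0$ for $V,W$ in distinct fibres; since $g$ vanishes on these pairs as well, no constraint arises. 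For $V,W\in\Gamma(TF_i)$, Proposition 4.5(3) together with $g(V,W)=b_i^2\,g_{F_i}(V,W)$ yields an identity $Ric^{F_i}(V,W)=\big(\alpha-\Phi_i(t)\big)b_i(t)^2\,g_{F_i}(V,W)$, where $\Phi_i(t)$ is the explicit function of $b_i,b_i',b_i''$ and $\sum_{s\neq i}l_s b_s'/b_s$ produced by the bracket in Proposition 4.5(3) after the substitutions above.

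The only genuine point is a separation-of-variables step in this last identity: its left-hand side is a tensor on $F_i$ and so is independent of $t$, whereas its right-hand side is $g_{F_i}(V,W)$ times a function of $t$; nondegeneracy of $g_{F_i}$ then forces $\big(\alpha-\Phi_i(t)\big)b_i(t)^2$ to be a constant $\alpha_i$. Hence $Ric^{F_i}=\alpha_i g_{F_i}$, which is condition $(1)$, and solving the relation $\alpha_i=\big(\alpha-\Phi_i(t)\big)b_i^2$ for the derivatives of $b_i$, after collecting the cross terms, reproduces condition $(3)$. For the converse, assuming $(1)$–$(3)$ and running the three computations backwards shows $\overline{Ric}=\alpha g$ on every pair of the adapted frame, so $(M,\overline\nabla)$ is Einstein. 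I expect the one delicate part to be purely the bookkeeping of the sums $\sum_{s\neq i}l_s b_s'/b_s$ and the signs carried by the Lorentzian factor; structurally the argument is the $m=1$ case of Theorem 3.11.
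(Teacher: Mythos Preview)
Your proposal is correct and follows essentially the same approach as the paper: the paper's proof is the single sentence ``Similar to the proof method of Theorem 3.11'', and Theorem 3.11 is proved by substituting the one-dimensional Lorentzian base data into Proposition 3.5 and imposing the Einstein condition, which is precisely what you do here with the multiply warped analogue Proposition 4.5. Your write-up is in fact more detailed than the paper's; the separation-of-variables step and the sign bookkeeping are exactly the content the paper leaves implicit.
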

\begin{Theorem}
Let $M=I \times_{b_{1}}F_{1} \times_{b_{2}}F_{2}\cdots \times_{b_{m}}F_{m}$ be a multiply warped product with the metric tensor $g=-dt^2\oplus b_{1}^2g_{F_{1}}\oplus b_{2}^2g_{F_{2}}\cdots\oplus b_{m}^2g_{F_{m}},\;  P\in\Gamma(TF_{r})$ with $g_{F_{r}}(P,P)=1$ and $\overline n>2.$ Then $(M,\overline\nabla)$ is Einstein with the Einstein constant $\alpha$ if and only if the following conditions are satisfied for any $i\in\{1,\dots,m\}:$\\
$(1)\;(F_{i},\nabla^{F_{i}}) (i\neq r)$ is Einstein with the Einstein constant $\alpha_{i},i\in\{1,\dots,m\};$ \\
$(2)\;b_{r}$ is a constant and $\sum\limits_{i=1}^ml_{i}\frac{b_{i}^{\prime\prime}}{b_{i}}=\mu_{0};\;
div_{F_{r}}P=\mu_{1};\;\mu_{0}-\lambda_{2}\mu_{1}+\alpha=[(\overline n-1)\lambda_{1}\lambda_{2}
-\lambda_{2}^2]b_{r}^2,$ \\ \mbox{} \quad where $\mu_{0},\mu_{1}$ are constants; \\
$(3)\;Ric^{F_{r}}(V,W)+\overline\alpha g_{F_{r}}(V,W)=[(\overline n-1)\lambda_{1}^2-\lambda_{2}^2]\pi(V)\pi(W)
-[(\overline n-1)\lambda_{1}-\lambda_{2}]g(W,\nabla_{V}P),$ \\ \mbox{} \quad for $V,W\in\Gamma(TF_{r});$  \\
$(4)\;\alpha_{i}-b_{i}b_{i}^{\prime\prime}+[(\overline n-1)\lambda_{1}\lambda_{2}-\lambda_{2}^2]b_{i}^2b_{r}^2
-b_{i}b_{i}^\prime\sum\limits_{s\neq i}l_{s}\frac{b_{s}^\prime}{b_{s}}
-(l_{i}-1)(b_{i}^\prime)^2=(\alpha-\lambda_{2}\mu_{1})b_{i}^2.$
\end{Theorem}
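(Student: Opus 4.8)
The plan is to specialize Proposition 4.7 to the present situation, where the base is the interval $I$ with $g_{B}=-dt^{2}$ and the distinguished field is $P\in\Gamma(TF_{r})$ with $g_{F_{r}}(P,P)=1$. First I would record the one-dimensional-base identities $H^{b_{i}}_{B}(\partial_{t},\partial_{t})=b_{i}''$, $\Delta_{B}b_{i}=-b_{i}''$, $|{\rm grad}_{B}b_{i}|_{B}^{2}=-(b_{i}')^{2}$, $g_{B}({\rm grad}_{B}b_{i},{\rm grad}_{B}b_{s})=-b_{i}'b_{s}'$, together with $g(\partial_{t},\partial_{t})=-1$, $\pi(P)=g(P,P)=b_{r}^{2}$, and $g(V,W)=b_{i}^{2}g_{F_{i}}(V,W)$ for $V,W\in\Gamma(TF_{i})$. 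Inserting these into Proposition 4.7 yields explicit expressions for $\overline{Ric}(\partial_{t},\partial_{t})$, for the mixed components $\overline{Ric}(\partial_{t},V)$ and $\overline{Ric}(V,\partial_{t})$, for $\overline{Ric}(V,W)$ with $V,W$ in one and the same fiber $F_{i}$ (here the cases $i\neq r$ and $i=r$ must be kept apart, since the $\nabla_{V}P$- and $\pi(V)\pi(W)$-terms survive only for $i=r$), and the identity $\overline{Ric}(V,W)=0$ for $V,W$ in distinct fibers.

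The core of the ``only if'' direction is then to impose $\overline{Ric}=\alpha g$ component by component. The mixed equation $\overline{Ric}(\partial_{t},V)=\alpha\,g(\partial_{t},V)=0$ is the decisive one: because each $b_{i}$ is a function of $t$ alone, the term $V\partial_{t}(\ln b_{i})$ vanishes identically, so the equation collapses to $[(\overline n-1)\lambda_{1}-\lambda_{2}]\,\pi(V)\,\frac{b_{r}'}{b_{r}}=0$ for all $V\in\Gamma(TF_{r})$; since $g_{F_{r}}(P,P)=1$ the $1$-form $\pi$ is not identically zero on $TF_{r}$, and, handling the coefficient $(\overline n-1)\lambda_{1}-\lambda_{2}$ with care, one obtains $b_{r}'=0$, i.e. $b_{r}$ is constant. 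Substituting $b_{r}=\mathrm{const}$ into the $(\partial_{t},\partial_{t})$-equation turns it into $\sum_{i}l_{i}\frac{b_{i}''}{b_{i}}+\alpha=[(\overline n-1)\lambda_{1}\lambda_{2}-\lambda_{2}^{2}]b_{r}^{2}+\lambda_{2}\,{\rm div}_{F_{r}}P$, whose left side depends on $t$ only while ${\rm div}_{F_{r}}P$ depends on $F_{r}$ only; a separation-of-variables argument then forces both $\mu_{0}:=\sum_{i}l_{i}b_{i}''/b_{i}$ and $\mu_{1}:={\rm div}_{F_{r}}P$ to be constants obeying $\mu_{0}-\lambda_{2}\mu_{1}+\alpha=[(\overline n-1)\lambda_{1}\lambda_{2}-\lambda_{2}^{2}]b_{r}^{2}$. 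This is exactly condition $(2)$.

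Next I would process the fiber equations. For $V,W\in\Gamma(TF_{i})$ with $i\neq r$ one has $\pi(V)=0$ and $\nabla_{V}P=0$ by Lemma 2.3, so $\overline{Ric}(V,W)=\alpha\,g(V,W)$ reads $Ric^{F_{i}}(V,W)=(\text{scalar function of }t)\cdot g_{F_{i}}(V,W)$; this simultaneously forces $(F_{i},\nabla^{F_{i}})$ to be Einstein with a constant $\alpha_{i}$ (condition $(1)$) and, after clearing denominators by $b_{i}^{2}$ and using $b_{r}=\mathrm{const}$, rearranges precisely into the relation of condition $(4)$. For $V,W\in\Gamma(TF_{r})$, substituting $b_{r}=\mathrm{const}$ annihilates all ${\rm grad}_{B}b_{r}$-terms; writing $\overline{Ric}(V,W)=\alpha\,g(V,W)=\alpha b_{r}^{2}g_{F_{r}}(V,W)$ and eliminating the constant $[(\overline n-1)\lambda_{1}\lambda_{2}-\lambda_{2}^{2}]b_{r}^{2}$ via the relation in $(2)$, one is left with $Ric^{F_{r}}(V,W)+\overline\alpha\,g_{F_{r}}(V,W)=[(\overline n-1)\lambda_{1}^{2}-\lambda_{2}^{2}]\pi(V)\pi(W)-[(\overline n-1)\lambda_{1}-\lambda_{2}]g(W,\nabla_{V}P)$ with $\overline\alpha=b_{r}^{2}\mu_{0}$, which is condition $(3)$. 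The ``if'' direction is obtained by reading this chain of equivalences backwards: assuming $(1)$--$(4)$ and feeding them into the specialized Proposition 4.7 shows $\overline{Ric}=\alpha g$ on every pair drawn from the adapted family $\{\partial_{t}\}\cup\bigcup_{i}\Gamma(TF_{i})$, hence on all of $TM$.

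I expect the main obstacle to be the extraction of condition $(2)$: one must check that $\overline{Ric}(\partial_{t},V)=0$ is the \emph{only} off-diagonal constraint the Einstein condition imposes, be attentive to the degenerate sub-case $(\overline n-1)\lambda_{1}=\lambda_{2}$ when concluding $b_{r}'=0$ (this is where the hypothesis $\overline n>2$ and the non-degeneracy $g_{F_{r}}(P,P)=1$ enter), and make the separation-of-variables step rigorous so as to pin down $\mu_{0}$ and $\mu_{1}$ as genuine constants. Once that is secured, passing from the diagonal equations to conditions $(3)$ and $(4)$ is patient algebra, the only real care being to keep the auxiliary constants ($\alpha,\alpha_{i},\overline\alpha,\mu_{0},\mu_{1}$) mutually consistent.
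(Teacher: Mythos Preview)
Your proposal is correct and follows essentially the same route as the paper: specialize Proposition~4.7 to the one-dimensional base, use the mixed component $\overline{Ric}(\partial_{t},V)=0$ together with $\pi(P)=b_{r}^{2}\neq 0$ to force $b_{r}'=0$, separate variables in the $(\partial_{t},\partial_{t})$-equation to obtain $(2)$, and then read off $(1)$, $(4)$, $(3)$ from the fiber blocks $i\neq r$ and $i=r$; your formula $\overline\alpha=b_{r}^{2}\mu_{0}$ agrees with the paper's $\overline\alpha=b_{r}^{2}\{[(\overline n-1)\lambda_{1}\lambda_{2}-\lambda_{2}^{2}]b_{r}^{2}+\lambda_{2}\mu_{1}-\alpha\}$ once $(2)$ is substituted. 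One caution: the hypothesis $\overline n>2$ does \emph{not} rule out the degenerate coefficient case $(\overline n-1)\lambda_{1}=\lambda_{2}$ as you suggest---the paper simply glosses over this point as well, so if you want the step $b_{r}'=0$ to be airtight you should either record $(\overline n-1)\lambda_{1}\neq\lambda_{2}$ as a standing hypothesis or supply an independent argument in that sub-case.
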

\begin{proof}
By Proposition $4.7(2)$ and $g_{F_{r}}(P,P)=1,$ we have that $b_{r}$ is a constant. By Proposition $4.7(1),$ we have
$$\overline {Ric}\Big(\frac{\partial}{\partial t},\frac{\partial}{\partial t}\Big)=
\sum\limits_{i=1}^ml_{i}\frac{b_{i}^{\prime\prime}}{b_{i}}+[\lambda_{2}^2
+(1-\overline n)\lambda_{1}\lambda_{2}]b_{r}^2-\lambda_{2}div_{F_{r}}P=-\alpha.$$ By variables separation, we have
$$\sum\limits_{i=1}^ml_{i}\frac{b_{i}^{\prime\prime}}{b_{i}}=\mu_{0};\;div_{F_{r}}P=\mu_{1};\;
\mu_{0}-\lambda_{2}\mu_{1}+\alpha=[(\overline n-1)\lambda_{1}\lambda_{2}-\lambda_{2}^2]b_{r}^2,$$ then we get $(2).$
By Proposition $4.7(3),$ we have
\begin{eqnarray*}
\overline{Ric}(V,W)&=&Ric^{F_{i}}(V,W)+b_{i}^2g_{F_{i}}(V,W)\Big\{-\frac{b_{i}^{\prime\prime}}{b_{i}}
+(l_{i}-1)\frac{-(b_{i}^\prime)^2}{b_{i}^2}+\sum\limits_{s\neq i}l_{s}\frac{-b_{i}^\prime b_{s}^{\prime}}{b_{i}b_{s}}
\\& &+[(\overline n-1)\lambda_{1}\lambda_{2}-\lambda_{2}^2]\pi(P)\Big\}+[(\overline n-1)\lambda_{1}
-\lambda_{2}]g(W,\nabla_{V}P)\\& &+[\lambda_{2}^2+(1-\overline n)\lambda_{1}^2]\pi(V)\pi(W)
+\lambda_{2}g(V,W)div_{F_{r}}P.
\end{eqnarray*}
When $i\neq r,$ then $\nabla_{V}P=\pi(V)=0,$ so
\begin{eqnarray*}
\overline{Ric}(V,W)&=&Ric^{F_{i}}(V,W)+b_{i}^2g_{F_{i}}(V,W)\Big\{-\frac{b_{i}^{\prime\prime}}{b_{i}}
+(l_{i}-1)\frac{-(b_{i}^\prime)^2}{b_{i}^2}+\sum\limits_{s\neq i}l_{s}\frac{-b_{i}^\prime b_{s}^{\prime}}{b_{i}b_{s}}
\\& &+[(\overline n-1)\lambda_{1}\lambda_{2}-\lambda_{2}^2]b_{r}^2\Big\}+\lambda_{2}\mu_{1}b_{i}^2g_{F_{i}}(V,W)
=\alpha b_{i}^2g_{F_{i}}(V,W).
\end{eqnarray*}
By variables separation, we have that $(F_{i},\nabla^{F_{i}}) (i\neq r)$ is Einstein with the Einstein constant $\alpha_{i}$ and $$\alpha_{i}-b_{i}b_{i}^{\prime\prime}+[(\overline n-1)\lambda_{1}\lambda_{2}
-\lambda_{2}^2]b_{i}^2b_{r}^2-b_{i}b_{i}^\prime\sum\limits_{s\neq i}l_{s}\frac{b_{s}^\prime}{b_{s}}
-(l_{i}-1)(b_{i}^\prime)^2=(\alpha-\lambda_{2}\mu_{1})b_{i}^2.$$ Then we get $(1)$ and $(4).$\\
When $i=r$ and $b_{r}$ is a constant, then
\begin{eqnarray*}
Ric^{F_{r}}(V,W)+b_{r}^2\big\{[(\overline n-1)\lambda_{1}\lambda_{2}-\lambda_{2}^2]b_{r}^2
+\lambda_{2}\mu_{1}-\alpha\big\}g_{F_{r}}(V,W)\\=[(\overline n-1)\lambda_{1}^2-\lambda_{2}^2]\pi(V)\pi(W)
-[(\overline n-1)\lambda_{1}-\lambda_{2}]g(W,\nabla_{V}P),
\end{eqnarray*}
let $\overline\alpha=b_{r}^2\big\{[(\overline n-1)\lambda_{1}\lambda_{2}-\lambda_{2}^2]b_{r}^2+\lambda_{2}\mu_{1}-\alpha\big\},$ we get $(3).$
\end{proof}
\begin{Theorem}
Let $M=I \times_{b_{1}}F_{1} \times_{b_{2}}F_{2}\cdots \times_{b_{m}}F_{m}$ be a multiply warped product and $P=\frac{\partial}{\partial t}.$ If $(M,\overline\nabla)$ has constant scalar curvature $\overline S,$ then each $(F_{i},\nabla^{F_{i}})$ has constant scalar curvature $S^{F_{i}}.$
\end{Theorem}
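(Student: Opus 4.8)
The plan is to read off the scalar curvature of $(M,\overline\nabla)$ from Proposition 4.9 and then separate variables. Since here the base is $B=I$ with $g_B=-dt^2$, each warping function $b_i$ depends on $t$ alone, and $P=\frac{\partial}{\partial t}$, every term on the right-hand side of the formula in Proposition 4.9 is a smooth function of $t$ only, with the single exception of the term $\sum_{i=1}^m S^{F_i}/b_i^2$. Concretely: $\overline S^B=0$ since $\dim I=1$; the Hessian/Laplacian/gradient terms reduce to $\Delta_B b_i=-b_i''$, $|\mathrm{grad}_B b_i|_B^2=-(b_i')^2$, $g_B(\mathrm{grad}_B b_i,\mathrm{grad}_B b_s)=-b_i'b_s'$; $Pb_i/b_i=b_i'/b_i$; $\pi(P)=g(P,P)=-1$ is a constant; and $\mathrm{div}_B P=0$ because $\nabla^{I}_{\partial/\partial t}\,\partial/\partial t=0$. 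Hence Proposition 4.9 yields an identity of the shape $\overline S=\Phi(t)+\sum_{i=1}^m \dfrac{S^{F_i}}{b_i(t)^2}$, where $\Phi$ depends only on $t$ and each $S^{F_i}$ is, a priori, a smooth function on $F_i$.

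Next I would invoke the hypothesis that $\overline S$ is constant to rewrite this as $\sum_{i=1}^m S^{F_i}/b_i(t)^2=\overline S-\Phi(t)$, so that the function $\sum_i S^{F_i}/b_i^2$ on $I\times F_1\times\cdots\times F_m$ in fact depends on $t$ alone. Fixing an index $j$ and differentiating this identity along a coordinate vector field tangent to $F_j$ (holding the $I$-coordinate and all other fiber coordinates fixed) annihilates the right-hand side and every summand on the left except the $j$-th, leaving $b_j(t)^{-2}\,\partial_x S^{F_j}=0$; since $b_j>0$ this forces $\partial_x S^{F_j}=0$ for all such directions, i.e. $S^{F_j}$ is (locally) constant. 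As $j\in\{1,\dots,m\}$ was arbitrary, each $(F_i,\nabla^{F_i})$ has constant scalar curvature. Equivalently, one argues pointwise: if $p,q\in M$ differ only in their $F_j$-component, then $\overline S(p)=\overline S(q)$ together with the displayed formula gives $\big(S^{F_j}(p_j)-S^{F_j}(q_j)\big)/b_j(t)^2=0$, hence $S^{F_j}(p_j)=S^{F_j}(q_j)$.

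I expect the proof to be short; the only point requiring care — and hence the main obstacle in writing it up cleanly — is the verification, term by term in Proposition 4.9, that under $P=\partial/\partial t$ everything except $\sum_i S^{F_i}/b_i^2$ is genuinely independent of the fibers, since this is precisely what legitimizes the separation-of-variables step. This runs parallel to the argument already used for the singly warped case in Proposition 3.17.
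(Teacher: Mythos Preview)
Your proposal is correct and follows essentially the same approach as the paper: specialize Proposition~4.9 to the base $I$ with $P=\partial/\partial t$ to obtain an expression for $\overline S$ in which every term but $\sum_i S^{F_i}/b_i^2$ depends on $t$ alone, and then separate variables. The paper's proof is in fact terser---it records the specialized formula (equation~(38)) and then simply invokes ``variables separation''---whereas you spell out the differentiation-along-$F_j$ step explicitly, but there is no substantive difference.
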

\begin{proof}
By Proposition $4.9,$ we have
\begin{eqnarray}
\overline S &=&-2\sum\limits_{i=1}^ml_{i}\frac{b_{i}^{\prime\prime}}{b_{i}}
+\sum\limits_{i=1}^m\frac{S^{F_{i}}}{b_{i}^2}+\sum\limits_{i=1}^ml_{i}(l_{i}-1)\frac{-(b_{i}^\prime)^2}{b_{i}^2}
+\sum\limits_{i=1}^m\sum\limits_{s\neq i}l_{i}l_{s}\frac{-b_{i}^\prime b_{s}^\prime}{b_{i}b_{s}} \nonumber \\
\setcounter{equation}{38}
& &+\sum\limits_{i=1}^ml_{i}[(\overline n-1)\lambda_{1}+l_{i}\lambda_{2})]\frac{b_{i}^\prime}{b_{i}}
+\lambda_{2}\sum\limits_{i=1}^m\sum\limits_{s\neq i}l_{i}l_{s}\frac{b_{s}^\prime}{b_{s}}  \\
& &-\sum\limits_{i=1}^ml_{i}[\overline n\lambda_{1}\lambda_{2}-(\lambda_{1}^2+\lambda_{2}^2)]. \nonumber
\end{eqnarray}
Note that each $S^{F_{i}}$ is function defined on $F_{i},$ using variables separation we complete this proof.
\end{proof}
\begin{Theorem}
Let $M=I \times_{b_{1}}F_{1} \times_{b_{2}}F_{2}\cdots \times_{b_{m}}F_{m}$ be a multiply warped product and
$P\in\Gamma(TF_{r}).$ If $(M,\overline\nabla)$ has constant scalar curvature $\overline S,$ then \\
$(1)$ each $(F_{i},\nabla^{F_{i}}) (i\neq r)$ has constant scalar curvature $S^{F_{i}};$ \\
$(2)$ If $\lambda_{1}+\lambda_{2}\neq0$ and $\lambda_{1}^2+\lambda_{2}^2-\overline n\lambda_{1}\lambda_{2}=0,$ and $div_{F_{r}}P$ is a constant, then $S^{F_{r}}$ is a constant;\\
$(3)$ If $\lambda_{1}=-\lambda_{2}\neq0$ and $g_{F_{r}}(P,P)$  is a constant, then $S^{F_{r}}$ is a constant;\\
$(4)$ If $\lambda_{1}+\lambda_{2}\neq0$ and $\lambda_{1}^2+\lambda_{2}^2-\overline n\lambda_{1}\lambda_{2}\neq0,$
and $div_{F_{r}}P,g_{F_{r}}(P,P)$ are constants, then $S^{F_{r}}$ is a constant.
\end{Theorem}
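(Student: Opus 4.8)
First I would specialize Proposition 4.10 to the present setting, where $B=I$ carries the metric $g_{B}=-dt^{2}$ and each $b_{i}$ depends only on $t$. Then ${\rm dim}I=1$ gives $\overline S^{B}=0$, while $\Delta_{B}b_{i}=-b_{i}^{\prime\prime}$, $|grad_{B}b_{i}|^{2}_{B}=-(b_{i}^{\prime})^{2}$, $g_{B}(grad_{B}b_{i},grad_{B}b_{s})=-b_{i}^{\prime}b_{s}^{\prime}$, and since $P\in\Gamma(TF_{r})$ one has $\pi(P)=g(P,P)=b_{r}^{2}g_{F_{r}}(P,P)$. Using the identity $\overline n(\overline n-1)\lambda_{1}\lambda_{2}+(1-\overline n)(\lambda_{1}^{2}+\lambda_{2}^{2})=-(\overline n-1)(\lambda_{1}^{2}+\lambda_{2}^{2}-\overline n\lambda_{1}\lambda_{2})$ to recast the coefficient of $\pi(P)$, Proposition 4.10 becomes
$$\overline S=G(t)+\sum_{i=1}^{m}\frac{S^{F_{i}}}{b_{i}^{2}}-(\overline n-1)(\lambda_{1}^{2}+\lambda_{2}^{2}-\overline n\lambda_{1}\lambda_{2})\,b_{r}^{2}\,g_{F_{r}}(P,P)+(\overline n-1)(\lambda_{1}+\lambda_{2})\,div_{F_{r}}P,$$
where $G(t)=-2\sum_{i}l_{i}\frac{b_{i}^{\prime\prime}}{b_{i}}-\sum_{i}l_{i}(l_{i}-1)\frac{(b_{i}^{\prime})^{2}}{b_{i}^{2}}-\sum_{i}\sum_{s\neq i}l_{i}l_{s}\frac{b_{i}^{\prime}b_{s}^{\prime}}{b_{i}b_{s}}$ depends only on $t$, the $S^{F_{i}}$ are functions on the $F_{i}$, and $g_{F_{r}}(P,P)$, $div_{F_{r}}P$ are functions on $F_{r}$.

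To obtain (1), I fix an index $i\neq r$, freeze one point in each $F_{j}$ with $j\neq i$ (in particular in $F_{r}$), and let the point of $F_{i}$ vary: every term on the right except $S^{F_{i}}/b_{i}^{2}(t)$ is then a function of $t$ only, so $S^{F_{i}}(p)/b_{i}^{2}(t)$ does not depend on $p\in F_{i}$, and since $b_{i}(t)>0$ this forces $S^{F_{i}}$ to be constant. Substituting the now-constant $S^{F_{i}}$ $(i\neq r)$ back and writing $\widetilde G(t)=\overline S-G(t)-\sum_{i\neq r}S^{F_{i}}b_{i}^{-2}(t)$, the identity collapses to
$$\widetilde G(t)=\frac{S^{F_{r}}}{b_{r}^{2}}-(\overline n-1)(\lambda_{1}^{2}+\lambda_{2}^{2}-\overline n\lambda_{1}\lambda_{2})\,b_{r}^{2}\,g_{F_{r}}(P,P)+(\overline n-1)(\lambda_{1}+\lambda_{2})\,div_{F_{r}}P,$$
with $\widetilde G$ still a function of $t$ alone.

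Parts (2)--(4) are then read off from this last identity by case analysis; note that $\lambda_{1}+\lambda_{2}=0$ and $\lambda_{1}^{2}+\lambda_{2}^{2}-\overline n\lambda_{1}\lambda_{2}=0$ cannot hold simultaneously (they would force $\lambda_{1}=\lambda_{2}=0$), so the three hypotheses exhaust the admissible pairs, exactly as in the proof of Proposition 3.18. In case (2) the condition $\lambda_{1}^{2}+\lambda_{2}^{2}-\overline n\lambda_{1}\lambda_{2}=0$ deletes the $g_{F_{r}}(P,P)$-term and, $div_{F_{r}}P$ being constant, the identity reads $\widetilde G(t)=S^{F_{r}}/b_{r}^{2}(t)+{\rm const}$; hence $S^{F_{r}}(p)=b_{r}^{2}(t)\big(\widetilde G(t)-{\rm const}\big)$, and as the two sides depend on disjoint variables $S^{F_{r}}$ must be constant. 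In case (3), $\lambda_{1}=-\lambda_{2}\neq0$ makes $\lambda_{1}+\lambda_{2}=0$ (removing the $div_{F_{r}}P$-term) while $\lambda_{1}^{2}+\lambda_{2}^{2}-\overline n\lambda_{1}\lambda_{2}=(\overline n+2)\lambda_{1}^{2}\neq0$; with $g_{F_{r}}(P,P)$ constant the right-hand side is again $S^{F_{r}}/b_{r}^{2}(t)$ plus a function of $t$, so $S^{F_{r}}$ is constant. Case (4) is the same separation once more, now with both $g_{F_{r}}(P,P)$ and $div_{F_{r}}P$ constant. The only step requiring real care is the algebraic rewriting of the $\pi(P)$-coefficient, which is precisely what triggers the trichotomy; after that the argument runs parallel to the proofs of Theorem 4.13 and Proposition 3.18.
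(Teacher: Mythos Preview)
Your proposal is correct and follows essentially the same route as the paper: you specialize Proposition~4.10 to the one-dimensional base $I$ with $g_B=-dt^2$, obtain the formula the paper records as equation~(39), and then carry out the separation-of-variables argument ``similar to the proof of Proposition~3.18,'' which is exactly what the paper says. In fact you supply more detail than the paper does (the explicit freezing argument for part~(1) and the algebraic rewriting of the $\pi(P)$-coefficient that makes the trichotomy transparent), but the underlying strategy is identical.
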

\begin{proof}
By Proposition $4.10,$ we have
\begin{eqnarray}
\setcounter{equation}{39}
\overline S &=&-2\sum\limits_{i=1}^ml_{i}\frac{b_{i}^{\prime\prime}}{b_{i}}
+\sum\limits_{i=1}^m\frac{S^{F_{i}}}{b_{i}^2}+\sum\limits_{i=1}^ml_{i}(l_{i}-1)\frac{-(b_{i}^\prime)^2}{b_{i}^2}
+\sum\limits_{i=1}^m\sum\limits_{s\neq i}l_{i}l_{s}\frac{-b_{i}^\prime b_{s}^\prime}{b_{i}b_{s}} \\
& &+[\overline n(\overline n-1)\lambda_{1}\lambda_{2}+(1-\overline n)(\lambda_{1}^2
+\lambda_{2}^2)]b_{r}^2g_{F_{r}}(P,P)+(\overline n-1)(\lambda_{1}+\lambda_{2})div_{F_{r}}P    \nonumber
\end{eqnarray}
then similar to the proof of Proposition $3.18,$ we complete this Proposition.
\end{proof}
\begin{remark}
When $\lambda_{1}=\lambda_{2}=1,$ we get Theorems $15,16,$ Propositions $18,19$ in $[12]$ by Theorems $4.11-4.14,$ respectively.
\end{remark}

\subsection{Generalized Kasner Space-times with a Quarter-symmetric Connection}
In this section, we consider the Einstein and scalar curvature of generalized Kasner space-times with a quarter-symmetric connection. We recall the definition of generalized Kasner space-times in $[5].$
\newtheorem{Definition}[Proposition]{Definition}
\begin{Definition}
A generalized Kasner space-time $(M, g)$ is a Lorentzian multiply warped product of the form
$M=I \times_{\phi^{p_{1}}}F_{1} \times_{\phi^{p_{2}}}F_{2}\cdots \times_{\phi^{p_{m}}}F_{m}$ with the metric tensor
$g=-dt^2\oplus \phi^{2p_{1}}g_{F_{1}}\oplus \phi^{2p_{2}}g_{F_{2}}\cdots\oplus \phi^{2p_{m}}g_{F_{m}},$
where $\phi:I\to(0,\infty)$ is smooth and $p_{i}\in\mathbb{R},$ for any $i\in\{1,\dots,m\}$ and also $I=(t_{1},t_{2}).$
\end{Definition}
We introduce the following parameters $\zeta=\sum\limits_{i=1}^ml_{i}p_{i}$ and $\eta=\sum\limits_{i=1}^ml_{i}p_{i}^2$ for generalized Kasner space-times. By Theorem $4.11$ and direct computations,
we get the following:
\begin{Proposition}
Let $M=I \times_{\phi^{p_{1}}}F_{1} \times_{\phi^{p_{2}}}F_{2}\cdots \times_{\phi^{p_{m}}}F_{m}$ be a generalized Kasner space-time and $P=\frac{\partial}{\partial t}.$ Then $(M,\overline\nabla)$ is Einstein with the Einstein constant $\alpha$ if and only if the following conditions are satisfied for any $i\in\{1,\dots,m\}:$\\
$(1)\;(F_{i},\nabla^{F_{i}})$ is Einstein with the Einstein constant $\alpha_{i},i\in\{1,\dots,m\};$ \\
$(2)\;\zeta\Big(\lambda_{2}\frac{\phi^\prime}{\phi}-\frac{\phi^{\prime\prime}}{\phi}\Big)
-(\eta-\zeta)\frac{(\phi^\prime)^2}{\phi^2}+(\lambda_{1}^2-\lambda_{1}\lambda_{2})(\overline n-1)=\alpha;$\\
$(3)\;\frac{\alpha_{i}}{\phi^{2p_{i}}}-p_{i}\frac{\phi^{\prime\prime}}{\phi}
-(\zeta-1)p_{i}\frac{(\phi^\prime)^2}{\phi^2}
+\{\lambda_{2}\zeta+[(\overline n-1)\lambda_{1}-\lambda_{2}]p_{i}\}\frac{\phi^\prime}{\phi}
=\alpha-\lambda_{2}^2+(\overline n-1)\lambda_{1}\lambda_{2}.$
\end{Proposition}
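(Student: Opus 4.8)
The plan is to obtain this Proposition directly from Theorem 4.11 by specializing to the Kasner warping functions $b_i = \phi^{p_i}$; since the base is the one-dimensional interval $I$ with $P = \partial/\partial t$, no new geometric computation is needed, only a substitution followed by bookkeeping with the parameters $\zeta = \sum_i l_i p_i$, $\eta = \sum_i l_i p_i^2$, and the dimension identity $\sum_{i=1}^m l_i = \overline n - 1$ (valid because $\dim I = 1$).

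First I would record the elementary identities for $b_i = \phi^{p_i}$, namely $\dfrac{b_i'}{b_i} = p_i \dfrac{\phi'}{\phi}$, $\dfrac{(b_i')^2}{b_i^2} = p_i^2 \dfrac{(\phi')^2}{\phi^2}$, and $\dfrac{b_i''}{b_i} = p_i \dfrac{\phi''}{\phi} + p_i(p_i-1)\dfrac{(\phi')^2}{\phi^2}$. Condition $(1)$ of the Proposition is literally condition $(1)$ of Theorem 4.11, so there is nothing to do there. For condition $(2)$ I would substitute the above into $\sum_i l_i\bigl(\lambda_2 \tfrac{b_i'}{b_i} - \tfrac{b_i''}{b_i} + \lambda_1^2 - \lambda_1\lambda_2\bigr) = \alpha$ and collect terms: the coefficient of $\phi''/\phi$ becomes $-\sum_i l_i p_i = -\zeta$, the coefficient of $\phi'/\phi$ becomes $\lambda_2\sum_i l_i p_i = \lambda_2\zeta$, the coefficient of $(\phi')^2/\phi^2$ becomes $-\sum_i l_i p_i(p_i-1) = -(\eta-\zeta)$, and the constant term becomes $(\lambda_1^2 - \lambda_1\lambda_2)\sum_i l_i = (\lambda_1^2 - \lambda_1\lambda_2)(\overline n - 1)$; this is exactly the stated condition $(2)$.

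For condition $(3)$ I would divide condition $(3)$ of Theorem 4.11 through by $b_i^2 = \phi^{2p_i}$, turning the $\alpha_i$-term into $\alpha_i/\phi^{2p_i}$ and using $\sum_{s\neq i} l_s \tfrac{b_s'}{b_s} = (\zeta - l_i p_i)\tfrac{\phi'}{\phi}$. The one step that requires care — and essentially the only place where something nontrivial happens — is the coefficient of $(\phi')^2/\phi^2$: gathering $-p_i(p_i-1) + (1-l_i)p_i^2 - p_i(\zeta - l_i p_i)$, the $p_i^2$ and $l_i p_i^2$ contributions cancel, leaving $p_i(1-\zeta) = -(\zeta-1)p_i$, which is what appears in the Proposition. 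Likewise the coefficient of $\phi'/\phi$ collapses to $\lambda_2(\zeta - l_i p_i) + [(\overline n - 1)\lambda_1 + (l_i-1)\lambda_2]p_i = \lambda_2\zeta + [(\overline n - 1)\lambda_1 - \lambda_2]p_i$, and moving the constant $\lambda_2^2 + (1-\overline n)\lambda_1\lambda_2$ to the right-hand side alongside $\alpha$ yields $\alpha - \lambda_2^2 + (\overline n - 1)\lambda_1\lambda_2$. This reproduces condition $(3)$. Since every manipulation above is an equivalence, the converse implication comes for free, so the main obstacle is simply keeping the algebra of that $(\phi')^2/\phi^2$ coefficient straight.
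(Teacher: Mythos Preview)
Your proposal is correct and follows exactly the approach the paper takes: the paper's own proof of this Proposition consists of the single sentence ``By Theorem $4.11$ and direct computations,'' and your write-up supplies precisely those direct computations (substituting $b_i=\phi^{p_i}$ and collecting coefficients via $\zeta$, $\eta$, and $\sum_i l_i=\overline n-1$). There is nothing to add.
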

By the equation $(38),$ we obtain the following:
\begin{Proposition}
Let $M=I \times_{\phi^{p_{1}}}F_{1} \times_{\phi^{p_{2}}}F_{2}\cdots \times_{\phi^{p_{m}}}F_{m}$ be a generalized Kasner space-time and $P=\frac{\partial}{\partial t}.$ Then $(M,\overline\nabla)$ has constant scalar curvature
$\overline S$ if and only if each $(F_{i},\nabla^{F_{i}})$ has constant scalar curvature $S^{F_{i}}$ and
$$\overline S=\sum\limits_{i=1}^m\frac{S^{F_{i}}}{\phi^{2p_{i}}}-2\zeta\frac{\phi^{\prime\prime}}{\phi}
-(\eta+\zeta^2-2\zeta)\frac{(\phi^\prime)^2}{\phi^2}
+(\lambda_{1}+\lambda_{2})\zeta(\overline n-1)\frac{\phi^\prime}{\phi}
+(\overline n-1)(\lambda_{1}^2+\lambda_{2}^2-\overline n\lambda_{1}\lambda_{2}). \eqno{(40)}$$
\end{Proposition}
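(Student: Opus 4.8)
The plan is to derive (40) directly from the scalar-curvature formula (38) for a special multiply warped product $I\times_{b_{1}}F_{1}\times\cdots\times_{b_{m}}F_{m}$ with $P=\frac{\partial}{\partial t}$, by specializing the warping functions to $b_{i}=\phi^{p_{i}}$. First I would record the elementary identities
$$\frac{b_{i}^\prime}{b_{i}}=p_{i}\frac{\phi^\prime}{\phi},\qquad \frac{b_{i}^{\prime\prime}}{b_{i}}=p_{i}(p_{i}-1)\frac{(\phi^\prime)^2}{\phi^2}+p_{i}\frac{\phi^{\prime\prime}}{\phi},\qquad b_{i}^2=\phi^{2p_{i}},$$
together with $\sum_{i=1}^m l_{i}=\overline n-1$ coming from $\dim M=\overline n$. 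Substituting these into the seven groups of terms on the right-hand side of (38) and collecting by $\frac{\phi^{\prime\prime}}{\phi}$, $\frac{(\phi^\prime)^2}{\phi^2}$, $\frac{\phi^\prime}{\phi}$, and the constant term should produce exactly (40), with the fiber terms contributing the sum $\sum_{i=1}^m S^{F_{i}}/\phi^{2p_{i}}$ unchanged.

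The computation then splits into a few bookkeeping steps. The $\frac{\phi^{\prime\prime}}{\phi}$ terms come only from $-2\sum_{i=1}^m l_{i}\frac{b_{i}^{\prime\prime}}{b_{i}}$ and immediately give $-2\zeta\frac{\phi^{\prime\prime}}{\phi}$. For the $\frac{(\phi^\prime)^2}{\phi^2}$ terms I would combine the contribution $-2(\eta-\zeta)$ from $-2\sum_{i=1}^m l_{i}\frac{b_{i}^{\prime\prime}}{b_{i}}$ with the two quadratic terms $\sum_{i=1}^m l_{i}(l_{i}-1)\frac{-(b_{i}^\prime)^2}{b_{i}^2}$ and $\sum_{i=1}^m\sum_{s\neq i}l_{i}l_{s}\frac{-b_{i}^\prime b_{s}^\prime}{b_{i}b_{s}}$; here the key cancellation is that the unwanted $\sum_{i=1}^m l_{i}^2p_{i}^2$ pieces from these last two sums cancel against each other, leaving $-(\eta-\zeta^2)$ and hence the coefficient $-(\eta+\zeta^2-2\zeta)$. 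For the $\frac{\phi^\prime}{\phi}$ terms I would expand $\sum_{i=1}^m l_{i}[(\overline n-1)\lambda_{1}+l_{i}\lambda_{2}]\frac{b_{i}^\prime}{b_{i}}$ and $\lambda_{2}\sum_{i=1}^m\sum_{s\neq i}l_{i}l_{s}\frac{b_{s}^\prime}{b_{s}}$; after using $\sum_{i\neq s}l_{i}=\overline n-1-l_{s}$, the spurious $\lambda_{2}\sum_{i=1}^m l_{i}^2p_{i}$ terms again cancel, leaving $(\overline n-1)(\lambda_{1}+\lambda_{2})\zeta\frac{\phi^\prime}{\phi}$. Finally $-\sum_{i=1}^m l_{i}[\overline n\lambda_{1}\lambda_{2}-(\lambda_{1}^2+\lambda_{2}^2)]=(\overline n-1)(\lambda_{1}^2+\lambda_{2}^2-\overline n\lambda_{1}\lambda_{2})$ gives the constant term, and assembling the pieces yields (40).

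For the equivalence itself I would argue exactly as in the proof of Theorem 4.13: in the resulting expression for $\overline S$, every term apart from $\sum_{i=1}^m S^{F_{i}}/\phi^{2p_{i}}$ depends only on $t$, each $S^{F_{i}}$ depends only on the coordinates of $F_{i}$, and $\phi(t)^{2p_{i}}\neq0$. Freezing $t$ and varying the point of a single factor $F_{i}$ forces $S^{F_{i}}$ to be constant whenever $\overline S$ is; conversely, if all the $S^{F_{i}}$ are constants then the whole right-hand side of (40) is a function of $t$ alone, and its being a (global) constant is precisely the assertion. I expect no conceptual obstacle here; the only delicate point is the bookkeeping in the previous paragraph — verifying that the $\sum_{i=1}^m l_{i}^2p_{i}^2$ and $\sum_{i=1}^m l_{i}^2p_{i}$ contributions really do cancel — but this is forced by the structure of (38) and the definitions $\zeta=\sum_{i=1}^m l_{i}p_{i}$, $\eta=\sum_{i=1}^m l_{i}p_{i}^2$.
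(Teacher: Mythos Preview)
Your approach is correct and is exactly what the paper does: it simply says ``By the equation (38), we obtain the following,'' so your detailed substitution $b_{i}=\phi^{p_{i}}$ and term-by-term collection is precisely the computation the paper leaves implicit, and your separation-of-variables argument for the equivalence mirrors the proof of Theorem~4.13. One tiny slip: the combined contribution of the two quadratic sums (your Terms~3 and~4) is $+(\eta-\zeta^{2})\frac{(\phi')^{2}}{\phi^{2}}$, not $-(\eta-\zeta^{2})$; with the correct sign, $-2(\eta-\zeta)+(\eta-\zeta^{2})=-(\eta+\zeta^{2}-2\zeta)$ indeed gives the coefficient you state.
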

Next, we first give a classification of four-dimensional generalized Kasner space-times with a quarter-symmetric connection and then consider Ricci tensors and scalar curvatures of them.
\begin{Definition}
Let $M=I \times_{b_{1}}F_{1} \times_{b_{2}}F_{2}\cdots \times_{b_{m}}F_{m}$ be a multiply warped product with the metric tensor $g=-dt^2\oplus b_{1}^2g_{F_{1}}\oplus b_{2}^2g_{F_{2}}\cdots\oplus b_{m}^2g_{F_{m}}.$ Then:\\
$(1)\;(M,g)$ is said to be of type $(I)$ if $m=1$ and ${\rm dim}F=3;$ \\
$(2)\;(M,g)$ is said to be of type $(II)$ if $m=2$ and ${\rm dim}F_{1}=1$ and ${\rm dim}F_{2}=2;$ \\
$(3)\;(M,g)$ is said to be of type $(III)$ if $m=3$ and ${\rm dim}F_{1}=1,\;{\rm dim}F_{2}=1,\;{\rm dim}F_{3}=1.$ \\
\end{Definition}
\subsubsection{Classification of Einstein Type (I) generalized Kasner space-times with a quarter-symmetric connection}
By Theorem $3.16,$ we have given a classification of Einstein Type (I) generalized Kasner space-times with a quarter-symmetric connection.

\subsubsection{Type (I) generalized Kasner space-times with a quarter-symmetric connection with constant scalar curvature}
By Theorem $3.19,$ we have given a classification of Type (I) generalized Kasner space-times with a quarter-symmetric connection with constant scalar curvature.

\subsubsection{Classification of Einstein Type (II) generalized Kasner space-times with a quarter-symmetric connection}
Let $M=I\times_{\phi^{p_{1}}}F_{1}\times_{\phi^{p_{2}}}F_{2}$ be an Einstein Type (II) generalized Kasner space-times
and $P=\frac{\partial}{\partial t}.$ Then $\alpha_{1}=0$ because of
${\rm dim}F_{1}=1.\;\zeta=p_{1}+2p_{2},\;\eta=p_{1}^2+2p_{2}^2.$ By Proposition $4.16,$ we have $$\zeta\Big(\lambda_{2}\frac{\phi^\prime}{\phi}-\frac{\phi^{\prime\prime}}{\phi}\Big)
-(\eta-\zeta)\frac{(\phi^\prime)^2}{\phi^2}+3(\lambda_{1}^2-\lambda_{1}\lambda_{2})=\alpha. \eqno{(41a)}$$
$$-p_{1}\frac{\phi^{\prime\prime}}{\phi}-(\zeta-1)p_{1}\frac{(\phi^\prime)^2}{\phi^2}
+[\lambda_{2}\zeta+(3\lambda_{1}-\lambda_{2})p_{1}]\frac{\phi^\prime}{\phi}
=\alpha-\lambda_{2}^2+3\lambda_{1}\lambda_{2}.    \eqno{(41b)} $$
$$\frac{\alpha_{2}}{\phi^{2p_{2}}}-p_{2}\frac{\phi^{\prime\prime}}{\phi}-(\zeta-1)p_{2}\frac{(\phi^\prime)^2}{\phi^2}
+[\lambda_{2}\zeta+(3\lambda_{1}-\lambda_{2})p_{2}]\frac{\phi^\prime}{\phi}
=\alpha-\lambda_{2}^2+3\lambda_{1}\lambda_{2}.   \eqno{(41c)}$$
where $\alpha_{2}$ is a constant. Consider the following two cases:\\
Case $\underline{(i)\;(\zeta=0).}$ In this case, $p_{2}=-\frac{1}{2}p_{1},\;\eta=\frac{3}{2}p_{1}^2.$ Then by equations $(41a)-(41c),$ we have:
$$-\eta\frac{(\phi^\prime)^2}{\phi^2}+3(\lambda_{1}^2-\lambda_{1}\lambda_{2})=\alpha. \eqno{(42a)}$$
$$p_{1}\Big[-\frac{\phi^{\prime\prime}}{\phi}+\frac{(\phi^\prime)^2}{\phi^2}+(3\lambda_{1}
-\lambda_{2})\frac{\phi^\prime}{\phi}\Big]=\alpha-\lambda_{2}^2+3\lambda_{1}\lambda_{2}.    \eqno{(42b)} $$
$$\frac{\alpha_{2}}{\phi^{-p_{1}}}-\frac{1}{2}p_{1}\Big[-\frac{\phi^{\prime\prime}}{\phi}+\frac{(\phi^\prime)^2}{\phi^2}
+(3\lambda_{1}-\lambda_{2})\frac{\phi^\prime}{\phi}\Big]=\alpha-\lambda_{2}^2+3\lambda_{1}\lambda_{2}. \eqno{(42c)}$$
¢Ù$\;\underline{\eta=0.}$ We have $p_{i}=0,$ by the equation $(42a),$ we get $\alpha=3\lambda_{1}^2-3\lambda_{1}\lambda_{2};$   by the equation $(42b),$ we get $\alpha=\lambda_{2}^2-3\lambda_{1}\lambda_{2};$ then we have $\lambda_{2}^2=3\lambda_{1}^2$ and by the equation $(42c),$ we get $\alpha_{2}=0.$ So we have $\underline{\lambda_{2}^2=3\lambda_{1}^2,\;p_{i}=0,\;
\alpha=3\lambda_{1}^2-3\lambda_{1}\lambda_{2}=\lambda_{2}^2-3\lambda_{1}\lambda_{2},\;\alpha_{1}=\alpha_{2}=0.}$\\
¢Ú$\;\underline{\eta\neq0.}$ We have $p_{i}\neq0.$\\
$1)\;\underline{\alpha_{2}=0.}$ By equations $(42b),(42c),$ we get $\alpha=\lambda_{2}^2-3\lambda_{1}\lambda_{2}$ and
$$-\frac{\phi^{\prime\prime}}{\phi}+\frac{(\phi^\prime)^2}{\phi^2}+(3\lambda_{1}
-\lambda_{2})\frac{\phi^\prime}{\phi}=0;  \eqno{(43a)}$$
$$\frac{(\phi^\prime)^2}{\phi^2}=\frac{3\lambda_{1}^2-\lambda_{2}^2}{\eta}.   \eqno{(43b)} $$
$1^\prime.\;3\lambda_{1}^2-\lambda_{2}^2<0,$ we have no solution;\\
$2^\prime.\;3\lambda_{1}^2-\lambda_{2}^2=0,$ we have $\phi=c,$ then by the equation $(42b),$ we get $\alpha=\lambda_{2}^2-3\lambda_{1}\lambda_{2}.$\\
\mbox{}\quad \; So we have $\underline{\lambda_{2}^2=3\lambda_{1}^2,\;p_{1}\neq0,\;p_{2}\neq0\;\alpha=\lambda_{2}^2-3\lambda_{1}\lambda_{2},\;
\alpha_{1}=\alpha_{2}=0,\;\phi=c.}$\\
$3^\prime.\;3\lambda_{1}^2-\lambda_{2}^2>0,$ we have $\phi=c_{0}e^{\pm\sqrt{\frac{3\lambda_{1}^2-\lambda_{2}^2}{\eta}}t},$ by the equation $(43a),$ we get
$\lambda_{2}=3\lambda_{1},$ considering that $3\lambda_{1}^2-\lambda_{2}^2>0,$ we have $\lambda_{1}^2<0,$ which is a
contradiction. \\
$2)\;\underline{\alpha_{2}\neq0.}$ By equations $(42b),(42c),$ we get
$\frac{\alpha_{2}}{\phi^{-p_{1}}}=\frac{3}{2}(\alpha-\lambda_{2}^2+3\lambda_{1}\lambda_{2}),$ so $\phi=c;$
then by the equation $(42b),$ we get $\alpha-\lambda_{2}^2+3\lambda_{1}\lambda_{2}=0,$ then $\alpha_{2}=0,$ this is a
contradiction. \\
Case $\underline{(ii)\;(\zeta\neq0).}$ Then $\eta\neq0.$ Putting $\phi=\psi^{\frac{\zeta}{\eta}},$ then
$\psi^{\prime\prime}-\lambda_{2}\psi^{\prime}+(\alpha+3\lambda_{1}\lambda_{2}-3\lambda_{1}^2)
\frac{\eta}{\zeta^2}\psi=0.$ Hence:\\
$(1)\;\alpha<\frac{\lambda_{2}^2\zeta^2}{4\eta}+3\lambda_{1}^2-3\lambda_{1}\lambda_{2},\;
\psi=c_{1}e^{\frac{\lambda_{2}+\sqrt{\lambda_{2}^2-4(\alpha+3\lambda_{1}\lambda_{2}-3\lambda_{1}^2)
\frac{\eta}{\zeta^2}}}{2}t}+c_{2}e^{\frac{\lambda_{2}-\sqrt{\lambda_{2}^2-4(\alpha+3\lambda_{1}\lambda_{2}-3\lambda_{1}^2)
\frac{\eta}{\zeta^2}}}{2}t};$\\
$(2)\;\alpha=\frac{\lambda_{2}^2\zeta^2}{4\eta}+3\lambda_{1}^2-3\lambda_{1}\lambda_{2},\;
\psi=c_{1}e^{\frac{\lambda_{2}}{2}t}+c_{2}te^{\frac{\lambda_{2}}{2}t};$\\
$(3)\;\alpha>\frac{\lambda_{2}^2\zeta^2}{4\eta}+3\lambda_{1}^2-3\lambda_{1}\lambda_{2},\\ \mbox{} \quad \;
\psi=c_{1}e^{\frac{\lambda_{2}}{2}t}cos(\frac{\sqrt{4(\alpha+3\lambda_{1}\lambda_{2}-3\lambda_{1}^2)
\frac{\eta}{\zeta^2}-\lambda_{2}^2}}{2}t)
+c_{2}e^{\frac{\lambda_{2}}{2}t}sin(\frac{\sqrt{4(\alpha+3\lambda_{1}\lambda_{2}-3\lambda_{1}^2)
\frac{\eta}{\zeta^2}-\lambda_{2}^2}}{2}t).$\\
We make equations $(41a)-(41c)$ into
$$\frac{\zeta^2}{\eta}\frac{\lambda_{2}\psi^\prime-\psi^{\prime\prime}}{\psi}
=\alpha+3\lambda_{1}\lambda_{2}-3\lambda_{1}^2;     \eqno{(44a)}$$
$$-\frac{p_{1}}{\zeta}\frac{(\phi^\zeta)^{\prime\prime}}{\phi^\zeta}+\frac{\lambda_{2}\zeta+(3\lambda_{1}-\lambda_{2})
p_{1}}{\zeta}\frac{(\phi^\zeta)^\prime}{\phi^\zeta}=\alpha-\lambda_{2}^2+3\lambda_{1}\lambda_{2};    \eqno{(44b)}$$
$$\frac{\alpha_{2}}{\phi^{2p_{2}}}-\frac{p_{2}}{\zeta}\frac{(\phi^\zeta)^{\prime\prime}}{\phi^\zeta}
+\frac{\lambda_{2}\zeta+(3\lambda_{1}-\lambda_{2})p_{2}}{\zeta}\frac{(\phi^\zeta)^\prime}{\phi^\zeta}
=\alpha-\lambda_{2}^2+3\lambda_{1}\lambda_{2}.  \eqno{(44c)}$$
When $p_{1}=p_{2},$ type (II) spaces turn into type (I) spaces, so we assume $p_{1}\neq p_{2}.$
By $(44b)\times p_{2}-(44c)\times p_{1},$ we get
$$\psi^\prime=\frac{p_{1}\alpha_{2}}{\lambda_{2}(p_{2}-p_{1})}\frac{\eta}{\zeta^2}\psi^{1-\frac{2p_{2}\zeta}{\eta}}
+\frac{\alpha-\lambda_{2}^2+3\lambda_{1}\lambda_{2}}{\lambda_{2}}\frac{\eta}{\zeta^2}\psi.  \eqno{(45)}$$
¢Ù$\;\alpha<\frac{\lambda_{2}^2\zeta^2}{4\eta}+3\lambda_{1}^2-3\lambda_{1}\lambda_{2},\;
\psi=c_{1}e^{at}+c_{2}e^{bt},$ where $a=\frac{\lambda_{2}+\sqrt{\lambda_{2}^2-4(\alpha+3\lambda_{1}\lambda_{2}-3\lambda_{1}^2)
\frac{\eta}{\zeta^2}}}{2},\;
b=\\ \mbox{} \quad \frac{\lambda_{2}-\sqrt{\lambda_{2}^2-4(\alpha+3\lambda_{1}\lambda_{2}-3\lambda_{1}^2)
\frac{\eta}{\zeta^2}}}{2}.$ By the equation $(45),$ we get
$$ac_{1}e^{at}+bc_{2}e^{bt}=
\frac{p_{1}\alpha_{2}}{\lambda_{2}(p_{2}-p_{1})}\frac{\eta}{\zeta^2}\big(c_{1}e^{at}+c_{2}e^{bt}\big)
^{1-\frac{2p_{2}\zeta}{\eta}}+\frac{\alpha-\lambda_{2}^2+3\lambda_{1}\lambda_{2}}{\lambda_{2}}\frac{\eta}{\zeta^2}
\big(c_{1}e^{at}+c_{2}e^{bt}\big).  \eqno{(46)}$$
$1)\;\underline{c_{1}=0.}$ We have
$$\Big[b-\frac{\alpha-\lambda_{2}^2+3\lambda_{1}\lambda_{2}}{\lambda_{2}}\frac{\eta}{\zeta^2}\Big]c_{2}e^{bt}
=\frac{p_{1}\alpha_{2}}{\lambda_{2}(p_{2}-p_{1})}\frac{\eta}{\zeta^2}\big(c_{2}e^{bt}\big)
^{1-\frac{2p_{2}\zeta}{\eta}}   \eqno{(47)}$$
$1^\prime.\;\underline{b\neq0,\;p_{1}\alpha_{2}\neq0.}$ By the equation $(47),$ we get $p_{2}=0,$ so $\zeta=p_{1},\;\eta=p_{1}^2$ and
$\frac{\zeta^2}{\eta}=1.$ Then $b=\frac{\lambda_{2}-\sqrt{\lambda_{2}^2-4(\alpha+3\lambda_{1}\lambda_{2}-3\lambda_{1}^2)}}{2},$
so $$b^2-\lambda_{2}b=3\lambda_{1}^2-3\lambda_{2}\lambda_{2}-\alpha.      \eqno{(48)}     $$
On the other hand, $\phi^\zeta=\psi=c_{2}e^{bt},$ so by equations $(44b),(44c),$ we get
$$-b^2+3\lambda_{1}b=\alpha-\lambda_{2}^2+3\lambda_{1}\lambda_{2};    \eqno{(49a)}$$
$$\alpha_{2}+\lambda_{2}b=\alpha-\lambda_{2}^2+3\lambda_{1}\lambda_{2};    \eqno{(49b)}$$
$(48)+(49a),$ we get $(3\lambda_{1}-\lambda_{2})b=3\lambda_{1}^2-\lambda_{2}^2,$ if $\lambda_{2}=3\lambda_{1},$
then $0=-6\lambda_{1}^2,$ this is a contradiction with $\lambda_{1}\neq0,$ so $\lambda_{2}\neq3\lambda_{1},\;
b=\frac{3\lambda_{1}^2-\lambda_{2}^2}{3\lambda_{1}-\lambda_{2}}.$ Then by the equation $(48),$ we have
$\alpha=\frac{18\lambda_{1}^4-36\lambda_{1}^3\lambda_{2}+24\lambda_{1}^2\lambda_{2}^2-6\lambda_{1}\lambda_{2}^3}
{(3\lambda_{1}-\lambda_{2})^2}
=\frac{6\lambda_{1}(\lambda_{1}-\lambda_{2})(3\lambda_{1}^2-3\lambda_{1}\lambda_{2}+\lambda_{2}^2)}
{(3\lambda_{1}-\lambda_{2})^2},$ by the equation $(49b),$ we get
$\alpha_{2}=\frac{18\lambda_{1}^4-18\lambda_{1}^3\lambda_{2}+6\lambda_{1}\lambda_{2}^3-2\lambda_{2}^4}
{(3\lambda_{1}-\lambda_{2})^2}=\frac{2(3\lambda_{1}^2-\lambda_{2}^2)(3\lambda_{1}^2-3\lambda_{1}\lambda_{2}
+\lambda_{2}^2)}{(3\lambda_{1}-\lambda_{2})^2},$ since $3\lambda_{1}^2-3\lambda_{1}\lambda_{2}+\lambda_{2}^2\neq0,$ and $\alpha_{2}\neq0,$ we have $3\lambda_{1}^2\neq\lambda_{2}^2.$ Considering that $b<\frac{\lambda_{2}}{2},$ we get
$\frac{6\lambda_{1}^2-3\lambda_{1}\lambda_{2}-\lambda_{2}^2}{3\lambda_{1}-\lambda_{2}}<0.$\\
So we obtain $\underline{3\lambda_{1}^2\neq\lambda_{2}^2,\;
\frac{6\lambda_{1}^2-3\lambda_{1}\lambda_{2}-\lambda_{2}^2}{3\lambda_{1}-\lambda_{2}}<0,\;p_{1}\neq0,\;p_{2}=0,\;
\alpha=\frac{18\lambda_{1}^4-36\lambda_{1}^3\lambda_{2}+24\lambda_{1}^2\lambda_{2}^2-6\lambda_{1}\lambda_{2}^3}
{(3\lambda_{1}-\lambda_{2})^2},}\\ \underline{\alpha_{1}=0,\;\alpha_{2}=\frac{18\lambda_{1}^4-18\lambda_{1}^3\lambda_{2}
+6\lambda_{1}\lambda_{2}^3-2\lambda_{2}^4}{(3\lambda_{1}-\lambda_{2})^2},\;
\phi=c_{0}e^{\frac{3\lambda_{1}^2-\lambda_{2}^2}{(3\lambda_{1}-\lambda_{2})p_{1}}t}}.$\\
$2^\prime.\;\underline{b\neq0,\;p_{1}\alpha_{2}=0.}$\\
\mbox{} \quad $1^\circ.\;\underline{p_{1}=0.}$ Then $\zeta=2p_{2},\;\eta=2p_{2}^2,\;\frac{\eta}{\zeta^2}=\frac{1}{2}.$ By the equation $(47),$ we get
$b=\frac{\alpha-\lambda_{2}^2+3\lambda_{1}\lambda_{2}}{2\lambda_{2}},$ on the other hand,
$b=\frac{\lambda_{2}-\sqrt{\lambda_{2}^2-2(\alpha+3\lambda_{1}\lambda_{2}-3\lambda_{1}^2)}}{2},$ so we get
$$\alpha^2+(6\lambda_{1}\lambda_{2}-2\lambda_{2}^2)\alpha+3\lambda_{1}^2\lambda_{2}^2-6\lambda_{1}^3\lambda_{2}
+3\lambda_{1}^4=0.  \eqno{(50)}$$
when $\lambda_{1}=\lambda_{2},$ we get $\alpha^2+4\lambda_{1}^2\alpha=0,$ then $\alpha=0$ or $\alpha=-4\lambda_{1}^2.$ If $\alpha=0,$ we have $b=\lambda_{1}=\lambda_{2}<0,$ by the equation $(44c),$ we get
$\alpha_{2}=0;\;\phi=\psi^{\frac{\zeta}{\eta}}=c_{0}e^{\frac{\lambda_{1}}{p_{2}}t}.$  If $\alpha=-4\lambda_{1}^2,$
then $\lambda_{1}=\lambda_{2}>0,\;b=-\lambda_{1},$ then by the equation $(44c),$ we get $\lambda_{1}=0,$ which is a contradiction. \\
When $\lambda_{1}\neq\lambda_{2},\;\Delta=8\lambda_{2}^2(3\lambda_{1}^2-\lambda_{2}^2),$
if $\lambda_{2}^2>3\lambda_{1}^2,$ we have no solution;
if $\lambda_{2}^2=3\lambda_{1}^2,$ we have $b=0,$ which is a contradiction;
if $\lambda_{2}^2<3\lambda_{1}^2,$ we have
$\alpha=\lambda_{2}^2-3\lambda_{1}\lambda_{2}+\sqrt{2\lambda_{2}^2(3\lambda_{1}^2-\lambda_{2}^2)}$
or $\alpha=\lambda_{2}^2-3\lambda_{1}\lambda_{2}-\sqrt{2\lambda_{2}^2(3\lambda_{1}^2-\lambda_{2}^2)}.$
By the equation $(44c),$ we have $\alpha_{2}=0$ and
$$-2b^2+(3\lambda_{1}+\lambda_{2})b=\alpha-\lambda_{2}^2+3\lambda_{1}\lambda_{2}.    \eqno{(51)}$$
When $\alpha=\lambda_{2}^2-3\lambda_{1}\lambda_{2}+\sqrt{2\lambda_{2}^2(3\lambda_{1}^2-\lambda_{2}^2)},$
we have $b=\frac{\sqrt{2\lambda_{2}^2(3\lambda_{1}^2-\lambda_{2}^2)}}{2\lambda_{2}},$ by the equation $(51),$ we have
$\lambda_{1}=\lambda_{2},$ which is a contradiction. When $\alpha=\lambda_{2}^2-3\lambda_{1}\lambda_{2}-\sqrt{2\lambda_{2}^2(3\lambda_{1}^2-\lambda_{2}^2)},$ we have the same
contradiction.\\
So we get $\underline{\lambda_{1}=\lambda_{2}<0,\;p_{1}=0,\;p_{2}\neq0,\;\alpha=\alpha_{1}=\alpha_{2}=0,\;
\phi=c_{0}e^{\frac{\lambda_{1}}{p_{2}}t}.}$ \\
$2^\circ.\;\underline{\alpha_{2}=0.}$ Since $p_{1}=0$ we have discussed, so we assume that $p_{1}\neq0.$
By the equation $(47),$ we have $b=\frac{\alpha-\lambda_{2}^2+3\lambda_{1}\lambda_{2}}{\lambda_{2}}\frac{\eta}{\zeta^2},
\;\phi^\zeta=ce^{b\frac{\zeta^2}{\eta}t},$ then by the equation$(44b),$ we have
$\alpha(\alpha-\lambda_{2}^2+3\lambda_{1}\lambda_{2})=0.$ So when $\lambda_{2}=3\lambda_{1},$ we have $\alpha=0;$
when $\lambda_{2}\neq3\lambda_{1},$ we have $\alpha=0$ or $\alpha=\lambda_{2}^2-3\lambda_{1}\lambda_{2}\neq0.$\par
When $\lambda_{2}=3\lambda_{1},\;\alpha=0,$ then $b=0,$ which is a contradiction;\par
when $\lambda_{2}\neq3\lambda_{1},\;\alpha=\lambda_{2}^2-3\lambda_{1}\lambda_{2}\neq0,$ then $b=0,$ which is a contradiction;\par
when $\lambda_{2}\neq3\lambda_{1},\;\alpha=0,$ then $b=(3\lambda_{1}-\lambda_{2})\frac{\eta}{\zeta^2},$ by the equation
$(44a),$ we get $\frac{\eta}{\zeta^2}=\frac{3\lambda_{1}^2-\lambda_{2}^2}{(3\lambda_{1}-\lambda_{2})^2},$ then
$b=\frac{3\lambda_{1}^2-\lambda_{2}^2}{3\lambda_{1}-\lambda_{2}}.$ Since $b<\frac{\lambda_{2}}{2},$ we get
$\frac{6\lambda_{1}^2-3\lambda_{1}\lambda_{2}-\lambda_{2}^2}{3\lambda_{1}-\lambda_{2}}<0,$ by
$\alpha<\frac{\lambda_{2}^2\zeta^2}{4\eta}+3\lambda_{1}^2-3\lambda_{1}\lambda_{2},$ we get $\lambda_{2}^2<3\lambda_{1}^2.$ $\phi=\psi^{\frac{\zeta}{\eta}}=ce^{\frac{3\lambda_{1}-\lambda_{2}}{\zeta}t}.$\\
So we have $\underline{\lambda_{2}^2<3\lambda_{1}^2,\;
\frac{6\lambda_{1}^2-3\lambda_{1}\lambda_{2}-\lambda_{2}^2}{3\lambda_{1}-\lambda_{2}}<0,\;
p_{1}\neq0,\;p_{2}\neq-\frac{1}{2}p_{1},\;\alpha=\alpha_{1}=\alpha_{2}=0,}\\ \underline{
\phi=ce^{\frac{3\lambda_{1}-\lambda_{2}}{\zeta}t}.}$\\
$3^\prime.\;\underline{b=0.}$ Then $\psi=c_{2},$ by the equation $(44a),$ we have $\alpha=3\lambda_{1}^2-3\lambda_{1}\lambda_{2};$ by the equation $(44b),$ we have
$\alpha=\lambda_{2}^2-3\lambda_{1}\lambda_{2};$ so $\lambda_{2}^2=3\lambda_{1}^2,$ by the equation $(44c),$ we have
$\alpha_{2}=0.$ \par So we have
$\underline{\lambda_{2}^2=3\lambda_{1}^2,\;\zeta\neq0,\;\eta\neq0,\;
\alpha=\lambda_{2}^2-3\lambda_{1}\lambda_{2}=3\lambda_{1}^2-3\lambda_{1}\lambda_{2},\;
\alpha_{1}=\alpha_{2}=0,}\\ \underline{\phi=c.}$\\
$2)\;\underline{c_{2}=0.}$ We have
$$\Big[a-\frac{\alpha-\lambda_{2}^2+3\lambda_{1}\lambda_{2}}{\lambda_{2}}\frac{\eta}{\zeta^2}\Big]c_{1}e^{at}
=\frac{p_{1}\alpha_{2}}{\lambda_{2}(p_{2}-p_{1})}\frac{\eta}{\zeta^2}\big(c_{1}e^{at}\big)
^{1-\frac{2p_{2}\zeta}{\eta}}   \eqno{(52)}$$
$1^\prime.\;\underline{p_{1}\alpha_{2}\neq0.}$ By the equation $(52),$ we get $p_{2}=0,$ so
$\zeta=p_{1},\;\eta=p_{1}^2$ and
$\frac{\zeta^2}{\eta}=1.$ Then $a=\frac{\lambda_{2}+\sqrt{\lambda_{2}^2-4(\alpha+3\lambda_{1}\lambda_{2}-3\lambda_{1}^2)}}{2},$
using the equation $(52)$ again, we get $a-\frac{\alpha-\lambda_{2}^2+3\lambda_{1}\lambda_{2}}{\lambda_{2}}
=\frac{p_{1}\alpha_{2}}{-\lambda_{2}p_{1}}=-\frac{\alpha_{2}}{\lambda_{2}},$ so
$\alpha_{2}=\alpha-\lambda_{2}^2+3\lambda_{1}\lambda_{2}-\lambda_{2}a.$ By the equation $(44b),$ we get
$(3\lambda_{1}-\lambda_{2})^2\alpha=18\lambda_{1}^4-6\lambda_{1}\lambda_{2}^3+24\lambda_{1}^2\lambda_{2}^2
-36\lambda_{1}^3\lambda_{2},$ if $3\lambda_{1}=\lambda_{2},$ then $0=-36\lambda_{1}^2,$ which is a contradiction,
so $3\lambda_{1}\neq\lambda_{2}$ and $\alpha=\frac{18\lambda_{1}^4-6\lambda_{1}\lambda_{2}^3+24\lambda_{1}^2\lambda_{2}^2
-36\lambda_{1}^3\lambda_{2}}{(3\lambda_{1}-\lambda_{2})^2},$ so $a=\frac{\lambda_{2}
+\sqrt{\frac{(6\lambda_{1}^2-3\lambda_{1}\lambda_{2}-\lambda_{2}^2)^2}{(3\lambda_{1}-\lambda_{2})^2}}}{2}.$\par
If $\frac{6\lambda_{1}^2-3\lambda_{1}\lambda_{2}-\lambda_{2}^2}{3\lambda_{1}-\lambda_{2}}>0,$ then
$a=\frac{3\lambda_{1}^2-\lambda_{2}^2}{3\lambda_{1}-\lambda_{2}},\;
\alpha_{2}=\frac{18\lambda_{1}^4-2\lambda_{2}^4+6\lambda_{1}\lambda_{2}^3-18\lambda_{1}^3\lambda_{2}}
{(3\lambda_{1}-\lambda_{2})^2}=\\ \frac{(3\lambda_{1}^2-\lambda_{2}^2)(3\lambda_{1}^2-3\lambda_{1}\lambda_{2}
+\lambda_{2}^2)}{(3\lambda_{1}-\lambda_{2})^2},$ since $3\lambda_{1}^2-3\lambda_{1}\lambda_{2}+\lambda_{2}^2
=3\big(\lambda_{1}-\frac{1}{2}\lambda_{2}\big)^2+\frac{1}{4}\lambda_{2}^2\neq0,$ and $\alpha_{2}\neq0,$ we have $3\lambda_{1}^2\neq\lambda_{2}^2,$ so $a\neq0.$\par
If $\frac{6\lambda_{1}^2-3\lambda_{1}\lambda_{2}-\lambda_{2}^2}{3\lambda_{1}-\lambda_{2}}<0,$ then
$a=\frac{3\lambda_{1}\lambda_{2}-\lambda_{2}^2}{3\lambda_{1}-\lambda_{2}},\;
\alpha_{2}=\frac{18\lambda_{1}^4-\lambda_{2}^4+6\lambda_{1}\lambda_{2}^3-15\lambda_{1}^2\lambda_{2}^2}
{(3\lambda_{1}-\lambda_{2})^2}\neq0,$ so $18\lambda_{1}^4-\lambda_{2}^4+6\lambda_{1}\lambda_{2}^3-15\lambda_{1}^2\lambda_{2}^2\neq0.$\par
Hence, we have $\underline{\lambda_{2}\neq3\lambda_{1},\;\lambda_{2}^2\neq3\lambda_{1}^2,\;
\frac{6\lambda_{1}^2-3\lambda_{1}\lambda_{2}-\lambda_{2}^2}{3\lambda_{1}-\lambda_{2}}>0,\;p_{1}\neq0,\;p_{2}=0,\;
\alpha_{1}=0,}$\\
$\underline{\alpha=\frac{18\lambda_{1}^4-6\lambda_{1}\lambda_{2}^3+24\lambda_{1}^2\lambda_{2}^2
-36\lambda_{1}^3\lambda_{2}}{(3\lambda_{1}-\lambda_{2})^2},\;\alpha_{2}=\frac{18\lambda_{1}^4-2\lambda_{2}^4
+6\lambda_{1}\lambda_{2}^3-18\lambda_{1}^3\lambda_{2}}{(3\lambda_{1}-\lambda_{2})^2},\;
\phi=ce^{\frac{3\lambda_{1}^2-\lambda_{2}^2}{(3\lambda_{1}-\lambda_{2})p_{1}}t}.}$\\
or $\underline{\lambda_{2}\neq3\lambda_{1},\;
18\lambda_{1}^4-\lambda_{2}^4+6\lambda_{1}\lambda_{2}^3-15\lambda_{1}^2\lambda_{2}^2\neq0,\;
\frac{6\lambda_{1}^2-3\lambda_{1}\lambda_{2}-\lambda_{2}^2}{3\lambda_{1}-\lambda_{2}}<0,\;p_{1}\neq0,\;p_{2}=0,\;}$
\\$\underline{\alpha_{1}=0,\alpha=\frac{18\lambda_{1}^4-6\lambda_{1}\lambda_{2}^3+24\lambda_{1}^2\lambda_{2}^2
-36\lambda_{1}^3\lambda_{2}}{(3\lambda_{1}-\lambda_{2})^2},\;\alpha_{2}=\frac{18\lambda_{1}^4-\lambda_{2}^4
+6\lambda_{1}\lambda_{2}^3-15\lambda_{1}^2\lambda_{2}^2}{(3\lambda_{1}-\lambda_{2})^2},\;
\phi=ce^{\frac{3\lambda_{1}\lambda_{2}-3\lambda_{1}^2}{(3\lambda_{1}-\lambda_{2})p_{1}}t}.}$\\
Especially when $\lambda_{1}=\lambda_{2}<0,$ we have $p_{1}\neq0,\;p_{2}=0,\;\alpha=\alpha_{1}=0,\;
\alpha_{2}=2\lambda_{1}^2,\;\phi=c.$\\
$2^\prime.\;\underline{p_{1}\alpha_{2}=0.}$ $1^\circ.\;\underline{p_{1}=0.}$ Then $\zeta=2p_{2},\;\eta=2p_{2}^2,\;\frac{\eta}{\zeta^2}=\frac{1}{2}.$ By the equation $(52),$ we get
$a=\frac{\alpha-\lambda_{2}^2+3\lambda_{1}\lambda_{2}}{2\lambda_{2}},$ on the other hand,
$a=\frac{\lambda_{2}+\sqrt{\lambda_{2}^2-2(\alpha+3\lambda_{1}\lambda_{2}-3\lambda_{1}^2)}}{2},$ so we get
$$\alpha^2+(6\lambda_{1}\lambda_{2}-2\lambda_{2}^2)\alpha+3\lambda_{1}^2\lambda_{2}^2-6\lambda_{1}^3\lambda_{2}
+3\lambda_{1}^4=0.  \eqno{(53)}$$
when $\lambda_{1}=\lambda_{2},$ we get $\alpha^2+4\lambda_{1}^2\alpha=0,$ then $\alpha=0$ or $\alpha=-4\lambda_{1}^2.$ If $\alpha=0,$ we have $a=\lambda_{1}=\lambda_{2}>0,$ by the equation $(44c),$ we get
$\alpha_{2}=0,\;\phi=\psi^{\frac{\zeta}{\eta}}=c_{0}e^{\frac{\lambda_{1}}{p_{2}}t}.$  If $\alpha=-4\lambda_{1}^2,$
then $\lambda_{1}=\lambda_{2}<0,\;a=-\lambda_{1},$ then by the equation $(44c),$ we get $\lambda_{1}=0,$ which is a contradiction. \\
When $\lambda_{1}\neq\lambda_{2},\;\Delta=8\lambda_{2}^2(3\lambda_{1}^2-\lambda_{2}^2),$
if $\lambda_{2}^2>3\lambda_{1}^2,$ we have no solution;
if $\lambda_{2}^2=3\lambda_{1}^2,$ we have $\underline{a=0,\;\alpha_{1}=\alpha_{2}=0,\;\alpha=\lambda_{2}^2-3\lambda_{1}\lambda_{2},\;\phi=c;}$
if $\lambda_{2}^2<3\lambda_{1}^2,$ we have
$\alpha=\lambda_{2}^2-3\lambda_{1}\lambda_{2}+\sqrt{2\lambda_{2}^2(3\lambda_{1}^2-\lambda_{2}^2)}$
or $\alpha=\lambda_{2}^2-3\lambda_{1}\lambda_{2}-\sqrt{2\lambda_{2}^2(3\lambda_{1}^2-\lambda_{2}^2)}.$
By the equation $(44c),$ we have $\alpha_{2}=0$ and
$$-2a^2+(3\lambda_{1}+\lambda_{2})a=\alpha-\lambda_{2}^2+3\lambda_{1}\lambda_{2}.    \eqno{(54)}$$
then we have $\lambda_{1}=\lambda_{2}>0$ or $\lambda_{1}=\lambda_{2}<0,$ but when $\lambda_{1}=\lambda_{2}<0,$ it is
not satisfies the equation $(44c),$ so we get $\underline{\lambda_{1}=\lambda_{2}>0,\;p_{1}=0,\;p_{2}\neq0,\;
\alpha=\alpha_{1}=\alpha_{2}=0,\;\phi=ce^{\frac{\lambda_{1}}{p_{2}}t}}.$\\
$2^\circ.\;\underline{\alpha_{2}=0.}$ Since $p_{1}=0$ we have discussed, so we assume that $p_{1}\neq0.$
By the equation $(52),$ we have $a=\frac{\alpha-\lambda_{2}^2+3\lambda_{1}\lambda_{2}}{\lambda_{2}}\frac{\eta}{\zeta^2},
\;\phi^\zeta=ce^{a\frac{\zeta^2}{\eta}t},$ then by the equation$(44b),$ we have
$\alpha(\alpha-\lambda_{2}^2+3\lambda_{1}\lambda_{2})=0.$ So when $\lambda_{2}=3\lambda_{1},$ we have $\alpha=0;$
when $\lambda_{2}\neq3\lambda_{1},$ we have $\alpha=0$ or $\alpha=\lambda_{2}^2-3\lambda_{1}\lambda_{2}\neq0.$\par
When $\lambda_{2}=3\lambda_{1},\;\alpha=0,$ then $a=0,\;\eta=0,$ which is a contradiction.\par
When $\lambda_{2}\neq3\lambda_{1},\;\alpha=\lambda_{2}^2-3\lambda_{1}\lambda_{2}\neq0,$ then $a=0$ and
$(\lambda_{2}^2-3\lambda_{1}^2)\eta=0.$ \\
If $\lambda_{2}^2\neq3\lambda_{1}^2,$ then $\eta=0,$ which is a contradiction;\\
If $\lambda_{2}^2=3\lambda_{1}^2,$ which satisfies $\lambda_{2}\neq3\lambda_{1},$ then
$\phi^\zeta=c,$ which satisfies equations $(44a)-(44c).$\\
so we have $\underline{\lambda_{2}^2=3\lambda_{1}^2,\;p_{1}\neq0,\;p_{2}\neq-\frac{1}{2}p_{1}.\;
\alpha=\lambda_{2}^2-3\lambda_{1}\lambda_{2},\;\alpha_{1}=\alpha_{2}=0,\;\phi=c.}$\par
When $\lambda_{2}\neq3\lambda_{1},\;\alpha=0,$ then $a=(3\lambda_{1}-\lambda_{2})\frac{\eta}{\zeta^2},$ by the equation $(44a),$ we get $\frac{\eta}{\zeta^2}=\frac{3\lambda_{1}^2-\lambda_{2}^2}{(3\lambda_{1}-\lambda_{2})^2}>0$ and $\lambda_{2}^2<3\lambda_{1}^2,$ then
$a=\frac{3\lambda_{1}^2-\lambda_{2}^2}{3\lambda_{1}-\lambda_{2}}.$ Since $a>\frac{\lambda_{2}}{2},$ we get
$\frac{6\lambda_{1}^2-3\lambda_{1}\lambda_{2}-\lambda_{2}^2}{3\lambda_{1}-\lambda_{2}}>0,$
considering that$\frac{p_{1}^2+2p_{2}^2}{(p_{1}+2p_{2})^2}=\frac{\eta}{\zeta^2}=\frac{3\lambda_{1}^2
-\lambda_{2}^2}{(3\lambda_{1}-\lambda_{2})^2},$ we have
$(3\lambda_{1}^2-3\lambda_{1}\lambda_{2}+\lambda_{2}^2)p_{1}^2-(6\lambda_{1}^2-2\lambda_{2}^2)p_{1}p_{2}
=(3\lambda_{1}^2-6\lambda_{1}\lambda_{2}+3\lambda_{2}^2)p_{2}^2,$ no matter $\lambda_{1}=\lambda_{2}$ or
$\lambda_{1}\neq\lambda_{2},$ we can get $p_{2}\neq0.$\\
So we get $\underline{\lambda_{2}\neq3\lambda_{1},\;\lambda_{2}^2<3\lambda_{1}^2,\;
\frac{6\lambda_{1}^2-3\lambda_{1}\lambda_{2}-\lambda_{2}^2}{3\lambda_{1}-\lambda_{2}}>0,\;
\frac{\eta}{\zeta^2}=\frac{3\lambda_{1}^2-\lambda_{2}^2}{(3\lambda_{1}-\lambda_{2})^2},\;
\alpha=\alpha_{1}=\alpha_{2}=0,}\\ \underline{\phi=ce^{\frac{3\lambda_{1}^2-\lambda_{2}^2}{3\lambda_{1}-\lambda_{2}}
\frac{\zeta}{\eta}t}.}$\\
$3)\;\underline{c_{1}\neq0,\;c_{2}\neq0,\;b\neq0.}$\\
$1^\prime.\;\underline{p_{2}\neq0.}$ Then $e^{at},e^{bt}$ and $(c_{1}e^{at}+c_{2}e^{bt})^{1-2p_{2}\frac{\zeta}{\eta}}$ are linearly independent, by the equation $(46),$ we have
$\Big[a-\frac{\alpha-\lambda_{2}^2+3\lambda_{1}\lambda_{2}}{\lambda_{2}}\frac{\eta}{\zeta^2}\Big]c_{1}=0,\;\;\;
\Big[b-\frac{\alpha-\lambda_{2}^2+3\lambda_{1}\lambda_{2}}{\lambda_{2}}\frac{\eta}{\zeta^2}\Big]c_{2}=0.$\\
Considering that $c_{1}\neq0,\;c_{2}\neq0,$ we have $a=b=\frac{\alpha-\lambda_{2}^2+3\lambda_{1}\lambda_{2}}{\lambda_{2}}\frac{\eta}{\zeta^2},$ which is a contradiction.\\
$2^\prime.\;\underline{p_{2}=0.}$ Then by the equation $(46),$ we have \\
$a-\frac{\alpha-\lambda_{2}^2+3\lambda_{1}\lambda_{2}}{\lambda_{2}}\frac{\eta}{\zeta^2}
-\frac{p_{1}\alpha_{2}}{\lambda_{2}(p_{2}-p_{1})}\frac{\eta}{\zeta^2}=0,\qquad
b-\frac{\alpha-\lambda_{2}^2+3\lambda_{1}\lambda_{2}}{\lambda_{2}}\frac{\eta}{\zeta^2}
-\frac{p_{1}\alpha_{2}}{\lambda_{2}(p_{2}-p_{1})}\frac{\eta}{\zeta^2}=0.$\\
Then $a=b,$ which is a contradiction.\\
$4)\;\underline{c_{1}\neq0,\;c_{2}\neq0,\;b=0.}$ Then $a=\lambda_{2}\neq0$ and
$$ac_{1}e^{at}
=\frac{p_{1}\alpha_{2}}{\lambda_{2}(p_{2}-p_{1})}\frac{\eta}{\zeta^2}(c_{1}e^{at}+c_{2})^{1-2p_{2}\frac{\zeta}{\eta}}
+\frac{\alpha-\lambda_{2}^2+3\lambda_{1}\lambda_{2}}{\lambda_{2}}\frac{\eta}{\zeta^2}(c_{1}e^{at}+c_{2}). \eqno{(55)}$$
$1^\prime.\;\underline{1-2p_{2}\frac{\zeta}{\eta}\neq0.}$ \\If $p_{2}\neq0,$ then $e^{at}$ and $(c_{1}e^{at}+c_{2})^{1-2p_{2}\frac{\zeta}{\eta}}$ are linearly independent, by the equation $(55),$ we have
$a=0,$ this is a contradiction with $a=\lambda_{2}\neq0.$\\
If $p_{2}=0,$ using the same method we can get $a=0,$ this is a contradiction with $a=\lambda_{2}\neq0.$\\
$2^\prime.\;\underline{1-2p_{2}\frac{\zeta}{\eta}=0.}$ \\
Then $p_{2}\neq0,\;\eta=2p_{2}\zeta$ and the equation $(55)$ becomes
$$\Big(\lambda_{2}-\frac{\alpha-\lambda_{2}^2+3\lambda_{1}\lambda_{2}}{\lambda_{2}}\frac{\eta}{\zeta^2}\Big)c_{1}e^{at}
-\frac{p_{1}\alpha_{2}}{\lambda_{2}(p_{2}-p_{1})}\frac{\eta}{\zeta^2}
-\frac{\alpha-\lambda_{2}^2+3\lambda_{1}\lambda_{2}}{\lambda_{2}}\frac{\eta}{\zeta^2}=0.    \eqno{(56)}$$
Then $$\lambda_{2}=\frac{\alpha-\lambda_{2}^2+3\lambda_{1}\lambda_{2}}{\lambda_{2}}\frac{\eta}{\zeta^2}.\eqno{(57)}$$
Since $b=0$ which means $\lambda_{2}=\sqrt{\lambda_{2}^2-4(\alpha+3\lambda_{1}\lambda_{2}-3\lambda_{1}^2)\frac{\eta}{\zeta^2}},$ we get
$\alpha=3\lambda_{1}^2-3\lambda_{1}\lambda_{2},$ then using the equation $(57),$ we have
$\lambda_{2}=\frac{3\lambda_{1}^2-\lambda_{2}^2}{\lambda_{2}}\frac{\eta}{\zeta^2}$ and $\lambda_{2}^2\neq3\lambda_{1}^2,$ considering that
$\eta=2p_{2}\zeta,$ we get $p_{1}=\frac{6\lambda_{1}^2-4\lambda_{2}^2}{\lambda_{2}^2}p_{2}.$\\
If $3\lambda_{1}^2=2\lambda_{2}^2,$ then $p_{1}=0,$ by $\eta=2p_{2}\zeta$ we have $p_{2}=0,$ which is a contradiction.\\
If $3\lambda_{1}^2\neq2\lambda_{2}^2,$ then $p_{1}\neq0,$ and by $\eta=2p_{2}\zeta$ we have $18\lambda_{1}^4-30\lambda_{1}^2\lambda_{2}^2+11\lambda_{2}^4=0,$ then
$\lambda_{1}^2=\frac{5\pm\sqrt{3}}{6}\lambda_{2}^2,$ which satisfies $\lambda_{2}^2\neq3\lambda_{1}^2$ and
$3\lambda_{1}^2\neq2\lambda_{2}^2.$ Then $\alpha_{2}=\frac{(p_{2}-p_{1})(\lambda_{2}^2-3\lambda_{1}\lambda_{2})}{p_{1}},\;p_{1}=(1\pm\sqrt{3})p_{2}\neq0,\;
\zeta=(3\pm\sqrt{3})p_{2},\;\eta=(6\pm2\sqrt{3})p_{2}^2,\;\phi=ce^{\frac{\lambda_{2}}{2p_{1}}t}.$\par
So we get $\underline{\lambda_{1}^2=\frac{5\pm\sqrt{3}}{6}\lambda_{2}^2,\;p_{1}=(1\pm\sqrt{3})p_{2}\neq0,\;
\alpha=3\lambda_{1}^2-3\lambda_{1}\lambda_{2},\;\alpha_{1}=0,\;}$ \\ $\underline{
\alpha_{2}=\frac{(p_{2}-p_{1})(\lambda_{2}^2-3\lambda_{1}\lambda_{2})}{p_{1}},\;\phi=ce^{\frac{\lambda_{2}}{2p_{1}}t}.}$\\
¢Ú$\;\alpha=\frac{\lambda_{2}^2\zeta^2}{4\eta}+3\lambda_{1}^2-3\lambda_{1}\lambda_{2},\;
\psi=c_{1}e^{\frac{\lambda_{2}}{2}t}+c_{2}te^{\frac{\lambda_{2}}{2}t}.$ By the equation $(45),$ we have
$$\Big[\frac{\lambda_{2}}{2}c_{1}+c_{2}-a_{0}c_{1}+\Big(\frac{\lambda_{2}}{2}c_{2}
-a_{0}c_{2}\Big)t\Big]e^{\frac{\lambda_{2}}{2}t}=\frac{p_{1}\alpha_{2}}{\lambda_{2}(p_{2}-p_{1})}\frac{\eta}{\zeta^2}
(c_{1}+c_{2}t)^{1-2p_{2}\frac{\zeta}{\eta}}(e^{\frac{\lambda_{2}}{2}t})^{1-2p_{2}\frac{\zeta}{\eta}},  \eqno{(58)}$$
where $a_{0}=\frac{\lambda_{2}}{4}+\frac{3\lambda_{1}^2-\lambda_{2}^2}{\lambda_{2}^2}\frac{\eta}{\zeta^2}.$\\
$1)\;\underline{c_{2}\neq0.}$ Then by the equation $(58),$ we have $p_{2}=0,$ by the equation $(44c),$ we get
$\alpha_{2}+\lambda_{2}\frac{(\phi^\zeta)^\prime}{\phi^\zeta}=\alpha-\lambda_{2}^2+3\lambda_{1}\lambda_{2},$ then
$\phi^\zeta=c_{0}e^{\frac{\alpha-\lambda_{2}^2+3\lambda_{1}\lambda_{2}-\alpha_{2}}{\lambda_{2}}t};$ on the other hand, $\phi^\zeta=\psi^{\frac{\zeta^2}{\eta}}=\Big(c_{1}e^{\frac{\lambda_{2}}{2}t}
+c_{2}te^{\frac{\lambda_{2}}{2}t}\Big)^{\frac{\zeta^2}{\eta}},$ then
$c_{0}e^{\frac{\alpha-\lambda_{2}^2+3\lambda_{1}\lambda_{2}-\alpha_{2}}{\lambda_{2}}t}
=\Big(c_{1}e^{\frac{\lambda_{2}}{2}t}+c_{2}te^{\frac{\lambda_{2}}{2}t}\Big)^{\frac{\zeta^2}{\eta}},$ this is a contradiction with $c_{2}\neq0.$ \\
$2)\;\underline{c_{2}=0.}$ Then the equation $(58)$ becomes
$$\Big(\frac{\lambda_{2}}{2}c_{1}-a_{0}c_{1}\Big)e^{\frac{\lambda_{2}}{2}t}
=\frac{p_{1}\alpha_{2}}{\lambda_{2}(p_{2}-p_{1})}\frac{\eta}{\zeta^2}
c_{1}^{1-2p_{2}\frac{\zeta}{\eta}}(e^{\frac{\lambda_{2}}{2}t})^{1-2p_{2}\frac{\zeta}{\eta}}.  \eqno{(59)}$$
$1^\prime.\;\underline{a_{0}=\frac{\lambda_{2}}{2}.}$ By the equation $(59),$ we get $p_{1}\alpha_{2}=0.$ Considering that $a_{0}=\frac{\lambda_{2}}{4}+\frac{3\lambda_{1}^2-\lambda_{2}^2}{\lambda_{2}^2}\frac{\eta}{\zeta^2},$ we have
$\lambda_{2}^2\neq3\lambda_{1}^2$ and $\frac{\lambda_{2}^2}{4}\frac{\zeta^2}{\eta}=3\lambda_{1}^2-\lambda_{2}^2,$
then $\alpha=6\lambda_{1}^2-3\lambda_{1}\lambda_{2}-\lambda_{2}^2.$\\
If $p_{1}=0,$ then $\zeta=2p_{2},\;\eta=2p_{2}^2,\;\frac{\eta}{\zeta^2}=\frac{1}{2}.$ By
$\frac{\lambda_{2}^2}{4}\frac{\zeta^2}{\eta}=3\lambda_{1}^2-\lambda_{2}^2,$ we have $\lambda_{2}^2=2\lambda_{1}^2$ and $\alpha=2\lambda_{2}^2-3\lambda_{1}\lambda_{2}.$ By the equation $(44b),$ we get $\phi^\zeta=c_{0}e^{\lambda_{2}t},$ then by the equation $(44c),$ we get $\lambda_{2}=\frac{3}{2}\lambda_{1},$ this is a contradiction with $\lambda_{2}^2=2\lambda_{1}^2.$\\
If $p_{1}\neq0,\;\alpha_{2}=0,$ then $\psi=c_{1}e^{\frac{\lambda_{2}}{2}t},\;
\phi^\zeta=c_{0}e^{\frac{2(3\lambda_{1}^2-\lambda_{2}^2)}{\lambda_{2}}t},$ then by the equation $(44b),$ we have
$\alpha=6\lambda_{1}^2-3\lambda_{1}\lambda_{2}-\lambda_{2}^2=0,$ which satisfies the equation $(44c).\;
\phi=c_{0}e^{\frac{\lambda_{2}}{2}\frac{\zeta}{\eta}t}.$\\
So we get $\underline{p_{1}\neq0,\;\alpha=6\lambda_{1}^2-3\lambda_{1}\lambda_{2}-\lambda_{2}^2=0,\;
\alpha_{1}=\alpha_{2}=0,\;\phi=c_{0}e^{\frac{\lambda_{2}}{2}\frac{\zeta}{\eta}t}.}$\\
$2^\prime.\;\underline{a_{0}\neq\frac{\lambda_{2}}{2}.}$ By the equation $(59),$ we have $p_{2}=0,$ then
$\zeta=p_{1},\;\eta=p_{1}^2,\;\frac{\zeta^2}{\eta}=1,$ so $\alpha=\frac{\lambda_{2}^2}{4}+3\lambda_{1}^2-3\lambda_{1}\lambda_{2},\;\phi^\zeta=c_{1}e^{\frac{\lambda_{2}}{2}t}.$
By equation $(44b),$ we have $6\lambda_{1}^2-3\lambda_{1}\lambda_{2}-\lambda_{2}^2=0,$ by the equation $(44c),$ we have $\alpha_{2}=3\lambda_{1}^2-\frac{5}{4}\lambda_{2}^2.$\\
So we get $\underline{6\lambda_{1}^2-3\lambda_{1}\lambda_{2}-\lambda_{2}^2=0,\;p_{1}\neq0,\;p_{2}=0,\;
\alpha=\frac{\lambda_{2}^2}{4}+3\lambda_{1}^2-3\lambda_{1}\lambda_{2},\;
\alpha_{2}=3\lambda_{1}^2-\frac{5}{4}\lambda_{2}^2,}\\ \underline{\phi=ce^{\frac{\lambda_{2}}{2\zeta}t}.}$\\
¢Û$\alpha>\frac{\lambda_{2}^2\zeta^2}{4\eta}+3\lambda_{1}^2-3\lambda_{1}\lambda_{2},\;
\psi=c_{1}e^{\frac{\lambda_{2}}{2}t}cos(at)+c_{2}e^{\frac{\lambda_{2}}{2}t}sin(at),\;
a=\frac{\sqrt{4(\alpha+3\lambda_{1}\lambda_{2}-3\lambda_{1}^2)\frac{\eta}{\zeta^2}-\lambda_{2}^2}}{2}.$
By the equation $(45),$ we have
\begin{eqnarray}
& &\Big(\frac{\lambda_{2}}{2}c_{1}+ac_{2}\Big)cos(at)+\Big(-ac_{1}+\frac{\lambda_{2}}{2}c_{2}\Big)sin(at) \nonumber\\
&=&\frac{p_{1}\alpha_{2}}{\lambda_{2}(p_{2}-p_{1})}\frac{\eta}{\zeta^2}
\big(c_{1}cos(at)+c_{2}sin(at)\big)^{1-2p_{2}\frac{\zeta}{\eta}}e^{-\lambda_{2}p_{2}\frac{\zeta}{\eta}t}
  \setcounter{equation}{60}\\
&+&\frac{\alpha-\lambda_{2}^2
+3\lambda_{1}\lambda_{2}}{\lambda_{2}}\frac{\eta}{\zeta^2}\big(c_{1}cos(at)+c_{2}sin(at)\big).\nonumber
\end{eqnarray}
$1)\;\underline{p_{2}\neq0.}$ Then by the equation $(60),$ we get $p_{1}\alpha_{2}=0$ and
$$\frac{\lambda_{2}}{2}c_{1}+ac_{2}
=\frac{\alpha-\lambda_{2}^2+3\lambda_{1}\lambda_{2}}{\lambda_{2}}\frac{\eta}{\zeta^2}c_{1};  \eqno{(61a)}  $$
$$-ac_{1}+\frac{\lambda_{2}}{2}c_{2}
=\frac{\alpha-\lambda_{2}^2+3\lambda_{1}\lambda_{2}}{\lambda_{2}}\frac{\eta}{\zeta^2}c_{2}.  \eqno{(61b)}  $$
$(61a)\times c_{2}-(61b)\times c_{1},$ we get $c_{1}^2+c_{2}^2=0,$ this is a contradiction.\\
$2)\;\underline{p_{2}=0.}$ Then by the equation $(60),$ we have
$$\frac{\lambda_{2}}{2}c_{1}+ac_{2}=\frac{p_{1}\alpha_{2}}{\lambda_{2}(p_{2}-p_{1})}\frac{\eta}{\zeta^2}c_{1}
+\frac{\alpha-\lambda_{2}^2+3\lambda_{1}\lambda_{2}}{\lambda_{2}}\frac{\eta}{\zeta^2}c_{1};  \eqno{(62a)}  $$
$$-ac_{1}+\frac{\lambda_{2}}{2}c_{2}=\frac{p_{1}\alpha_{2}}{\lambda_{2}(p_{2}-p_{1})}\frac{\eta}{\zeta^2}c_{2}
+\frac{\alpha-\lambda_{2}^2+3\lambda_{1}\lambda_{2}}{\lambda_{2}}\frac{\eta}{\zeta^2}c_{2}.  \eqno{(62b)}  $$
$(62a)\times c_{2}-(62b)\times c_{1},$ we get $c_{1}^2+c_{2}^2=0,$ this is a contradiction.\\
So we have no solution in ¢Û.

According to above discussions, we get the following Theorem:
\begin{Theorem}
Let $M=I \times_{\phi^{p_{1}}}F_{1} \times_{\phi^{p_{2}}}F_{2}\cdots \times_{\phi^{p_{m}}}F_{m}$ be a generalized Kasner space-time, ${\rm dim}F_{1}=1,\;{\rm dim}F_{2}=2$ and $P=\frac{\partial}{\partial t}.$ Then $(M,\overline\nabla)$ is Einstein with the Einstein constant $\alpha$ if and only if one of the following conditions is satisfied:\\
$(1)\lambda_{2}^2=3\lambda_{1}^2,\;p_{1}=p_{2}=0,\;
\alpha=3\lambda_{1}^2-3\lambda_{1}\lambda_{2}=\lambda_{2}^2-3\lambda_{1}\lambda_{2},\;\alpha_{1}=\alpha_{2}=0;$\\
$(2)\lambda_{2}^2=3\lambda_{1}^2,\;p_{1}^2+p_{2}^2\neq0,\;\alpha=3\lambda_{1}^2-3\lambda_{1}\lambda_{2}
=\lambda_{2}^2-3\lambda_{1}\lambda_{2},\;\alpha_{1}=\alpha_{2}=0,\;\phi=c;$\\
$(3)\lambda_{2}^2\neq3\lambda_{1}^2,\;\lambda_{2}\neq3\lambda_{1},\;
\frac{6\lambda_{1}^2-3\lambda_{1}\lambda_{2}-\lambda_{2}^2}{3\lambda_{1}-\lambda_{2}}\neq0,\;p_{1}\neq0,\;p_{2}=0,\;
\alpha=\frac{18\lambda_{1}^4-6\lambda_{1}\lambda_{2}^3+24\lambda_{1}^2\lambda_{2}^2
-36\lambda_{1}^3\lambda_{2}}{(3\lambda_{1}-\lambda_{2})^2},\;\\ \mbox{} \quad \; \alpha_{1}=0,\alpha_{2}=\frac{18\lambda_{1}^4-2\lambda_{2}^4
+6\lambda_{1}\lambda_{2}^3-18\lambda_{1}^3\lambda_{2}}{(3\lambda_{1}-\lambda_{2})^2},\;
\phi=ce^{\frac{3\lambda_{1}^2-\lambda_{2}^2}{(3\lambda_{1}-\lambda_{2})p_{1}}t};$\\
$(4)\lambda_{2}^2<3\lambda_{1}^2,\;\lambda_{2}\neq3\lambda_{1},\;
\frac{6\lambda_{1}^2-3\lambda_{1}\lambda_{2}-\lambda_{2}^2}{3\lambda_{1}-\lambda_{2}}\neq0,\;p_{1}\neq0,
\;p_{2}\neq-\frac{1}{2}p_{1},\;\frac{\eta}{\zeta^2}=\frac{3\lambda_{1}^2-\lambda_{2}^2}{(3\lambda_{1}-\lambda_{2})^2},\;
\alpha=\\ \mbox{} \quad\, \alpha_{1}=\alpha_{2}=0,
\phi=ce^{\frac{3\lambda_{1}-\lambda_{2}}{\zeta}t};$\\
$(5)\lambda_{2}\neq3\lambda_{1},
\frac{6\lambda_{1}^2-3\lambda_{1}\lambda_{2}-\lambda_{2}^2}{3\lambda_{1}-\lambda_{2}}<0,
18\lambda_{1}^4-\lambda_{2}^4+6\lambda_{1}\lambda_{2}^3-15\lambda_{1}^2\lambda_{2}^2\neq0,
p_{1}\neq0,p_{2}=0,\alpha_{1}=\\ \mbox{} \quad\, 0,\alpha=\frac{18\lambda_{1}^4-6\lambda_{1}\lambda_{2}^3+24\lambda_{1}^2\lambda_{2}^2
-36\lambda_{1}^3\lambda_{2}}{(3\lambda_{1}-\lambda_{2})^2},\;\alpha_{2}=\frac{18\lambda_{1}^4-\lambda_{2}^4
+6\lambda_{1}\lambda_{2}^3-15\lambda_{1}^2\lambda_{2}^2}{(3\lambda_{1}-\lambda_{2})^2},\;
\phi=ce^{\frac{3\lambda_{1}\lambda_{2}-3\lambda_{1}^2}{(3\lambda_{1}-\lambda_{2})p_{1}}t};$\\
$(6)\lambda_{1}=\lambda_{2},\;p_{1}=0,\;p_{2}\neq0,\;\alpha=\alpha_{1}=\alpha_{2}=0,\;
\phi=c_{0}e^{\frac{\lambda_{1}}{p_{2}}t};$\\
$(7)\lambda_{1}^2=\frac{5\pm\sqrt{3}}{6}\lambda_{2}^2,\;p_{1}=(1\pm\sqrt{3})p_{2}\neq0,\;
\alpha=3\lambda_{1}^2-3\lambda_{1}\lambda_{2},\;\alpha_{1}=0,\;
\alpha_{2}=\frac{(p_{2}-p_{1})(\lambda_{2}^2-3\lambda_{1}\lambda_{2})}{p_{1}},\\
\mbox{} \quad \, \phi=ce^{\frac{\lambda_{2}}{2p_{1}}t};$\\
$(8)6\lambda_{1}^2-3\lambda_{1}\lambda_{2}-\lambda_{2}^2=0,\;
p_{1}\neq0,\;\alpha=6\lambda_{1}^2-3\lambda_{1}\lambda_{2}-\lambda_{2}^2=0,\;
\alpha_{1}=\alpha_{2}=0,\;\phi=ce^{\frac{\lambda_{2}}{2}\frac{\zeta}{\eta}t};$\\
$(9)6\lambda_{1}^2-3\lambda_{1}\lambda_{2}-\lambda_{2}^2=0,\;p_{1}\neq0,\;p_{2}=0,\;\frac{\zeta^2}{\eta}=1,\;
\alpha=\frac{\lambda_{2}^2}{4}+3\lambda_{1}^2-3\lambda_{1}\lambda_{2},\;\alpha_{2}=\\ \mbox{} \quad \;
3\lambda_{1}^2-\frac{5}{4}\lambda_{2}^2,\phi=ce^{\frac{\lambda_{2}}{2}\frac{\zeta}{\eta}t}.$
\end{Theorem}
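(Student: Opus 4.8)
The plan is to specialize Proposition 4.16 to $m=2$, $\dim F_1 = 1$, $\dim F_2 = 2$ (so $\overline n = 4$). Since $F_1$ is one-dimensional it is vacuously Einstein with $\alpha_1 = 0$, and the Kasner parameters become $\zeta = p_1 + 2p_2$, $\eta = p_1^2 + 2p_2^2$. Writing out the three conditions of Proposition 4.16 then produces exactly the system $(41a)$–$(41c)$: the equation from $\overline{Ric}(\partial_t,\partial_t)$, the equation from the $F_1$-direction, and the equation from the $F_2$-direction (the last two carrying, respectively, $\alpha_1 = 0$ and the unknown constant $\alpha_2$). The whole problem thus reduces to determining when this coupled system admits a solution $\phi:I\to(0,\infty)$, and recording all such data $(\lambda_1,\lambda_2,p_1,p_2,\alpha,\alpha_1,\alpha_2,\phi)$.

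First I would dichotomize on whether $\zeta$ vanishes. If $\zeta = 0$ then $p_2 = -\frac12 p_1$, the $\phi''$ terms disappear from $(41a)$, and $(41a)$ degenerates to the algebraic relation $-\eta\,(\phi'/\phi)^2 + 3(\lambda_1^2 - \lambda_1\lambda_2) = \alpha$; combined with the reduced forms $(42b)$–$(42c)$ this is resolved by a short finite check on whether $\eta$, $\alpha_2$, and $3\lambda_1^2 - \lambda_2^2$ vanish, discarding the branches that force $\lambda_1 = 0$. If $\zeta \neq 0$ (hence $\eta > 0$), the key move is the substitution $\phi = \psi^{\zeta/\eta}$, which turns condition $(2)$ of Proposition 4.16 into the constant-coefficient linear ODE $\psi'' - \lambda_2\psi' + (\alpha + 3\lambda_1\lambda_2 - 3\lambda_1^2)\frac{\eta}{\zeta^2}\psi = 0$. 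Its general solution splits, according to the sign of $\frac{\lambda_2^2\zeta^2}{4\eta} + 3\lambda_1^2 - 3\lambda_1\lambda_2 - \alpha$, into the three standard forms: a sum of two real exponentials $c_1 e^{at} + c_2 e^{bt}$; the critical form $c_1 e^{\lambda_2 t/2} + c_2 t e^{\lambda_2 t/2}$; and the damped oscillation $e^{\lambda_2 t/2}(c_1\cos(at) + c_2\sin(at))$.

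Next I would feed each of these three forms back into the remaining equations. Recasting $(41b)$–$(41c)$ in terms of $\psi$ (equations $(44a)$–$(44c)$) and eliminating the $(\phi^\zeta)''$ term via $p_2\cdot(44b) - p_1\cdot(44c)$ yields the first-order relation $(45)$ for $\psi'$; here one may assume $p_1 \neq p_2$, since $p_1 = p_2$ collapses Type (II) to Type (I). Substituting the explicit $\psi$ into $(45)$ and using that the relevant families of functions — $e^{at}$, $e^{bt}$, $(c_1 e^{at} + c_2 e^{bt})^{1-2p_2\zeta/\eta}$ in the generic case, and their degenerate analogues in the critical and oscillatory cases — are linearly independent, I can separate the functional identity into polynomial equations in $\lambda_1,\lambda_2,p_1,p_2,\alpha,\alpha_2$. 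Solving these, always subdividing on whether $p_1$, $p_2$, $\alpha_2$, or the various coefficients vanish and systematically discarding contradictory branches (those forcing $\lambda_1 = 0$, or an impossible identity such as $c_1^2 + c_2^2 = 0$, etc.), produces precisely the nine alternatives listed in the theorem; equation $(44c)$ survives only to pin down $\alpha_2$ (and, when $\phi$ is constant, forces $\alpha_2$ to the stated value).

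The main obstacle is bookkeeping rather than any single hard estimate: the branch tree is large, and the two delicate points are (i) correctly invoking linear independence of the appropriate family of functions in each discriminant regime so as to reduce the ODE identities to algebraic constraints, and (ii) verifying in every terminal branch that the candidate solution satisfies all three equations of Proposition 4.16 simultaneously — not merely the two used in the elimination — so that spurious solutions are eliminated and no genuine case is overlooked. Care is also needed to keep track of the standing hypotheses $\lambda_1 \neq 0$, $\lambda_2 \neq 0$, which are what rule out several otherwise-admissible sub-branches.
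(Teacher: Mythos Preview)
Your proposal is correct and follows essentially the same route as the paper's own argument: specialize Proposition 4.16, split on $\zeta=0$ versus $\zeta\neq0$, in the latter case substitute $\phi=\psi^{\zeta/\eta}$ to linearize, branch on the discriminant of the resulting ODE, eliminate via $p_2\cdot(44b)-p_1\cdot(44c)$ to obtain (45), and then separate each functional identity by linear independence into algebraic constraints. The paper's proof is exactly this case tree, with the same bookkeeping and the same use of $\lambda_1\neq0$, $\lambda_2\neq0$ to kill spurious branches.
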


\subsubsection{Type (II) generalized Kasner space-times with a quarter-symmetric connection with constant scalar curvature}
By Proposition $4.17,$ then $(F_{2},\nabla^{F_{2}})$ has constant scalar curvature $S^{F_{2}}$ and
$$\overline S=\frac{S^{F_{2}}}{\phi^{2p_{2}}}-2\zeta\frac{\phi^{\prime\prime}}{\phi}
-(\eta+\zeta^2-2\zeta)\frac{(\phi^\prime)^2}{\phi^2}+3(\lambda_{1}+\lambda_{2})\zeta\frac{\phi^\prime}{\phi}
+3(\lambda_{1}^2+\lambda_{2}^2-4\lambda_{1}\lambda_{2}). \eqno{(63)}$$
¢Ù\;$\zeta=0.\;(1)\;\eta=0.$ then $p_{1}=p_{2}=0$ and
$\overline S=S^{F_{2}}+3(\lambda_{1}^2+\lambda_{2}^2-4\lambda_{1}\lambda_{2});$\\ \mbox{} \qquad \quad \;\;
$(2)\;\eta\neq0.$ then $\overline S=\frac{S^{F_{2}}}{\phi^{2p_{2}}}-\eta\frac{(\phi^\prime)^2}{\phi^2}
+3(\lambda_{1}^2+\lambda_{2}^2-4\lambda_{1}\lambda_{2}),$ which means
$$\eta\frac{(\phi^\prime)^2}{\phi^2}=\frac{S^{F_{2}}}{\phi^{2p_{2}}}
-[\overline S-3(\lambda_{1}^2+\lambda_{2}^2-4\lambda_{1}\lambda_{2})]. \eqno{(64)}$$
¢Ú$\zeta\neq0.$ Putting $\phi=\psi^\frac{2\zeta}{\eta+\zeta^2},$ we get
$$-\frac{4\zeta^2}{\eta+\zeta^2}\psi^{\prime\prime}+\frac{6(\lambda_{1}+\lambda_{2})\zeta^2}{\eta+\zeta^2}\psi^\prime
+(3\lambda_{1}^2+3\lambda_{2}^2-12\lambda_{1}\lambda_{2}
-\overline S)\psi+S^{F_{2}}\psi^{1-\frac{4p_{2}\zeta}{\eta+\zeta^2}}=0.       \eqno{(65)}$$

\subsubsection{Classification of Einstein Type (III) generalized Kasner space-times with a quarter-symmetric connection}
Considering ${\rm dim}F_{1}={\rm dim}F_{2}={\rm dim}F_{3}=1,$ by Remark $4,$ we get $\alpha_{i}=0$ and by Proposition $4.16,$ we have
$$\zeta\Big(\lambda_{2}\frac{\phi^\prime}{\phi}-\frac{\phi^{\prime\prime}}{\phi}\Big)
-(\eta-\zeta)\frac{(\phi^\prime)^2}{\phi^2}+3(\lambda_{1}^2-\lambda_{1}\lambda_{2})=\alpha;  \eqno{(66a)} $$
$$-p_{1}\Big[\frac{\phi^{\prime\prime}}{\phi}+(\zeta-1)\frac{(\phi^\prime)^2}{\phi^2}
+(\lambda_{2}-3\lambda_{1})\frac{\phi^\prime}{\phi}\Big]+\lambda_{2}\zeta\frac{\phi^\prime}{\phi}
=\alpha-\lambda_{2}^2+3\lambda_{1}\lambda_{2}.   \eqno{(66b)}$$
$$-p_{2}\Big[\frac{\phi^{\prime\prime}}{\phi}+(\zeta-1)\frac{(\phi^\prime)^2}{\phi^2}
+(\lambda_{2}-3\lambda_{1})\frac{\phi^\prime}{\phi}\Big]+\lambda_{2}\zeta\frac{\phi^\prime}{\phi}
=\alpha-\lambda_{2}^2+3\lambda_{1}\lambda_{2}.   \eqno{(66c)}$$
$$-p_{3}\Big[\frac{\phi^{\prime\prime}}{\phi}+(\zeta-1)\frac{(\phi^\prime)^2}{\phi^2}
+(\lambda_{2}-3\lambda_{1})\frac{\phi^\prime}{\phi}\Big]+\lambda_{2}\zeta\frac{\phi^\prime}{\phi}
=\alpha-\lambda_{2}^2+3\lambda_{1}\lambda_{2}.   \eqno{(66d)}$$
¢Ù\;$\zeta=\eta=0.$ By the equation $(66a),$ we have $\alpha=3\lambda_{1}^2-3\lambda_{1}\lambda_{2},$ and by the
equation $(66b),$ we have $\alpha=\lambda_{2}^2-3\lambda_{1}\lambda_{2},$ then we get $\lambda_{2}^2=3\lambda_{1}^2.$ \\ So we obtain $\underline{\lambda_{2}^2=3\lambda_{1}^2,\;\alpha=3\lambda_{1}^2-3\lambda_{1}\lambda_{2}= \lambda_{2}^2-3\lambda_{1}\lambda_{2},\;\alpha_{i}=0,\;\zeta=\eta=0.}$\\
¢Ú\;$\zeta=0,\;\eta\neq0.$ $(66b)+(66c)+(66d),$ we get $\alpha=\lambda_{2}^2-3\lambda_{1}\lambda_{2};$
by the equation $(66a),$ we have $\frac{(\phi^\prime)^2}{\phi^2}=\frac{3\lambda_{1}^2-\lambda_{2}^2}{\eta}.$\\
$1)\;3\lambda_{1}^2-\lambda_{2}^2<0,$ we have no solution.\\
$2)\;3\lambda_{1}^2-\lambda_{2}^2=0,$ then $\phi=c,$ which satisfies the equation $(66a).$ \\
$3)\;3\lambda_{1}^2-\lambda_{2}^2>0,$ then $\phi=c_{0}e^{\pm\sqrt{\frac{3\lambda_{1}^2-\lambda_{2}^2}{\eta}}t},$
since $\eta\neq0,$ so at least one $p_{i}\neq0,$ we assume $p_{1}\neq0,$ by the equation $(66b),$ we get
$\lambda_{2}=3\lambda_{1},$ but by $3\lambda_{1}^2-\lambda_{2}^2>0,$ we get $\lambda_{1}^2<0,$ which is a contradiction.\\
So we have $\underline{\lambda_{2}^2=3\lambda_{1}^2,\;\alpha=3\lambda_{1}^2-3\lambda_{1}\lambda_{2}
=\lambda_{2}^2-3\lambda_{1}\lambda_{2},\;\alpha_{i}=0,\;\zeta=0,\;\eta\neq0,\;\phi=c}$ \;in case ¢Ú.\\
¢Û\;$\zeta\neq0,$ then $\eta\neq0.$ If $p_{1}=p_{2}=p_{3},$ we get type $(I),$ if $p_{1}=p_{2}$ or $p_{2}=p_{3}$ or $p_{1}=p_{3},$ we get type $(II),$ so $p_{1}\neq p_{2}\neq p_{3}.$ Let $\phi=\psi^{\frac{\zeta}{\eta}},$
then equations $(66a)-(66d)$ becomes
$$\frac{\zeta^2}{\eta}\frac{\lambda_{2}\psi^\prime-\psi^{\prime\prime}}{\psi}
=\alpha+3\lambda_{1}\lambda_{2}-3\lambda_{2}^2      \eqno{(67a)}$$
$$\frac{p_{1}}{\zeta}\Big[-\frac{(\phi^\zeta)^{\prime\prime}}{\phi^\zeta}
+(3\lambda_{1}-\lambda_{2})\frac{(\phi^\zeta)^\prime}{\phi^\zeta}\Big]+\lambda_{2}\frac{(\phi^\zeta)^\prime}{\phi^\zeta}
=\alpha-\lambda_{2}^2+3\lambda_{1}\lambda_{2}    \eqno{(67b)}$$
$$\frac{p_{2}}{\zeta}\Big[-\frac{(\phi^\zeta)^{\prime\prime}}{\phi^\zeta}
+(3\lambda_{1}-\lambda_{2})\frac{(\phi^\zeta)^\prime}{\phi^\zeta}\Big]+\lambda_{2}\frac{(\phi^\zeta)^\prime}{\phi^\zeta}
=\alpha-\lambda_{2}^2+3\lambda_{1}\lambda_{2}    \eqno{(67c)}$$
$$\frac{p_{3}}{\zeta}\Big[-\frac{(\phi^\zeta)^{\prime\prime}}{\phi^\zeta}
+(3\lambda_{1}-\lambda_{2})\frac{(\phi^\zeta)^\prime}{\phi^\zeta}\Big]+\lambda_{2}\frac{(\phi^\zeta)^\prime}{\phi^\zeta}
=\alpha-\lambda_{2}^2+3\lambda_{1}\lambda_{2}    \eqno{(67d)}$$
$(67b)\times p_{2}-(67c)\times p_{1}$ and considering that $p_{1}\neq p_{2},$ we get
$$\lambda_{2}\frac{(\phi^\zeta)^\prime}{\phi^\zeta}=\alpha-\lambda_{2}^2+3\lambda_{1}\lambda_{2}  \eqno{(68a)}$$
$$\frac{(\phi^\zeta)^\prime}{\phi^\zeta}=\frac{\alpha}{\lambda_{2}}-\lambda_{2}+3\lambda_{1}  \eqno{(68b)}$$
by equations $(67b)$ and $(68a),$ we get $-\frac{(\phi^\zeta)^{\prime\prime}}{\phi^\zeta}+(3\lambda_{1}-\lambda_{2})\frac{(\phi^\zeta)^\prime}{\phi^\zeta}=0,$
then by the equation $(68b),$ we have
$$\frac{(\phi^\zeta)^{\prime\prime}}{\phi^\zeta}=(\frac{3\lambda_{1}}{\lambda_{2}}-1)\alpha
+9\lambda_{1}^2-6\lambda_{1}\lambda_{2}+\lambda_{2}^2     \eqno{(69)}$$
on the other hand, using the equation $(68b),$ we get
$$\phi^\zeta=c_{0}e^{(\frac{\alpha}{\lambda_{2}}-\lambda_{2}+3\lambda_{1})t}   \eqno{(70)} $$
by equations $(69)$ and $(70),$ we obtain
$\alpha^2+(3\lambda_{1}\lambda_{2}-\lambda_{2}^2)\alpha=0,$
so when $\lambda_{2}=3\lambda_{1},$ we have $\alpha=0;$
when $\lambda_{2}\neq3\lambda_{1},$ we have $\alpha=0$ or $\alpha=\lambda_{2}^2-3\lambda_{1}\lambda_{2}\neq0.$\\
$1)\;\lambda_{2}=3\lambda_{1}.$ Then $\alpha=0,$ by the equation $(70),$ we get $\phi^\zeta$ is a constant, then $\psi$ is a constant, so by the equation $(67a),$ we have $\lambda_{1}^2=0,$ which is a contradiction with $\lambda_{1}\neq0.$\\
$2)\;\lambda_{2}\neq3\lambda_{1}.\;1^\prime.\;\alpha=\lambda_{2}^2-3\lambda_{1}\lambda_{2}\neq0.$
By the equation $(70),$ we get $\phi^\zeta$ is a constant, then $\psi$ is a constant, and $\phi=c$ is a constant.
by the equation $(67a),$ we have $\alpha=3\lambda_{1}^2-3\lambda_{1}\lambda_{2}$ and $\lambda_{2}^2=3\lambda_{1}^2.$\\
So we get $\underline{\lambda_{2}^2=3\lambda_{1}^2,\;\alpha=3\lambda_{1}^2-3\lambda_{1}\lambda_{2}
=\lambda_{2}^2-3\lambda_{1}\lambda_{2},\;\alpha_{i}=0,\;\zeta\neq0,\;\eta\neq0,\;\phi=c.}$ \\
$2^\prime.\;\alpha=0.$ By the equation $(67a),$ we have
$$\psi^{\prime\prime}-\lambda_{2}\psi^\prime+(3\lambda_{1}\lambda_{2}-3\lambda_{1}^2)\frac{\eta}{\zeta^2}\psi=0.  \eqno{(71)}$$
by the equation $(70),$ we get $\phi^\zeta=c_{0}e^{(3\lambda_{1}-\lambda_{2})t},$ then
$\phi=ce^{\frac{3\lambda_{1}-\lambda_{2}}{\zeta}t},\;\psi=c_{1}e^{(3\lambda_{1}-\lambda_{2})\frac{\eta}{\zeta^2}t},$
by the equation $(71),$ we get $\frac{\eta}{\zeta^2}=\frac{3\lambda_{1}^2-\lambda_{2}^2}{(3\lambda_{1}-\lambda_{2})^2}.$\\
So we have $\underline{\lambda_{2}\neq3\lambda_{1},\;\alpha=0,\;\alpha_{i}=0,\;
\frac{\eta}{\zeta^2}=\frac{3\lambda_{1}^2-\lambda_{2}^2}{(3\lambda_{1}-\lambda_{2})^2},\;
\phi=ce^{\frac{3\lambda_{1}-\lambda_{2}}{\zeta}t}.}$   \par
According to above discussions, we get the following Theorem:
\begin{Theorem}
Let $M=I \times_{\phi^{p_{1}}}F_{1} \times_{\phi^{p_{2}}}F_{2}\cdots \times_{\phi^{p_{m}}}F_{m}$ be a generalized Kasner space-time for $p_{i}\neq p_{j}$ for $i,j\in\{1,2,3\}$ and ${\rm dim}F_{1}={\rm dim}F_{2}={\rm dim}F_{3}=1,$
and $P=\frac{\partial}{\partial t}.$ Then $(M,\overline\nabla)$ is Einstein with the Einstein constant $\alpha$ if and only if one of the following conditions is satisfied:\\
$(1)\;\lambda_{2}^2=3\lambda_{1}^2,\;\alpha=3\lambda_{1}^2-3\lambda_{1}\lambda_{2}=
\lambda_{2}^2-3\lambda_{1}\lambda_{2},\;\alpha_{i}=0,\;\zeta=\eta=0;$ \\
$(2)\;\lambda_{2}^2=3\lambda_{1}^2,\;\alpha=3\lambda_{1}^2-3\lambda_{1}\lambda_{2}
=\lambda_{2}^2-3\lambda_{1}\lambda_{2},\;\alpha_{i}=0,\;\eta\neq0,\;\phi=c;$  \\
$(3)\;\lambda_{2}\neq3\lambda_{1},\;\alpha=0,\;\alpha_{i}=0,\;
\frac{\eta}{\zeta^2}=\frac{3\lambda_{1}^2-\lambda_{2}^2}{(3\lambda_{1}-\lambda_{2})^2},\;
\phi=ce^{\frac{3\lambda_{1}-\lambda_{2}}{\zeta}t}.$
\end{Theorem}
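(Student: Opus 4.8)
The plan is to reduce the Einstein condition to the ODE system coming from Proposition 4.16 and then to run an exhaustive case analysis on the Kasner parameters $\zeta=\sum_i l_ip_i$ and $\eta=\sum_i l_ip_i^2$. First I would apply Proposition 4.16 with $b_i=\phi^{p_i}$ and $l_i=\dim F_i=1$; since every fiber is one-dimensional it is automatically Ricci-flat, so $\alpha_i=0$ and condition (1) of Proposition 4.16 is vacuous (this is exactly Remark 4). Substituting $b_i=\phi^{p_i}$ into conditions (2) and (3) of Proposition 4.16 and simplifying produces precisely the equations $(66a)$–$(66d)$. Thus $(M,\overline\nabla)$ being Einstein with constant $\alpha$ is equivalent to $\phi$ solving that system, and the remaining task is purely a solvability analysis of $(66a)$–$(66d)$.

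Next I would split into the mutually exclusive cases $\zeta=\eta=0$, $\zeta=0$ with $\eta\neq0$, and $\zeta\neq0$ (which forces $\eta\neq0$). In the first case, $(66a)$ forces $\alpha=3\lambda_1^2-3\lambda_1\lambda_2$ while $(66b)$ (with $p_1=0$) forces $\alpha=\lambda_2^2-3\lambda_1\lambda_2$, hence $\lambda_2^2=3\lambda_1^2$; this is conclusion (1). In the second case, summing $(66b)+(66c)+(66d)$ and using $p_1+p_2+p_3=\zeta=0$ gives $\alpha=\lambda_2^2-3\lambda_1\lambda_2$, and $(66a)$ then yields $(\phi'/\phi)^2=(3\lambda_1^2-\lambda_2^2)/\eta$; I would examine the sign of $3\lambda_1^2-\lambda_2^2$: negative is impossible, zero gives $\phi$ constant, and positive leads via $(66b)$ and $\lambda_1\neq0$ to the contradiction $\lambda_1^2<0$, pinning down conclusion (2).

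The substantive case is $\zeta\neq0$. Here, after discarding the degenerate subcases $p_1=p_2$, $p_2=p_3$, $p_1=p_3$ (which collapse to Types (I) or (II) and are excluded by the hypothesis $p_i\neq p_j$), I would introduce $\phi=\psi^{\zeta/\eta}$ to obtain $(67a)$–$(67d)$, then form $(67b)\times p_2-(67c)\times p_1$ to eliminate the second-derivative term and deduce $(68a)$, hence $(68b)$. Feeding $(68a)$ back into $(67b)$ kills the $(3\lambda_1-\lambda_2)$-term and, with $(68b)$, yields the expression $(69)$ for $(\phi^\zeta)''/\phi^\zeta$; comparing $(69)$ with the exponential solution $(70)$ forced by $(68b)$ produces the key algebraic constraint $\alpha^2+(3\lambda_1\lambda_2-\lambda_2^2)\alpha=0$. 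I would then run through the roots $\alpha=0$ and $\alpha=\lambda_2^2-3\lambda_1\lambda_2$ against $\lambda_2=3\lambda_1$ versus $\lambda_2\neq3\lambda_1$: $\lambda_2=3\lambda_1$ forces $\psi$ constant and then the contradiction $\lambda_1^2=0$; $\alpha=\lambda_2^2-3\lambda_1\lambda_2\neq0$ forces $\phi$ constant and, via $(67a)$, $\lambda_2^2=3\lambda_1^2$ (already recorded in conclusion (2)); and $\lambda_2\neq3\lambda_1$ with $\alpha=0$ gives $\eta/\zeta^2=(3\lambda_1^2-\lambda_2^2)/(3\lambda_1-\lambda_2)^2$ and $\phi=ce^{(3\lambda_1-\lambda_2)t/\zeta}$, which is conclusion (3). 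Collecting the three cases and verifying the converse (each listed $\phi$ solves $(66a)$–$(66d)$) finishes the proof.

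I expect the main obstacle to be the bookkeeping in the $\zeta\neq0$ case rather than any conceptual difficulty: extracting the single clean identity $\alpha^2+(3\lambda_1\lambda_2-\lambda_2^2)\alpha=0$ from the over-determined ODE system and then systematically discarding the spurious branches, each exclusion of which quietly uses $\lambda_1\neq0$, $\lambda_2\neq0$, or $p_i\neq p_j$. Care is also needed to make sure conclusions (1)–(3) are exactly the surviving branches and overlap correctly (the $\zeta\neq0$, $\lambda_2^2=3\lambda_1^2$ branch being absorbed into (2)).
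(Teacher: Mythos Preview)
Your proposal is correct and follows essentially the same approach as the paper: the same case split on $(\zeta,\eta)$, the same substitution $\phi=\psi^{\zeta/\eta}$ in the $\zeta\neq0$ case, the same elimination $(67b)\times p_2-(67c)\times p_1$ leading to $(68a)$--$(70)$ and the quadratic $\alpha^2+(3\lambda_1\lambda_2-\lambda_2^2)\alpha=0$, and the same disposal of the spurious branches. Your explicit remark about verifying the converse and about absorbing the $\zeta\neq0$, $\lambda_2^2=3\lambda_1^2$ branch into conclusion~(2) is a welcome addition that the paper leaves implicit.
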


\subsubsection{Type (III) generalized Kasner space-times with a quarter-symmetric connection with constant scalar curvature}
By Proposition $4.17,$ we get
$$\overline S=-2\zeta\frac{\phi^{\prime\prime}}{\phi}-(\eta+\zeta^2-2\zeta)\frac{(\phi^\prime)^2}{\phi^2}
+3(\lambda_{1}+\lambda_{2})\zeta\frac{\phi^\prime}{\phi}+3(\lambda_{1}^2+\lambda_{2}^2-4\lambda_{1}\lambda_{2}). \eqno{(72)}$$
¢Ù\;$\zeta=\eta=0.$ Then $p_{1}=p_{2}=p_{3}=0,$ and
$\overline S=3(\lambda_{1}^2+\lambda_{2}^2-4\lambda_{1}\lambda_{2}).$\\
¢Ú\;$\zeta=0,\;\eta\neq0.$ Then $[(ln\phi)^\prime]^2=\frac{3(\lambda_{1}^2+\lambda_{2}^2-4\lambda_{1}\lambda_{2})-\overline S}{\eta},$ so we have:\\
$1)\;\overline S>3(\lambda_{1}^2+\lambda_{2}^2-4\lambda_{1}\lambda_{2}),$ we have no solution.\\
$2)\;\overline S=3(\lambda_{1}^2+\lambda_{2}^2-4\lambda_{1}\lambda_{2}),$ then $\phi=c.$  \\
$3)\;\overline S<3(\lambda_{1}^2+\lambda_{2}^2-4\lambda_{1}\lambda_{2}),$ then $\phi=c_{0}e^{\pm\sqrt{\frac{3(\lambda_{1}^2+\lambda_{2}^2-4\lambda_{1}\lambda_{2})-\overline S}{\eta}}t}.$\\
¢Û\;$\zeta\neq0,$ then $\eta\neq0.$ Putting $\phi=\psi^\frac{2\zeta}{\eta+\zeta^2},$ then
$$-\frac{4\zeta^2}{\eta+\zeta^2}\psi^{\prime\prime}+\frac{6(\lambda_{1}+\lambda_{2})\zeta^2}{\eta+\zeta^2}\psi^\prime
+(3\lambda_{1}^2+3\lambda_{2}^2-12\lambda_{1}\lambda_{2}-\overline S)\psi=0.       \eqno{(73)}$$
So we get\\
$1)\;\overline S<\frac{9\zeta^2(\lambda_{1}+\lambda_{2})^2}{4(\eta+\zeta^2)}
+3\lambda_{1}^2+3\lambda_{2}^2-12\lambda_{1}\lambda_{2},$ \\
\begin{eqnarray*}
\psi &=& c_{1}e^{\frac{\frac{3}{2}(\lambda_{1}+\lambda_{2})+\sqrt{\frac{9(\lambda_{1}+\lambda_{2})^2}{4}
-\frac{(\overline S-3\lambda_{1}^2-3\lambda_{2}^2+12\lambda_{1}\lambda_{2})(\eta+\zeta^2)}{\zeta^2}}}{2}t}\\
&+& c_{2}e^{\frac{\frac{3}{2}(\lambda_{1}+\lambda_{2})-\sqrt{\frac{9(\lambda_{1}+\lambda_{2})^2}{4}
-\frac{(\overline S-3\lambda_{1}^2-3\lambda_{2}^2+12\lambda_{1}\lambda_{2})(\eta+\zeta^2)}{\zeta^2}}}{2}t};
\end{eqnarray*}
$2)\;\overline S=\frac{9\zeta^2(\lambda_{1}+\lambda_{2})^2}{4(\eta+\zeta^2)}
+3\lambda_{1}^2+3\lambda_{2}^2-12\lambda_{1}\lambda_{2},\;
\psi=c_{1}e^{\frac{3(\lambda_{1}+\lambda_{2})}{4}t}+c_{2}te^{\frac{3(\lambda_{1}+\lambda_{2})}{4}t};$  \\
$3)\;\overline S>\frac{9\zeta^2(\lambda_{1}+\lambda_{2})^2}{4(\eta+\zeta^2)}
+3\lambda_{1}^2+3\lambda_{2}^2-12\lambda_{1}\lambda_{2},$\\
\begin{eqnarray*}
\psi &=& c_{1}e^{\frac{3}{4}(\lambda_{1}+\lambda_{2})t}cos{\frac{\sqrt{\frac{(\overline S-3\lambda_{1}^2-3\lambda_{2}^2+12\lambda_{1}\lambda_{2})(\eta+\zeta^2)}{\zeta^2}
-\frac{9(\lambda_{1}+\lambda_{2})^2}{4}}}{2}t}\\
&+& c_{2}e^{\frac{3}{4}(\lambda_{1}+\lambda_{2})t}sin{\frac{\sqrt{\frac{(\overline S-3\lambda_{1}^2-3\lambda_{2}^2+12\lambda_{1}\lambda_{2})(\eta+\zeta^2)}{\zeta^2}
-\frac{9(\lambda_{1}+\lambda_{2})^2}{4}}}{2}t}
\end{eqnarray*} \par
According to above discussions, we get the following Theorem:
\begin{Theorem}
Let $M=I \times_{\phi^{p_{1}}}F_{1} \times_{\phi^{p_{2}}}F_{2}\cdots \times_{\phi^{p_{m}}}F_{m}$ be a generalized Kasner space-time and ${\rm dim}F_{1}={\rm dim}F_{2}={\rm dim}F_{3}=1,$ and $P=\frac{\partial}{\partial t}.$ Then $(M,\overline\nabla)$ has constant scalar curvature $\overline S$ if and only if one of the following conditions is satisfied:\\
$(1)\;\zeta=\eta=0,\;\overline S=3(\lambda_{1}^2+\lambda_{2}^2-4\lambda_{1}\lambda_{2}).$ \\
$(2)\;\zeta=0,\;\eta\neq0,$
when $\overline S>3(\lambda_{1}^2+\lambda_{2}^2-4\lambda_{1}\lambda_{2}),$ we have no solution;
when $\overline S=3(\lambda_{1}^2+ \\ \mbox{} \quad \;\lambda_{2}^2-4\lambda_{1}\lambda_{2}),$ then $\phi=c;$
when $\overline S<3(\lambda_{1}^2+\lambda_{2}^2-4\lambda_{1}\lambda_{2}),$ then $\phi=c_{0}e^{\pm\sqrt{\frac{3(\lambda_{1}^2+\lambda_{2}^2-4\lambda_{1}\lambda_{2})-\overline S}{\eta}}t}.$  \\
$(3)\;\zeta\neq0,$\\
\mbox{} \quad $(a)\;\overline S<\frac{9\zeta^2(\lambda_{1}+\lambda_{2})^2}{4(\eta+\zeta^2)}
+3\lambda_{1}^2+3\lambda_{2}^2-12\lambda_{1}\lambda_{2},$
\begin{eqnarray*}
\psi &=& c_{1}e^{\frac{\frac{3}{2}(\lambda_{1}+\lambda_{2})+\sqrt{\frac{9(\lambda_{1}+\lambda_{2})^2}{4}
-\frac{(\overline S-3\lambda_{1}^2-3\lambda_{2}^2+12\lambda_{1}\lambda_{2})(\eta+\zeta^2)}{\zeta^2}}}{2}t}\\
&+& c_{2}e^{\frac{\frac{3}{2}(\lambda_{1}+\lambda_{2})-\sqrt{\frac{9(\lambda_{1}+\lambda_{2})^2}{4}
-\frac{(\overline S-3\lambda_{1}^2-3\lambda_{2}^2+12\lambda_{1}\lambda_{2})(\eta+\zeta^2)}{\zeta^2}}}{2}t};
\end{eqnarray*}
\mbox{} \quad $(b)\;\overline S=\frac{9\zeta^2(\lambda_{1}+\lambda_{2})^2}{4(\eta+\zeta^2)}
+3\lambda_{1}^2+3\lambda_{2}^2-12\lambda_{1}\lambda_{2},\;
\psi=c_{1}e^{\frac{3(\lambda_{1}+\lambda_{2})}{4}t}+c_{2}te^{\frac{3(\lambda_{1}+\lambda_{2})}{4}t};$  \\
\mbox{} \quad $(c)\;\overline S>\frac{9\zeta^2(\lambda_{1}+\lambda_{2})^2}{4(\eta+\zeta^2)}
+3\lambda_{1}^2+3\lambda_{2}^2-12\lambda_{1}\lambda_{2},$\\
\begin{eqnarray*}
\psi &=& c_{1}e^{\frac{3}{4}(\lambda_{1}+\lambda_{2})t}cos{\frac{\sqrt{\frac{(\overline S-3\lambda_{1}^2-3\lambda_{2}^2+12\lambda_{1}\lambda_{2})(\eta+\zeta^2)}{\zeta^2}
-\frac{9(\lambda_{1}+\lambda_{2})^2}{4}}}{2}t}\\
&+& c_{2}e^{\frac{3}{4}(\lambda_{1}+\lambda_{2})t}sin{\frac{\sqrt{\frac{(\overline S-3\lambda_{1}^2-3\lambda_{2}^2+12\lambda_{1}\lambda_{2})(\eta+\zeta^2)}{\zeta^2}
-\frac{9(\lambda_{1}+\lambda_{2})^2}{4}}}{2}t}
\end{eqnarray*}
\end{Theorem}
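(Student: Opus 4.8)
The plan is to reduce the condition that $(M,\overline\nabla)$ have constant scalar curvature to a single ordinary differential equation for $\phi$, and then integrate it explicitly. Because $\dim F_i=1$ for each $i$, every fibre is flat, so $S^{F_i}=0$ and the part of Proposition 4.17 requiring each $(F_i,\nabla^{F_i})$ to have constant scalar curvature is vacuous; specialising equation (40) to $m=3$, $l_i=1$, $\overline n=4$ and $S^{F_i}=0$ shows that $(M,\overline\nabla)$ has constant scalar curvature $\overline S$ if and only if $\phi$ satisfies the ODE (72). Thus the theorem is a classification of the positive solutions of (72) for a prescribed constant $\overline S$, and I would split the argument according to whether $\zeta$ and $\eta$ vanish.

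First I would dispose of the degenerate cases. If $\zeta=\eta=0$ then $p_1=p_2=p_3=0$, every derivative term in (72) has coefficient $0$, and (72) collapses to $\overline S=3(\lambda_1^2+\lambda_2^2-4\lambda_1\lambda_2)$; any $\phi$ is then admissible, which is condition (1). If $\zeta=0$ but $\eta\neq0$, the coefficients of $\phi''/\phi$ and $\phi'/\phi$ still vanish and (72) becomes $\eta(\phi'/\phi)^2=3(\lambda_1^2+\lambda_2^2-4\lambda_1\lambda_2)-\overline S$. Since $\eta>0$, the right-hand side must be a constant of definite sign: negative forces no solution, zero forces $\phi$ constant, and positive gives $(\ln\phi)'=\pm\sqrt{(3(\lambda_1^2+\lambda_2^2-4\lambda_1\lambda_2)-\overline S)/\eta}$, hence an exponential $\phi=c_0e^{\pm\sqrt{\cdots}\,t}$. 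Translating these three alternatives into comparisons of $\overline S$ with $3(\lambda_1^2+\lambda_2^2-4\lambda_1\lambda_2)$ yields condition (2).

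For the main case $\zeta\neq0$ (which forces $\eta>0$, hence $\eta+\zeta^2>0$), I would substitute $\phi=\psi^{2\zeta/(\eta+\zeta^2)}$. Writing $a=2\zeta/(\eta+\zeta^2)$, one has $\phi'/\phi=a\,\psi'/\psi$ and $\phi''/\phi=a\,\psi''/\psi+a(a-1)(\psi'/\psi)^2$; inserting these into (72), the coefficient of $(\psi'/\psi)^2$ becomes $a\bigl(2\zeta-(\eta+\zeta^2)a\bigr)$, and the choice $a=2\zeta/(\eta+\zeta^2)$ is precisely what makes this vanish, leaving the linear constant-coefficient equation (73). I would then solve (73): its characteristic polynomial has discriminant $\frac{9}{4}(\lambda_1+\lambda_2)^2-\frac{(\overline S-3\lambda_1^2-3\lambda_2^2+12\lambda_1\lambda_2)(\eta+\zeta^2)}{\zeta^2}$, whose sign is governed by the comparison of $\overline S$ with $\frac{9\zeta^2(\lambda_1+\lambda_2)^2}{4(\eta+\zeta^2)}+3\lambda_1^2+3\lambda_2^2-12\lambda_1\lambda_2$. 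A positive discriminant yields two distinct real exponentials (case $(a)$); a vanishing discriminant yields the repeated root $\frac{3}{4}(\lambda_1+\lambda_2)$ with the solution $c_1e^{\frac{3}{4}(\lambda_1+\lambda_2)t}+c_2te^{\frac{3}{4}(\lambda_1+\lambda_2)t}$ (case $(b)$); a negative discriminant yields the complex pair $\frac{3}{4}(\lambda_1+\lambda_2)\pm i\omega$ with the solution $e^{\frac{3}{4}(\lambda_1+\lambda_2)t}(c_1\cos\omega t+c_2\sin\omega t)$ (case $(c)$), as in the statement.

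I expect the only real obstacle to be purely computational: checking that the exponent $2\zeta/(\eta+\zeta^2)$ indeed annihilates the quadratic term, tracking the remaining coefficients carefully so that (72) becomes exactly (73), and then identifying the characteristic discriminant with the threshold displayed in the theorem — all elementary but bookkeeping-heavy, exactly as in Theorems 3.14, 3.16 and 3.19. As there, one should also tacitly restrict $I$ to a subinterval on which $\psi$, hence $\phi=\psi^{2\zeta/(\eta+\zeta^2)}$, remains positive so that $\phi:I\to(0,\infty)$ is legitimate; conversely each $\phi$ listed satisfies (72) by construction, which supplies the ``if'' direction.
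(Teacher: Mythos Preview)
Your proposal is correct and follows essentially the same route as the paper: specialise Proposition~4.17 to obtain (72), handle the degenerate cases $\zeta=\eta=0$ and $\zeta=0,\eta\neq0$ directly, and for $\zeta\neq0$ substitute $\phi=\psi^{2\zeta/(\eta+\zeta^2)}$ to linearise (72) into (73), then read off the three families from the characteristic discriminant. Your account is in fact more explicit than the paper's (you justify the choice of exponent and flag the positivity caveat), but the argument is the same.
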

\begin{remark}
When $\lambda_{1}=\lambda_{2}=1,$ we get Propositions $32,33$ Theorems $35,37,36,$ in $[12]$ by Propositions $4.16,4.17,$ Theorems $4.19-4.21,$ respectively.
\end{remark}
{\bfseries Acknowledgement.}
This work was supported by NSFC No.11271062 and NCET-13-0721. We would like to thank the referee for his(her) careful reading and helpful comments.


\clearpage


\begin{thebibliography}{999}
\bibitem{1}
  R. Bishop, B. O'Neill, Manifolds of negative curvature, Trans. Am. Math. Soc.145(1969) 1-49.
\bibitem{2}
  F. Dobarro, E. Dozo, Scalar curvature and warped products of Riemannian manifolds, Trans. Am. Math. Soc. 303(1987) 161-168.
\bibitem{3}
  P. Ehrlich, Y. Jung, S. Kim, Constant scalar curvatures on warped product manifolds, Tsukuba J. Math. 20(1996) No.1 239-265.
\bibitem{4}
  L. Al\'ias, A. Romero, M. S'anchez, Spacelike hypersurfaces of constant mean curvature and Clabi-Bernstein type problems, Tohoku Math. J. 49(1997) 337-345.
\bibitem{5}
  M. Fern\'andez-L\'opez, E. Garc\'ia-R\'io, D. Kupeli, B. \"Unal, A curvature condition for a twisted product to be a warped product, Manu. math. 106(2001), 213-217.
\bibitem{6}
  H. A. Hayden, Subspace of a space with torsion, Proc. Lond. Math. Soc. 34(1932) 27-50.
\bibitem{7}
  K. Yano, On semi-symmetric metric connection, Rev. Roumaine Math. Pures Appl. 15(1970) 1579-1586.
\bibitem{8}
  Golab, S., On semi-symmetric and quarter-symmetric linear connections, Tensor N.S., 29(1975), 249-254.
\bibitem{9}
  F. Dobarro, B. \"Unal, Curvature of multiply warped products, J. Geom. Phys. 55(2005) 75-106.
\bibitem{10}
  S. Sular and C. \"Ozg\"ur, Warped products with a semi-symmetric metric connection, Taiwanese J. Math. 15 (2011), no. 4, 1701¨C1719.
\bibitem{11}
  S.Sular and C.\"Ozg\"ur, Warped products with a semi-symmetric non-metric connection, Arab. J. Sci. Eng. 36 (2011), no. 3, 461¨C473.
\bibitem{12}
  Y. Wang, Multiply Warped products with a semi-symmetric metric connection, Abstract and Applied Analysis. Vol 2014, Article ID 742371, 12pages.
\bibitem{13}
  Y. Wang, Curvature of multiply warped products with an affine connection, Bull. Korean Math. Soc. 50 (2013), No. 5, pp. 1567¨C1586
\bibitem{11}
  M. Tripathi, A new connection in a Riemannian manifold, arXiv:1207.0199

\end{thebibliography}
\end{document}